\theoremstyle{theorem}
\newtheorem{theorem}{Theorem}
\newtheorem{lemma}[theorem]{Lemma}
\newtheorem{proposition}[theorem]{Proposition}
\newtheorem{corollary}[theorem]{Corollary}
\newtheorem{problem}[theorem]{Problem}
\theoremstyle{definition}
\def\ed{\stackrel{d}{=}}
\def\led{\stackrel{d}{\le}}
\def\lecx{\stackrel{cx}{\le}}
\def\bitem{$\bullet\,\,$}
\def\ootimes{\otimes}
\def\convd{\stackrel{d}{\rightarrow}}
\def\Pbul{P}
\def\Rbul{R}
\def\Pdecbul{{P^\downarrow}}
\def\Pst {P^*}
\def\np {\#_{\rm part}}
\def\Larea {A_{\mbox{\footnotesize L\'evy}}}
\def\arccot{{\rm arccot}}
\def\Qst {Q^*}
\def\Qdecbul{{Q^\downarrow}}
\def\E{ {\mathbb E }}
\def\tilX{ {\widetilde{X}}}
\def\dbul{ \bullet \bullet }
\def\bul{ \bullet  }
\def\tilY{ {\widetilde{Y}}}
\def\tilZ{ {\widetilde{Z}}}
\def\maxG{  G^{\rm max}}
\def\tilY{ {\widetilde{Y}}}
\def\P{ {\mathbb P }}
\def\eps{ \varepsilon }
\def\hf{ \mbox{$\frac{1}{2}$} }
\def\reals{ {\mathbb R }}
\def\TZS{ S }
\def\thovpi{ \frac{\theta}{\pi} }
\def\Treal{ T}
\def\complex{ {\mathbb C }}
\def\nats{ {\mathbb N }}
\def\alth{ { \alpha,\theta } }
\def\thoval { { \frac{\theta}{\alpha} } }
\def\giv {\,|\,}
\def\Pdec{ P^{\downarrow}}
\def\Rdec{ R^{\downarrow}}
\def\Adec{ A^{\downarrow}}
\def\ndec{ n^{\downarrow}}
\def\Nex{ N^{\rm ex} }
\def\Nexn{ N^{\rm ex}_{\bullet:n }}
\def\pex{ p^{\rm ex} }
\newcommand{\alpi}{{\alpha \pi}}
\newcommand{\nnn}{ {\bf n}}
\newcommand{\Qdec}{Q^{\downarrow}}
\newcommand*\co[1]{%
   \hbox{%
     \vbox{%
       \hrule height 0.5pt % The actual bar
       \kern0.5ex%         % Distance between bar and symbol
       \hbox{%
         \kern-0.1em%      % Shortening on the left side
         \ensuremath{#1}%
         \kern-0.1em%      % Shortening on the right side
       }%
     }%
   }%
} 
\newcommand{\FF} {\mathcal F }
\newcommand{\RR} {\mathcal R }
\newcommand{\PP} {\mathcal P }
\newcommand{\GG} {\mathcal G }
\renewcommand{\SS}{ \mathcal{S}}
\newcommand{\BB}{ {B^{\rm br}} }
\newcommand{\zz}{ {p}}
\newcommand{\Abr} {A^{\rm br}_1 }
\begin{document}

%%\title{Random weighted averages, generalized arcsine laws and partition probabilities}%%
\title{Random weighted averages, partition structures and generalized arcsine laws}%%
%%%, %% moment problems and beta-gamma algebra}
\author{Jim Pitman, Dept. Statistics, Evans Hall, U.C. Berkeley, CA 94720}

\maketitle

%%%ROUGH DRAFT FOR COMMENTS. PLEASE DO NOT DISTRIBUTE. 

\begin{abstract}
This article offers a simplified approach to the distribution theory of {\em randomly weighted averages} or {\em $\Pbul$-means} $M_P(X):= \sum_{j} X_j  P_j$, for
a sequence of i.i.d.random variables $X, X_1, X_2, \ldots$, and independent random weights 
$P:= (P_j)$ with $P_j \ge 0$ and $\sum_{j} P_j = 1$.
The collection of distributions of $M_P(X)$, indexed by distributions of $X$, 
is shown to encode Kingman's partition structure derived from $P$.
%%which describes the distributions of $\Pi_n$, the random partition of $n$ indices generated by a random sample from $P$.
For instance, if $X_\zz$ has Bernoulli$(\zz)$ distribution on $\{0,1\}$, the $n$th moment of $M_P(X_\zz)$ 
is a polynomial function of $\zz$ which equals the probability generating function of the number $K_n$ of distinct values in a sample of size $n$ from $P$:
$\E (M_P(X_\zz))^n = \E \zz^{K_n}$. 
This elementary identity illustrates a general moment formula for $P$-means in terms of the partition structure associated
with random samples from $P$, first developed by \citet{MR1425401} %%% Diaconis, P. and Kemperman, J.}, TITLE = {Some new tools for {D}irichlet priors},
and \citet{MR1618739}  %% AUTHOR = {Kerov, Sergei}, TITLE = {Interlacing measures},
in terms of random permutations.
As shown by \citet{MR1691650}, %%AUTHOR = {Tsilevich, N. V.}, TITLE = {Distribution of mean values for some random measures},
if the partition probabilities factorize in a way characteristic of the generalized Ewens sampling
formula with two parameters $(\alpha,\theta)$, found by
\cite{MR1337249}, %%AUTHOR = {Pitman, Jim}, TITLE = {Exchangeable and partially exchangeable random partitions}, JOURNAL = {Probab. Theory Related Fields},
%two-parameter family of GEM$(\alpha,\theta)$ 
%models defined by a stick-breaking scheme for generating $P$ with suitable independent beta factors. 
then the moment formula yields the Cauchy-Stieltjes transform of an $(\alpha,\theta)$ mean.
The analysis of these random means includes the characterization of $(0,\theta)$-means, known as Dirichlet means, 
 due to \citet{von1941distribution}, %%% title={Distribution of the ratio of the mean square successive difference to the variance}, author={Von Neumann, John},
\citet{watson1956joint}, %%%, title={On the joint distribution of the circular serial correlation coefficients},
and \citet*{MR1041402}, %%AUTHOR = {Cifarelli, Donato Michele and Regazzini, Eugenio}, TITLE = {Distribution functions of means of a {D}irichlet process},
and generalizations of L\'evy's arcsine law for the
time spent positive by a Brownian motion, due to  
\citet{darling49a},  %%%, AUTHOR = {D. A. Darling}, TITLE = {A theorem on stable distributions}, JOURNAL = {Bull. Amer. Math. Soc.},
\citet{MR0094863}, % %%AUTHOR = {Lamperti, John}, TITLE = {An occupation time theorem for a class of stochastic processes},
and \citet*{MR1022918}. %%AUTHOR = {Barlow, Martin and Pitman, Jim and Yor, Marc}, TITLE = {Une extension multidimensionnelle de la loi de l'arc sinus},
\end{abstract}

\tableofcontents

\section{Introduction}

Consider the {\em randomly weighted average} or {\em $\Pbul$-mean} of a sequence of random variables $(X_1, X_2, \ldots)$
\begin{equation}
\label{tilx}
\tilX:= \sum_{j} X_j  P_j
\end{equation}
%\begin{itemize}
%\item 
where $P:= (P_1, P_2, \ldots)$ is a {\em random discrete distribution} 
meaning that the $P_j$ are random variables with $P_j \ge 0 $ and $\sum_j P_j = 1$ almost surely,
where $(X_1, X_2, \ldots)$ and $\Pbul$ are independent, 
and it is assumed that the series converges to a well defined limit almost surely.
%%, e.g.
%%\begin{itemize}
%ti\item if $X_i \in [a,b]$ for some finite $a$ and $b$, when $\tilX \in [a,b]$ almost surely, 
%\item or if $X_i \ge 0$ when $\tilX \in [0,\infty]$ almost surely, 
%\item or if the $X_i$ are i.i.d. copies of $X$ with $E(X)$ well defined.
%\end{itemize}
%If the $X_j$ are i.i.d. copies of some basic random variable $X$, then $\tilX$ may be called a {\em $P$-mean of $X$}, denoted $\tilX_P$ or
%$M_P(X)$, and abbreviated to $\tilX_\theta$ or $M_\theta(X)$ if the distribution of $P$ is supposed to depend on some parameter $\theta$.
This article is concerned with characterizations of the exact distribution of $\tilX$ under various assumptions on the random discrete distribution $P$ and the
sequence $(X_1, X_2, \ldots)$. Interest is focused on the case when the $X_i$ are i.i.d. copies of some basic random variable $X$. Then $\tilX$ is a well defined random
variable, called the {\em $P$-mean of $X$}, whatever the distribution of $X$ with a finite mean, and whatever the random discrete distribution $P$ independent of  the
sequence of copies of $X$.
These characterizations of the distribution of $P$-means are mostly known in some form. But the literature of random $P$-means is scattered, and the conceptual
foundations of the theory have not been as well laid as they might have been.
There has been recent interest in refined development of  the distribution theory of $P$-means
 in various settings, especially for the model of distributions of $P$ indexed by two-parameters $(\alpha,\theta)$, whose size-biased presentation is known 
as GEM$(\alpha,\theta)$ after Griffiths, Engen and McCloskey, and  whose associated partition probabilities were derived by 
\cite{MR1337249}. %%AUTHOR = {Pitman, Jim}, TITLE = {Exchangeable and partially exchangeable random partitions}, JOURNAL = {Probab. Theory Related Fields},
See e.g.
\citet{MR1936323}, %AUTHOR = {Regazzini, Eugenio and Guglielmi, Alessandra and Di Nunno, Giulia}, TITLE = {Theory and numerical analysis for exact distributions of functionals of a {D}irichlet process},
\citet{MR1983542}, %AUTHOR = {Regazzini, Eugenio and Lijoi, Antonio and Pr{\"u}nster, Igor}, TITLE = {Distributional results for means of normalized random measures with independent increments},
\citet{MR2060305}, %AUTHOR = {Lijoi, Antonio and Regazzini, Eugenio}, TITLE = {Means of a {D}irichlet process and multiple hypergeometric
\citet{MR2398765}, %%%, {James, Lancelot F. and Lijoi, Antonio and Pr\"unster, Igor}, TITLE = {Distributions of linear functionals of two parameter {P}oisson-{D}irichlet random measures},
\citet{MR2668906,MR2676940},  %%}. %%%AUTHOR = {James, Lancelot F.}, TITLE = {Dirichlet mean identities and laws of a class of subordinators},
                              %AUTHOR = {James, Lancelot F.}, TITLE = {Lamperti-type laws},
\citet{MR2529667}. %%, AUTHOR = {Lijoi, Antonio and Pr{\"u}nster, Igor}, TITLE = {Distributional properties of means of random probability
See also
\citet{MR2564485}, %% AUTHOR = {Ruggiero, Matteo and Walker, Stephen G.}, TITLE = {Countable representation for infinite dimensional diffusions 2009
\citet{petrov2009two}, %% title={Two-parameter family of infinite-dimensional diffusions on the Kingman simplex}, author={Petrov, Leonid Aleksandrovich},
\citet{MR3694590},  %%AUTHOR = {Canale, A. and Lijoi, A. and Nipoti, B. and Pr\"unster, I.}, TITLE = {On the {P}itman-{Y}or process with spike and slab base
\citet{MR3160565} %%AUTHOR = {Lau, John W.}, TITLE = {A conjugate class of random probability measures based on tilting and with its posterior analysis},
for other recent applications of two-parameter model and closely related random discrete distributions, in which settings the theory of $(\alpha,\theta)$-means may be of further interest.
So it may be timely to review the foundations of the theory of random $P$-means, with special attention to $P$ governed by the $(\alpha,\theta)$ model, and references to 
the historical literature and contemporary developments.
The article is intended to be accessible even to readers unfamiliar with the theory of partition structures, and to 
provide motivation for further study of that theory and its applications to $P$-means.

The article is organized as follows.  Section \ref{sec:overview} offers an overview of 
the distribution theory of $P$-means, with pointers to the literature and following sections for details.
Section \ref{sec:basic} develops the foundations of a general distribution theory for $P$-means, essentially from scratch. 
Section \ref{sec:models} develops this theory further for some of the standard models of random discrete distributions.
The aim is to explain, as simply as possible, some of the most remarkable known results involving $P$-means, and to clarify relations between these results and
the theory of partition structures, introduced by 
\citet{MR0368264}, %%AUTHOR = {Kingman, J. F. C. }, TITLE = {Random discrete distributions},
then further developed in \citet{MR1337249},  %%AUTHOR = {Pitman, Jim}, TITLE = {Exchangeable and partially exchangeable random partitions}, JOURNAL = {Probab. Theory Related Fields},
and surveyed in \citet[Chapters 2,3,4]{MR2245368}.  % AUTHOR = {Pitman, J.}, TITLE = {Combinatorial stochastic processes},
The general treatment of $P$-means in Section \ref{sec:basic} makes many connections to those sources, and 
motivates the study of partition structures as a tool for the analysis of $P$-means.

\section{Overview}
\label{sec:overview}

\subsection{Scope}
This article focuses attention on two particular instances of the general random average construction  $\tilX:= \sum_j X_j P_j$.
\begin{itemize}
\item [(i)] The $X_j $ are assumed to be independent and identically distributed (i.i.d.) copies of some basic random variable $X$, with the $X_j$ independent
of $P$. Then $\tilX$ is called the {\em $\Pbul$-mean of $X$}, typically denoted $M_\Pbul(X)$ or $\tilX_\Pbul$.
\item [(ii)] The case $\tilX:= X_1 P_1 + X_2 \co{P}_1$, with only two non-zero weights $P_1$ and $\co{P}_1:= 1 - P_1$. 
It is assumed that $P_1$ is independent of $(X_1,X_2)$. But $X_1$ and $X_2$ might be independent and not identically 
distributed, or they might have some more general joint distribution.
\end{itemize}
%xxx two component mixture model ??? usually P_1 is fixed  
Of course, more general random weighting schemes are possible, and have been studied to some extent.
For instance, \citet{MR716487} treat the distribution of randomly weighted sums $\sum_i W_i X_i$ for random non-negative weights $W_i$ not
subject to any constraint on their sum, and $(X_i)$ a sequence of i.i.d. random variables independent of the weight sequence.
But the theory of the two basic kinds of random averages indicated above is already very rich. 
This theory was developed in the first instance for real valued random variables $X_j$.
But the theory extends easily to vector-valued random elements $X_i$, including random measures, as discussed in the next subsection.

Here, for a given distribution of $P$, the collection of distributions of $M_P(X)$, indexed by distributions of $X$, 
is regarded as an encoding of Kingman's partition structure derived from $P$
(Corollary \ref{crl:means:kingman}).
That is, the collection
of distributions of $\Pi_n$, the random partition of $n$ indices generated by a random sample of size $n$ from $P$.
For instance,
if $X_p$ has Bernoulli$(p)$ distribution on $\{0,1\}$, the $n$th moment of the $P$ mean of $X_p$ 
is a polynomial in $p$ of degree $n$, which is also the probability generating function of the number $K_n$ of distinct values in a sample of size $n$ from $P$:
$\E (M_P(X_p))^n = \E p^{K_n}$ (Proposition \ref{crlpgf}).
This elementary identity illustrates a general moment formula for $P$-means,
involving the exchangeable partition probability function (EPPF), which describes the distributions of $\Pi_n$
(Corollary \ref{mainthm}).
An equivalent moment formula, in terms of a random permutation whose cycles are the blocks of $\Pi_n$,
was found by \citet{MR1425401} %%% Diaconis, P. and Kemperman, J.}, TITLE = {Some new tools for {D}irichlet priors},
for the $(0,\theta)$ model,
 and extended to general partition structures by
\citet{MR1618739}.  %% AUTHOR = {Kerov, Sergei}, TITLE = {Interlacing measures},
As shown in Section 
\ref{sec:twomeans}, following \citet{MR1691650}, %%AUTHOR = {Tsilevich, N. V.}, TITLE = {Distribution of mean values for some random measures},
this moment formula leads quickly to characterizations of the distribution of $P$-means
when the EPPF factorizes in a way 
characteristic  of the 
two-parameter family of %%Dirichlet $(\alpha,\theta)$ models derived from the 
GEM$(\alpha,\theta)$  models defined by a stick-breaking scheme generating $P$ from suitable independent beta factors. 
Then the moment formula yields the Cauchy-Stieltjes transform of an $(\alpha,\theta)$ mean $\tilX_{\alth}$ derived from an i.i.d. sequence of copies of $X$.
The analysis of these random $(\alpha,\theta)$ means $\tilX_{\alth}$ includes the
includes the characterization of $(0,\theta)$-means, commonly known as {\em Dirichlet means}, 
 due to \citet{von1941distribution}, %%% title={Distribution of the ratio of the mean square successive difference to the variance}, author={Von Neumann, John},
\citet{watson1956joint}, %%%, title={On the joint distribution of the circular serial correlation coefficients},
and \citet*{MR1041402}, %%AUTHOR = {Cifarelli, Donato Michele and Regazzini, Eugenio}, TITLE = {Distribution functions of means of a {D}irichlet process},
as well as generalizations of L\'evy's arcsine law for the
time spent positive by a Brownian motion, due to  
\citet{MR0094863}, % %%AUTHOR = {Lamperti, John}, TITLE = {An occupation time theorem for a class of stochastic processes},
and \citet*{MR1022918}. %%AUTHOR = {Barlow, Martin and Pitman, Jim and Yor, Marc}, TITLE = {Une extension multidimensionnelle de la loi de l'arc sinus},
%%Besides the model of $P$-means derived from an i.i.d. sequence, some other constructions of randomly weighted averages are reviewed.
%%A new proof is given of Kerov's interval splitting construction of the Markov-Krein correspondence, which generalizes the relation
%%between the distribution of a random variable $X$ and that of its Dirichlet mean $\tilX_{0,1}$. A number of related characterizations of beta distributions by
%%interval splitting are considered.
%\tableofcontents

\subsection{Random measures}
To illustrate the idea of extending $P$-means from random variables to random measures, suppose that the $X_j$ are random point masses 
$$
X_j(\bullet) := \delta_{Y_j}(\bullet) = 1(Y_j \in \bullet)
$$ 
for a sequence of i.i.d. copies $Y_j$ of a random element $Y$ with values in an abstract measurable space $(S,\SS)$,
with $\bullet$ ranging over $\SS$.
Then 
\begin{equation}
\label{sprinkle}
P(\bullet) := M_P(1(Y \in \bullet)):= \sum_j 1 ( Y_j \in \bullet) P_j
\end{equation}
is a measure-valued random $P$-mean.
%%%, which concentrates on the set of discrete random probability measures on $(S,\SS)$.
This is a discrete random probability measure on $(S, \SS)$ which places an atom of mass $P_j$ at location $Y_j$ for each $j$. 
Informally, $P(\bullet)$ is a reincarnation of $P = (P_j)$ as a random discrete distribution on $(S,\SS)$ instead of the positive integers, 
obtained  by randomly sprinkling the atoms $P_j$ over $S$ according to the
distribution of $Y$. In particular, if the distribution of $Y$ is continuous, 
on the event of probability one  that there are no ties between any two $Y$-values,
the list of magnitudes of atoms of $P(\bullet)$ in non-increasing order is 
identical to the corresponding reordering $\Pdec$ of the sequence $P:= (P_j, j = 1,2, \ldots)$. 
The original random discrete distribution $P$ on positive integers, and the derived random discrete distribution $P(\bullet)$
on $(S,\SS)$, are then so similar, that using the same symbol $P$ for both of them seems justified.
The integral of a suitable real-valued $\SS$-measurable function
$g$ with respect to $P(\bullet)$ is just the $P$-mean of the real-valued random variable $g(Y)$:
\begin{equation}
\label{intSg}
\int_{S} g(s) P(ds)  = M_P( g(Y) ):= \sum_{j} g(Y_j) P_j  .
\end{equation}
Hence the analysis of random probability measures $P(\bullet)$ of the form \eqref{sprinkle} on an abstract space $(S,\SS)$ reduces
to an analysis of distributions of $P$-means $M_P(X)$ for real-valued $X = g(Y)$.
For $P$ a listing of the normalized jumps of a standard gamma process $(\gamma(r), 0 \le r \le \theta)$, that is a {\em subordinator}, or  {\em increasing process with
stationary independent increments}, with 
\begin{equation}
\label{gamdens}
\P(\gamma(r) \in dx)/dx  = \frac{1 } {\Gamma(r)}  x^{r-1} e^{-x} 1(x>0 ),
\end{equation}
formula \eqref{sprinkle} is Ferguson's \citeyearpar{MR0350949} % AUTHOR = {Ferguson, Thomas S.}, TITLE = {A {B}ayesian analysis of some nonparametric problems},
construction of a {\em Dirichlet random probability measure $P(\bullet)$ on $(S,\SS)$ governed by the measure $\theta \, \P(Y \in \bullet)$
with total mass $\theta$.}
For $r,s>0$ let $\beta_{r,s}$ 
denote a random variable with the beta$(r,s)$ distribution  on $[0,1]$
\begin{equation}
\label{betadef}
\P( \beta_{r,s} \in du)/du = \frac{ \Gamma(r+s) }{\Gamma(r) \Gamma(s) } \, u^{r-1} (1-u)^{s-1} 1 (0 < u < 1 ) .
\end{equation}
Such a beta $(r,s)$ variable is conveniently constructed from the standard gamma process $(\gamma(r), r \ge 0 )$ 
by the {\em beta-gamma algebra} 
\begin{equation}
\label{betafromgam}
\beta_{r,s} :=  
\frac{ \gamma(r) }{\gamma(r+s)} = \frac{ \gamma(r) }{ \gamma(r) + \gamma'(s) }
\end{equation}
where $\gamma'(s) := \gamma(r+s) - \gamma(r) \ed \gamma(s)$ is a copy of $\gamma(s)$ that is independent of $\gamma(r)$, and 
\begin{equation}
\label{betagamindpt}
\beta_{r,s} \mbox{ and } \gamma(r+s) \mbox{ are independent.}
\end{equation}
As a consequence, for $g(s) = 1(s \in B)$ in \eqref{intSg}, so $g(X)$ has the Bernoulli$(p)$ distribution on $\{0,1\}$ for $p = \P(Y \in B)$,
the simplest Dirichlet mean \eqref{intSg} for an indicator variable has a beta distribution:
\begin{equation}
\label{dirbeta}
P(B) \ed \beta_{p \theta , q \theta } \mbox{ for } p := \P(Y \in B), \qquad q:= 1 - p .
\end{equation}
See Section \ref{sec:dirichlet} for further disussion.

Replacing the gamma process by a more general 
subordinator makes $P(\bullet)$ a {\em homogeneous normalized random measure with independent increments (HRMI)}
as studied by
\citet{MR1983542}, %% AUTHOR = {Regazzini, Eugenio and Lijoi, Antonio and Pr{\"u}nster, Igor}, TITLE = {Distributional results for means of normalized random measures with independent increments},
\citet{james2009posterior}. %% title={Posterior analysis for normalized random measures with independent increments}, author={James, Lancelot F and Lijoi, Antonio and Pr{\"u}nster, Igor},
from the perspective of Bayesian inference for $P(\bullet)$ given a random sample of size $n$ from $P(\bullet)$.
Basic properties of $P$-means derived from normalized subordinators are developed here in Section \ref{sec:subord}.

\subsection{Splitting off the first term}
\label{sec:splitoff}

It is a key observation that the $P$-mean of an i.i.d. sequence can sometimes be expressed as a $(P_1,\co{P}_1)$-mean
by the {\em splitting off the first term}.
That is the decomposition
\begin{align}
\label{splitser}
\tilX_\Pbul &:= \sum_{j=1}^\infty  X_j  P_j \\
 &= X_1 P_1 + \tilX_\Rbul  \, \co{P}_1 \mbox{ where } \tilX_\Rbul:=  \sum_{j=1}^\infty X_{j+1} R_j 
\end{align}
with $R_j:= P_{j+1}/\co{P}_1$ the {\em residual probability sequence} defined on the event $\co{P}_1 >0 $ by first conditioning $\Pbul$ 
on $\{2,3, \ldots\}$ and then shifting back to $\{1,2, \ldots\}$. In general, the residual sequence $\Rbul$ may be dependent on $P_1$.
Then $\tilX_\Rbul$ and $P_1$ will typically not be independent, and analysis of $\tilX_\Pbul$ will be difficult.
However, 
\begin{equation}
\label{indptsplit}
\mbox{ if $P_1$ and $(R_1,R_2, \ldots)$ are independent, }
\end{equation}
then $P_1$, $X_1$  and  $\tilX_\Rbul$ are mutually independent.
So 
%%%%we are in case (ii) above, just with  $X_2$ replaced by $\tilX_R$:
\begin{equation}
\label{splitser1}
\tilX_\Pbul = X_1 P_1 + \tilX_\Rbul  \co{P}_1  .
\end{equation}
The right side is the $(P_1, \co{P}_1)$-mean of $X_1$ and $\tilX_\Rbul$, with $P_1$ independent of $X_1$ and $\tilX_\Rbul$, which are independent 
but typically not identically distributed.

This basic decomposition of a $P$-mean by splitting off the first term leads naturally to discussion of $P$-means for random discrete distributions
defined by a recursive splitting of this kind, called {\em residual allocation models} or {\em stick-breaking schemes}, discussed further in Section \ref{sec:residual}.

\subsection{L\'evy's arcsine laws}
\label{sec:levyarc}

An inspirational example of splitting off the first term is provided by the work of 
\citet{MR0000919} %%AUTHOR = {L{\'e}vy, Paul}, TITLE = {Sur certains processus stochastiques homog\`enes},
on the distributions of the time $A_t$ spent positive up to time $t$, and the time $G_t$ of the last zero before time $t$,
for a standard Brownian motion $B$:
$$
A_t:= \int_0^t 1 ( B_u > 0 ) du \qquad \mbox{and} \qquad   G_t:= \max \{ 0 \le u \le t : B_u  = 0 \},
$$
See e.g.  \citet[Theorem 13.16]{MR1876169} %%%, AUTHOR = {Kallenberg, Olav}, TITLE = {Foundations of modern probability}, SERIES = {Probability and its Applications (New York)},
for background.
To place this example in the framework of $P$-means:
\begin{itemize}
\item Let $P_1 := 1 - G_1$ be the length of the {\em meander interval}
$(G_1, 1)$. 
\item Let $X_1:= 1(B_1 >0)$ be the indicator of the event $(B_1 >0)$ with Bernoulli $(\hf)$ distribution.
\item
Let $(P_j,X_j)$ for $j \ge 2$ be an exhaustive  listing of the lengths $P_j$ of excursion intervals of $B$ away from 
$0$ on $(0, G_1)$,
with $X_j$  the indicator of the event that $B_t >0$ for $t$ in the excursion interval of length $P_j$. 
\end{itemize}
If the lengths $P_j$ for $j \ge 2$ are put in a suitable order, for instance by ranking, 
then $(X_j, j \ge 1)$ will be a sequence of i.i.d. 
copies of a Bernoulli $(\hf)$ variable $X_{\hf}$, with $(X_j, j \ge 1)$ independent of the excursion lengths $(P_j, j \ge 1)$.
Then by construction,
$$
A_1 = M_P(X_{\hf})
$$
is the $P$-mean of a Bernoulli $(\hf)$ indicator $X_{\hf}$, representing the sign of a generic excursion.
This is so for any listing $P$ of excursion lengths of $B$ on $[0,1]$ that is independent of their signs. But 
if $P_1:= 1-G_1$  puts the meander length first as above,  then
the residual sequence $(R_1, R_2, \ldots)$ is identified with the sequence of relative lengths of excursions away from zero of 
$B$ on $[0,G_1]$. But that is also the list of excursion lengths of the rescaled process $\BB := (B(u G_1)/\sqrt{G_1}, 0 \le u \le 1)$, 
with corresponding positivity indicators $(X_2, X_3, \ldots)$. 
L\'evy showed that $\BB$ is a {\em standard Brownian bridge}, equivalent in distribution to $(B_u, 0 \le u \le 1 \giv B_1 = 0)$,
and that a last exit decomposition of the path of $B$ at time $G_1$ makes the length $P_1$ of the meander interval independent of
$\BB$, hence also independent of the residual sequence $(R_1, R_2, \ldots)$ and the positivity indicators $(X_2, X_3, \ldots)$,
which are encoded in the path of $\BB$. 
Let $\Abr$ denote the total time spent positive  by this Brownian bridge $\BB$. So $\Abr \ed (A_1 | B_1 = 0)$, while also
$\Abr = \sum_{j=1}^\infty R_j X_{j+1}$ by the previous construction.
Then the last exit decomposition provides a splitting of $A_1=M_P(X)$ of the general form \eqref{splitser1}. In this instance,
\begin{equation}
\label{splitserlev}
A_1  = X_{1} P_1 + \Abr \, \co{P}_1 
\end{equation}
where on the right side 
\begin{itemize}
\item $X_{1}, P_1 $ and $\Abr$ are independent, with 
\item $X_1 = 1 (B_1 >0) \ed X_{\hf}$ a Bernoulli$(\hf)$ indicator,
\item $P_1$ the meander length, 
\item $\Abr$ the total time spent positive by $\BB$, and
\item $\co{P}_1 := 1 - P_1 = G_1$ the last exit time.
\end{itemize}

L\'evy showed the meander interval has length $P_1 \ed \beta_{\hf,\hf}$, known as the {\em arcsine law}, because
\begin{equation}
\P( \beta_{\hf,\hf} \le u ) = \frac{2}{\pi} \arcsin \sqrt{u} \qquad ( 0 \le u \le 1),
\end{equation}
while the bridge occupation time
has the uniform $[0,1]$ distribution $\Abr \ed \beta_{1,1}$. 
L\'evy then deduced from \eqref{splitserlev} that 
the unconditioned occupation time $A_1$ has the same arcsine distribution as $P_1$ and $G_1 = \co{P}_1$: 
\begin{equation}
\label{levyarcsine}
A_1 \ed P_1 \ed G_1 \ed \beta_{\hf,\hf} .
\end{equation}

\subsection{Generalized arcsine laws}

L\'evy's arcsine laws \eqref{levyarcsine} for the Brownian occupation time $A_1$, the time $G_1$ of the last zero in $[0,1]$,
and the meander length $P_1:= 1 - G_1$, and his associated uniform law for the Brownian bridge occupation times $\Abr$, have been generalized
in several different ways. One of the most far-reaching of these generalizations gives corresponding results when the
basic Brownian motion $B$ is replaced by process with exchangeable increments. Discrete time versions of these results were first
developed by \cite{MR0058893}. %%%, AUTHOR = {Sparre Andersen, Erik}, TITLE = {On the fluctuations of sums of random variables}, 
\citet[\S XII.8 Theorem 2]{MR0270403} %%%AUTHOR = {Feller, William}, TITLE = {An introduction to probability theory and its applications.  {V}ol. {II}. },
gave a refined treatment, with the following formulation for a random walk $S_n:= X_1 + \cdots + X_n$ with exchangeable increments
$(X_i)$, started at $S_0:= 0$: the random number of times $\sum_{i=1}^n 1 ( S_i > 0 )$ that the walk is strictly positive up to time $n$ has the same
distribution as the random index $\min \{ 0 \le k \le n : S_k = M_n \}$ at which the walk first attains its maximum value $M_n:= \max_{0 \le k \le n } S_k$.
In the Brownian scaling limit, Sparre Andersen's identity implies the equality in distribution $A_1 \ed \maxG_1$, the last time 
in $[0,1]$ that Brownian motion attains its maximum on $[0,1]$.
That the distribution of $\maxG_1$ is arcsine was shown also by L\'evy, who then argued  that $\maxG_1 \ed G_1$, the time of the last zero of $B$ on $[0,1]$,
by virtue of his famous identity in distribution of reflecting processes
\begin{equation}
\label{levyid}
(M_t - B_t, t \ge 0 ) \ed ( |B_t|, t \ge 0 )
\end{equation}
where $M_t:= \max_{0 \le s \le t }B_s$ is the running maximum process derived from the path of $B$.

Many other generalizations of the arcsine law have been developed, typically starting from one of the many ways this distribution
arises from  Brownian motion, or from one of its many characterizations by identities in distribution or moment evaluations.
See for instance 
\citet[Theorem 15.21]{MR1876169} %%%, AUTHOR = {Kallenberg, Olav}, TITLE = {Foundations of modern probability}, SERIES = {Probability and its Applications (New York)},
for the result that L\'evy's arcsine law \eqref{levyarcsine} extends to the occupation time $A_1$ of $(0,\infty)$ up to time $1$
for any symmetric L\'evy process $X$ with $\P(X_t = 0) = 0$ instead of $B$, with $G_1$ replaced by $\maxG_1$, the last time in $[0,1]$ that $X$ attains 
its maximum on $[0,1]$, and $P_1$ replaced by $1- \maxG_1$.
See also 
\citet{MR1410128,MR1429264,MR1714672,MR1645445},
\citet{MR1184917} %%, AUTHOR = {Petit, Fr{\'e}d{\'e}rique}, TITLE = {Quelques extensions de la loi de l'arcsinus},
and \citet[Chapter 8]{MR2454984} %%%, AUTHOR = {Mansuy, Roger and Yor, Marc}, TITLE = {Aspects of {B}rownian motion}, SERIES = {Universitext}, PUBLISHER = {Springer-Verlag, Berlin}, YEAR = {2008},
%%MR1714672, , AUTHOR = {Tak\'acs, Lajos}, TITLE = {The distribution of the sojourn time for the {B}rownian excursion},
%@article {MR1645445, AUTHOR = {Tak\'acs, Lajos}, TITLE = {Sojourn times for the {B}rownian motion}, JOURNAL = {J. Appl. Math. Stochastic Anal.},
%@article {MR1429264, AUTHOR = {Tak\'acs, Lajos}, TITLE = {Sojourn times}, JOURNAL = {J. Appl. Math. Stochastic Anal.}, FJOURNAL = {Journal of Applied Mathematics and Stochastic Analysis},
regarding the distribution of occupation times of Brownian motion with drift and other processes derived from Brownian motion.
See
\citet{MR1260572}, %AUTHOR = {Getoor, R. K. and Sharpe, M. J.}, TITLE = {On the arc-sine laws for {L}\'evy processes},
\citet{MR1385407}, %%AUTHOR = {Bertoin, Jean and Yor, Marc}, TITLE = {Some independence results related to the arc-sine law},
\citet{MR1443955} %%% AUTHOR = {Bertoin, J. and Doney, R. A.}, TITLE = {Spitzer's condition for random walks and {L}\'evy processes},
for more general results on L\'evy processes,
and \cite{MR1417982}  %AUTHOR = {Knight, F. B.}, TITLE = {The uniform law for exchangeable and {L}\'evy process bridges},
and \citet{MR1341555}, %%AUTHOR = {Fitzsimmons, P. J. and Getoor, R. K.}, TITLE = {Occupation time distributions for {L}\'evy bridges and excursions},
for an extension of the uniform distribution of $\Abr$ for Brownian motion to more general bridges with exchangeable increments,
and \citet{MR2266997} %%for TITLE = AUTHOR = {Yano, Yuko},
for an extension to conditioned diffusions.
\cite{watanabe1995generalized} %, title={Generalized arc-sine laws for one-dimensional diffusion processes and random walks}, author={Watanabe, Shinzo},
gave generalized arc-sine laws for occupation times of half lines of one-dimensional diffusion processes and random walks, which 
were further developed in \cite{MR2130959} %%%, AUTHOR = {Kasahara, Yuji and Yano, Yuko}, TITLE = {On a generalized arc-sine law for one-dimensional diffusion processes},
and \citet{MR2226630}. %%AUTHOR = {Watanabe, Shinzo and Yano, Kouji and Yano, Yuko}, TITLE = {A density formula for the law of time spent on the positive side of one-dimensional diffusion processes},
Yet another generalization of the arcsine law was proposed by \citet{lijoi2012two}.  %%%title={Two classes of bivariate distributions on the unit square}, author={Lijoi, Antonio and Nipoti, Bernardo},

The focus here is on generalized arcsine laws involving the distributions of $P$-means for some random discrete distribution $P$.
The framing of L\'evy's description of the laws of the Brownian occupation times $A_1$ and $\Abr$,
 as $P$-means of a Bernoulli$(\hf)$ variable, for distributions of $P$ determined by the lengths of excursions of a Brownian motion or Brownian bridge, 
inspired the work  of
\citet*{MR1022918} and  %%AUTHOR = {Barlow, Martin and Pitman, Jim and Yor, Marc}, TITLE = {Une extension multidimensionnelle de la loi de l'arc sinus},
\citet{MR1168191}. %%AUTHOR = {Pitman, Jim and Yor, Marc}, TITLE = {Arcsine laws and interval partitions derived from a stable
These articles showed how L\'evy's analysis could be extended by consideration of the path of $(B_t, 0 \le t \le T)$ for a 
random time $T$ independent of $B$ with the standard exponential distribution of $\gamma(1)$.
For then $G_T/T \ed G_1$ by Brownian scaling, while the last exit decomposition 
at time $G_T$ breaks the path of $B$ on $[0,T]$ into two independent random fragments 
%$(B_t, 0 \le t \le G_T)$ and $(B_{G_T + v}, 0 \le v \le T- G_T)$ 
of random lengths $G_T$ and $T-G_T$ respectively.
Thus 
$$
G_1 \ed \frac{ G_T } { T }  = \frac{ G_T } { G_T + (T- G_T)}  \ed \frac{ \gamma(\hf) }{\gamma(\hf) + \gamma'(\hf) } \ed \beta_{\hf,\hf} .
$$
This realizes the instance $r = s = \hf$ of the beta-gamma algebra \eqref{betafromgam} in the path of Brownian motion stopped at the independent gamma$(1)$
distributed random time $T$. A similar subordination construction 
%%%representing an exponential variable $T$ independent of a L\'evy process as the value of a gamma process at time $1$, 
was exploited  earlier by \citet{MR588409}  %%AUTHOR = {Greenwood, Priscilla and Pitman, Jim}, TITLE = {Fluctuation identities for {L}\'evy processes and splitting at
in their study of fluctuation theory for {L}\'evy processes by splitting at the time $\maxG_T$ of the last maximum before an independent exponential time $T$.
See \citet{MR1406564} %% AUTHOR = {Bertoin, Jean}, TITLE = {L\'evy processes},
and  \citet{MR3155252} %% AUTHOR = {Kyprianou, Andreas E.}, TITLE = {Fluctuations of {L}\'evy processes with applications},
for more recent accounts of this theory.
This involves the lengths of excursions of the L\'evy process below its running maximum process $M$. 
L\'evy recognized that for a Brownian motion $B$ his famous identity in law of processes $M-B \ed |B|$,  as in \eqref{levyid},
implied that the structure of excursions of $B$ below $M$ is identical to the structure of excursions of $|B|$ away from $0$. This leads 
from the decomposition of $M-B$ at the time $\maxG_T$ of the last zero of $M-B$ on $[0,T]$ to the corresponding decomposition for $|B|$,  discussed earlier.
The same method of subordination was exploited further
in \citet*[Proposition 21]{MR1434129}, %%%AUTHOR = {Pitman, Jim and Yor, Marc}, TITLE = {The two-parameter {P}oisson-{D}irichlet distribution derived
in a deeper study of random discrete distributions derived from stable subordinators.

The above analysis of the $P$-mean $M_P(X)$, for an indicator variable $X = X_{\hf}$, and $P$ the list of lengths of excursions of a Brownian motion or Brownian bridge,
was generalized by \citet*{MR1022918} %%AUTHOR = {Barlow, Martin and Pitman, Jim and Yor, Marc}, TITLE = {Une extension multidimensionnelle de la loi de l'arc sinus},
to allow any discrete distribution of $X$ with a finite number of values. That corresponds to a linear combination of occupation times of various sectors in the plane by 
Walsh's Brownian motion on a finite number of rays, whose radial part is $|B|$, and whose angular part is made by assigning each excursion of $|B|$ to the $i$th
ray with some probability $p_i$, independently  for different excursions.
The analysis up to an independent exponential time $T$ relies only on the scaling properties of $|B|$, the Poisson character of excursions of $|B|$,  
and beta-gamma algebra, all of which extend
straightforwardly to the case when $|B|$ is replaced by a Bessel process or Bessel bridge of dimension $2 - 2 \alpha$, for $0 < \alpha < 1$. 
Then $P$ becomes a list of excursion lengths of the Bessel process or bridge over $[0,1]$, while $G_T$ and $T-G_T$ become independent
gamma$(\alpha)$ and gamma$(1-\alpha)$ variables with sum $T$ that is gamma$(1)$. So the distribution of the final meander length in the stable $(\alpha)$ case is given by
\begin{equation}
\label{alphameander}
P_1 \ed \frac{ T- G_T }{ T} \ed \beta_{1-\alpha, \alpha}
\end{equation}
by another application of the beta-gamma algebra \eqref{betafromgam}.
%the occupation time of $(0,\infty)$ by a skew Brownian motion, each excursion of which is positive with probability
%$p$ and negative with probability $1-p$, indendently for the various excursions, and to allow 
The excursion lengths $P$ in this case are a list of lengths of intervals of the relative complement in  $[0,1]$ of the range of a stable subordinator of index $\alpha$, with
conditioning of this range to contain $1$ in the bridge case. 
In particular, for $0 < p < 1$, the $P$-mean of a Bernoulli$(p)$ indicator $X_p$ represents the
occupation time  of the positive half line for a skew Brownian motion or Bessel process, each excursion of which is positive with probability $p$ and negative
with probability $1-p$. 
%It is
%A formula for the distribution of These distributions of $P$-means, 
%presented
The distribution of such a $P$-mean, say $M_{\alpha,0}(X_p)$, associated with a stable subordinator of index $\alpha \in (0,1)$ and a selection probability parameter $p \in (0,1)$, 
was found independently by \citet{darling49a}  %%%, AUTHOR = {D. A. Darling}, TITLE = {A theorem on stable distributions}, JOURNAL = {Bull. Amer. Math. Soc.},
and \citet*{MR0094863}. %%% %%AUTHOR = {Lamperti, John}, TITLE = {An occupation time theorem for a class of stochastic processes},
Darling indicated the representation 
$$
M_{\alpha,0}(X_p) \ed T_\alpha(p)/T_\alpha(1)
$$ 
where  $(T_\alpha(s), s \ge 0 )$ is the stable subordinator with 
\begin{equation}
\label{stabal}
\E \exp(-\lambda T_\alpha(s) ) = \exp( -s \lambda^\alpha) \qquad (\lambda \ge 0 ) .
\end{equation}
Darling also presented a formula for the
cumulative distribution function of $M_{\alpha,0}(X_p)$, corresponding to the probability density
\begin{equation}
\label{darling-lamperti}
\frac{ \P( M_{\alpha,0} (X_p) \in du ) }{du } = \frac{ p q \sin (\alpha \pi ) u^{\alpha - 1} \co{u} ^{\alpha - 1}   }
{ \pi [ q^2 u ^{2 \alpha } + 2 p q u^\alpha \co{u}^\alpha \cos (\alpha \pi) + p^2 \co{u}^{2 \alpha }  ]}  
\qquad (0 < u < 1 ) 
\end{equation}
where $q:= 1-p$ and $\co{u}:= 1 - u$.
Later, \citet{zolotarev1957mellin} %%  title={Mellin-Stieltjes transforms in probability theory}, author={Zolotarev, Vladimir Mikhailovich},
derived the corresponding formula for the density of the ratio of two independent stable$(\alpha)$ variables $T_\alpha(p)/(T_\alpha(1)-T_\alpha(p))$ 
by Mellin transform inversion. This makes a surprising connection between the stable$(\alpha)$ subordinator and the Cauchy distribution, discussed further in Section \ref{sec:transforms}.
\citet*{MR0094863} %%% %%AUTHOR = {Lamperti, John}, TITLE = {An occupation time theorem for a class of stochastic processes},
showed that the density of $M_{\alpha,0} (X_p)$ displayed in
\eqref{darling-lamperti} is the density of the limiting distribution of occupation times of
a recurrent Markov chain, under assumptions implying that the return time of some state is in the domain of
attraction of the stable law of index $\alpha$, and between visits to this state the chain enters some given subset of its state space with probability $p$.
%\citet{MR1168191}. %AUTHOR = {Pitman, Jim and Yor, Marc}, TITLE = {Arcsine laws and interval partitions derived from a stable subordinator},
Lamperti's approach was to first derive the 
%For such a Markov chain, Lamperti characterized the limit distribution of the fraction of time the process spends in the given subset as the
%distribution of a random variable $L_{\alpha,0,p}$ with values in $[0,1]$ characterized by 
the {\em Stieltjes transform}
\begin{equation}
\label{lampertistieltjes}
\E ( 1 + \lambda M_{\alpha,0}(X_p) )^{-1} =  \sum_{n=0}^\infty \E ( M_{\alpha,0}(X_p) )^n  \lambda ^n = 
\frac {q + p ( 1 + \lambda)^{\alpha- 1}} {q  + p ( 1 + \lambda)^{\alpha}}  
\end{equation}
where $q:= 1-p$.
%In terms of $P$-means, the later work of \cite[\S 4]{MR1478738} %%%, AUTHOR = {Pitman, Jim and Yor, Marc}, TITLE = {On the relative lengths of excursions derived from a stable subordinator},
%explains why
%$$
%M_{P(\alpha,0)} ( X_p) \ed L_{\alpha,0,p} 
%$$
%for $X_p$ Bernoulli $(p)$ and $P(\alpha,0)$ governed by the $(\alpha,0)$ model of interval lengths derived from a stable$(\alpha)$ subordinator.
%Lamperti inverted the Stieltjes transform \eqref{lampertistieltjes} to show that 
%the {\em Lamperti law} of $L_{\alpha,0,p}$ has the probability density
The associated beta$(1-\alpha,\alpha)$ distribution of $P_1$ appearing in \eqref{alphameander} is also known as a generalized arcsine law. In Lamperti's setting of
a chain returning to a recurrent state, the results of 
\citet{MR0116376}, %%AUTHOR = {Dynkin, E. B.}, TITLE = {Some limit theorems for sums of independent random variables
presented also in \citet[\S XIV.3]{MR0270403}, %%%AUTHOR = {Feller, William}, TITLE = {An introduction to probability theory and its applications.  {V}ol. {II}. },
imply that Lamperti's  limit law for occupation times holds jointly with convergence in distribution of the fraction of time since last visit to 
the recurrent state to the meander length $P_1$ as in \eqref{alphameander}, along with the generalization  to this case of the distributional identity
\eqref{splitserlev}, which was exploited by \citet*{MR1022918}. %%AUTHOR = {Barlow, Martin and Pitman, Jim and Yor, Marc}, TITLE = {Une extension multidimensionnelle de la loi de l'arc sinus},
Due to the results of Sparre Andersen 
mentioned earlier, this beta$(1-\alpha,\alpha)$ distribution also arises from random walks and L\'evy processes as both a limit distribution of scaled occupation times, and as the 
exact distribution of the occupation time of the positive half line for a limiting stable L\'evy process $X_t$ with $\P(X_t >0 ) = 1-\alpha$ for all $t$. 
But in the
context of the $(\alpha,0)$ model for $P$, this beta$(1-\alpha,\alpha)$ distribution appears either as the distribution of the length of the meander interval $P_1$, 
as in \eqref{alphameander}, or as the distribution of a size-biased pick $P_1^*$ from $P$.
See also \citet{MR1168191} %%AUTHOR = {Pitman, Jim and Yor, Marc}, TITLE = {Arcsine laws and interval partitions derived from a stable
and \cite[\S 4]{MR1478738} %%%, AUTHOR = {Pitman, Jim and Yor, Marc}, TITLE = {On the relative lengths of excursions derived from a stable subordinator},
for closely related results, and
\citet*{MR2676940}  %AUTHOR = {James, Lancelot F.}, TITLE = {Lamperti-type laws},
for an authoritative recent account of further developments of Lamperti's work.

\subsection{Fisher's model for species sampling}
A parallel but independent development of closely related ideas, from the 1940's to the 1990's,
was initiated by \citet{fisher1943solo}. % title={A theoretical distribution for the apparent abundance of different species}, author={Fisher, Ronald A.},
See \citet{MR1481784} for a review. %%%  AUTHOR = {Pitman, Jim}, TITLE = {Some developments of the {B}lackwell-{M}ac{Q}ueen urn scheme}, BOOKTITLE = {Statistics, probability and game theory},
Fisher introduced a theoretical model for species sampling, which amounts to random sampling from
the random discrete distribution $(P_1, \ldots, P_m)$ with the {\em symmetric Dirichlet distribution with $m$ parameters equal to $\theta/m$} on the $m$-simplex of 
$(P_1, \ldots, P_m)$ with $P-i \ge 0$ and $\sum_{i=1}^m P_i = 1$. 
See Section \ref{sec:dirichlet} for a quick review of basic properties of Dirichlet distributions.
Fisher showed that many features of sampling from this symmetric Dirichlet model for $P$ have simple limit distributions as $m \to \infty$ with $\theta$ fixed.
Ignoring the order of the $P_i$, the limit model may be constructed directly by supposing that the $P_i$ are the
normalized jumps of a standard gamma process on the interval $[0,\theta]$. 
That model for a random discrete distribution, called here the $(0,\theta)$ model,  was considered by \citet{mccloskey} as an instance of the 
more general model, discussed in Section \ref{sec:subord} in which the $P_i$ are the  normalized jumps of a subordinator on a fixed time interval $[0,\theta]$, 
which for a stable $(\alpha)$ subordinator corresponds to the $(\alpha,0)$ model involved in the L\'evy-Lamperti description of occupation times.
%%\citet{MR0350949} % AUTHOR = {Ferguson, Thomas S.}, TITLE = {A {B}ayesian analysis of some nonparametric problems},
%%rediscovered and adapted the $(0,\theta)$ model to the purposes of Bayesian non-parametric statistical inference.
McCloskey showed that if the atoms of $P$ in the $(0,\theta)$ model are presented in the size-biased order $P^*$ of their appearance in a process of random 
sampling, then $P^*$ admits a simple stick-breaking representation by a recursive splitting like  \eqref{splitser}
with i.i.d. factors $P^*_j/(1 - P^*_1 - \cdots - P^*_{j-1}) \ed \beta_{1,\theta}$.
\citet{MR0411097} %%AUTHOR = {Engen, Steiner}, TITLE = {A note on the geometric series as a species frequency model}, JOURNAL = {Biometrika},
interpreted this GEM$(0,\theta)$ model as the limit in distribution of size-biased frequencies in Fisher's limit model.
This presentation of $(0,\theta)$ model was developed in various ways by
\citet{MR617593}, %%%, AUTHOR = {Patil, G. P. and Taillie, C.}, TITLE = {Diversity as a concept and its implications for random communities},
\citet{MR1309433}, %% AUTHOR = {Sethuraman, Jayaram}, TITLE = {A constructive definition of {D}irichlet priors},
and \citet{MR1387889}. %%%AUTHOR = {Pitman, Jim}, TITLE = {Random discrete distributions invariant under size-biased
In this model for $P = P^*$ in size-biased random order, the basic splitting \eqref{splitser1} holds with a residual sequence $R$ that is identical in law to the original sequence $P$, 
hence also $\tilX_R \ed \tilX_P$. Then \eqref{splitser1} 
becomes a characterization of the law of $\tilX_P$ by a stochastic equation which typically has a unique solution,
as discussed in
\cite{feigin1989linear}, %%title={Linear functionals and Markov chains associated with Dirichlet processes}, author={Feigin, Paul D and Tweedie, Richard L},
\cite{MR1669737}, %%AUTHOR = {Diaconis, Persi and Freedman, David}, TITLE = {Iterated random functions},
\citet{MR2177313}. %AUTHOR = {Hjort, Nils Lid and Ongaro, Andrea}, % TITLE = {Exact inference for random {D}irichlet means},
See also \cite{MR3730518} %% AUTHOR = {Bacallado, S. and Battiston, M. and Favaro, S. and Trippa, L.}, TITLE = {Sufficientness {P}ostulates for {G}ibbs-{T}ype {P}riors and {H}ierarchical {G}eneralizations},
for a recent review of species sampling models.

\citet{MR0350949} % AUTHOR = {Ferguson, Thomas S.}, TITLE = {A {B}ayesian analysis of some nonparametric problems},
and
\citet{MR0368264} %%AUTHOR = {Kingman, J. F. C. }, TITLE = {Random discrete distributions},
further developed McCloskey's model of $P$ derived from the normalized jumps of subordinator,
working instead with the ranked rearrangement $\Pdec$ of $P$ with $\Pdec_1 \ge \Pdec_2 \ge \cdots \ge 0$.
However, it is easily seen that the distribution of the $P$-mean of a sequence of i.i.d. copies of $X$ 
is unaffected by any reordering of
terms of $P$, provided the reordering is made independently of the copies of $X$. So for any random discrete distribution $P$, and any distribution of $X$, there is the equality in distribution
\begin{equation}
\label{ranksb}
M_P(X) \ed M_\Pdec (X) \ed M_{\Pst} (X)
\end{equation}
where $\Pst$ can be any random rearrangement of terms of $P$.
This {\em invariance in distribution of $P$-means under re-ordering of the atoms of $P$}
is fundamental to understanding the general theory of $P$-means.
In the analysis of $M_P(X)$ by splitting off the first term, the distribution of $M_P(X)$
is the same, no matter how the terms of $P$ may be ordered. 
But the ease of 
analysis depends on the joint distribution of $P_1$ and $(P_2, P_3, \ldots)$, 
which in turn depends critically on the ordering of terms of $P$.
Detailed study of problems  of this kind  by 
\cite{MR1387889} %AUTHOR = {Pitman, Jim}, TITLE = {Random discrete distributions invariant under size-biased 
explained why
the size-biased random permutation of terms $\Pst$, first introduced by McCloskey in the setting of species sampling,
is typically more tractable than the ranked ordering used by Ferguson and Kingman.
The notation $\Pst$ will be used consistently below to indicate a size-biased ordering of terms in a random 
discrete distribution.

\subsection{The two-parameter family}
\label{sec:twoparamintro}
%These various lines of research into random discrete distributions $P$ and their associated $P$-means 
%culminated in the work of 
The articles of 
\citet{MR1156448} %% AUTHOR = {Perman, Mihael and Pitman, Jim and Yor, Marc}, TITLE = {Size-biased sampling of {P}oisson point processes and
and 
\citet*{MR1434129}. %%%AUTHOR = {Pitman, Jim and Yor, Marc}, TITLE = {The two-parameter {P}oisson-{D}irichlet distribution derived
introduced a family of random discrete distributions indexed by two-parameters $(\alpha,\theta)$, which includes
the various examples recalled above in a unified way. 
Various terminology is used for different encodings of this family of random discrete distributions and associated random partitions.
\begin{itemize}
\item The distribution of the size-biased random permutation $\Pst$ is known as GEM$(\alpha,\theta)$, after Griffiths, Engen and McCloskey,
who were among the first to study the simple stick-breaking description of this model recalled later in \eqref{gembreaks}.
\item The distribution of the corresponding ranked arrangement $\Pdec$ is known as the {\em two-parameter Poisson-Dirichlet distribution}
\citep*{MR1434129}, %%%AUTHOR = {Pitman, Jim and Yor, Marc}, TITLE = {The two-parameter {P}oisson-{D}irichlet distribution derived
\citep{MR2663265}.  %%, AUTHOR = {Feng, Shui}, TITLE = {The {P}oisson-{D}irichlet distribution and related topics}, SERIES = {Probability and its Applications (New York)}, NOTE = {Models and asymptotic behaviors},
\item The corresponding random discrete probability measure on an abstract space $(S,\SS)$, constructed as in \eqref{sprinkle} by assigning the GEM or
Poisson-Dirichlet atoms i.i.d. locations in $S$, has become known as a {\em Pitman-Yor process}. 
\citep{MR1952729}.  %%, AUTHOR = {Ishwaran, Hemant and James, Lancelot F.}, TITLE = {Gibbs sampling methods for stick-breaking priors},
\item The corresponding partition structure is governed by the sampling formula of 
\cite{MR1337249} %%AUTHOR = {Pitman, Jim}, TITLE = {Exchangeable and partially exchangeable random partitions}, JOURNAL = {Probab. Theory Related Fields},
which is a two parameter generalization of the Ewens sampling formula,
recently reviewed  by \cite{MR3458585}. %AUTHOR = {Crane, Harry}, TITLE = {The ubiquitous {E}wens sampling formula},
\item 
The  $P$-means associated with the $(0,\theta)$ model are commonly called {\em Dirichlet means}
\citep{MR2476736}, %%%AUTHOR = {James, Lancelot F. and Roynette, Bernard and Yor, Marc}, TITLE = {Generalized gamma convolutions, {D}irichlet means, {T}horin measures, with explicit exa
\citep{MR2668906}. %%, AUTHOR = {James, Lancelot F.}, TITLE = {Dirichlet mean identities and laws of a class of subordinators},
\end{itemize}
The {\em $(\alpha,\theta)$ model} refers here to this model of a random discrete distribution $P$,
whose size-biased presentation is GEM$(\alpha,\theta)$. For such a $P$ the associated $P$-mean will be called simply an {\em $(\alpha,\theta)$-mean},
with similar terminology for other attributes of the $(\alpha,\theta)$ model, such as its partition structure.

Following further work by numerous authors including
\citet*{MR1041402}, %%AUTHOR = {Cifarelli, Donato Michele and Regazzini, Eugenio}, TITLE = {Distribution functions of means of a {D}irichlet process},
\citet{MR1425401} %%% Diaconis, P. and Kemperman, J.}, TITLE = {Some new tools for {D}irichlet priors},
and \citet{MR1618739},  %% AUTHOR = {Kerov, Sergei}, TITLE = {Interlacing measures},
a definitive formula characterizing the distribution of an $(\alpha, \theta)$ mean $\tilX_{\alth}$,
 for an arbitary distribution of a bounded or non-negative random variable $X$,
 was found by \citet{MR1691650}: %%AUTHOR = {Tsilevich, N. V.}, TITLE = {Distribution of mean values for some random measures},
for all $(\alpha,\theta)$ for which the model is well defined, except if $\alpha = 0$ or $\theta = 0$, the distribution of $\tilX_{\alth}$ is uniquely
determined by the generalized Cauchy-Stieltjes transform
\begin{equation}
\label{composmomalth0}
\E ( 1 + \lambda \tilX_\alth  )^{-\theta } = \left(  \E ( 1 + \lambda X )^\alpha \right)^{- \thoval}  \qquad (\alpha \ne 0, \theta \ne 0, \lambda \ge 0).
\end{equation}
Companion formulas for the $(\alpha,0)$ case with $\theta = 0$, $0 < \alpha < 1$, trace back to Lamperti for $X = X_p$ a Bernoulli$(p)$ variable, as in 
\eqref{lampertistieltjes},
while the $(0,\theta)$ case with $\alpha = 0, \theta >0$ is the case of Dirichlet means 
due to \citet{von1941distribution}, %%% title={Distribution of the ratio of the mean square successive difference to the variance}, author={Von Neumann, John},
and \citet{watson1956joint} %%%, title={On the joint distribution of the circular serial correlation coefficients},
in the classical setting of mathematical statistics, involving ratios of quadratic forms of normal variables, and developed by
\citet*{MR1041402} %%AUTHOR = {Cifarelli, Donato Michele and Regazzini, Eugenio}, TITLE = {Distribution functions of means of a {D}irichlet process},
and others in Ferguson's Bayesian non-parametric setting.
These formulas are all obtained as limit cases of the generic two-parameter formula \eqref{composmomalth0}, 
naturally involving exponentials and logarithms due to the basic approximations of these functions by large 
or small powers as the case may be e.g.  $e^x = \lim_{n \to \infty} (1 + x/n)^n$ and $\log x = \lim_{\alpha \downarrow 0 } (x^\alpha - 1 )/\alpha$
for $x >0$.
For $\theta = \alpha \in (0,1)$ the transform 
\eqref{composmomalth0}
was obtained earlier by
\citet{MR1022918} %%AUTHOR = {Barlow, Martin and Pitman, Jim and Yor, Marc}, TITLE = {Une extension multidimensionnelle de la loi de l'arc sinus},
in their description of the distribution of occupation times  derived from a Brownian or Bessel bridge, by a straightforward argument
from  the perspective of Markovian excursion theory. But Tsilevich's extension of this formula to general $(\alpha,\theta)$ is not obvious from that
perspective. Rather, the simplest approach to Tsilevich's formula involves analysis of partition structure associated with $(\alpha,\theta)$ model,
as discussed in Section \ref{sec:twomeans}.

Further development  of the theory of $(\alpha,\theta)$ means was made by
\citet*{MR1879060}.  %AUTHOR = {Vershik, A. M. and Yor, M. and Tsilevich, N. V.}, TITLE = {The {M}arkov-{K}rein identity and the quasi-invariance of the gamma process},
See also the articles by James, Lijoi and coauthors, listed in the introduction, for the most refined analysis of $(\alpha,\theta)$-means by inversion of the
Cauchy-Stieltjes transform.

\section{Transforms}
\label{sec:transforms}
Typical arguments for identifying the distribution of a $P$-mean involve encoding the distribution by some kind of transform.
This section reviews some probabilistic techniques for handling such transforms, by study of some key examples related to ratios of independent stable variables.
See \citet{MR2016344} %%, AUTHOR = {Chaumont, L. and Yor, M.}, TITLE = {Exercises in probability}, 
for further exercises with these techniques, and
\citet*{MR2676940}  %AUTHOR = {James, Lancelot F.}, TITLE = {Lamperti-type laws},
for many deeper results in this vein.

\subsection{The Talacko-Zolotarev distribution}

The following proposition was discovered  independently in different contexts by 
\citet{MR0083847}  %%AUTHOR = {Talacko, Joseph}, TITLE = {Perk's distributions and their role in the theory of
and \citet[Theorem 3]{zolotarev1957mellin}. %%  title={Mellin-Stieltjes transforms in probability theory}, author={Zolotarev, Vladimir Mikhailovich},

\begin{proposition} 
\label{prop:talzol}
{\em [Talacko-Zolotarev distribution].}
%%jj$S_\alpha$ denote a random variable with the probability density
Let $C$ denote a standard Cauchy variable with probability density $\P( C \in dc ) = \pi^{-1}(1+c^2) ^{-1} dc$ for $c \in \reals$, and 
\begin{equation}
\label{calpha}
C_\alpha:= - \cos (\alpi )  + \sin (\alpi )  \, C \qquad ( 0 \le \alpha \le 1 ).
\end{equation}
%%%For $0 < \alpha < 1$ 
Let $\TZS_\alpha$ be a random variable with the conditional distribution of $\log C_\alpha$ given the event $(C_\alpha >0)$, with $P( C_\alpha > 0 ) = \alpha$:
\begin{equation}
\label{salcal}
\TZS_{\alpha} \ed (\log C_\alpha \giv  C_\alpha > 0)  \qquad ( 0 < \alpha \le 1),
\end{equation}
with $\TZS_1 = 0$ and the distribution of $\TZS_0$ defined as the limit distribution of $\TZS_\alpha$ as $\alpha \downarrow 0$.
For each fixed $\alpha$ with $0 \le \alpha < 1$, the distribution of $\TZS_\alpha$ is characterized by each of the following three descriptions,
to be evaluated for $\alpha = 0$ by continuity in $\alpha$, as detailed later in \eqref{levychar}:
\begin{itemize}
\item [(i)]
by the symmetric probability density 
\begin{equation}
\label{faldef}
\frac{ \P( \TZS_\alpha \in ds) }{ds}  = f_\alpha(s):=  \frac{ \sin \alpi }{ ( 2 \pi \alpha ) ( \cos \alpha \pi  + \cosh s )}  \qquad (s \in \reals);
\end{equation}
\item [(ii)]
by the characteristic function
\begin{equation}
\label{cfsal}
\E \exp( i \lambda \TZS_\alpha ) = \phi_\alpha(\lambda) := \frac{ \sinh \alpha \pi \lambda }{\alpha \, \sinh \pi \lambda }   \qquad (\lambda \in \reals);
\end{equation}
\item [(iii)]
by the moment generating function 
\begin{equation}
\label{mgfsal}
\E \exp(  r \TZS_\alpha ) = \E ( C_ \alpha ^r \giv C_\alpha >0 ) = \phi_\alpha( - i r ) = \frac{ \sin \alpi r }{\alpha \sin   \pi r }   \qquad (|r| < 1) .
\end{equation}
\end{itemize}
\end{proposition}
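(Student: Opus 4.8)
The plan is to prove the three characterizations (i)--(iii) in the cyclic order (iii) $\Rightarrow$ (ii) $\Rightarrow$ (i), together with a direct verification that the stated density integrates to $1$ and that $\P(C_\alpha > 0) = \alpha$, which anchors the normalization throughout. The starting point is the definition \eqref{calpha}: since $C$ is standard Cauchy and $C_\alpha = -\cos(\alpi) + \sin(\alpi)\,C$, the variable $C_\alpha$ has a Cauchy distribution with location $-\cos(\alpi)$ and scale $\sin(\alpi)$, i.e. density $c \mapsto \sin(\alpi)/\bigl(\pi[(c+\cos\alpi)^2 + \sin^2\alpi]\bigr)$. A direct computation gives $\P(C_\alpha > 0) = \tfrac1\pi\arccot\!\bigl(\cos(\alpi)/\sin(\alpi)\bigr) = \tfrac1\pi(\pi - \alpi) $ --- wait, one must be careful with the branch: the event $C_\alpha>0$ is $\{C > \cot(\alpi)\}$, and $\P(C > \cot\alpi) = \tfrac12 - \tfrac1\pi\arctan(\cot\alpi) = \tfrac1\pi\,\alpi/\pi\cdot\pi = \alpha$ after simplifying $\arctan(\cot\alpi) = \pi/2 - \alpi$ for $0<\alpha<1$. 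This is the first small obstacle: getting the inverse-trig bookkeeping right so that the answer comes out to exactly $\alpha$ rather than $1-\alpha$.

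For (iii), I would compute $\E(C_\alpha^r \mid C_\alpha>0)$ by the substitution $c = \sin(\alpi)\,t - \cos(\alpi)$ followed by the standard beta-type integral, or more slickly by writing $C_\alpha \ed |Z_1/Z_2|$-type ratios: on the event $C_\alpha>0$, the conditional law of $C_\alpha$ is that of a ratio $T_\alpha(p)/(T_\alpha(1)-T_\alpha(p))$ of (differences of) independent stable$(\alpha)$ variables, which is exactly Zolotarev's \citeyearpar{zolotarev1957mellin} Mellin-transform computation alluded to just before the proposition. Concretely, the identity
\begin{equation*}
\int_0^\infty \frac{u^{r}\,\sin(\alpi)}{\pi\bigl[q^2 u^{2} + 2pq u\cos\alpi + p^2\bigr]}\,du \;=\; \frac{\sin(\pi r)}{\sin(\pi r/\alpha)}\cdot(\text{factors in }p,q)
\end{equation*}
is the engine; specializing and renormalizing by $\P(C_\alpha>0)=\alpha$ yields $\E\exp(rS_\alpha) = \sin(\alpi r)/(\alpha\sin\pi r)$ for $|r|<1$, the stated range being exactly the strip of convergence of the integral. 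The main obstacle is this contour/keyhole integral (or equivalently the Mellin inversion); I would either cite it from \citet{zolotarev1957mellin} and \citet{MR2676940} or push it through via the classical $\int_0^\infty x^{s-1}/(1+x)\,dx = \pi/\sin\pi s$ after a partial-fraction split of the quadratic $q^2u^2 + 2pqu\cos\alpi + p^2$ into conjugate linear factors over $\complex$.

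Then (ii) is immediate by analytic continuation: $\E\exp(r S_\alpha)$ is analytic in the strip $|\Re r|<1$, so $\E\exp(i\lambda S_\alpha) = \phi_\alpha(-i\cdot i\lambda)$ — here I substitute $r = i\lambda$, legitimate for all real $\lambda$ since the MGF extends analytically across the imaginary axis (the density decays exponentially), giving $\phi_\alpha(\lambda) = \sin(\alpi\cdot i\lambda)/(\alpha\sin(\pi i\lambda)) = \sinh(\alpi\lambda)/(\alpha\sinh(\pi\lambda))$ via $\sin(iz) = i\sinh z$. Finally (i) follows from (ii) by Fourier inversion: one recognizes $\phi_\alpha(\lambda) = \sinh(\alpi\lambda)/(\alpha\sinh\pi\lambda)$ as the known characteristic function of the density $f_\alpha(s) = \sin\alpi/\bigl(2\pi\alpha(\cos\alpi + \cosh s)\bigr)$ — this is a standard table pair, verifiable by residues (summing over the poles of $1/\sinh\pi\lambda$ at $\lambda = ik$, $k\in\nats$, which produces the geometric-type series $\sum_k (-1)^{k+1}\sin(\alpi k)e^{-k|s|}$ that resums to $f_\alpha$). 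Alternatively, and perhaps cleaner for the paper, I would verify (i)$\Leftrightarrow$(iii) directly: the integral $\int_\reals e^{rs} f_\alpha(s)\,ds$ over the real line, via $w = e^s$, becomes $\tfrac{\sin\alpi}{2\pi\alpha}\int_0^\infty w^{r-1}/\bigl(\tfrac12(w + w^{-1}) + \cos\alpi\bigr)\,dw = \tfrac{\sin\alpi}{\pi\alpha}\int_0^\infty w^{r}/(w^2 + 2w\cos\alpi + 1)\,dw$, which is again the same keyhole integral and equals $\sin(\alpi r)/(\alpha\sin\pi r)$. The normalization $f_\alpha$ integrates to $1$ is the case $r=0$. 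The one genuine subtlety worth a sentence in the proof is the $\alpha\downarrow 0$ limit: all three expressions have removable singularities at $\alpha=0$ (e.g. $\sin(\alpi r)/(\alpha\sin\pi r) \to \pi r/\sin\pi r$, $\phi_0(\lambda) = \pi\lambda/\sinh\pi\lambda$, $f_0(s) = 1/(2\pi(1+\cosh s)) = \tfrac1{4\pi}\operatorname{sech}^2(s/2)$), and continuity in $\alpha$ of the family justifies \emph{defining} $S_0$ by this limit, consistent with the forward reference \eqref{levychar}.
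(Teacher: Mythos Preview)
Your approach is correct and uses the same key ingredient as the paper --- the classical Mellin transform
\[
\int_0^\infty \frac{x^r\,dx}{1 + 2x\cos\alpi + x^2} = \frac{\pi}{\sin\alpi}\cdot\frac{\sin\alpi r}{\sin\pi r}\qquad(|r|<1),
\]
which the paper attributes to Euler via Whittaker--Watson. The difference is organizational. The paper proves (i) \emph{first}, by an elementary two-step change of variable: the linear map $C\mapsto C_\alpha$ gives the density \eqref{caldensr} of $C_\alpha$, then the substitution $s=\log x$ on $x>0$ immediately yields $f_\alpha(s)$ with normalizing constant $2\pi\,\P(C_\alpha>0)$; the computation $\P(C_\alpha>0)=\alpha$ (your branch-checking exercise) pins down the constant. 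No contour integral is needed for (i). Only then does the paper invoke the classical Mellin/Fourier identity \eqref{mellintf}--\eqref{keyft} to obtain (ii) and (iii) from (i) in one stroke.

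Your route (iii)$\to$(ii)$\to$(i) works but is less economical: you invoke the Mellin integral once to get (iii), then analytic continuation for (ii), then a \emph{second} classical computation (Fourier inversion by residues over the poles of $1/\sinh\pi\lambda$) to recover (i). Your ``alternative'' paragraph --- verifying (i)$\Leftrightarrow$(iii) directly via $w=e^s$ --- is exactly the paper's approach, and you should lead with it. The detour through the $p,q$ form of the Darling--Lamperti density is unnecessary here; the relevant integral is the $p=q$ symmetric case, which is cleaner to state and prove directly.

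One minor slip: in your $\alpha\downarrow 0$ limit you write $f_0(s)=1/\bigl(2\pi(1+\cosh s)\bigr)$, but $\sin\alpi/(2\pi\alpha)\to 1/2$, so $f_0(s)=1/\bigl(2(1+\cosh s)\bigr)=\tfrac14\operatorname{sech}^2(s/2)$, as in the paper's \eqref{levychar}; your extra factor of $\pi$ should be dropped.
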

\begin{proof}
%%with evaluation by continuity in $\alpha$ for $\alpha = 0$.
The linear change of variable \eqref{calpha} from the standard Cauchy density of $C$ makes
\begin{equation}
\label{caldensr}
\P( C_\alpha \in dx ) = \frac{\sin \alpha \pi }{\pi }  \frac{ x^{-1}  \, dx} {( x  + 2 \cos \pi \alpha  + x^{-1} )} \qquad ( x \in \reals).
\end{equation}
Restrict to $x >0$, and divide by $\P(C_\alpha >0 )$ to obtain $\P(C_\alpha \in dx \giv C_\alpha >0)$.
For $x >0$, make change of variable $s = \log x$, $ds = x^{-1} dx$, $x = e^{s}$ in \eqref{caldensr} to obtain the density 
$\P(\log C_\alpha \in ds \giv C_\alpha >0 ) = f_\alpha(s)$ as in \eqref{faldef}, with constant $2 \pi \P(C_\alpha >0 )$ in place of $(2 \pi \alpha)$.
To check $\P(C_\alpha >0 ) = \alpha$ use the standard formula
\begin{equation}
\label{tancdf}
\P(C > c ) = \frac{1}{2} - \frac{ \arctan (c ) }{\pi}  = \frac{ \arccot (c)  }{\pi}
\end{equation}
and the fact that $0 < \sin \pi \alpha < 1$ for $ 0 < \alpha < 1$, to calculate
\begin{equation}
\label{tanalcdf}
\P(C_\alpha >0 ) =  \P( C \sin \pi \alpha  > \cos \pi \alpha ) = \P( C  > \cot \pi \alpha ) = \frac{\pi \alpha}{\pi} = \alpha.
\end{equation}
This proves (i). Now (ii) and (iii) are probabilistic expressions of the classical Fourier transform
\begin{equation}
\label{keyft}
\frac{1}{2 \pi } \int_{-\infty}^\infty  \frac{ e^{i \lambda s } \, \sin \alpi  } { \cosh s  + \cos \alpi }  \, ds = \frac{ \sinh \alpi \lambda }{\sin \pi \lambda} .
\end{equation}
This Fourier transform is equivalent, by analytic continuation, and the change of variable $x = e^s$  as above, to the classical Mellin transform of a truncated Cauchy density
\begin{equation}
\label{mellintf}
\int_0^\infty  \frac{ x^ r  \, d x } { 1 + 2 x \cos \alpi + x^2 } = \frac{ \pi }{ \sin \alpi } \frac { \sin \alpi r }{\sin  \pi  r } \qquad (|r| < 1 ).
\end{equation}
\citet[Example 4, P. 119]{whittaker1927modern} %%, title={A course of modern analysis}, author={Whittaker, E. T. and Watson, G. N.},
attribute this Mellin transform to Euler, and present it to illustrate a general techique of computing Mellin transforms by calculus of residues.
This Mellin transform also appears as an exercise in complex variables in \citet[Part I, Problem 4.10]{MR0059774}. %% AUTHOR = {Morse, Philip M. and Feshbach, Herman},
\citep{MR0083847} gave details of the  derivation of the Fourier transform \eqref{keyft} by contour integration.
%%AUTHOR = {Talacko, Joseph}, TITLE = {Perk's distributions and their role in the theory of
A more elementary proof of the key Fourier transform \eqref{keyft} is indicated below.
\end{proof}

The Fourier transform \eqref{keyft} appears also in \citet[formula (21)]{zolotarev1957mellin}, %%  title={Mellin-Stieltjes transforms in probability theory}, author={Zolotarev, Vladimir Mikhailovich},
attributed to \citet[p. 282]{gradryz51}, %% , author = {I. M. Ryzhik and I. S. Gradshtein},
but with a typographical error (the lower limit of integration should be $-\infty$, not $0$). 
\citet[4.23]{MR2964501} %%AUTHOR = {Chaumont, Lo\"\i c and Yor, Marc}, TITLE = {Exercises in probability},
present some of Zolotarev's results below their (4.23.4), including \eqref{keyft} with the correct range of integration, but missing a factor of $2$: the $1/\pi$ on their left side should be $1/(2 \pi)$ as in \eqref{keyft}.

\citet{MR0083847}  %%AUTHOR = {Talacko, Joseph}, TITLE = {Perk's distributions and their role in the theory of
regarded the family of symmetric densities $f_\alpha(s)$ for $0 \le s < 1$ as a one-parameter extension of the case  $\alpha = \hf$, with
\begin{equation}
\label{levyhalf}
f_{\hf}(s) =  \frac{1}{\pi \cosh s } ~~\longleftrightarrow ~~\phi_{\hf}(\lambda ) = \frac{1}{\cosh \pi \lambda/2 }
\end{equation}
and the limit case $\alpha = 0$ with
\begin{equation}
\label{levychar}
f_0(s) :=  \frac{1}{2 ( 1 + \cosh s )} = \frac{1}{4 \cosh^2 s/2 } ~~\longleftrightarrow ~~ \phi_0(\lambda ) =  \frac{ \pi \lambda}{ \sinh \pi \lambda }.
\end{equation}
These probability densities and their associated  characteristic functions were found  earlier by 
\citet{MR0044774} %%%, AUTHOR = {L\'evy, Paul}, TITLE = {Wiener's random function, and other {L}aplacian random
 in his study of the random area
\begin{equation}
\label{levyarea}
\Larea(t):= \hf \int_0^t (X_s dY_s - Y_s d X_s) 
\end{equation}
swept out by the path of two-dimensional a Brownian motion $((X_t,Y_t), t \ge 0 )$ started at $X_0 = Y_0 = 0$.
In terms of the distribution of $\TZS_\alpha$ defined by the above proposition, L\'evy proved that 
\begin{equation}
\label{levarea}
\Larea(t) \ed  \frac{ t}{\pi } S_{\hf}  \mbox{ and } (\Larea(t) \giv X_t = Y_t = 0) \ed  \frac{ t}{ 2 \pi } S_{0}  .
\end{equation}
L\'evy first derived the characteristic functions $\phi_0$ and $\phi_{\hf}$ by analysis of his area functional of planar Brownian motion.
He showed that the distributions of $\TZS_0$ and $\TZS_{\hf}$ are infinitely divisible, each associated with a symmetric pure-jump L\'evy process, whose L\'evy measure he computed.
He then inverted $\phi_0$ and $\phi_{\hf}$ to obtain the densities $f_0$ and $f_{\hf}$ displayed above by appealing to the classical infinite products for the hyperbolic functions.
%, which lead to the
%representations 
%\begin{equation}
%\label{infseries}
%S_{\hf} \ed   xx \mbox{ and } S_0 \ed   xx
%\end{equation}
%where $\eps_1, \eps_2, \ldots $ and $\hateps_1, \hateps_2, \ldots $  are two independent sequences of  i.i.d. standard exponential variables.
%xx what about $S_\alpha$  like this?
L\'evy's work on Brownian areas inspired  a number of further studies, which have clarified relations between various probability distributions derived from Brownian paths whose
Laplace or Fourier transforms involve the hyperbolic functions. See 
\citet{MR886959}, %AUTHOR = {Biane, Ph. and Yor, M.}, TITLE = {Valeurs principales associ\'ees aux temps locaux browniens},
and
\cite{MR1969794} %%, AUTHOR = {Pitman, Jim and Yor, Marc}, TITLE = {Infinitely divisible laws associated with hyperbolic
for comprehensive accounts of these distributions, their associated L\'evy processes, and
several other appearances of the same Fourier transforms in the distribution theory of Brownian functionals,
and \citet[\S 0.6]{MR1725357} %%AUTHOR = {Revuz, Daniel and Yor, Marc}, TITLE = {Continuous martingales and {B}rownian motion}, 
for a summary of formulas associated with the laws of $\TZS_0$ and $\TZS_{\hf}$.
%Following L\'evy, most of these studies have involved the infinite products for hyperbolic functions, 
%and associated 
% which  arise the classical evaluation of Fourier transforms such as \eqref{keyft} by contour integration, as in \cite{MR0083847}.  
%%AUTHOR = {Talacko, Joseph}, TITLE = {Perk's distributions and their role in the theory of
%There is however 
Note from \eqref{cfsal}  and \eqref{levychar} that 
the characteristic function $\phi_\alpha$ of $S_\alpha$ 
is derived from $\phi_0$ by the identity
$$
\phi_0(\lambda) = \phi_0( \alpha \lambda ) \phi_\alpha(\lambda) \qquad 
( 0 \le \alpha \le 1)
$$
corresponding to the identity in distribution
$$
S_0 \ed \alpha S_0 + S_\alpha  \qquad ( 0 \le \alpha \le 1)
$$
where $S_0$ and $S_\alpha$ are assumed to be independent. That is to say, the distribution of $S_0$ is {\em self-decomposable},
as discussed further in  \citet{MR2108161}. %%AUTHOR = {Jurek, Zbigniew J. and Yor, Marc}, TITLE = {Selfdecomposable laws associated with hyperbolic functions},

An easier approach to these Fourier relations \eqref{levyhalf} and \eqref{levychar} for $\alpha = \hf$ and $\alpha = 0$,
which extends to the Fourier transform \eqref{keyft} for all $0 \le \alpha < 1$, is to recognize the distributions
involved as hitting distributions of a Brownian motion in the complex plane. The Cauchy density of $C_\alpha$ in 
\eqref{caldensr}
is well known to be the
hitting density of $X_T$ on the real axis for a complex Brownian motion $(X_t + i Y_t, t \ge 0 )$ started at  the point
on the unit semicircle in the upper half plane
$$
X_0 + i Y_0 = \cos (1 - \alpha) \pi + i \sin(1-\alpha) \pi  = - \cos \alpi + i \sin \alpi
$$
and stopped at the random time $\Treal:= \inf \{t : Y_t = 0 \}$.
Let $X_t + i Y_t = R_t \exp(i W_t )$ be the usual representation of this complex Brownian motion in polar coordinates,
with radial part $R_t$ and continuous angular winding $W_t$, starting from $R_0 = 1$ and $W_0 = (1-\alpha) \pi$.
Then by construction
$$
C_\alpha \ed X_\Treal = R_\Treal 1( W_\Treal = 0 ) - R_\Treal 1( W_\Treal = \pi ).
$$
According to L\'evy's theorem on conformal invariance of Brownian motion, the process
$(\log R_t + i W_t , 0 \le t \le \Treal)$ is a time changed complex Brownian motion $(\Phi(u) + i \Theta(u), u \ge 0)$ :
$$
\log R_t + i W_t  = \Phi(U_t) +  i \Theta (U_t) \mbox{ where } U_t:= \int_{0}^t \frac{ds} {R_s^2}
$$
and $U_{\Treal} = \inf \{u: \Theta(u)  \in \{0, \pi \} \}$.
See \citet{MR841582}  %%AUTHOR = {Pitman, Jim and Yor, Marc}, TITLE = {Asymptotic laws of planar {B}rownian motion}, 
for further details of this well known construction.  
The conclusion of the above argument is summarized by the following lemma, which combined with the 
next proposition provides a nice explanation of the basic Fourier transform \eqref{keyft}.

\begin{lemma}
The Talacko-Zolatarev distribution of $\TZS_\alpha$ introduced in Proposition \ref{prop:talzol} as the conditional 
distribution of $\log C_\alpha$ given $C_\alpha >0$ may also be represented as
\begin{equation}
\P(\TZS_\alpha  \in \bullet) = \P_{(1-\alpha)\pi }( \Phi_T \in \bullet \giv \Theta_T = 0 ) = \P_{\alpha \pi }( \Phi_T \in \bullet \giv \Theta_T = \pi )
\end{equation}
where $\P_\theta$ governs $(\Theta_t, t \ge 0)$ and $(\Phi_t, t \ge 0)$ two independent Brownian motions, started at $\Theta_0 = \theta \in (0,\pi)$ and
$\Phi_0 = 0$,
and $T:= \inf \{t : \Theta_t = 0 \mbox{ or } \pi \}$. 
\end{lemma}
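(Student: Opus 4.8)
The plan is to read the lemma off from the conformal‑invariance construction recalled immediately above the statement, by tracking carefully how the event $\{C_\alpha>0\}$ and the random variable $\log C_\alpha$ transform under the passage to polar coordinates and the conformal time change, and then to get the second equality for free from a reflection symmetry.

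First I would recall that a complex Brownian motion $X_t+iY_t$ started at $X_0+iY_0=-\cos\alpi+i\sin\alpi=e^{i(1-\alpha)\pi}$ and stopped at $\Treal:=\inf\{t:Y_t=0\}$ has $X_\Treal$ distributed on $\reals$ according to the half‑plane Poisson kernel based at that point, which is precisely the density \eqref{caldensr} of $C_\alpha$ (a Cauchy law with location $-\cos\alpi$ and scale $\sin\alpi$); hence $C_\alpha\ed X_\Treal$. Writing $X_t+iY_t=R_te^{iW_t}$ with $R_0=1$, continuous winding $W$, and $W_0=(1-\alpha)\pi\in(0,\pi)$, the a.s. positivity of $R_\Treal$ together with $Y_\Treal=R_\Treal\sin W_\Treal=0$ and the minimality of $\Treal$ force $W_\Treal\in\{0,\pi\}$ by the intermediate value theorem, with $\{C_\alpha>0\}=\{W_\Treal=0\}$ and, on that event, $\log C_\alpha=\log R_\Treal$. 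By L\'evy's theorem on conformal invariance, as stated above and in \citet{MR841582}, $\log R_t+iW_t=\Phi_{U_t}+i\Theta_{U_t}$ for a planar Brownian motion $(\Phi,\Theta)$ with independent coordinates, $\Phi_0=0$, $\Theta_0=(1-\alpha)\pi$, and continuous strictly increasing time change $U_t:=\int_0^t R_s^{-2}\,ds$; since $W_t=\Theta_{U_t}$ does not reach $\{0,\pi\}$ before $t=\Treal$, one gets $U_\Treal=\inf\{u:\Theta_u\in\{0,\pi\}\}=T$.

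Combining these, $\{C_\alpha>0\}=\{\Theta_T=0\}$ and, on this event, $\log C_\alpha=\log R_\Treal=\Phi_{U_\Treal}=\Phi_T$. Because $\P(C_\alpha>0)=\alpha>0$ by \eqref{tanalcdf} — equivalently the harmonic measure of $\{0\}$ for $\Theta$ started at $(1-\alpha)\pi$ on $[0,\pi]$ is $\alpha$ — the conditioning is legitimate and the conditional law of $\log C_\alpha$ given $C_\alpha>0$ equals the conditional law of $\Phi_T$ given $\Theta_T=0$ under $\P_{(1-\alpha)\pi}$, which is the first displayed identity. For the second identity I would apply the reflection $\Theta\mapsto\pi-\Theta$ (equivalently $z\mapsto-\bar z$ in the plane, which fixes $R=|z|$ and hence $\Phi$): this is a measure‑preserving involution carrying $\P_{(1-\alpha)\pi}$ to $\P_{\alpha\pi}$ and the event $\{\Theta_T=0\}$ to $\{\Theta_T=\pi\}$ while leaving $\Phi_T$ unchanged, so $\P_{(1-\alpha)\pi}(\Phi_T\in\bullet\giv\Theta_T=0)=\P_{\alpha\pi}(\Phi_T\in\bullet\giv\Theta_T=\pi)$. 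The boundary case $\alpha=0$ (where $(1-\alpha)\pi=\pi\notin(0,\pi)$) is handled by continuity in $\alpha$, as in Proposition~\ref{prop:talzol}.

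The main obstacle is not computational but the usual rigor underlying the conformal time change: that the path a.s. avoids the origin, so $\log R_t+iW_t$ is well defined and continuous; that $\Treal<\infty$ a.s. (planar Brownian motion hits every line), so $U$ is a genuine finite, continuous, strictly increasing bijection of $[0,\Treal]$ onto $[0,T]$ and stopping therefore commutes with the time change; and the routine identification of the hitting density of $X_\Treal$ with \eqref{caldensr}. Each of these is classical and can be quoted from \citet{MR841582}, so beyond recognizing that $-\cos\alpi+i\sin\alpi$ sits on the unit circle at angle $(1-\alpha)\pi$, no new estimate is needed.
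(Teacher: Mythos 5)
Your proof is correct and takes essentially the same route as the paper: the paper does not give a separate proof but states the lemma as a summary of the conformal-invariance discussion just preceding it (polar decomposition $X_t+iY_t=R_te^{iW_t}$, L\'evy's conformal invariance with time change $U_t=\int_0^t R_s^{-2}\,ds$, identification $\{C_\alpha>0\}=\{W_{\Treal}=0\}$ and $\log C_\alpha=\Phi_T$ on that event), which is exactly what you spell out. Your added remarks on the reflection symmetry for the second equality and the rigor points (a.s.\ avoidance of the origin, finiteness of $\Treal$, continuity of the time change) are a faithful, slightly more explicit rendering of the same argument.
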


%In more analytic terms, this involves the
%basic connection by conformal mapping between the Poisson kernel in the upper half-plane, 
%which represents the Cauchy density \eqref{caldensr} as the hitting density on the real axis of a planar Brownian motion
%started at the point 
%$$
%(\cos ( 1 - \alpha) \pi , \sin(1-\alpha)\pi) = ( - \cos ( \alpha) \pi , \sin(\alpha)\pi)
%$$
%on the unit semicircle in the upper half-plane, and the corresponding Poisson kernel in the strip $\{ \theta + i s  : 0 \le \theta \le \pi \}$, 
%according to which  xx
%the density $f_\alpha(s)$ in \eqref{xx}
%The connection between the between
%which features 
%in the density xx and the left hand side of xx. In particular, xx

\begin{proposition}
With the notation of the previous lemma, and the Talacko-Zolatarev densities and characteristic functions $f_\alpha$ and $\phi_\alpha$
defined as in Proposition \ref{prop:talzol},
%Let $\P_\theta$ govern two independent standard Brownian motions $(\Theta_t, t \ge 0 )$ and $(\Phi_t, t \ge 0)$ with
%$\Theta_0 = \theta \in [0,\pi]$ and $\Phi_0 = 0$. 
the joint distribution of
$\Phi_T$ and $\Theta_T$ is determined by any one of the following three formulas, each of 
which holds jointly with a companion formula for $(\Theta = 0)$ instead of $(\Theta = \pi)$, with  
$\theta$ replaced by $\pi - \theta$ on the right side only, so $\sin \theta = \sin(\pi - \theta)$ is unchanged, and $\cos \theta$ is replaced by $\cos (\pi - \theta) = - \cos \theta$:
\begin{itemize}
\item [(i)] The density of $\Phi_T$ on the event $(\Theta_T = \pi)$ with $\P_\theta(\Theta_T = \pi) = \thovpi$ is
\begin{equation}
\label{pthform}
\frac{ \P_\theta( \Phi_T \in ds, \Theta_T = \pi) }{ ds } = \frac{ \sin \theta} { 2 \pi ( \cosh s + \cos \theta ) }  = \frac{\theta}{\pi} f_{\thovpi}(s) .
\end{equation}
\item [(ii)] The corresponding cumulative distribution function is  %%%of $\Phi_T$ on the event $(\Theta_T = \pi)$ is
\begin{equation}
\label{pthformcdf}
\P_\theta( \Phi_T \le s , \Theta_T = \pi) = \frac{1}{ 2 \pi } \left[ 1 + 2 \arctan( \tan(\theta/2) \tanh(x/2) ) \right] %%= \frac{\theta}{\pi} \int_0^s f_{\theta/\pi}(u) du  .
\end{equation}
%%which divided by $\P_\theta (\Theta_T = \pi) = \theta/\pi$ is the conditional distribution function of $\Phi_T$ given $(\Theta = \pi)$.
\item [(iii)] The corresponding  Fourier transform is %%%%of $\Phi_T$ on the event $(\Theta_T = \pi)$ is
\begin{equation}
\label{pthfourier}
\E_\theta \,  e^{ i \lambda \Phi_T } 1 ( \Theta_T = \pi)  =  \frac{ \sinh \theta \lambda }{ \sinh \pi \lambda } = \frac{\theta}{\pi} \phi_{\thovpi}(\lambda).
\end{equation}
%which divided by $\P_\theta (\Theta_T = \pi) = \theta/\pi$ is the conditional characteristic function of $\Phi_T$ given $(\Theta = \pi)$.
\end{itemize}
\end{proposition}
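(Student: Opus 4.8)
\emph{The plan.} The three displays all encode the single joint law of $(\Phi_T,\Theta_T)$, and since $\Theta_T\in\{0,\pi\}$ it is enough to describe the two sub-probability measures $\P_\theta(\Phi_T\in\cdot\,,\Theta_T=\pi)$ and $\P_\theta(\Phi_T\in\cdot\,,\Theta_T=0)$. I would establish the Fourier transform (iii) by a direct Markovian computation, read off the density (i) from the preceding lemma together with Proposition~\ref{prop:talzol}, obtain the distribution function (ii) by antidifferentiating (i), and deduce the companion formulas from a reflection of the angular motion.

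\emph{Step 1: the Fourier transform (iii).} Since $\Phi$ and $\Theta$ are independent and $T$ is measurable with respect to the path of $\Theta$, conditioning on that path and using that $\Phi_T$ is then centered Gaussian with variance $T$ gives
\[
\E_\theta\,e^{i\lambda\Phi_T}1(\Theta_T=\pi)=\E_\theta\bigl[e^{-\lambda^2T/2}\,1(\Theta_T=\pi)\bigr].
\]
So fix $r\ge 0$ and let $u$ be the bounded solution on $[0,\pi]$ of $\tfrac12u''=ru$ with $u(0)=0$, $u(\pi)=1$, namely $u(x)=\sinh(\sqrt{2r}\,x)/\sinh(\sqrt{2r}\,\pi)$. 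Because $\Theta$ is a Brownian motion run until the a.s.\ finite time $T$ at which it leaves $(0,\pi)$, the process $e^{-r(t\wedge T)}u(\Theta_{t\wedge T})$ is a bounded martingale, so optional stopping together with continuity of $\Theta$ at $T$ gives $u(x)=\E_x\bigl[e^{-rT}u(\Theta_T)\bigr]=\E_x\bigl[e^{-rT}1(\Theta_T=\pi)\bigr]$. Taking $r=\lambda^2/2$ and using that $\sinh$ is odd yields \eqref{pthfourier}; the specialization $\lambda=0$ is the familiar $\P_\theta(\Theta_T=\pi)=\theta/\pi$.

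\emph{Step 2: the density (i) and the distribution function (ii).} The preceding lemma identifies the conditional law of $\Phi_T$ given $\{\Theta_T=\pi\}$, with starting angle $\theta=\alpha\pi$, with the Talacko--Zolotarev law of $S_{\theta/\pi}$, whose density is $f_{\theta/\pi}$ by Proposition~\ref{prop:talzol}(i); multiplying by the mass $\P_\theta(\Theta_T=\pi)=\theta/\pi$ from Step~1 produces the density $\tfrac{\theta}{\pi}f_{\theta/\pi}(s)=\sin\theta/\bigl[2\pi(\cosh s+\cos\theta)\bigr]$, i.e.\ \eqref{pthform}. For \eqref{pthformcdf} I would simply antidifferentiate: using $1+\cos\theta=2\cos^2(\theta/2)$ and $1-\cos\theta=2\sin^2(\theta/2)$ one checks that $\frac{d}{ds}\arctan\!\bigl(\tan(\theta/2)\tanh(s/2)\bigr)=\sin\theta/\bigl[2(\cosh s+\cos\theta)\bigr]$, so the right side of \eqref{pthformcdf} agrees with $\P_\theta(\Phi_T\le s,\Theta_T=\pi)$ up to an additive constant, which is pinned down by letting $s\to-\infty$ (the sub-distribution function tends to $0$) or $s\to+\infty$ (it tends to the total mass $\theta/\pi$). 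Alternatively (i) could be gotten by Fourier inversion of (iii); but that inversion is exactly the transform \eqref{keyft}, so this route would prove \eqref{keyft} at the same stroke, which is the elementary derivation of it promised earlier.

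\emph{Step 3: companion formulas, and the obstacle.} Reflecting the angular motion through $\pi/2$, i.e.\ replacing $\Theta$ by $\pi-\Theta$ started at $\pi-\theta$, leaves $T$ and the independent motion $\Phi$ untouched and interchanges the events $\{\Theta_T=0\}$ and $\{\Theta_T=\pi\}$; hence $\P_\theta(\Phi_T\in\cdot\,,\Theta_T=0)=\P_{\pi-\theta}(\Phi_T\in\cdot\,,\Theta_T=\pi)$, which gives the asserted companions, $\sin\theta$ being unchanged and $\cos\theta$ turning into $-\cos\theta$. I do not expect a serious difficulty anywhere; the one point worth stating carefully is the classical fact in Step~1 that the two-point boundary value problem has a \emph{unique bounded} solution and that the probabilistic functional $\E_x[e^{-rT}1(\Theta_T=\pi)]$ coincides with it via the martingale/optional-stopping argument. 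If one insisted on proving (i) without quoting the preceding lemma, the real work would move to inverting $\sinh(\theta\lambda)/\sinh(\pi\lambda)$, i.e.\ to the contour-integral evaluation of \eqref{keyft}.
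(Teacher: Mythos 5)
Your proof is correct and takes essentially the same route as the paper. For (iii) the two arguments coincide: after conditioning on the path of $\Theta$ to reduce to $\E_\theta[e^{-\lambda^2 T/2}1(\Theta_T=\pi)]$, the paper stops the bounded martingale $\sinh(\lambda\Theta_t)e^{-\lambda^2 t/2}$ and you phrase the same computation as Feynman--Kac for the two-point boundary value problem with solution $\sinh(\sqrt{2r}\,x)/\sinh(\sqrt{2r}\,\pi)$ -- same content. The one place you diverge is (i): the paper reads the subdensity off the classical Poisson kernel of the strip $\{0<\theta<\pi\}$ (citing Hardy and Widder), whereas you obtain it by combining the preceding lemma with the density $f_\alpha$ already derived in Proposition~\ref{prop:talzol}(i) and the mass $\theta/\pi$ from your Step~1. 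These are equivalent in substance -- the strip Poisson kernel is the conformal image of the half-plane kernel, which is precisely the Cauchy computation underlying $f_\alpha$ together with the lemma -- but your route stays entirely inside material already established in the paper, so it is arguably more self-contained, and it still preserves the advertised ``elementary proof'' of \eqref{keyft} as the statement that (i) and (iii) are a Fourier pair. For (ii), your antidifferentiation identity $\frac{d}{ds}\arctan\bigl(\tan(\theta/2)\tanh(s/2)\bigr)=\sin\theta/\bigl[2(\cosh s+\cos\theta)\bigr]$ checks out, and your insistence on pinning the additive constant at $s\to\pm\infty$ is warranted: carrying it through gives $\tfrac{1}{2\pi}\bigl[\theta+2\arctan(\tan(\theta/2)\tanh(s/2))\bigr]$, which in fact exposes a typographical slip in \eqref{pthformcdf} as printed (the leading $1$ should be $\theta$, and the variable $x$ should be $s$). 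The reflection argument for the companion formulas is also fine.
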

\begin{proof}
By the well known description of hitting probabilities for Brownian motion in terms of harmonic functions,
the $\P_\theta$ distribution of $(\Theta_T, \Phi_T)$ is the harmonic measure on the boundary of the vertical strip $\{(\theta ,s) : 0 < \theta < \pi, s \in \reals \}$
for Brownian motion with initial point  $(\theta,0)$ in the interior of the strip. Formula \eqref{pthform} is then read from the classical formula for the
Poisson kernel in the strip, which gives the hitting density on the two vertical lines. This formula is mentioned in \citet{MR1574272} % , AUTHOR = {Hardy, G. H.}, TITLE = {A {T}heorem {C}oncerning {H}armonic {F}unctions},
and derived in detail by \citet{MR0132838}. %%%, AUTHOR = {Widder, D. V.}, TITLE = {Functions harmonic in a strip},
As indicated by Widder, the formula for the Poisson kernel for the strip follows easily from the corresponding kernel for the upper half plane,
by the method of conformally mapping $\theta  + i s$ to $e^{i (\theta  + i s )} = e^{-s} e^{i \theta}$. 
This proves (i), and (ii) follows by integration. As for (iii),
it is easily seen that conditionally given $T$ and $\Theta_T$ the distribution of $\Phi_T$ is Gaussian with mean $0$ and variance $T$.
Hence
\begin{equation}
\label{knightlt}
\E _\theta \,   e^{ i \lambda \Phi_T }  1 (\Theta_T  = \pi) = \E _\theta  e^{ - \hf \lambda^2 T} 1(\Theta_T  = \pi) =  \frac{ \sinh \theta \lambda }{ \sinh \pi \lambda }
\end{equation}
where the last equality is a well known formula for one-dimensional Brownian motion
\cite[Exercise II.3.10]{MR1725357}, %% AUTHOR = {Revuz, Daniel and Yor, Marc}, TITLE = {Continuous martingales and {B}rownian motion},
which holds because $(\exp( \pm \lambda \Theta_t - \hf \lambda^2 t ), t \ge 0)$ is a martingale for each choice of sign $\pm$ and $\lambda >0$. 
The average of these two martingales is $M_{\lambda,t}:= \sinh( \lambda \Theta_t) \exp( - \hf \lambda^2 t )$.
So $\P_\theta$  governs $(M_{\lambda,t}, t \ge 0)$ as a martingale with continuous paths which starts at $M_{\lambda,0} = \sinh (\lambda \theta )$,  
and is bounded by $0 \le M_{\lambda,t} \le \sinh \pi \lambda$ for $0 \le t \le T$. But $\sinh (0) = 0$ makes $\sinh( \lambda \Theta_T) = \sinh (\lambda \pi ) 1(\Theta_T = \pi)$, so
$$
\sinh (\lambda \theta ) =  \E M_{\lambda,0} = \E M_{\lambda,T} = \E \sinh (\lambda \pi )  e^ {- \hf \lambda^2 T } 1(\Theta_T  = \pi).
$$
As a check on \eqref{pthfourier}, its limit as $\lambda \to 0$ gives $\P_\theta( \Theta_T = \pi) = \theta/\pi$.
\end{proof}

%@article{robbins1970boundary,
%  title={Boundary crossing probabilities for the Wiener process and sample sums},
%  author={Robbins, Herbert and Siegmund, David},

\subsection{Laplace and Mellin transforms}

The {\em Laplace transform} of a non-negative random variable $X$,
\begin{equation}
\phi_X(\lambda): = \E e^{- \lambda X } = \int_0^\infty e^{- \lambda x } \P( X \in dx ),
\end{equation}
can always be interpreted probabilistically as follows for $\lambda \ge 0$. Let $\eps \ed \gamma(1)$ be a standard exponential
variable independent of $X$. By conditioning on $X$,
\begin{equation}
\label{eq:surviv}
\phi_X(\lambda ) = \P( \eps > \lambda X ) = \P( \eps/ X > \lambda ) \qquad (\lambda \ge 0).
\end{equation}
This basic formula presents $\phi_X(\lambda)$ as the {\em survival probability function} of the random ratio $\eps/X$, whose distribution 
 is the {\em scale mixture of exponential distributions}, with a random inverse scale parameter $X$. 
See \citet{MR2011862} %%AUTHOR = {Steutel, Fred W. and van Harn, Klaas}, TITLE = {Infinite divisibility of probability distributions on the real xx missing ref
for much more about such scale mixtures of exponentials.
This formula  \eqref{eq:surviv}
works with the convention
$\eps/X = +\infty$ if $X = 0$.
For instance, if $X = T_\alpha$ has the standard stable$(\alpha)$ law with Laplace transform \eqref{stabal}
then \eqref{eq:surviv} gives
\begin{equation}
\label{eq:survivstable}
\P( \eps/ T_\alpha  > \lambda ) = \exp( - \lambda^\alpha )
\end{equation}
and hence for $\lambda = x^{1/\alpha}$
\begin{equation}
\label{eq:survivstable1}
\P( (\eps/ T_\alpha)^\alpha   > x ) = \P( \eps/ T_\alpha > x^{1/\alpha} )  = \exp( - x).
\end{equation}
That is to say, in view of the uniqueness theorem for Laplace transforms, the standard stable$(\alpha)$ distribution of $T_\alpha$ is
uniquely characterized by the identity in law
\begin{equation}
\label{eq:shanbag}
\left( \frac{ \eps } {T_\alpha } \right) ^\alpha \ed \eps
\end{equation}
where $\eps \ed \gamma(1)$ is an exponential variable with mean $1$, independent of $T_\alpha$. Equate real moments in \eqref{eq:shanbag}
to see that the distribution of $T_\alpha$ has 
Mellin transform
\begin{equation}
\label{talmel}
\E T_\alpha ^{\alpha r } = \frac{ \Gamma(1 - r) }{ \Gamma(1 - \alpha r ) } \qquad  |r| < 1 .
\end{equation}
This provides another characterization of the standard stable$(\alpha)$ law of $T_\alpha$, by uniqueness of Mellin transforms.
This derivation of \eqref{eq:shanbag} and \eqref{talmel} is due to \citet{MR0436267}. %%%, AUTHOR = {Shanbhag, D. N. and Sreehari, M.},
A more general Mellin transform for stable laws appears much earlier in \cite[Theorem 3]{zolotarev1957mellin}. %%  title={Mellin-Stieltjes transforms in probability theory}, author={Zolotarev, Vladimir Mikhailovich},

Consider now the ratio $R_\alpha:= T_\alpha/T'_\alpha$ of two independent standard stable$(\alpha)$ variables. Immediately from  \eqref{talmel}, the
Mellin transform of $R_\alpha^\alpha $ is
\begin{equation}
\label{talmel1}
\E R_\alpha ^{\alpha p } = 
\frac{ \Gamma(1 + p) }{ \Gamma(1 + \alpha p ) } \frac{ \Gamma(1 - p) }{ \Gamma(1 - \alpha p ) } 
=
\frac{ 1} {\alpha } \frac{ \Gamma(p) }{ \Gamma(\alpha p ) } \frac{ \Gamma(1 - p) }{ \Gamma(1 - \alpha p ) } 
=
\frac{ 1} {\alpha } \frac{ \sin p \alpha \pi }{\sin p \pi } 
\qquad  |p| < 1 
\end{equation}
by two applications of the reflection formula for the gamma function $\Gamma(1-z) \Gamma(z) = \pi/ \sin z \pi$.
%Unlike the weaker identity in distribution \eqref{rhalf1} of ratios of independent copies of $T_{\hf}$, 
%which generalizes to \eqref{condal} for general $\alpha \in (0,1)$, L\'evy's identity \eqref{levyhalf} does not seem to have any natural generalization to 
%general $\alpha \in (0,1)$.
%This Mellin transform was inverted  by \cite[(65)]{zolotarev1957mellin} %%  title={Mellin-Stieltjes transforms in probability theory}, author={Zolotarev, Vladimir Mikhailovich},
%to give the density
Compare with 
\eqref{cfsal} 
to see the identity in distribution $R_\alpha^\alpha \ed \TZS_\alpha$ 
for $\TZS_\alpha$ as in in Proposition \ref{prop:talzol}, that is 
\begin{equation}
\label{ralphalph}
\P(R_\alpha^ \alpha \in dx ) = \frac{\sin \alpha \pi }{\alpha \pi }  \frac{dx} {( 1 + 2 x \cos \pi \alpha  + x^2 )} \qquad ( x > 0 ).
\end{equation}
Equivalently, by the change of variable $r = x^{1/\alpha}$, so $x = r^{\alpha}$, $dx = \alpha r^{\alpha - 1} dr$, 
\begin{equation}
\label{ralphalphr}
\P(R_\alpha \in dr ) = \frac{\sin \alpha \pi }{\pi}  \frac{ r^{\alpha - 1} \, dr } { ( 1 + 2 r^\alpha  \cos \pi \alpha  + r^{2 \alpha} ) } \qquad ( r > 0 ).
\end{equation}

By calculus, the 
density \eqref{ralphalphr} of $R_\alpha$ has derivative at $r >0$ which is is a strictly negative function of $r$ multiplied by
\begin{equation}
\label{zquad}
(1 + \alpha) x^2 + 2 x \cos \alpha \pi + 1 - \alpha  \mbox{ where } x:= r^\alpha .
\end{equation}
Analysis of this quadratic function of $x$ explains the qualitative features of the densities of $R_\alpha$ displayed in Figure \ref{fi:zolo} 
for selected values of $\alpha$.

\begin{figure}[h!]
\includegraphics[width=\linewidth]{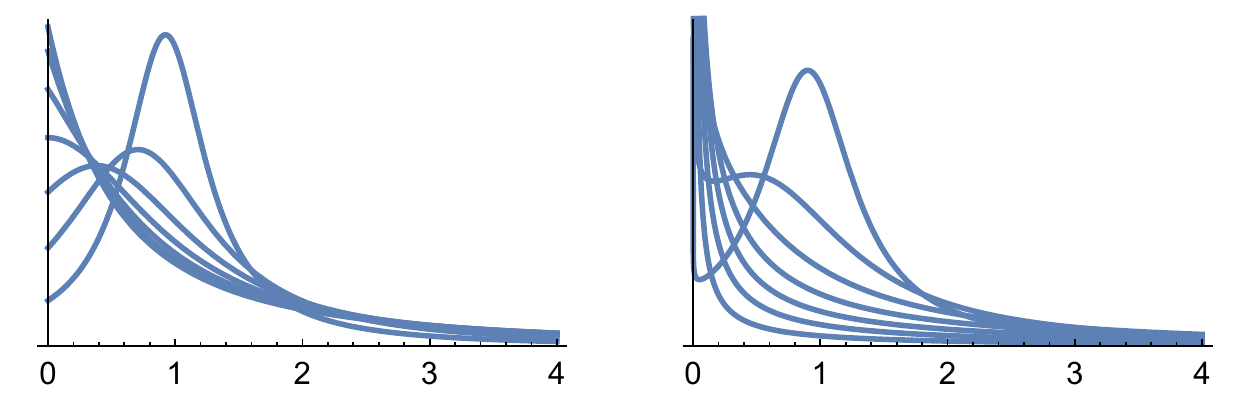}
\caption{ {\em Probability densities of $R_\alpha^\alpha$ and $R_\alpha:= T_\alpha/T'_\alpha$ for $\alpha = k/8, 1 \le k \le 7$.}
The $7$ densities of $R_\alpha^\alpha$ in the left panel are those of the scaled Cauchy 
variable $C_\alpha$ in \eqref{calpha} conditioned to be positive. The curves 
are identified by their values at $0$, which  decrease as $\alpha$ increases, and their values at $1$ which
increase with $\alpha$. The corresponding densities of $R_\alpha$ can be identified similarly in the right panel.
By unimodality of the Cauchy density, in the left panel each density of $R_\alpha^\alpha$ is unimodal, with maximum density at $0$ for $\alpha \le \hf$,
and at $\sin (\alpha - \hf)\pi$ for $\alpha \ge \hf$. Each density of $R_\alpha$ in the right panel has an infinite maximum achieved at $0+$.
The discrimant of the quadratic \eqref{zquad} is $\Delta(\alpha):= 2 (\cos^2 \alpha \pi  + \alpha^2 - 1)$ which is negative for
$\alpha \le \alpha_c$, where $\alpha_c \approx 0.736484$ is the unique root $\alpha \in (0,1)$ of the equation $\Delta(\alpha) = 0$.
So the density of $R_\alpha$ is strictly decreasing for $\alpha \le \alpha_c$, with strictly negative derivative for $\alpha < \alpha_c$,
and with a unique point of inflection for $\alpha = \alpha_c$ at $(\sqrt{ 1 - \alpha_c^2}/( 1 + \alpha_c))^{1/\alpha_c} \approx 0.278018$.
For $\alpha > \alpha_c$, as for the top two curves with $\alpha = 6/8$ and $\alpha = 7/8$, the density of $R_\alpha$ is bimodal, with a local minimum at $r_{-}(\alpha)$ and
a local maximum at $r_+(\alpha)$ where $r_{\pm} (\alpha):= (x_{\pm}(\alpha))^{1/\alpha}$ for $x_\pm(\alpha)$ the two roots in $[0,1]$ of the quadratic \eqref{zquad}.
%as displayed in the next figure.
A common feature of the laws of $R_\alpha^\alpha$ and $R_\alpha$ for all $0 < \alpha < 1$ is that each law has median $1$, due to $R_\alpha \ed R_\alpha^{-1}$,
and each law has infinite mean.
As $\alpha \uparrow 1$, both laws converge to the distribution degenerate at $1$. But as $\alpha \downarrow 0$, the behavior is different.
At each $x >0$, the density of $R_\alpha^\alpha$ converges to $(1 + x)^{-2}$, which is the density of the limit in distribution of $R_\alpha^\alpha$.
In parallel with this convergence, as $\alpha \downarrow 0$, the density of $R_\alpha$ converges pointwise to $0$, as the distribution of
$R_\alpha$ converges vaguely to an atom of $\hf$ at $0$ and an atom of $\hf$ at $+\infty$. 
This pointwise convergence of densities as $\alpha \downarrow 0$ is apparent in both panels.
}
\label{fi:zolo}
\end{figure}

\begin{figure}[h!]
\includegraphics[width=\linewidth]{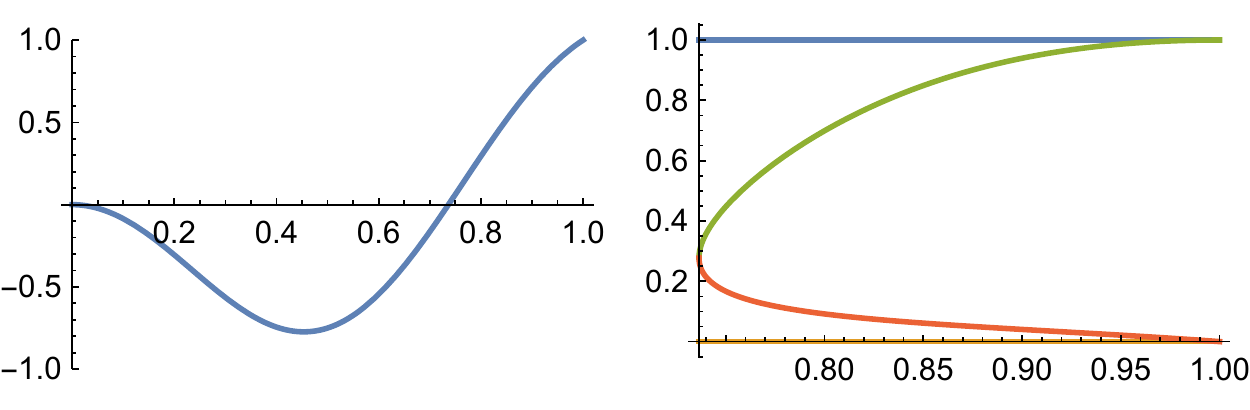}
\label{fi:zolo1}
\caption{ {\em Discrimant and locations of the minimum and maximum of the density of $R_\alpha:= T_\alpha/T'_\alpha$.}
Half the discriminant $\Delta(\alpha)$ of the quadratic equation \eqref{zquad} is $\cos^2 \alpha \pi + \alpha^2 - 1$, as plotted in the left panel,
with $\alpha_c \approx 0.736484$ the unique root of this function in $(0,1)$.
%This discriminant is negative, hence the density of $R_\alpha$ is decreasing on $(0,1)$ for  $0 < \alpha \le 
The right panel shows the two graphs of $r_{\pm} (\alpha):= (x_{\pm}(\alpha))^{1/\alpha}$ for $x_\pm(\alpha)$ the two roots in $[0,1]$ of the quadratic equation \eqref{zquad},
for $\alpha_c  \le \alpha < 1$. The lower curve $r_-(\alpha)$ gives the location of the unique minimum in $(0,1)$ of the density of $R_\alpha$. This location decreases from
$r_{\pm}(\alpha_c) \approx 0.278018$ to $0$ as $\alpha$ increases from $\alpha_c$ to $1$.
The upper curve $r_+(\alpha)$ is the location of the unique local maximum of the density $(0,\infty)$. This modal value is always less than $1$, and increases from
$r_{\pm}(\alpha_c) \approx 0.278018$ to the median value of $1$ as $\alpha$ increases from $\alpha_c$ to $1$.
}
\end{figure}

\subsection{Cauchy-Stieltjes transforms}

For a real valued random variable $X$, the {\em Cauchy-Stieltjes transform of $X$} is commonly defined to be the function of a complex variable $z$
\begin{equation}
\label{cauchy-stieltjes}
G_X(z):= \E ( z  - X )^{-1}   \qquad (z \notin \reals).
\end{equation}
There are inversion formulas both for this transform, as well as
for the {\em generalized Cauchy-Stieltjes transform of $X$ of order $\theta$}, say $G_{X,\theta}(z)$
 obtained by replacing the power $-1$ in \eqref{cauchy-stieltjes} by $-\theta$:
\begin{equation}
\label{cauchy-stieltjes-gen}
G_{X,\theta}(z):= \E ( z  - X )^{-\theta}   \qquad (z \notin \reals).
\end{equation}
See \citet{MR3484407} % AUTHOR = {Demni, Nizar}, TITLE = {Generalized {S}tieltjes transforms of compactly-supported probability distributions: further examples},
for a recent article about this transform with references to earlier work.
For $X$ with values in $[0,1]$ it is more pleasant to deal with the variant of this transform
\begin{equation}
\label{cauchy-stieltjesth}
\E ( 1  - \lambda X )^{- \theta}   = \sum_{n=0}^\infty \frac{ (\theta)_n }{n!} \E X^n \lambda ^n   =  \lambda^{-\theta} G_{X,\theta}(\lambda^{-\theta}) \qquad (|\lambda|< 1)
\end{equation}
where the series is convergent and equal to $\E ( 1  - \lambda X )^{- \theta}$ for every $|\lambda | <  1$ by dominated convergence. 
A distribution of $X$ on $[0,1]$ is uniquely determined by its moment sequence $(\E X^n ,  n = 0,1,2, \ldots)$,
hence also by its generalized Cauchy-Stieltjes transform of order $\theta$, for any fixed $\theta >0$.
For unbounded non-negative $X$, including $X$ with $\E X = \infty$, for which there is not even a partial series expansion \eqref{cauchy-stieltjesth} for $\lambda$ in any neighbourhood of $0$,
it is typically easier to  work with
\begin{equation}
\label{cauchy-stieltjesthlap}
\E ( 1  + \lambda X )^{- \theta}  =  (- \lambda)^\theta  G_{X,\theta}((- \lambda)^{-\theta}) \qquad ( \lambda \ge 0).
\end{equation}
Here the left side is evidently a well defined and analytic function of $\lambda $ with positive real part. The right side may be understood
by analytic continuation of $G_{X,\theta}(z)$ from non-real values of $z$. But arguments by analytic continuation can often be avoided by the
following key observation. By introducing $\gamma(\theta)$ with gamma$(\theta)$ distribution, independent of $X$, and conditioning on $X$, the
expectation in \eqref{cauchy-stieltjesthlap} is
\begin{equation}
\label{cauchy-stieltjesthlap2}
\E ( 1  + \lambda X )^{- \theta}  =  \E \exp [ - \lambda \gamma(\theta) X ]  \qquad (\lambda \ge 0),
\end{equation}
that is the ordinary Laplace transform of $\gamma(\theta) X$. This determines the distribution of $X$,
by uniqueness of Laplace transforms, and the the following lemma which has been frequently exploited 
\citep[p. 358]{MR1825154}, %% AUTHOR = {Pitman, Jim and Yor, Marc}, TITLE = {On the distribution of ranked heights of excursions of a {B}rownian bridge},
\citet[1.13, 4.2, 4.24]{MR2964501}, %%AUTHOR = {Chaumont, Lo\"\i c and Yor, Marc}, TITLE = {Exercises in probability},
\cite[Theorem 3]{MR3217784}. %AUTHOR = {McKinlay, S.}, TITLE = {A characterisation of transient random walks on stochastic matrices with {D}irichlet distributed limits},
As a general rule, in reading formulas involving generalized Stieltjes transforms of probability distributions
of $X$, especially $X \ge 0$, matters are often simplified by interpreting the generalized Stieltjes transform as the Laplace transform of $\gamma(\theta) X$. 

\begin{lemma}
\label{lem:gammacancel}
{\em [Cancellation of independent gamma variables]}
For random variables or random vectors $X$ and $Y$, 
and $\gamma(\theta)$ with gamma$(\theta)$ distribution independendent of both $X$ and $Y$, for each real $a$ there is the equivalence of identities in distribution 
\begin{equation}
\label{gammacancel}
\gamma(\theta) ^a X \ed \gamma(\theta)^a  Y \qquad \iff \qquad X \ed Y .
\end{equation}
\end{lemma}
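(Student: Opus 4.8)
The plan is to prove the non-trivial implication ``$\Rightarrow$''; the converse is immediate, since for every bounded continuous $f$ one has $\E f(\gamma(\theta)^a X) = \E h(\gamma(\theta))$ with $h(w) := \E f(w^a X)$, and $X \ed Y$ forces $h$ to be the same function whether defined through $X$ or through $Y$. The case $a = 0$ is vacuous, so assume $a \neq 0$. First I would reduce to real-valued variables: if $X, Y$ take values in $\reals^d$, then for each fixed $s \in \reals^d$ the hypothesis gives $\gamma(\theta)^a \langle s, X\rangle \ed \gamma(\theta)^a \langle s, Y\rangle$, because $\langle s, \gamma(\theta)^a X\rangle = \gamma(\theta)^a \langle s, X\rangle$; so if the scalar case delivers $\langle s, X\rangle \ed \langle s, Y\rangle$ for every $s$, then $X \ed Y$ by the Cram\'er--Wold device.

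Next I would reduce the real scalar case to strictly positive variables. Since $\gamma(\theta) \in (0,\infty)$ almost surely, the multiplier $Z := \gamma(\theta)^a$ is a.s.\ a finite positive number, so it preserves signs: writing $U := \langle s, X\rangle$ and $V := \langle s, Y\rangle$, the identity $Z U \ed Z V$ splits, via $(ZU)^{\pm} = Z\,U^{\pm}$, into $Z U^{+} \ed Z V^{+}$ and $Z U^{-} \ed Z V^{-}$, and it equates the atoms at $0$. As $Z$ is independent of $U$ and $ZU^{+} > 0 \iff U^{+} > 0$, conditioning on positivity leaves the problem: for $P, Q$ strictly positive a.s.\ and $Z$ independent of each, $ZP \ed ZQ$ implies $P \ed Q$. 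Taking logarithms turns this into $\log Z + \log P \ed \log Z + \log Q$ with $\log Z$ independent, that is $\phi_{\log Z}(t)\,\phi_{\log P}(t) = \phi_{\log Z}(t)\,\phi_{\log Q}(t)$ for all $t \in \reals$, where $\phi$ denotes characteristic function.

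The one substantive ingredient is then the non-vanishing of $\phi_{\log Z}$. By the Mellin transform of the gamma density, $\phi_{\log Z}(t) = \E \gamma(\theta)^{iat} = \Gamma(\theta + iat)/\Gamma(\theta)$, and since $\Gamma$ has no zeros anywhere in $\complex$, this is nonzero for every real $t$. Cancelling it yields $\phi_{\log P} = \phi_{\log Q}$, hence $\log P \ed \log Q$, hence $P \ed Q$; unwinding the reductions (equal atoms at $0$ together with equal conditional laws given positivity give $U^{\pm} \ed V^{\pm}$, hence $U \ed V$, hence $X \ed Y$) completes the argument. I do not expect a real obstacle here: everything past the elementary Mellin computation is bookkeeping, and the same proof shows $\gamma(\theta)^a$ may be replaced by any positive random variable whose logarithm has an everywhere non-vanishing characteristic function.
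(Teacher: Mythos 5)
Your proof is correct, and its overall structure (reduce to the real scalar case by Cram\'er--Wold, reduce to strictly positive variables by matching the masses on $(-\infty,0)$, $\{0\}$, $(0,\infty)$, take logarithms, cancel the nonvanishing characteristic function of $\log\gamma(\theta)^a$) is exactly the paper's. The one place you diverge is the justification that $\phi_{\log\gamma(\theta)}$ never vanishes: the paper invokes the fact, with a citation, that $\log\gamma(\theta)$ is infinitely divisible, and uses the general principle that infinitely divisible laws have nonvanishing characteristic functions; you instead compute the characteristic function explicitly via the Mellin transform, $\E\,\gamma(\theta)^{iat}=\Gamma(\theta+iat)/\Gamma(\theta)$, and appeal to the fact that $\Gamma$ has no zeros. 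Your version is more self-contained and elementary (no appeal to infinite divisibility theory or to the cited result), and as you note it generalizes cleanly to any positive multiplier whose log has a nowhere-vanishing characteristic function; the paper's version is shorter given the cited lemma and highlights the infinite divisibility structure of $\log\gamma(\theta)$, which recurs elsewhere in this circle of ideas. Both are valid; the difference is a matter of which black box one prefers to open.
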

\begin{proof}
Consider first the case of real random variables. Obviously $\P( \gamma(\theta) ^a X \in B) = \P(X \in B)$ if $B$ is any of the subsets $(-\infty,0)$, $\{0\}$ or $(0,\infty)$ of $\reals$.
So by conditioning it may as well be assumed that both $X$ and $Y$ are strictly positive, when there is no difficulty in taking logarithms.
It is known \citep{MR1300491} %% AUTHOR = {Gordon, Louis}, TITLE = {A stochastic approach to the gamma function},
that the distribution of $\log \gamma(\theta)$  is infinitely divisible, hence has a characteristic function which does not vanish. 
The conclusion in the univariate case follows easily, by characteristic functions. An appeal to the Cram\'er-Wold
 theorem takes care of the multivariate case.
\end{proof}

To illustrate these ideas, let us derive the ordinary Cauchy-Stieltjes transform of the ratio $R_\alpha:= T_\alpha/T'_\alpha$ of two i.i.d. standard stable
$(\alpha)$ variables, whose Mellin transform and probability density were already indicated above.
From above, the problem is to calculate
\begin{equation}
\label{cauchy-stieltjesthlapr}
\E ( 1  + \lambda R_\alpha )^{- 1}  =  \E \exp ( - \lambda \eps T_\alpha/T'_\alpha)
\end{equation}
for independent random variables $\eps \ed \gamma(1)$ and $T_\alpha \ed T'_\alpha$.
But we already know from 
\eqref{eq:shanbag}
 that $\eps /T'_\alpha \ed \eps^{1/\alpha}$. So 
\begin{equation}
\label{cauchy-stieltjesthlapr3}
\E ( 1  + \lambda R_\alpha )^{- 1}  =  \E \exp ( - \lambda \eps^{1/\alpha }  T_\alpha ) = \E \exp ( - \lambda ^\alpha \eps ) = (1 + \lambda^\alpha)^{-1}.
\end{equation}
Thus the distribution of $R_\alpha$ is uniquely characterized by the simple Cauchy-Stieltjes transform
\begin{equation}
\label{cauchy-stieltjesthlapr4}
\E ( 1  + \lambda R_\alpha )^{- 1}  =  (1 + \lambda^\alpha)^{-1} \qquad (\lambda > 0).
\end{equation}
It is notable that the explicit 
formula
\eqref{ralphalphr}
for the density of $R_\alpha$ with Laplace-Stieltjes transform $(1 + \lambda^\alpha)^{-1}$ is much
simpler than the corresponding inversion for the common distribution of $\eps R_\alpha \ed \eps R_\alpha^{-1} \ed \eps^{1/\alpha }  T_\alpha $ which has 
$(1 + \lambda^\alpha)^{-1} $ as its ordinary Laplace transform:
\begin{equation}
\label{mlfunction}
\P( \eps R_\alpha > x ) = E_\alpha( - x^\alpha) \qquad ( x \ge 0 )
\end{equation}
where
$$
E_\alpha(z):= \sum_{k=0}^\infty \frac{ z ^ k } { \Gamma(k \alpha + 1)} \qquad  (z \in \complex)
$$
is the classical {\em Mittag-Leffler function} with parameter $\alpha$. This is an entire function of $z \in \complex$, for each $\alpha \in \complex$ with
strictly positive real part, with $\alpha \in (0,1)$ here.
This formula was found by \citet{MR1054728}. %%, AUTHOR = {Pillai, R. N.}, TITLE = {On {M}ittag-{L}effler functions and related distributions},
See also \cite[(3.9) and (4.37)]{MR1829592} %%, AUTHOR = {Mainardi, Francesco and Luchko, Yuri and Pagnini, Gianni}, TITLE = {The fundamental solution of the space-time fractional diffusion equation},
for closely related transforms, and \citet{MR3244285} %AUTHOR = {Gorenflo, Rudolf and Kilbas, Anatoly A. and Mainardi, Francesco and Rogosin, Sergei V.}, TITLE = {Mittag-{L}effler functions, related topics and applications}, SERIES = {Springer Monographs in Mathematics},
for a recent survey of Mittag-Leffler functions and their applications.
Compare also with the density of $T_\alpha$, given by \citet{MR0018286} %%AUTHOR = {Pollard, Harry}, TITLE = {The representation of {$e^{-x^{\lambda}}$} as a {L}aplace integral},
\begin{equation}
\label{pollard}
\P( T_\alpha \in dt )/dt = \frac{1} {\pi} \sum_{k = 0}^\infty \sin (\alpha k \pi ) \frac{ (-1)^{k+1} \Gamma ( k \alpha + 1 ) }{ k! \, t^{k \alpha + 1 }  } .
\end{equation}
Only for $\alpha = \hf$, when $T_{\hf} \ed 1/(2 \gamma(\hf))$ is there substantial simplification of this series formula. But see
\cite{MR2740992} %%, AUTHOR = {Penson, K. A. and G\'orska, K.}, TITLE = {Exact and explicit probability densities for one-sided {L}\'evy stable distributions},
for explicit expressions for the density \eqref{pollard} in terms of the Meijer $G$ function for rational $\alpha$, and
\cite{MR870201} %%, AUTHOR = {Schneider, W. R.}, TITLE = {Stable distributions: {F}ox functions representation and generalization},
for a general representation of stable densities in terms of Fox functions.
See also \citet{ho2007gibbs}. %%%, James xxx.

Returning to the context of  random discrete distributions, if $P_{\alpha,0}$ is governed by the $(\alpha,0)$ model defined by
normalizing the jumps of a stable$(\alpha)$ subordinator on some fixed interval of length say $s >0$, then it is evident that
for $X = X_p$ the indicator of an event of probability $p$, the distribution of the $P_{\alpha,0}$ mean of $X_p$ is determined by
\begin{equation}
\label{malthzero1}
M_{\alpha,0}(X_p)  \ed \frac{ T_\alpha(p) }{ T_\alpha(1)}  \ed \frac{    p^{1/\alpha} T_\alpha }{ p^{1/\alpha} T_\alpha + q^{1/\alpha} T'_\alpha } \ed \frac{1} {1 + c R_\alpha } 
\end{equation}
where $(T_\alpha(s), s \ge 0 )$ is the stable$(\alpha)$ subordinator with $T_\alpha(s) \ed s^{1/\alpha} T_\alpha$ for 
$T_\alpha$ the standard stable$(\alpha)$ variable as above, and $c:= (q/p)^{1/\alpha}$ for $q := 1 - p$.
Here the second $\ed$ appeals to the decomposition of $T_\alpha(1)$ into two independent components $T_\alpha(1) = T_\alpha(p) + (T_\alpha(1) - T_\alpha(p) )$
with $T_\alpha(p) \ed p^{1/\alpha} T_\alpha$ and $T_\alpha(1) - T_\alpha(p) \ed q^{1/\alpha} T_\alpha$.
The distribution of $M_{\alpha,0}(X_p)$ is thus obtained from that of $R_\alpha$ by a simple change of variable.
Moreover, for any real $X$, the identity
$$
\left( 1 + \frac{ \lambda } { 1 + c X } \right)^{-1} = 1 - \frac{\lambda}{(1 + \lambda)} \left( 1 + \frac{c X}{(1 + \lambda)} \right)^{-1}
$$
allows the Cauchy-Stieltjes transform of $(1 + c X )^{-1}$ to be expressed directly in terms of that of $X$. In particular, for the ratio of independent
stable variables $X = R_\alpha$ with the simple Cauchy-Stieltjes transform \eqref{cauchy-stieltjesthlapr4}, and $c:= (q/p)^{1/\alpha}$ with $q := 1 - p$,
this algebra simplifies nicely to give in  \eqref{malthzero1}
\begin{equation}
\label{lampertipqal}
\E ( 1 + \lambda M_{\alpha,0}(X_p) )^{-1}  = \frac{ q + p ( 1 + \lambda)^{\alpha - 1}} { q + p(1 + \lambda)^\alpha } .
\end{equation}
This is the Stieltjes transform \eqref{lampertistieltjes} found by Lamperti.
%%xxx Discuss behavior up to different random times $T$.
See
\cite[\S 4]{MR1478738} %%%, AUTHOR = {Pitman, Jim and Yor, Marc}, TITLE = {On the relative lengths of excursions derived from a stable subordinator},
for further discussion.

\section{Some basic theory of $P$-means}
\label{sec:basic}
This section presents some general theory of $P$-means, for an arbitrary random discrete distribution $P$, and its relation to Kingman's theory of partition structures,
relying only the simplest examples to motivate the development. This postpones to Section \ref{sec:twomeans} the study of the rich collection of examples associated with the $(\alpha,\theta)$ model.

\subsection{Partition structures}

\citet{MR509954} %%, AUTHOR = {Kingman, J. F. C.}, TITLE = {The representation of partition structures}, 
introduced the concept of the {\em partition structure} associated with sampling from a random probability distribution $F$.
That is, the collection of probability distributions of the random partitions $\Pi_n$ of the set $[n]:= \{1, \ldots, n \}$, 
generated by a {\em random sample $Y_1, \ldots, Y_n$ from $F$,} meaning that conditionally given $F$
the $Y_i$ are i.i.d. according $F$.
The blocks of $\Pi_n$ are the equivalence classes of the restriction to $[n]$ of the random equivalence relation $i \sim j$
iff $Y_i = Y_j$. %%% where $(J_1, J_2, \ldots )$ is a sample from $\Pbul$, or, what is the same almost surely, $Y_i = Y_j$ where $(Y_1, Y_2, \ldots )$ is a sample from $F_\Pbul$.
A convenient encoding of this partition structure is provided by its {\em exchangeable partition probability function (EPPF)}
\citep{MR1337249}. %%AUTHOR = {Pitman, Jim}, TITLE = {Exchangeable and partially exchangeable random partitions}, JOURNAL = {Probab. Theory Related Fields},
This is a function $p$ of
{\em compositions $(n_1, \ldots, n_k)$ of $n$}, that is to say sequences of $k$ positive integers $(n_1, \ldots , n_k)$ with $\sum_{i=1}^k n_i = n$ for some $1 \le k \le n$.
The function $p(n_1, \ldots, n_k)$ gives, for each {\em particular} partition $\{B_1, \ldots, B_k\}$ of $[n]$ into $k$ blocks,
the probability
\begin{equation}
\label{eppfdef}
\P(\Pi_n =  \{B_1, \ldots, B_k\} )  = p( \#B_1,  \ldots, \#B_k) ,
\end{equation}
where $\# B_i$ is the size of the block $B_i$ of indices $j$ with the same value of $Y_j$.
A random partition $\Pi_n$ of $[n]$ is called {\em exchangeable} iff its 
distribution is invariant under the natural action of permutations of $[n]$ on partitions of $[n]$.
Equivalently, its probability function is of the form \eqref{eppfdef} for some function
$p(n_1, \ldots, n_k)$ that is non-negative and symmetric. 
The sum of these probabilities \eqref{eppfdef}, over all partitions
$\{B_1, \ldots, B_k\}$ of $[n]$ into various numbers $k$ of blocks, must then equal $1$. This constraint is most easily expressed in terms
of the 
%The symmetric function $\pex(n_1, \ldots, n_k)$ of compositions  of $n$ defined by \eqref{cppf} is the corresponding {\em exchangeable composition probability function (ECPF)}.
associated {\em exchangeable random composition of $n$}
$$
\Nexn:= (\Nex_{1:n}, \Nex_{2:n} , \ldots, \Nex_{K_n:n})
$$
defined by listing the sizes of blocks of $\Pi_n$ in an exchangeable random order. This means that conditionally given 
the number $K_n$ of components of $\Pi_n$ equals $k$ for some $1 \le k \le n$, and
that $\Pi_n = \{B_1, \ldots, B_k \}$ for some particular sequence of blocks $(B_1, \ldots, B_k)$, which may be listed in any order, for instance their order of least elements,
$\Nexn:= \#B_{\sigma(1)}, \ldots, \#B_{\sigma(k)}$ where 
$\sigma$ is a uniform random permutation of $[k]$.
As indicated in \citet[(2.8)]{MR2245368}, % AUTHOR = {Pitman, J.}, TITLE = {Combinatorial stochastic processes},
the usual probability function of this random composition of $n$ is
the {\em exchangeable composition probability function (ECPF)}
%along with some other standard encodings of a partition structure,
\begin{equation}
\label{nexnform}
\P(\Nexn = (n_1, \ldots, n_k)) = \pex(n_1, \ldots, n_k):= \frac{1 }{k!} \binom{n}{n_1, \ldots, n_k} p(n_1, \ldots, n_k).
\end{equation}
These probabilities must sum to $1$ over all compositions of $n$. So the normalization condition on an EPPF is that
for $\pex$ derived from $p$ using the multiplier in \eqref{nexnform},
\begin{equation}
\label{pexsum1}
\sum_{k = 1}^n \sum_{(n_1, \ldots, n_k)} \pex(n_1, \ldots, n_k) = 1  .
\end{equation}
Here and in similar sums below, $(n_1, \ldots, n_k)$ ranges over the set of $\binom{n-1}{k-1}$ compositions of $n$ into $k$ parts.
To understand \eqref{nexnform}, observe that putting the components of $\Pi_n$ in an exchangeable random order creates a random ordered partition of $[n]$, with block sizes
$\Nexn$. So $\P(\Nexn:= (n_1, \ldots, n_k))$ is the sum,
 over all ordered partition of  $[n]$ into $k$ blocks of the specified sizes,
 of the probability of each ordered partition of those sizes. Each particular ordered partition has probability $p(n_1, \ldots, n_k)/k!$, and
 the number of these ordered partitions with sizes $(n_1, \ldots, n_k)$ 
is the multinomial coefficient.

For $\Pi_n$ generated by sampling from a random discrete distribution with atoms of sizes $(P_j)$,
let $(J_1, \ldots, J_n)$ denote the corresponding sample of positive integer indices. Then for each particular partition $\{B_1, \ldots, B_k\}$ of $[n]$ as in \eqref{eppfdef}
$$
\P\left( ( \Pi_n =  \{B_1, \ldots, B_k\} ) \bigcap_{i=1}^k \bigcap_{\ell \in B_i } (J_\ell = j_i ) \right) =  \E \prod_{i=1}^k P_{j_i}^{n_i} \mbox{ with } n_i := \#B_i.
$$
Hence, by conditioning on $\Pbul$,
\begin{equation}
\label{eppf}
p(n_1, \ldots, n_k):= \sum_{(j_1, \ldots, j_{k})} \E \prod_{i=1}^k P_{j_i}^{n_i} .
\end{equation}
where the sum is over all sequences of $k$ distinct positive integers $(j_1, \ldots, j_k)$.
As observed by Kingman, as $n$ varies, the partition structure associated with sampling from a random distribution is subject to a consistency
condition: the restriction of $\Pi_{n+1}$ to $[n]$ must be $\Pi_n$ for every $n \ge 1$.
In terms of the EPPF, this consistency condition implies
\begin{equation}
\label{consistnnm}
p(\nnn) = \sum_{i = 1}^{k+1} p( \nnn^{(i+)} )
\end{equation}
where $\nnn = (n_1, \ldots, n_k)$ ranges over compositions of $n$, and  $\nnn^{(i+)}$ for $1 \le i \le k +1$ is $\nnn$ with the $i$th component incremented by $1$, meaning 
for $(\nnn, 1)$ obtained by appending a $1$ to $\nnn$ for $i = k+1$.
See \citet[\S 3.2]{MR2245368} % AUTHOR = {Pitman, J.}, TITLE = {Combinatorial stochastic processes},
for further discussion.

The instance of the general formula \eqref{intSg}, when $(S,\SS)$ is the unit interval $[0,1]$ with Borel sets, and the 
$Y_j = U_j$ are i.i.d. uniform $[0,1]$ variables, independent of $P$, is of particular importance.
Write $F_P$ for the random probability measure on $[0,1]$ which sprinkes the atoms of $P$ at i.i.d. uniform random locations.
So by definition,
%In treating the $P$-mean of $X$ for a general distribution of $X$, there
%is no loss of generality, and some conceptual gain, to suppose $X = g(U)$ for some measurable function $g$ from $[0,1]$ to $\reals$,
%for instance the usual inverse c.d.f. 
%$$g(u):= \inf \{x : \P(X \le x) > u \}.$$
% The gain is that this constructs 
%$P$-means of $g(U)$  simultaneously 
for all bounded or non-negative measurable $g$ 
\begin{equation}
\label{gmpu}
\int_{0}^1  g(u) F_P(du) =  M_P( g(U) ):= \sum_{j = 1}^\infty  g(U_j) P_j 
\end{equation}
In particular, for $g(u) = 1 (u \le v)$, the indicator of the interval $[0,v]$, the random cumulative distribution function (c.d.f.) of $F_P$
is 
\begin{equation}
\label{fmpu}
F_P[0,v]:= M_P( 1(U \le v )):= \sum_{j = 1}^\infty  1(U_j \le v)  P_j  \qquad ( 0 \le v \le 1) .
\end{equation}
Note that $F_P[0,0] = 0$ and $F_P[0,1] = 1$ almost surely.

The following proposition summarizes some well known facts:

\begin{proposition}
\label{propkk}
[\citet{kallenberg1973canonical}, %%, title={Canonical representations and convergence criteria for processes with interchangeable increments}, author={Kallenberg, Olav},
\citet{MR509954}] %%, AUTHOR = {Kingman, J. F. C.}, TITLE = {The representation of partition structures}, 
The random c.d.f. $F(v):= F_P[0,v]$, derived as above for $0 \le v \le 1$ from a random discrete distribution $P$, is a process with exchangeable increments,
meaning that for each $m = 1, 2, \ldots$ the sequence $(F(i/m) - F((i-1)/m),1\le i \le m )$ is exchangeable.  The collection of 
distributions of these exchangeable sequences is an encoding of the partition structure generated by $P$, 
as is the collection of finite-dimensional distributions of $\Pdec$, the ranked re-ordering of $P$, and
the collection of finite-dimensional distributions of $\Pst$, the size-biased permutation of $P$. In other words, for two random discrete distributions $P$ and $Q$,
with associated random c.d.f.s with exchangeable increments $F_P$ and $F_Q$,
and exchangeable partition probability functions $p_P$ and $p_Q$, 
the following conditions are equivalent:
\begin{itemize}
\item $\Pdecbul \ed \Qdecbul$ 
\item $\Pst \ed \Qst$
\item $p_P(\nnn) = p_Q(\nnn)$ for all compositions of positive integers $\nnn$;
\item $F_P$ and $F_Q$ share the same finite dimensional distributions.
\end{itemize}
\end{proposition}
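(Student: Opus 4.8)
The basic tool throughout is the sprinkling representation $F_P[0,v]=M_P(1(U\le v))=\sum_{j}1(U_j\le v)P_j$ from \eqref{fmpu}, with $(U_j)$ i.i.d.\ uniform on $[0,1]$ and independent of $P$. The first assertion follows by conditioning: given $(P_j)$ the $U_j$ are i.i.d.\ uniform, so the increment vector $\big(F_P(i/m)-F_P((i-1)/m)\big)_{1\le i\le m}$ has $i$th coordinate $\sum_{j:\,\xi_j=i}P_j$, where $\xi_j:=\lceil mU_j\rceil$ are i.i.d.\ uniform on $\{1,\dots,m\}$; this conditional law is invariant under permuting the coordinates (relabel the values of the $\xi_j$), hence exchangeable, and a mixture of exchangeable laws over the distribution of $(P_j)$ is again exchangeable. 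For the equivalences, observe first that $(\Pdecbul\ed\Qdecbul)\Leftrightarrow(\Pst\ed\Qst)$ is elementary: $\Pdecbul$ is the non-increasing rearrangement of $\Pst$, a deterministic measurable map, while conversely $\Pst$ is produced from the \emph{unordered} family of atoms of $P$ --- itself a functional of the law of $\Pdecbul$ --- by an independent randomization (iterated size-biased sampling) that does not see the original labelling. It then remains to prove the cyclic chain $(\Pdecbul\ed\Qdecbul)\Rightarrow(p_P=p_Q)\Rightarrow(F_P,F_Q\text{ share their finite-dimensional distributions})\Rightarrow(\Pdecbul\ed\Qdecbul)$.

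The implication $(\Pdecbul\ed\Qdecbul)\Rightarrow(p_P=p_Q)$ is immediate from \eqref{eppf}, which exhibits $p(n_1,\dots,n_k)=\sum\E\prod_{i=1}^{k}P_{j_i}^{n_i}$, the sum over tuples of distinct indices, as a symmetric functional of the family $(P_j)$, hence a functional of the law of $\Pdecbul$ alone. The series converges because its terms form a sub-collection, with non-negative coefficients, of the expansion of $\E\big(\sum_j P_j\big)^{n_1+\dots+n_k}=1$.

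The substantive step is $(p_P=p_Q)\Rightarrow$ (shared finite-dimensional distributions). Fix $0=v_0<v_1<\dots<v_m=1$, put $I_r:=(v_{r-1},v_r]$, $\ell_r:=v_r-v_{r-1}$, and $D_r:=F_P(I_r)=\sum_j 1(U_j\in I_r)P_j$. Since each $D_r$ lies in $[0,1]$, the joint law of $(D_1,\dots,D_m)$ is determined by its joint moments, and $(F_P[0,v_1],\dots,F_P[0,v_{m-1}])$ is a fixed linear image of it; so it suffices to compute $\E\prod_r D_r^{a_r}$ from the EPPF. Expanding each $D_r^{a_r}$ and grouping the $n:=\sum_r a_r$ resulting factors (``slots'', $a_r$ of them coloured $r$) according to which atom index they select produces a sum over set partitions $\pi$ of the slots; a block of $\pi$ contributes $0$ unless all of its slots carry the same colour $r$ (otherwise the required event $\bigcap\{U_j\in I_r\}$ over distinct $r$ is empty), in which case it yields a $U$-factor $\ell_r$, and summing the $P$-factors over distinct atom indices attached to the blocks of $\pi$ gives precisely $p(\text{block sizes of }\pi)$ by \eqref{eppf}. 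Hence $\E\prod_r D_r^{a_r}=\sum_{\pi}\big(\prod_{b\in\pi}\ell_{c(b)}\big)\,p\big(|b|:b\in\pi\big)$, summed over ``monochromatic'' set partitions $\pi$ with $c(b)$ the common colour of block $b$ --- a quantity depending on $P$ only through its EPPF.

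Finally, (shared finite-dimensional distributions) $\Rightarrow(\Pdecbul\ed\Qdecbul)$: the finite-dimensional distributions of the c.d.f.\ process determine the law of $F_P$ as a random probability measure on $[0,1]$, and $\mu\mapsto(\text{non-increasing sequence of atom masses of }\mu)$ is a Borel map on the space of probability measures, so the law of $\Pdecbul$ is the push-forward of the law of $F_P$ under this map. (Alternatively one may first recover $p_P$ by drawing a sample $Y_1,\dots,Y_n$ from $F_P$ and reading off the partition into blocks of equal values, and then invoke Kingman's representation theorem for partition structures.) I expect the only real work to be the bookkeeping in the moment expansion of the preceding paragraph --- in particular verifying the monochromaticity constraint and matching the ``distinct index'' sums to \eqref{eppf}; every other implication is either elementary or a routine fact about random measures.
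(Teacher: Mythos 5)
Your argument is correct, but it takes a genuinely different route from the paper. The paper's proof is essentially a collage of citations: it invokes Kallenberg for the two-way equivalence between finite-dimensional distributions of $F_P$ and the law of $\Pdec$, observes that \eqref{eppf} makes the EPPF a functional of $\Pdec$ (or $\Pst$), and then invokes Kingman's representation theorem \citep{MR509954} to recover the law of $\Pdec$ from the EPPF. You instead close the implication cycle $\Pdec \Rightarrow \text{EPPF} \Rightarrow \text{FDD} \Rightarrow \Pdec$ without Kingman's theorem, and without the ``$\Pdec$ determines FDD'' half of Kallenberg's equivalence, by proving the central step $(\text{EPPF}) \Rightarrow (\text{shared FDD})$ directly via the moment identity
$\E\prod_r D_r^{a_r}=\sum_{\pi}\bigl(\prod_{b\in\pi}\ell_{c(b)}\bigr)\,p(|b|:b\in\pi)$ over monochromatic partitions of the slots. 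That computation is the genuine content of your proposal; it is in effect a specialization to indicator variables $1(U\in I_r)$ of the general product-moment formula for $P$-means that the paper derives only later (the proposition preceding Corollary \ref{mainthm}, and its univariate form in Proposition \ref{crlpgf}), so it anticipates, rather than uses, that later machinery. What this buys you is a self-contained proof of the equivalences that does not lean on two nontrivial external theorems; what the paper's route buys is brevity, at the cost of opacity about where the real work lives. Your remaining steps --- the exchangeability of increments by relabelling $\xi_j=\lceil mU_j\rceil$, the $\Pdec\leftrightarrow\Pst$ equivalence, and the recovery of $\Pdec$ from FDD by the Borel map $F\mapsto(\text{ranked jumps})$ on the increasing c\`adl\`ag paths --- are all sound and match the usual arguments.
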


\begin{proof}
As indicated by Kallenberg, the finite-dimensional distributions of $F = F_P$ determine those of the list $\Pdec$ of ranked jumps of $P$, and conversely.
It is obvious that the laws of $\Pdec$ and $\Pst$ determine each other, and that either of these laws determines the EPPF $p_P$, by application
of formula \eqref{eppf} with $P$ replaced by $\Pdec$ or $\Pst$. That the law of $\Pdec$ can be recovered from the partition structure was shown by
\citet{MR509954}. %%, AUTHOR = {Kingman, J. F. C.}, TITLE = {The representation of partition structures}, 
\end{proof}

See also \citet[Theorem 3.1]{MR2245368} % AUTHOR = {Pitman, J.}, TITLE = {Combinatorial stochastic processes},
for an explicit formula expressing the EPPF in terms of product moments derived from $\Pst$.

A nice exercise in Kallenberg's encoding of $P$ by an exchangeable random c.d.f. $F:= F_P$ is provided by the following construction, 
proposed by \citet*[Example 2.10]{MR617593}, %%%, AUTHOR = {Patil, G. P. and Taillie, C.}, TITLE = {Diversity as a concept and its implications for random communities},
in an insightful review article which appeared a year before the general theory of partition structures was offered by
\citet{MR509954}. %%%, AUTHOR = {Kingman, J. F. C.}, TITLE = {The representation of partition structures},
Suppose $P$ is a random discrete distribution with $\P(P_i >0) = 1$ for each $i = 1,2, \ldots$. Let $(U_i)$ be a sequence of
i.i.d. uniform variables, independent of $P$, and for each $0 < p < 1$ consider the sequence $P_i 1 ( U_i \le p)$ obtained by annihilating each $P_i$ with
$U_i >p$ and keeping each $P_i$ with $U_i \le p$. Then a new random discrete distribution $P(p)$, called a {\em $p$-thinning} or {\em $p$-screening} of $P$,
is obtained by ignoring the annihilated entries $P_i$ with $U_i > p$, and listing the remaining entries of $P_i$ with $P_i \le p$ in their original order, 
renormalized by their sum $F(p):= \sum_{i } P_i 1(U_i \le p)$. More precisely, the $j$th entry of $P(p)$ is $P_j(p) := P_{\tau(p,j)}/F(p)$ where $\tau(p,j)$ is the $j$th
index $i$ with $U_i \le p$. So  $\tau(p,j)$ is the sum of $j$ independent copies of $\tau(p,1)$ with the geometric$(p)$ distribution $\P(\tau(p,1) = k) = p q^{k-1}$ for $q:=1-p$,
and the sequence of indices $(\tau(p,j), j = 1,2, \ldots)$ is independent of $P$.
In terms of the random c.d.f. with exchangeable increments $F(u):= \sum_{i} P_i 1 ( U_i \le u)$, whose jumps in some order are the $P_i$, the $p$-thinning $P(p)$ is by construction a listing of jumps of the 
random c.d.f. with exchangeable increments $(F(u p)/F(p), 0 \le u \le 1)$.  In terms of $P$-means, for suitable distributions of $X$, the $P(p)$-mean of $X$ is the ratio
of two jointly distributed $P$-means:
\begin{equation}
\label{ratio}
M_{P(p)} (X) = \frac{ M_P( X 1(U \le p ) ) }{ M_P (1 (U \le p) )} := \frac{ \sum_{i}  X_i 1( U_i \le p )P_i  } { \sum_{i} 1 ( U_i \le p) P_i}  .
\end{equation}
A particularly appealing instance of this construction is described by the following proposition:
%In particular, if 
%$P$ is a random discrete distribution 
%$P(p)$ then provides a construction of the $(0, p \theta )$ model for each $0 < p < \theta$. 
\begin{proposition}
\label{prp:gemthin}
{\em \citep*[Theorem 2.5]{MR617593} }
If $P$ is governed by the GEM$(0,\theta)$ model $P_j := H_j \prod_{i = 1}^{j-1} H_i$ for i.i.d. random factors $H_i$ with $H_i \ed \beta_{1,\theta}$ for some $\theta >0$,
then 
\begin{itemize}
\item [(i)] the random fraction $F(p)$ has beta$(p\theta, q\theta )$ distribution for $q:= 1-p$;
\item [(ii)] the $p$-thinned random discrete distribution $P(p)$ has GEM$(0, p\theta)$ distribution;
\item [(iii)] the fraction $F(p)$ is independent of the random discrete distribution $P(p)$.
\end{itemize}
\end{proposition}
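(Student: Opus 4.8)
The plan is to recognise the $p$-thinning of a $\GEM(0,\theta)$ sequence as the effect of an independent colouring of the jumps of a standard gamma process, and then to read off all three assertions from the jump-thinning decomposition of that process together with the beta-gamma algebra \eqref{betafromgam}--\eqref{betagamindpt}. The steps are: (a) pass to the gamma-process picture and identify the $p$-thinning with restriction of a Dirichlet random measure to $[0,p]$; (b) deduce (i) directly; (c) use the colouring lemma for Poisson jump measures to split the gamma process into independent pieces, and read off (ii) and (iii).

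First I would invoke McCloskey's representation of $\GEM(0,\theta)$ recalled above: if $P$ is $\GEM(0,\theta)$ then its ranked rearrangement is the sequence of normalized jump sizes $(\Delta_i/\gamma(\theta))$ of a standard gamma process $(\gamma(r),\,0\le r\le\theta)$ with L\'evy density $x^{-1}e^{-x}$, and $P$ itself is a size-biased permutation of these. I would realise that size-biased order concretely by ordering the jumps $i$ according to increasing $\eps_i/\Delta_i$, where the $\eps_i$ are auxiliary i.i.d.\ exponential variables independent of the gamma process. Sprinkling the atoms of $P$ at the independent uniforms $U_i$ as in \eqref{sprinkle} turns $F_P$ into Ferguson's Dirichlet random measure with parameter measure $\theta$ times Lebesgue measure on $[0,1]$, and the $p$-thinning of the proposition is \emph{exactly} the restriction of this measure to $[0,p]$: with the same $U_i$ one has $F(p)=F_P[0,p]$, while $F(p)\,P(p)$ is the list, in the $\GEM$ order, of those masses $P_i$ with $U_i\le p$. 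Hence \eqref{dirbeta} applied to $B=[0,p]$ gives (i) immediately, with $q=1-p$.

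For (ii) and (iii) the engine is the colouring lemma: marking each jump of $(\gamma(r),\,0\le r\le\theta)$ independently ``kept'' with probability $p$ and ``discarded'' with probability $q$ splits its jump point process into two \emph{independent} Poisson point processes whose intensities are $p\theta$ and $q\theta$ times the L\'evy density, so the kept and discarded totals $\gamma_K,\gamma_D$ are independent with $\gamma_K\ed\gamma(p\theta)$, $\gamma_D\ed\gamma(q\theta)$ and $\gamma_K+\gamma_D=\gamma(\theta)$. For (ii): the kept jumps, carrying their marks $\eps_i$, are precisely the jump structure of a standard gamma process run to time $p\theta$, and ordering the kept jumps by increasing $\eps_i/\Delta_i$ is the same as restricting to the kept jumps the order that produced $P$; since $F(p)\,P(p)=(\Delta_i:\text{kept})$ in that order, $P(p)=(\Delta_i/\gamma_K:\text{kept})$ in that order, which is a size-biased permutation of the normalized jumps of $(\gamma(r),\,0\le r\le p\theta)$, i.e.\ $P(p)$ has the $\GEM(0,p\theta)$ distribution. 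For (iii): the classical fact that the normalized jumps of a gamma subordinator are independent of its sum makes $P(p)$ independent of $\gamma_K$; and $P(p)$ is a function of the kept jumps and their marks alone, hence independent of $\gamma_D$; so $P(p)$ is independent of $(\gamma_K,\gamma_D)$, and therefore of $F(p)=\gamma_K/(\gamma_K+\gamma_D)$.

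I expect the main obstacle to be the ordering bookkeeping in (ii): one must make sure that the ``original order'' prescribed in the $p$-thinning construction really reproduces the size-biased order of the retained atoms, so that the conclusion is the full $\GEM(0,p\theta)$ statement and not merely a statement about the law of the ranked values of $P(p)$ (which would only give $\mathrm{PD}(0,p\theta)$). This is exactly where independence of the $U_i$ from $P$ enters, and the exponential-mark realisation of the size-biased permutation makes it transparent, since restricting the order ``by increasing $\eps_i/\Delta_i$'' to a subset of the jumps obviously gives the same order rebuilt from that subset. A secondary point, used only in (iii), is the independence of the normalized jumps of a gamma process from its total; both facts are standard and should be cited rather than reproved. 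An alternative route packages all three claims into the self-similarity of the Dirichlet process: the renormalized restriction $F_P(p\,\cdot)/F_P[0,p]$ is again a Dirichlet random measure, with parameter $p\theta$ times Lebesgue measure, and is independent of $F_P[0,p]$.
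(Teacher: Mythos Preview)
Your proposal is correct and follows essentially the same route as the paper: both arguments rest on the gamma-process representation of $\GEM(0,\theta)$ and the beta--gamma algebra. The paper's presentation is slightly more compact because it works directly with the exchangeable-increments process $F(u)=\gamma(u\theta)/\gamma(\theta)$: then $F(p)=\gamma(p\theta)/\gamma(\theta)\ed\beta_{p\theta,q\theta}$ gives (i), the identification $(F(up)/F(p),\,0\le u\le1)=(\gamma(up\theta)/\gamma(p\theta),\,0\le u\le1)$ gives (ii), and the independence of $\gamma(p\theta)/\gamma(\theta)$ from the rescaled process gives (iii) in one stroke---so the paper never needs to invoke the Poisson colouring lemma or the exponential-mark device explicitly. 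Your jump-level decomposition via colouring is the same mechanism unpacked at the point-process level; it buys you a transparent treatment of the size-biased ordering issue you flagged, at the cost of the extra bookkeeping step $P(p)\perp\gamma_K$, $(P(p),\gamma_K)\perp\gamma_D$ $\Rightarrow$ $P(p)\perp(\gamma_K,\gamma_D)$ in (iii), whereas the paper gets that independence directly from \eqref{betagamindpt}. Your ``alternative route'' in the last sentence is in fact exactly the paper's argument.
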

\begin{proof}
As indicated by Patil and Taillie, this is a consequence of the representation of $P$ by random sampling from the random c.d.f. $F(u) = \gamma(u \theta)/\gamma(\theta)$ derived from 
the standard gamma subordinator.  
See \citet[\S 4.2]{MR2245368} % AUTHOR = {Pitman, J.}, TITLE = {Combinatorial stochastic processes},
for a proof of McCloskey's result that the size-biased representation of jumps of this $F$ gives $P$ governed by the GEM$(0,\theta)$ model
with i.i.d. beta$(1,\theta)$ distributed residual factors.
Granted the gamma representation of $P$, part (i) is just the basic beta-gamma algebra \eqref{betafromgam}. Part (ii) holds by the identification  of
$F(up)/F(p) = \gamma( u p \theta)/\gamma(p \theta), 0 \le u \le 1$ as the c.d.f. with exchangeable increments associated with $P(p)$.
%%GEM$(0, p\theta)$ distribution. 
Part (iii) appeals to independence part \eqref{betagamindpt} of the beta-gamma algebra, which  makes 
$F(p) = \gamma(p \theta)/\gamma(\theta)$ independent of the process $(F(up)/F(p), 0 \le u \le 1)$, hence also independent of its list of
jumps $P(p)$ in their order of discovery by a process of uniform random sampling.
\end{proof}

As remarked by Patil and Taillie, the above proposition holds also with GEM$(0,\theta)$ replaced by its decreasing rearrangement,
the Poisson-Dirichlet $(0,\theta)$ distribution.  Various components of the proposition can be broken down and generalized as follows.
% $P_j := H_j \prod_{i = 1}^{j-1} H_i$ for i.i.d. random factors $H_i$ with $H_i \ed \beta_{1,\theta}$ for some $\theta >0$,
%$P$ has the same distribution as the size-biased random permutation of normalized jumps of the standard gamma process $(\gamma(r), 0 \le r \le \theta )$,
%, which has the same
%%distribution as a size-biased permutation of the normalized jumps of $(\gamma(r), 0 \le r \le p \theta )$.
%Some appealing features of this thinning construction of $P(p)$ are indicated in the following proposition.
\begin{proposition}
Let $P(p)$ be the random discrete distribution obtained by $p$-thinning of a random discrete distribution $P$ with $\P(P_i >0) = 1$ for each $i = 1,2, \ldots$. 
\begin{itemize}
\item [(i)] if $P = P^\downarrow$ is in ranked order, then  so is $P(p)$;
\item [(ii)] if $P = P^*$ is in size-biased random order, then  so is $P(p)$;
\end{itemize}
Suppose $P$ is a list of jumps of the random c.d.f. $F$ with exchangeable increments defined by normalization of a subordinator $A$, 
say $F(u) = A(\theta u)/ A(\theta), 0 \le u \le 1$, for some fixed $\theta >0$, then 
\begin{itemize}
\item [(iii)] $P(p)$ is a list of  normalized jumps of the same subordinator on the interval $[0, p \theta]$ instead of $[0,\theta]$.  
\item [(iv)] if $P$ is in either ranked or size-biased order, then the following two conditions are equivalent:
\begin{equation}
\label{eqdp}
P(p) \ed P \mbox{ for every } 0 < p < 1 ;
\end{equation}
\begin{equation}
\label{stablesub}
\mbox{ $A$ is a stable $(\alpha)$ subordinator  for some $0 < \alpha < 1$.}
%for some $c >0$ and $0 < \alpha < 1$; with $\E \exp( - \lambda A(t) ) = \exp( - c t \lambda^\alpha)$ 
\end{equation}
in which case $P^*$ is governed by the GEM$(\alpha,0)$ model with independent residual factors $H_i \ed \beta_{1- \alpha, i \alpha}$ for $i = 1,2, \ldots$.
\end{itemize}
\end{proposition}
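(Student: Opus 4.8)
Part (i) is immediate: the entries of $P(p)$ are the $P_i$ with $U_i\le p$, listed in order of increasing index $i$ and divided by the positive normalizer $F(p)$, so if $P=P^\downarrow$ is non-increasing then so is every such subsequence, and dividing by $F(p)$ preserves this. For (ii) I would use the standard realization of size-biased order as the order of discovery under i.i.d.\ sampling: $P=P^*$ may be obtained by drawing i.i.d.\ indices $I_1,I_2,\ldots$ with $\P(I_n=j\mid P^\downarrow)=P^\downarrow_j$ and reading off the distinct ranked atoms in their order of first appearance along $(I_n)$. Attach the thinning marks to the ranked atoms, so atom $j$ is kept iff its mark is $\le p$; the marks these induce on $P^*$ are the $U_i$ of the thinning construction. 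The key point is that ``the mark of $I_n$ is $\le p$'' has probability $p$ regardless of the value of $I_n$, so the subsequence of those $I_n$ landing on a kept atom is i.i.d.\ from the law of $I$ conditioned, hence renormalized, on the set $K$ of kept atoms, and the order in which the kept atoms first appear along this subsequence equals their order of first appearance along $(I_n)$, which is their index order in $P^*$, which is the order in $P(p)$. Thus $P(p)$ is the size-biased ordering of the renormalized kept masses $(P^\downarrow_j/\sum_{\ell\in K}P^\downarrow_\ell)_{j\in K}$, i.e.\ of $P^\downarrow(p)$; this is (ii).

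\textbf{Part (iii).} Represent $P$ as the (ranked or size-biased) list of normalized jumps of $A$ on $[0,\theta]$, necessarily driftless since $\sum_iP_i=1$, and record the jumps of $A$ on $[0,\theta]$ together with their thinning marks as a Poisson point process on $(0,\theta]\times(0,\infty)\times[0,1]$ with intensity $dt\otimes\mu(d\delta)\otimes du$, where $\mu$ is the L\'evy measure of $A$. Retaining the points with mark $u\le p$ is an ordinary Poisson thinning, yielding a Poisson point process of (time, size) pairs with intensity $p\,dt\otimes\mu(d\delta)$ on $(0,\theta]\times(0,\infty)$, which after rescaling time is the (time, size) process of the jumps of $A$ on $[0,p\theta]$. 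Hence the multiset of retained jump sizes, jointly with its sum, has the law of the multiset of jump sizes of $A$ on $[0,p\theta]$ jointly with $A(p\theta)$; dividing through, the multiset of entries of $P(p)$ has the law of the multiset of normalized jumps of $A$ on $[0,p\theta]$. Using (i) or (ii) to fix the order, $P(p)$ is a (ranked or size-biased) list of normalized jumps of $A$ on $[0,p\theta]$.

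\textbf{Part (iv).} By (iii) together with (i)/(ii), condition \eqref{eqdp} is precisely the assertion that the law of the list of normalized jumps of $A$ on $[0,s]$ is the same for $s=\theta$ and for every $s=p\theta$, $0<p<1$. The implication \eqref{stablesub}$\Rightarrow$\eqref{eqdp} then follows at once from the self-similarity of the stable$(\alpha)$ subordinator, $(A(cs))_{s\ge0}\ed(c^{1/\alpha}A(s))_{s\ge0}$ as processes: the jointly rescaled jumps and total mass on $[0,c\theta]$ equal in law $c^{1/\alpha}$ times those on $[0,\theta]$, and the factor $c^{1/\alpha}$ cancels upon normalization. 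For the converse I would invoke the inverse-L\'evy-measure description of the ranked jumps: with $\overline\mu(x):=\mu((x,\infty))$, assumed infinite at $0+$ so that $A(s)>0$ and the jump set is infinite, and $\Gamma_1<\Gamma_2<\cdots$ the points of a unit-rate Poisson process, the $i$th ranked jump of $A$ on $[0,s]$ is $\overline\mu^{-1}(\Gamma_i/s)$. The $s$-invariant scale-free statistics $\Delta^\downarrow_i(s)/\Delta^\downarrow_1(s)=\overline\mu^{-1}(u\Gamma_i)/\overline\mu^{-1}(u\Gamma_1)$ with $u=1/s$ then have a law free of $u$; taking $i=2$, a Laplace-transform computation in the joint density $e^{-x_2}$ of $(\Gamma_1,\Gamma_2)$ on $\{0<x_1<x_2\}$ forces $t\mapsto\log\overline\mu^{-1}(e^{t})$ to be affine, i.e.\ $\overline\mu(x)=c\,x^{-\alpha}$ for some $c>0$ and $0<\alpha<1$, which is exactly the statement that $A$ is stable$(\alpha)$. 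This last deduction --- checking that $s$-invariance of the law of those ratios forces $\overline\mu^{-1}$ to be an exact power rather than merely regularly varying --- is the main obstacle; I expect to carry it out by the renewal-type functional equation just indicated, or else to sidestep it by quoting the known characterization of the stable subordinator among subordinators as the one whose normalized jump sequence is independent of its total mass \citep*{MR1434129}. Finally, when $A$ is stable$(\alpha)$ the assertion that $P^*$ is GEM$(\alpha,0)$ with independent residual factors $H_i\ed\beta_{1-\alpha,\,i\alpha}$ is the $\theta=0$ case of the Perman--Pitman--Yor stick-breaking representation of the size-biased normalized jumps of a stable subordinator \citep*{MR1434129}; cf.\ also \citet[\S 4]{MR2245368}.
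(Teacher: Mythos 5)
Your parts (i), (ii), and (iii) are correct. For (ii) you take a mildly different but equivalent route from the paper: you realize size-biased order via i.i.d.\ sampling of indices from $P^\downarrow$ and transport the thinning marks to the ranked atoms, where the paper works with a sample from the exchangeable c.d.f.\ $F$ and reads $P(p)$ off as the atoms of $F$ conditioned on $[0,p]$ in order of discovery; both arguments hinge on the same observation that marks are independent of the discovery order. For (iii) you give a genuinely different, more explicit derivation by Poisson thinning of the marked jump point process; the paper instead simply observes that by construction $P(p)$ is the list of jumps of $(F(up)/F(p))_{0\le u\le 1}=(A(up\theta)/A(p\theta))_{0\le u\le 1}$ and cites the preceding gamma-case argument. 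Your version is self-contained, which is a reasonable trade-off. The forward implication in (iv) via self-similarity is also fine and matches the paper.

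The gap is in the converse of (iv). Your primary route, starting from the $s$-invariance of $\bigl(\overline\mu^{-1}(u\Gamma_i)/\overline\mu^{-1}(u\Gamma_1)\bigr)_{i\ge 2}$ and aiming to force $\log\overline\mu^{-1}(e^t)$ to be affine, is plausible but you acknowledge you have not carried it out, and pinning down ``exact power law, not just regularly varying'' from distributional invariance is precisely the nontrivial step. More importantly, your proposed fallback is based on a misremembered fact: independence of the normalized jump sequence from the total mass $A(t)$ is a characterization of the \emph{gamma} subordinator (Lukacs-type), not the stable one --- indeed the paper points this out explicitly in the surrounding discussion --- and moreover it is not the hypothesis you have, which is invariance across intervals, not independence from the total. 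The paper's actual argument is shorter and cleaner: from $P(p)\ed P$ for all $p$ one deduces (via Kallenberg's result, Proposition \ref{propkk}) that the law of the normalized process $(A(up)/A(p))_{0\le u\le 1}$ is the same for all $p$; then \citet[Lemma~7.5]{MR1168191} asserts that the law of $(A(u)/A(1))_{0\le u\le1}$ determines the law of $A(1)$ up to a scale factor, so $A(p)\ed c(p)A(1)$; and the resulting scaling relation for a subordinator is the standard characterization of stability. You should either complete your functional-equation argument or switch to this route.
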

\begin{proof}
Part (i) is obvious. To see part (ii), observe that $P= P^*$ may be constructed by listing the jumps of the associated random c.d.f. with exchangeable increments $F$ in the order they
are discovered by a process of random sampling from $F$. But then by construction as above, $P(p)$ is the list of sizes of jumps of $F$ in $[0,p]$, relative to their sum $F(p)$, in the order of
their discovery in samping from $F$. But the successive values of the sample from $F$ which fall in $[0,p]$ form a sample from $F$ conditioned on $[0,p]$. Thus $P(p)$ is just the list of atoms
of this random conditional distribution in their order of their discovery by a process of random sampling, and it follows that $P(p)$ is in size-biased random order.
Part (iii) is just a reprise of part (ii) of the previous proposition, with  a general subordinator instead of the gamma process.
As for part (iv), if $F$ is derived from a stable subordinator, it is easily seen that the distribution of the process $(F(u p)/F(p) = A(u p ) / A(p), 0 \le u \le 1)$ does not
depend on $p$. Hence $P(p) \ed P$, for either ranked or size-biased ordering of $P$, by (i) and (ii).
Conversely, it is known \citep[Lemma 7.5]{MR1168191} %%AUTHOR = {Pitman, Jim and Yor, Marc}, TITLE = {Arcsine laws and interval partitions derived from a stable
that for a subordinator $A$ the distribution of $A(1)$ is determined up to a scale factor by that of the process $(A(u)/A(1), 0 \le u \le 1)$. 
If $P(p) \ed P$ for all $0 < p < 1$, then the distribution of $(F(u p)/F(p) = A(u p ) / A(p), 0 \le u \le 1)$ is the same for all $0 < p < 1$, hence $A(p) \ed c(p) A(1)$
for some constant $c(p)$. It is well known that for a subordinator $A$ this condition implies that $A$ is stable with some index $\alpha \in (0,1)$ as indicated in \eqref{stablesub}.
\end{proof}

The only part of Proposition  \ref{prp:gemthin} which does not extend to a subordinator more general than the gamma process is the independence of $F(p)$ and $P(p)$.
This is a consequence of independence of  $A(t)$ and $(A(u t)/A(t), 0 \le u \le t )$, which is well known to be a characteristic property of $A(t) = a \gamma( b t)$ for some $a, b>0$.
See \citet[\S 4.2]{MR2245368} % AUTHOR = {Pitman, J.}, TITLE = {Combinatorial stochastic processes},
and work cited there.
See also \citet{MR2004330} %%%AUTHOR = {Pitman, Jim}, TITLE = {Poisson-{K}ingman partitions}, 
and \cite{MR2074696} %%AUTHOR = {\'Emery, Michel and Yor, Marc}, TITLE = {A parallel between {B}rownian bridges and gamma bridges},
for more about bridges with exchangeable increments obtained by normalizing a subordinator.

The construction of infinitely divisible {\em semi-stable laws} by 
\citet[\S 58]{levy54} shows for each fixed $q \in (0,1)$ there exist non-stable subordinators 
such that \eqref{eqdp} holds if $p = q^n$ for some $n = 1,2, \ldots$ but not for all $0 < p < 1$.
Let 
$P_{(\alpha,0)}$ denote a random discrete distribution governed by the $(\alpha,0)$ model, say in size-biased order for simplicity, but
it could just as well be ranked.
Part (iv) of the above proposition implies
that for each probability distribution $\pi$ on $(0,1)$, which might be regarded as a prior distribution on the stability index $\alpha$, the formula
\begin{equation}
\label{almix}
\P( P \in \bullet) = \int_{(0,1)} \pi (d\alpha) \P( P{(\alpha,0)}\in \bullet)
\end{equation}
defines a mixture of $(\alpha,0)$ laws, which governs $P$ with the invariance property \eqref{eqdp} under $p$-thinning for all $0 < p < 1$.

\begin{problem}
Are there any other laws besides \eqref{almix} of random discrete distributions $P$ such that $\P(P_i >0) = 1$ for all $i$ and $P(p) \ed P$ for all $0 < p < 1$?
\end{problem}

See 
\citet{MR1742892} %%AUTHOR = {Pitman, Jim}, TITLE = {Coalescents with multiple collisions}.
and
\citet{MR1768841}  %%, AUTHOR = {Bertoin, Jean and Pitman, Jim},
for various constructions of $P_{(\alpha,0)}$ governed by the $(\alpha,0)$ model as a stochastic process indexed by $\alpha \in (0,1)$.

%that $P_i$ has been constructed as $\Pdec_{\sigma(i)$, where $\Pdec$ is the ranked rearrangement of $P$, and $\sigma$ is the random permutation of positive integers$ obtained by ranking $\Pdec_j/\eps_j$ 
%where $\eps_j$ is a sequence of i.i.d. standard exponential variables, meaning $\eps_j \ed \gamma(1)$. So $P_i = \Pdec_{ \sigma(i) }$ with $\Pdec_{ \sigma(1) } < \Pdec_{ \sigma(2) } <  \cdots$ almost surely.
%The mechanism for construction of $P(p)$ is equivalent to making a $p$-thinning of $P\dec$ and ranking the $\Pdec_j/\eps_j$  that are left. So $P(p)$ is in a size-biased random order.
%Part (iii) follows easily from Kallenberg's representation of processes with exchangeable increments.

\subsection{$P$-means and partition structures}

The present point of view is that the collection of distributions of $P$-means $M_P(X)$, indexed by various distributions of $X$, 
should be regarded as yet another encoding of the partition structure  associated with $P$.
That point of view is justified by the following corollary of Proposition \ref{propkk}, which does not seem to have been
pointed out before. Call a random variable {\em simple} if it takes only a finite number of possible values.

\begin{corollary}
\label{crl:means:kingman}
{\em [Characterization of partition structures by $P$-means]}
For each random discrete distribution $\Pbul$, the collection of distributions of its $\Pbul$-means $M_P(X)$, as $X$ ranges
over simple random variables, is an encoding of the partition structure of $\Pbul$.
That is to say, for any two random discrete distributions $P$ and $Q$, the condition
\begin{itemize}
\item $M_P(X) \ed M_Q(X)$ for every simple $X$
\end{itemize}
can be added to the list of equivalent conditions in the Proposition \ref{propkk}.
\end{corollary}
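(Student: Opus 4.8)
The plan is to add the new condition to Proposition~\ref{propkk} by proving its equivalence with the EPPF condition $p_P = p_Q$, using the moment formula of Corollary~\ref{mainthm} in one direction and the Cram\'er--Wold device together with Proposition~\ref{propkk} itself in the other.

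For the direction $p_P = p_Q \Rightarrow M_P(X)\ed M_Q(X)$: by Corollary~\ref{mainthm}, for each $n$ the moment $\E(M_P(X))^n$ is a fixed expression in the numbers $p_P(n_1,\dots,n_k)$ ranging over compositions of $n$ and in the ordinary moments $\E X,\dots,\E X^n$ of $X$; the identical expression with $p_Q$ in place of $p_P$ computes $\E(M_Q(X))^n$. Hence all moments of $M_P(X)$ and $M_Q(X)$ coincide for every $X$ possessing moments of all orders, in particular for every simple $X$. For simple $X$ one has $|X|\le C$ for some constant $C$, whence $|M_P(X)|\le C$ because $\sum_j P_j = 1$; a bounded random variable is determined by its moments, so $M_P(X)\ed M_Q(X)$.

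For the converse, assume $M_P(X)\ed M_Q(X)$ for every simple $X$. Taking $X=X_p$ with Bernoulli$(p)$ law and invoking Proposition~\ref{crlpgf} recovers only the common law of the block-count $K_n$ for each $n$, which does not pin down the EPPF; the remedy is to feed in finer simple variables built from a uniform seed. Fix $m\ge 1$, let $U$ be uniform on $[0,1]$ as in \eqref{gmpu}, and for reals $x_1,\dots,x_m$ set $X:=\sum_{i=1}^m x_i\,1\big((i-1)/m<U\le i/m\big)$, a simple random variable. By \eqref{fmpu}, $M_P(X)=\sum_{i=1}^m x_i D_i^P$ with $D_i^P:=F_P(i/m)-F_P((i-1)/m)$, and likewise for $Q$. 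The hypothesis gives $\sum_i x_i D_i^P\ed\sum_i x_i D_i^Q$ for every $(x_1,\dots,x_m)\in\reals^m$, so by Cram\'er--Wold $(D_1^P,\dots,D_m^P)\ed(D_1^Q,\dots,D_m^Q)$. As $m$ ranges over all positive integers these are precisely the exchangeable increment sequences of Proposition~\ref{propkk}, whose joint laws encode the partition structure; hence $p_P=p_Q$ (equivalently, $F_P$ and $F_Q$ share all finite-dimensional distributions, the grid points $i/m$ being dense in $[0,1]$ and $F_P$ right-continuous). This closes the equivalence.

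The only substantial point is this last reduction: recognizing that scalar $P$-means of step functions of a single uniform variable already carry the full information of the increment sequences $(F(i/m)-F((i-1)/m))_{1\le i\le m}$, with Cram\'er--Wold converting knowledge of all their linear combinations into knowledge of their joint laws. Everything else is routine manipulation of the moment formula of Corollary~\ref{mainthm} and the elementary fact that bounded random variables are moment-determined.
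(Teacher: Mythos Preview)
Your converse direction is essentially the paper's own argument: recognize that $P$-means of simple step functions of a uniform variable are exactly the linear combinations of values (or increments) of $F_P$, then apply Cram\'er--Wold to recover the finite-dimensional laws of $F_P$. The paper phrases this with $\sum_i a_i\,1(U\le v_i)$ yielding $\sum_i a_i\,F_P[0,v_i]$; you phrase it with disjoint indicators yielding the increments $F_P(i/m)-F_P((i-1)/m)$ directly. These are trivially interchangeable, and your version aligns neatly with the exact form of the exchangeable-increment condition in Proposition~\ref{propkk}.

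Your forward direction differs from the paper's. The paper dispatches it in one line via the reordering invariance \eqref{ranksb}: since the law of $M_P(X)$ depends on $P$ only through $\Pdec$, the hypothesis $\Pdec\ed\Qdec$ gives $M_P(X)\ed M_Q(X)$ immediately. You instead invoke the moment formula of Corollary~\ref{mainthm}, arguing that equal EPPFs force equal moment sequences and then using moment determinacy for bounded variables. This is mathematically correct and not circular (the proof of Corollary~\ref{mainthm} does not rely on the present result), but it is heavier machinery than needed, and in the paper's linear order Corollary~\ref{mainthm} is established only \emph{after} Corollary~\ref{crl:means:kingman}, so as written your argument creates a forward reference. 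The paper's one-line route needs nothing beyond the definition of $M_P(X)$.
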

\begin{proof}
As remarked earlier around \eqref{ranksb}, it the distribution of $M_P(X)$ remains unchanged if 
$P$ is replaced by $\Pdec$, and the same for $Q$ instead of $P$. So $\Pdec \ed \Qdec$ implies $M_P(X) \ed M_Q(X)$.
For the converse, the Cram\'er-Wold theorem shows that the finite-dimensional distributions of $F_P$ are determined
by the collection of one-dimensional distributions of finite linear combinations of $F_P[0,v], 0 \le v \le 1$, each of which is a $P$-mean
by application of \eqref{gmpu}:
$$
\sum_{i} \, a_i \, F_P[0,v_i] = M_P\left(\sum_i  \, a_i 1(U \le v_i) \right) .
$$
So $M_P(X) \ed M_Q(X)$ for all simple $X$ implies that the finite dimensional distributions of $F_P$ and $F_Q$ are the same.
Hence the conclusion, by the preceding proposition.
\end{proof}

Part of how the partition structure of $P$ is determined by the distributions of $P$-means $M_P(X)$, as the distribution of $X$ varies,
is found by consideration of the $P$-means of indicator variables $X$, that is $X = 1(U \le v)$ whose $P$-mean
is $F_P(v)$. So there is the following proposition, which also does not seem to have been noticed before, though it is the
easiest case for an indicator variable of the general moment formula for $P$-means, due to Kerov,
 which is presented later in Corollary \ref{mainthm}.

\begin{proposition}
\label{crlpgf}
Let $F(v):= F_P[0,v]$ be the random cumulative distribution function with exchangeable increments on $[0,1]$ derived from 
a random discrete distribution $\Pbul$, and  let $K_n$ be the number of distinct values in 
a random sample of size $n$ from either $\Pbul$ or from $F$.
Then the $n$th moment of $F(v)$ is  a polynomial in $v$ of degree at most $n$, which equals the
probability generating function of $K_n$ evaluated at $v$:
\begin{equation}
\label{pgfform}
\E  [ F(v)]^n = \E v^{K_n} = \sum_{k=1}^n \P(K_n = k)  v^k 
\end{equation}
where $\P( K_n = k )$ is determined by the ECPF $\pex$ of $P$ according to the formula
\begin{equation}
\label{kndist}
\P( K_n = k ) = \sum_{(n_1, \ldots, n_k) } \pex(n_1, \ldots, n_k)  
\end{equation}
where the sum is over all $\binom{n-1}{k-1}$ compositions of $n$ into $k$ parts.
Consequently, the collection of one-dimensional distributions of $K_n$, for $n = 1,2, \ldots$  determines the
collection of one-dimensional distributions of $F(v)$ for $0 \le v \le 1$, and vice versa.
\end{proposition}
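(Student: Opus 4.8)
The plan is to compute $\E[F(v)^n]$ directly from the series representation \eqref{fmpu}, namely $F(v)=\sum_j 1(U_j\le v)P_j$, by introducing an i.i.d. index sample from $P$. Conditionally on $P$, let $J_1,\dots,J_n$ be i.i.d. with $\P(J_i=j\mid P)=P_j$, taken independent of the i.i.d. uniform sequence $(U_j)$. Conditioning on $P$ and on $(U_j)$ and expanding,
\begin{equation}
\E\Big[\prod_{i=1}^n 1(U_{J_i}\le v)\ \Big|\ P,(U_j)\Big]=\Big(\sum_j 1(U_j\le v)P_j\Big)^{\!n}=F(v)^n ,
\end{equation}
so taking full expectations gives $\E[F(v)^n]=\E\big[\prod_{i=1}^n 1(U_{J_i}\le v)\big]$: the $n$th moment of $F(v)$ is the probability that all of $U_{J_1},\dots,U_{J_n}$ land in $[0,v]$.

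Next I would pass to the partition $\Pi_n$ of $[n]$ generated by the sample $(J_1,\dots,J_n)$, grouping indices $i$ that share a value of $J_i$; by definition $K_n$ is the number of blocks of $\Pi_n$. Since $(U_j)$ is an i.i.d. uniform sequence independent of $\big(P,(J_i)\big)$, conditionally on $\Pi_n$ the labels attached to the blocks are i.i.d. uniform $[0,1]$, and there are exactly $K_n$ of them; thus the distinct values among $U_{J_1},\dots,U_{J_n}$ form $K_n$ i.i.d. uniform variables that are independent of $K_n$. Hence $\prod_{i=1}^n 1(U_{J_i}\le v)=1$ iff each of these $K_n$ uniforms is $\le v$, an event of conditional probability $v^{K_n}$ given $K_n$. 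Taking expectations,
\begin{equation}
\E[F(v)^n]=\E v^{K_n}=\sum_{k=1}^n \P(K_n=k)\,v^k ,
\end{equation}
a polynomial in $v$ of degree at most $n$ because $1\le K_n\le n$ almost surely. The formula \eqref{kndist} for $\P(K_n=k)$ is then immediate: summing the ECPF \eqref{nexnform} over all compositions $(n_1,\dots,n_k)$ of $n$ with exactly $k$ parts gives the probability that the exchangeable random composition $\Nexn$ has $k$ parts, which is $\P(K_n=k)$.

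For the last assertion, for each fixed $n$ the function $v\mapsto\E[F(v)^n]$ is the polynomial $\sum_{k=1}^n\P(K_n=k)v^k$, and since $v,v^2,\dots,v^n$ are linearly independent its coefficients $\P(K_n=k)$ are recovered from this function (e.g. by evaluation at $n$ distinct points, or by differentiation at $0$); so the one-dimensional laws of $F(v)$, $0\le v\le 1$, determine the law of $K_n$ for every $n$. Conversely, knowing the law of each $K_n$ yields every moment $\E[F(v)^n]=\E v^{K_n}$, and since $F(v)$ takes values in $[0,1]$ its law is determined by its moment sequence, as recalled after \eqref{cauchy-stieltjesth}; hence the one-dimensional laws of $F$ are recovered. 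I do not expect a genuine obstacle here: the only step that needs care is the claim that the block-labels of $\Pi_n$ are i.i.d. uniform and independent of $K_n$, which rests solely on the independence of $(U_j)$ from $\big(P,(J_i)\big)$; everything else is bookkeeping, and one could alternatively read \eqref{pgfform} off the general Kerov moment formula in Corollary \ref{mainthm}, but the direct argument above is shorter.
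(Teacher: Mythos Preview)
Your argument is correct and follows essentially the same route as the paper: both compute the probability of the event $\bigcap_i(V_i\le v)$ for a sample $V_i=U_{J_i}$ from $F$ in two ways, first conditioning on $F$ to get $F(v)^n$, then conditioning on $K_n$ to get $v^{K_n}$. You are simply more explicit about the construction $V_i=U_{J_i}$ and about the final ``consequently'' clause, which the paper leaves implicit.
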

\begin{proof}
Formula \eqref{pgfform} displays two different ways of evaluating the probability of the event 
$E:= \cap_{1 \le i \le n} (V_i \le v)$ for a random sample $V_1, \ldots , V_n$ from $F$.
On the one hand, $\P(E \giv F) = [F(v)]^n$. On the other hand, $\P(E \giv K_n = k) = v^k$, because given $k$ distinct values of the $V_i$, these values are $k$ 
independent uniform $[0,1]$ variables $U_j, 1 \le j \le k$, which all fall to the left of $v$ with probability $v^k$.
\end{proof}

It is known 
\citep{MR2238047} %%AUTHOR = {Nacu, \c Serban}, TITLE = {Increments of random partitions}, JOURNAL = {Combin. Probab. Comput.},
that another equivalent condition is equality in distribution of the two sequences $(K_n, n \ge 1)$ generated by sampling
from $P$ and $Q$ respectively. 

\begin{problem}
Does equality of the one-dimensional distributions of $K_n$, generated by sampling from $P$ and $Q$ for each $n$, imply equality of partition structures?
\end{problem}
By Corollary \ref{crlpgf}, this condition is the same as equality of one-dimensional distributions of $F_P[0,p]$ and $F_Q[0,p]$ for each $0 \le p \le 1$.
So the issue is whether the finite-dimensional distributions of an increasing process with exchangeable increments are determined by its one-dimensional distributions. 
[\citet{kallenberg1973canonical}, %%, title={Canonical representations and convergence criteria for processes with interchangeable increments}, author={Kallenberg, Olav},
established a result in this vein, that the distribution of any process on $[0,1]$ with exchangeable increments and continuous paths is determined by its one-dimensional distributions.  

It appears that the distribution of an exchangeable random partition $\Pi_n$ on $[n]$, with restrictions $\Pi_m$ to $[m]$
for $m \le n$, is determined by the collection of distributions of $K_m$, the number of blocks of $\Pi_m$, for $1 \le m \le n$,
for $n \le 11$ but not  for $n = 12$.
To see this, consider the $\np(n)$ probabilities of individual partitions of $n$ in the distribution of the
partition of $n$ induced by the ranked block sizes of $\Pi_n$, where $\np(n)$ is the number of partitions of $n$.
These $\np(n)$ probabilities are subject only to the constraints of being non-negative, with sum $1$, so the range of $\np(n)-1$ of these probabilities
contains some open ball in $\reals^{\np(n) - 1}$. The $\P(K_m = k)$ for $1 \le k <  m \le n$ then form a collection of $\binom{n-1}{2}$ linearly independent linear
combinations of the $\np(n)$. It is easily checked that $\np(n) - 1 \le \binom{n-1}{2}$ for $1 \le n \le 11$, but
$\np(12) - 1 = 76 > 66 = \binom{12}{2}$. Hence the conclusion. However, it does not seem at all obvious how to construct such an example
which is part of an infinite partition structure derived by sampling from a random discrete distribution.

The following proposition develops the meaning of the terms $\pex(n_1, \ldots, n_k)$ in the sum \eqref{kndist} for $\P(K_n = k)$, 
in the context of the preceding proof.

\begin{proposition}
Let $V_1, \ldots, V_n$ be a sample from $F_P$, meaning that
\begin{equation}
\label{Vdef}
\P \left ( \cap_{i=1}^n (V_i \le v_i) \giv F_P \right) = \prod_{i=1}^n F_P[0,v_i] \qquad ( 0 \le v_i \le 1).
\end{equation}
Let $K_n$ be the number of distinct values among $V_1, \ldots, V_n$,  and let $\Nexn$ be the numbers of repetitions of these values in the 
sample $V_1, \ldots, V_n$, in increasing order of $V$-values.
Then $\Nexn$ is an exchangeable random composition of $n$ with the probability function $\pex$ featured in formulas \eqref{nexnform} and \eqref{kndist}.
\end{proposition}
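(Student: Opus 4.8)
The plan is to reduce the statement to Kingman's construction of $\Pi_n$ by integer sampling, after first realizing the sample from $F_P$ in a convenient concrete form. Write $V_i = U_{J_i}$, where $J_1,\ldots,J_n$ is a sample of positive integer indices from $P$ (so that conditionally on $P$ the $J_i$ are i.i.d.\ with $\P(J_i = j \giv P) = P_j$), and $(U_j, j\ge 1)$ are i.i.d.\ uniform$[0,1]$ variables independent of $(P,(J_i))$. Conditioning on $(P,(U_j))$ gives $\P(V_i \le v \giv P, (U_j)) = \sum_j 1(U_j \le v) P_j = F_P[0,v]$, with independence across $i$, which is exactly \eqref{Vdef}; hence this construction has the required joint law of $(F_P,V_1,\ldots,V_n)$, and it suffices to compute the law of $(K_n,\Nexn)$ in it.

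Next I would identify the partition. Since the $U_j$ are pairwise distinct almost surely, $V_i = V_{i'}$ iff $J_i = J_{i'}$, so the partition $\Pi_n$ of $[n]$ into the classes $\{i : V_i = \text{const}\}$ coincides with the partition generated by the integer sample $(J_i)$ from $P$. By the definition of the partition structure of $P$, equivalently by formula \eqref{eppf}, this $\Pi_n$ is exchangeable with EPPF $p = p_P$; in particular $K_n$ is its number of blocks, with the distribution \eqref{kndist}.

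Finally I would handle the ordering. Let $\{j_1,\ldots,j_k\}$ be the distinct values taken by $(J_i)$, with $k = K_n$; then the distinct $V$-values are precisely $\{U_{j_1},\ldots,U_{j_k}\}$, and $\Nexn$ lists the block sizes $\#\{i : J_i = j_m\}$ in the order determined by the ranks of $U_{j_1},\ldots,U_{j_k}$. The key point is that conditionally on $(P,(J_i))$ — hence on $\Pi_n$ together with the labelling of its blocks — the variables $(U_j)_{j\in\{j_1,\ldots,j_k\}}$ are still i.i.d.\ uniform, because the conditioning event does not involve the $U$'s; therefore their relative order is a uniformly random permutation of the $k$ blocks, independent of the block sizes. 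Thus $\Nexn$ is obtained from $\Pi_n$ by listing its block sizes in a uniformly random order, which is exactly the definition of the exchangeable random composition $\Nexn$ introduced above, so its probability function is the ECPF $\pex$ of \eqref{nexnform}.

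The \emph{main obstacle} — indeed the only delicate point — is the independence claim in the last step: although the index set $\{j_1,\ldots,j_k\}$ over which the relevant uniforms are selected is itself random (a measurable function of $(J_i)$), conditioning on $(P,(J_i))$ leaves those particular $U_j$ i.i.d.\ uniform, so the induced ordering of the blocks is genuinely uniform and independent of their sizes. The rest is bookkeeping, and one could alternatively verify the conclusion by a direct computation of $\P(\Nexn = (n_1,\ldots,n_k))$ from \eqref{Vdef}, recovering the multinomial factor $\binom{n}{n_1,\ldots,n_k}$ from the arrangements of the sample and the $1/k!$ from the uniform ordering of the distinct values.
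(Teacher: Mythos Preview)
Your proposal is correct and follows essentially the same route as the paper: realize $V_i = U_{J_i}$ with $(J_i)$ a sample from $P$ and $(U_j)$ i.i.d.\ uniforms, identify the partition of $[n]$ by $V$-ties with the partition by $J$-ties, and then use that the relevant uniforms, conditional on the sampling, induce a uniform random ordering of the blocks. If anything, you are more careful than the paper about the conditioning that justifies the uniform-permutation step (the paper glosses this as the standard fact that the ranks of i.i.d.\ uniforms are a uniform permutation independent of their order statistics).
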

\begin{proof}
By construction, $K_n$ is the number of blocks of $\Pi_n$, the random partition of $[n]$ generated by sampling from $P$. 
On the event of probability one that there are no ties among the $U$-values, the association $V_i = U_{J_i}$ pairs distinct $V$-values with distinct $J$-values 
in a sample $J_1, \ldots, J_n$ of indices of $P$.
Thus $K_n$ is the number of distinct values in a sample of size $n$ from $P$, and  the distinct $V$-values are the uniform order statistics
$$
U_{1:K_n} < U_{2:K_n} < \cdots < U_{K_n:K_n}
$$
where for $ k = 1,2, \ldots $ the $U_{1:k} < U_{2:k} < \cdots < U_{k:k}$ are the order statistics of the first $k$ i.i.d. uniform variables $U_1, \ldots, U_k$.
It is well known that $U_i = U_{\sigma_k(i):k}$ for a random permutation $\sigma_k$ of $[k] $ that is independent of these $k$ order statistics. Hence
$\Nexn$ is an exchangeable random composition  whose probability function \eqref{nexnform} encodes the partition structure of $P$.
\end{proof}

%The study of sequences such averages, say $\tilX_n:= \sum_{j} X_j  P_{n,j}$ where each $P_{n,j}, j \ge 1$ is a fixed discrete distribution, 
%with $\max_{j} P_{n,j} \to 0$ as $n \to \infty$, and the $X_j$ are either independent and identically distributed (i.i.d.) copies of some
%basic variable $X$, or subject to various weaker conditions, 
%is the subject of the classical laws of large numbers. For instance, if $P_{n,\bullet}$ is the uniform distribution on 
%the set $[n]:= \{1,\ldots, n \}$, 
%%%defined by $P_{n,j}= 1(j \le n )/n$, 
%then according to Kolmogorov's strong law of large numbers, in the i.i.d. case $\tilX_n:= (1/n) \sum_{j = 1}^n X_j \to \E(X)$ almost surely whenever 
%$\E(X)$ is well defined.

\subsection{$P$-means as conditional expectations}

%More generally for i.i.d. copies $X_j$ of $X$, the random weighted average $\tilX$ may be called
%a {\em $P$-mean of $X$}, and denoted  e.g. $\tilX_P$ or $\tilX_n$ as above, to indicate what distribution 
%of $P:= (P_1, P_2, \ldots)$ is involved.
The point of view taken here is that a random discrete distribution $P$ may be regarded as a probabilistic mechanism for turning a suitable random variable 
$X$ into another random variable $M_P(X)$. Considered in this way, $M_P$ becomes an operator on random variables $X$, 
whose properties are those of a conditional expectation operator. In the first instance, the definition $M_P(X):= \sum_j X_j P_j$,
makes $M_P$ an operator on probability distributions, which converts the common distribution of $X$ and the $X_j$ 
into the distribution of the new random variable $M_P(X)$. There is no specification of which of the many identically distributed variables $X_j$ 
should be regarded as $X$. 
This construction of $\tilX:= M_P(X)$ puts $\tilX$ on the same probability space as all the copies $X_j$ of $X$. But the joint distribution of $\tilX$ and $X_j$ will typically depend on $j$.
So there is no well defined joint distribution of $\tilX$ and a generic representative $X$ of the terms $X_j$ without some further precision. 
For instance, if $\E(X) = 0$ and $\E X^2 < \infty$, then the covariance 
$$
\E (\tilX X_j )  = ( \E  P_j) \E X^2
$$
will typically depend on $j$. Only  exceptionally, as in the case of 
%an
%{\em symmetric distribution of $(P_1, \dots, P_m)$}, meaning that the random variables 
exchangeable $P_1, \dots, P_m$, 
does the joint law of $(\tilX,X_j)$ not depend on $j$ for some finite range $1 \le j \le m$.
This apparent lack of a  joint distribution of $X$ and $\tilX:= M_P(X)$ should be contrasted with conditional expectations $\tilX:= \E(X \giv \GG)$ 
for $\GG$ any sub $\sigma$-field of events in a probability space $(\Omega, \FF, \P)$ on which $X$ is defined and integrable. For then $\tilX$ and $X$ are
defined on the same probability space, with an induced joint probability distribution $\P( (X,\tilX) \in \bullet)$ on $\reals^2$.

There are however many indications in the  literature of particular $P$-means, that the operation which transforms  a random variable $X$ into 
$M_P(X)$ shares properties of a conditional expectation operator $\E(X \giv \GG)$. 
Most obviously, $M_P$ is a {\em positive operator}:
$X \ge 0$ implies $M_P(X) \ge 0$, and $M_P$ is a {\em linear operator}, meaning that if $(X,Y)$ has some arbitrary joint distribution,  
such that both $\tilX:= M_P(X)$ and $\tilY:= M_P(Y)$ are well defined almost surely, then the natural construction of a random pair $(\tilX, \tilY) := M_P(X,Y)$, using
one copy of $P$ and an i.i.d.  sequence $(X_j, Y_j)$ of copies of $(X,Y)$, makes
$$
M_P( a X + b Y ) = a M_P(X)  + b M_P(Y) .
$$
It is also easily shown there is a monotone convergence theorem for $P$-means:  with the same coupling construction 
\begin{equation}
\label{monconv}
\mbox{ $0 \le X_n \uparrow X$ as $n \to \infty$ implies $0 \le M_P(X_n) \uparrow M_P(X)$ a.s.} 
\end{equation}
All of which supports the idea that $P$-means should be regarded as some kind of conditional expectation operator.
In fact, for any prescribed distribution of $X$ on an abstract measurable space,  there is the following
{\em canonical construction} of $X$ jointly with a sequence of i.i.d. copies $(X_j)$ of $X$ and a random discrete $P$ with any
desired distribution, and a suitable $\sigma$-field of events $\GG$, which makes
$$M_P[ g(X) ] = \E [ g(X) \giv \GG ] \qquad a.s.$$
for all bounded or non-negative measurable functions $g$.
Assume that the $(X_j)$ and $(P_j)$ are defined together with a uniform $[0,1]$ variable $U$, as needed for further randomization,
on some probability space  $(\Omega, \FF, \P)$, with $(X_j)$ , $(P_j)$ and $U$ independent.
Conditionally given $(X_j)$ and $P = (P_j)$ let $J$ be a random draw from $P$:
$$
\P(J= j \giv X_1, X_2, \ldots, P_1, P_2, \ldots ) = P_j \qquad ( j = 1,2, \ldots ),
$$
which may be constructed in the usual way by letting 
$$
\mbox{ $J= j$ if $\sum_{i = 1}^{j-1} P_i < U \le \sum_{i= 1}^j P_i$.  }
$$
Then set
$$ 
X:= X_J .
$$
So $X$ is not any particular $X_j$, but $X = X_J$ for $J$ picked at random according to $P$, independently of the entire sequence of $X_j$-values.
Then the following proposition is easily verified:
\begin{proposition}
\label{prp:canon}
Let $X:= X_J$ be defined in terms of an i.i.d. sequence $(X_j)$ and a random discrete distribution $(P_j)$ independent of $(X_j)$ by this canonical construction,
with the random index $J$ picked according to $P$, independently of $(X_j)$. 
Then
\begin{itemize}
\item the distribution of $X$ is the common distribution of the $X_j$;
\item for each measurable function $g$ with $\E |g(X)| < \infty$, let the $P$-mean of $g(X)$  be defined by
$$
M_P[ g(X) ] := \sum_{j = 1}^\infty  g(X_j) P_j  .
$$
Then the series converges absolutely both almost surely and in $L^1$, and $M_P[ g(X) ]$ is the conditional expectation
$$
M_P[ g(X) ] = \E[ g(X) \giv  X_1, X_2, \ldots, P_1, P_2 , \ldots ]   \mbox{ a.s. }
$$
\item
In particular, if $X$ is real-valued with $\E |X| < \infty$, and $\tilX:= M_P(X)$, then
$$
\E( X \giv \tilX ) = \tilX
$$
so the sequence $(\E X, \tilX , X )$ is a three term martingale.
\end{itemize}
\end{proposition}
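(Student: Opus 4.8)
The plan is to introduce the $\sigma$-field $\GG := \sigma(X_1, X_2, \ldots, P_1, P_2, \ldots)$ and to derive all three assertions from the single identity $\E[g(X) \giv \GG] = M_P[g(X)]$, proved first for bounded $g$ and then extended to integrable $g$. First I would record that, because $J$ is built from the uniform variable $U$ that is independent of $\GG$ by the quantile recipe $J=j$ iff $\sum_{i<j}P_i < U \le \sum_{i\le j}P_i$, the conditional law of $J$ given $\GG$ is $\P(J=j\giv\GG)=P_j$ almost surely. Since each $X_j$ is $\GG$-measurable, for bounded measurable $g$ one writes $g(X)=g(X_J)=\sum_j g(X_j)\,1(J=j)$ and interchanges the uniformly bounded partial sums with the conditional expectation (conditional dominated convergence) to get
$$\E[g(X)\giv\GG] \;=\; \sum_j g(X_j)\,\E[1(J=j)\giv\GG] \;=\; \sum_j g(X_j)P_j \;=\; M_P[g(X)].$$

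Taking ordinary expectations in this identity for bounded $g$, and using the independence of $(X_j)$ from $(P_j)$ together with $\sum_j \E P_j = \E\sum_j P_j = 1$ (Tonelli, since $\sum_j P_j = 1$ a.s.), gives $\E g(X) = \sum_j \E g(X_j)\,\E P_j = \E g(X_1)$, so $X$ has the common law of the $X_j$. Next, for $g$ with $\E|g(X)|<\infty$ --- equivalently, by what was just shown, $\E|g(X_1)|<\infty$ --- the same Tonelli argument yields $\E\sum_j |g(X_j)|P_j = \E|g(X_1)| < \infty$, so $\sum_j|g(X_j)|P_j<\infty$ a.s.\ and $M_P[g(X)]=\sum_j g(X_j)P_j$ converges absolutely a.s.\ and, being dominated by the integrable $\sum_j|g(X_j)|P_j$, in $L^1$ as well. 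To pass the conditional-expectation identity from bounded to integrable $g$, I would truncate with $g_n := (g\wedge n)\vee(-n)$: the identity holds for each $g_n$; the left-hand sides converge in $L^1$ to $\E[g(X)\giv\GG]$ by conditional dominated convergence (dominated by $|g(X)|$), and the right-hand sides $M_P[g_n(X)]$ converge in $L^1$ to $M_P[g(X)]$ (dominated by $\sum_j|g(X_j)|P_j$), so $\E[g(X)\giv\GG]=M_P[g(X)]$ a.s.

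For the last assertion, take $g$ the identity, so $\tilX = M_P(X) = \E[X\giv\GG]$; since $\tilX$ is $\GG$-measurable, $\sigma(\tilX)\subseteq\GG$, and the tower property gives $\E(X\giv\tilX)=\E\bigl[\E(X\giv\GG)\giv\tilX\bigr]=\E[\tilX\giv\tilX]=\tilX$, while $\E\tilX=\E X$; hence $(\E X,\tilX,X)$ is a three-term martingale for the filtration $\{\emptyset,\Omega\}\subseteq\sigma(\tilX)\subseteq\sigma(X,\tilX)$. The proof has no genuine difficulty; the only care required is bookkeeping --- each interchange of a sum with a (conditional) expectation, and of a limit with a conditional expectation, must be licensed by an integrable dominating function --- and the minor point of logical order that the law of $X$ must be identified before one is entitled to replace $\E|g(X)|$ by $\E|g(X_1)|$ in the integrability hypothesis.
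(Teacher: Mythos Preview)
Your proof is correct. The paper does not actually supply a proof of this proposition; it merely says the result ``is easily verified'' and states it. Your argument fills in exactly the details one would expect: identifying $\P(J=j\mid\GG)=P_j$ from the construction of $J$ via $U$ independent of $\GG$, establishing the conditional-expectation identity first for bounded $g$ and then extending by truncation and dominated convergence, and deducing the martingale property via the tower rule. This is the natural route and doubtless what the author had in mind.
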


Consequently, for each random discrete distribution of $P$, the transformation from the distribution of $X$ to that of its $P$-mean $\tilX$
enjoys {\em all} the well known general properties of conditional expectation operator.
So $P$-means should be properly be understood, like conditional expectations,  as a kind of partial averaging operator.
Some of these properties of $P$-means inherited from conditional expectations are listed in the following corollary.
Recall that the {\em convex partial order} on the distributions of real valued random variables $X$ and $Y$ with finite means is defined by
$X \lecx Y $ iff
\begin{equation}
\label{jensen}
\E \phi( X ) \le \E \phi (Y) \mbox{ for every convex function $\phi$.  }
\end{equation}
This relation $X \lecx Y$ should be understood as a relation between the distributions of $X$ and of $Y$, subject to $\E|X| <\infty$ and $\E|Y| <\infty$,
comparable to the usual stochastic order $X \led Y$, meaning that $\E \phi( X ) \le \E \phi (Y) $ for all bounded increasing $\phi$.
Because every convex function $\phi(x)$ is bounded below by some affine function $a x + b$, the assumption $\E|X| <\infty$ implies 
$\E \phi(X)$ has a well defined value which is either finite or $+\infty$ for every convex $\phi$, and similarly for $Y$.  So  for $X$ and $Y$
with both $\E |X| < \infty$ and $E |Y| <\infty$, the meaning of the condition \eqref{jensen} can be made more precise 
in either of the following equivalent ways:
\begin{itemize}
\item \eqref{jensen} holds for all convex $\phi$, allowing $+\infty$ as a value on one or both sides;
\item \eqref{jensen} holds for all convex $\phi$ such that both $\E \phi(X)$ and $\E \phi(Y)$ are finite.
\end{itemize}
It is known \citep*[\S 2.A]{MR2265633} %% {Shaked, Moshe and Shanthikumar, J. George}, for background regarding the convex order of distributions on the line.
that further equivalent conditions are
\begin{itemize}
\item $\E X = \E Y $ and the inequality \eqref{jensen} holds for $\phi(x) = (x-a)_+$ for all $a \in \reals$;
\item $\E X = \E Y $ and the inequality \eqref{jensen} holds for $\phi(x) = |x-a|$ for all $a \in \reals$.
\end{itemize}
Given some prescribed distributions on the line for $X$ and for $Y$, a {\em coupling of $X$ and $Y$} is a construction of random variables
$X$ and $Y$ with these distributions on a common probability space.
It is a well known that $X \led Y$ is equivalent to existence of a coupling of $X$ and $Y$ with $\P(X \le Y) = 1$: 
simply take $X = F_{X}^{-1}(U)$ and $Y = F_{Y}^{-1}(U)$ where $F_X^{-1}$ and $F_Y^{-1}$ are the usual inverse distribution functions, and $U$ has
uniform $[0,1]$ distribution. 

By Jensen's inequality for conditional expectations,  $X \lecx Y$ is implied by
\begin{itemize}
\item there exists a {\em martingale coupling of $X$ and $Y$}, that is a construction of $X$ and $Y$ with $\E(Y \giv X) = X$.
\end{itemize}
That remark is all that is 
needed to deduce the following Corollary from Proposition \ref{prp:canon}.
It is a well known result of Strassen that $X \lecx Y$ implies the existence of a martingale coupling of $X$ and $Y$.
But the construction is quite difficult and not explicit in general.
See \citet*{MR2808243} %%, AUTHOR = {Hirsch, Francis and Profeta, Christophe and Roynette, Bernard and Yor, Marc}, TITLE = {Peacocks and associated martingales, with explicit
and \citet*{MR3706738} for this result and more about the convex order.  %%% TITLE = {Complete duality for martingale optimal transport on the line},
  %FJOURNAL = {The Annals of Probability},
  %  VOLUME = {45},
  %    YEAR = {2017},
%One more general fact about the convex order is worth recording:
%if $X \lecx Y$ and $Y \lecx X$, meaning that \eqref{jensen} holds with equality for all convex $\phi$, then $X \ed Y$.

\begin{corollary}
\label{crl:convex}
Let $X$ be a random variable with $\E |X | < \infty$, and let $\tilX:= M_P(X)$ be its $P$-mean for some random discrete distribution $P$.
Then $\tilX \lecx X$.  In particular:
\begin{itemize}
\item [(i)] $\E |\tilX | \le \E|X| < \infty$ and $\E \tilX = \E X$. 
\item [(ii)]If $\E |X|^r < \infty$ for some $r >1$ then $\E | \tilX |^r \le \E |X|^r < \infty$.
%\item [(ii)] If $\E \exp(  t X ) < \infty$ for $ t = \pm \epsilon$ for some $\epsilon > 0$, then $\E \exp(  t \tilX ) \le \E \exp(  t X ) < \infty$ for $|t| \le \epsilon$;
%consequently, the distribution of $\tilX$ is uniquely determined by either its moment generating $\E \exp(  t \tilX ) $ for $|t| \le \epsilon$, or by its
%sequence of moments $\E \tilX^n$ for $n = 1,2, \ldots$.
\item [(iii)] The distributions of $X$ and $\tilX$ cannot be the same, except if either $\P(X = x)= 1$ for some $x$, or $\P( P_j = 1  \mbox{ for some } j) = 1$.
\end{itemize}
\end{corollary}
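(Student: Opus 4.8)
The plan is to derive everything from the canonical construction of Proposition~\ref{prp:canon}. That construction places $\tilX := M_P(X)$ on a common probability space with an i.i.d.\ sequence $(X_j)$ of copies of $X$ and the random discrete distribution $P$, in such a way that, writing $\GG := \sigma(X_1, X_2, \ldots, P_1, P_2, \ldots)$, one has $\tilX = \E(X \giv \GG)$ with $\tilX$ itself $\GG$-measurable. By the tower property $\E(X \giv \tilX) = \tilX$, so $(X, \tilX)$ is a martingale coupling of the kind discussed just before Corollary~\ref{crl:convex}, and hence $\tilX \lecx X$: indeed, for any convex $\phi$ with $\E|\phi(X)| < \infty$, conditional Jensen gives $\E \phi(\tilX) = \E\, \phi(\E(X \giv \GG)) \le \E\, \E(\phi(X) \giv \GG) = \E \phi(X)$, and the inequality $\E\phi(\tilX) \le \E\phi(X)$ is trivial otherwise. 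Parts (i) and (ii) are then immediate: take $\phi(x) = |x|$ and $\phi(x) = \pm x$ for (i), and $\phi(x) = |x|^r$ (convex for $r \ge 1$) for (ii).

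For (iii), suppose $X \ed \tilX$; I want to conclude one of the two exceptional cases. First I would upgrade $\tilX \lecx X$ to $X = \tilX$ a.s.\ in the canonical coupling. Choose a strictly convex $\phi$ with bounded derivative, for instance $\phi(x) := \sqrt{1 + x^2}$, so that $|\phi'| \le 1$ and $\phi(x) \le 1 + |x|$; then $\phi(X)$, $\phi(\tilX)$, and $\phi'(\tilX)(X - \tilX)$ are all integrable. Since $\phi'(\tilX)$ is bounded and $\GG$-measurable and $\E(X - \tilX \giv \GG) = 0$, we get $\E[\phi'(\tilX)(X - \tilX)] = 0$; and $X \ed \tilX$ gives $\E\phi(X) = \E\phi(\tilX)$, so
\[
\E\bigl[\, \phi(X) - \phi(\tilX) - \phi'(\tilX)(X - \tilX) \,\bigr] = 0 .
\]
By the subgradient inequality the bracketed quantity is pointwise nonnegative, hence zero a.s., and strict convexity of $\phi$ then forces $X = \tilX$ a.s.

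It remains to extract the structure of $P$ and $X$ from $X = \tilX$, i.e.\ from $X_J = \sum_j X_j P_j$ a.s.\ in the canonical construction, where $J$ is the index drawn according to $P$. Conditioning on $\GG$, the right-hand side is a fixed number $S$, while $X_J$ equals $X_j$ with probability $P_j$; so a.s.\ $X_j = S$ for every $j$ with $P_j > 0$, and in particular all of the \emph{active} $X_j$ agree. The event $\{P_j = 1 \text{ for some } j\}$ is precisely the event that exactly one index is active; so if $\P(P_j = 1 \text{ for some } j) < 1$, then with positive probability there are at least two active indices, and on that event the two smallest of them, $I_1 < I_2$, are functions of $(P_j)$ alone, hence independent of the i.i.d.\ sequence $(X_j)$. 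Then $X_{I_1} = X_{I_2}$ with positive probability, which forces $\P(X = X') = 1$ for independent copies $X, X'$ of $X$; since $\P(X = X') = \sum_x \P(X = x)^2$, this requires $\P(X = x) = 1$ for some $x$. This is exactly the dichotomy asserted in (iii).

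The only delicate point is the Jensen-equality step in (iii) under the bare hypothesis $\E|X| < \infty$: this is why I take $\phi$ with bounded derivative (so the linear correction term is integrable and can be discarded) rather than the more familiar $\phi(x) = x^2$, which would require a finite second moment. Everything else --- the measurable choice of two active indices, and the identity $\P(X = X') = \sum_x \P(X = x)^2$ --- is routine. Note that Strassen's theorem is not invoked anywhere: Proposition~\ref{prp:canon} supplies the required martingale coupling explicitly.
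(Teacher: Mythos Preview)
Your proof is correct and follows essentially the same approach as the paper: Proposition~\ref{prp:canon} supplies the martingale coupling, conditional Jensen gives $\tilX \lecx X$ and hence (i)--(ii), and (iii) is reduced to the fact that a martingale pair with equal marginals must be almost surely equal. Where the paper simply cites Durrett (Exercise~5.1.12) for that last fact and then says the dichotomy is ``easily seen,'' you supply both details in full; your choice of $\phi(x)=\sqrt{1+x^2}$ (strictly convex with bounded derivative) to make the Jensen-equality argument work under the bare hypothesis $\E|X|<\infty$, without assuming a second moment, is a nice touch.
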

\begin{proof}
All but part (iii) follow immediately from Proposition \ref{prp:canon}.  These statements also follow from the definition 
$\tilX:= \sum_j X_j P_j$ by applying Jensen's inequality $\phi(\sum_j X_j P_j) \le \sum_j \phi(X_j) P_j$ before taking expectations.
As for (iii), it is well known 
\citep[Exercise 5.1.12]{MR2722836} %%AUTHOR = {Durrett, Rick}, TITLE = {Probability: theory and examples}, SERIES = {Cambridge Series in Statistical and Probabilistic Mathematics}, EDITION = {Fourth},
that if a martingale pair $(\tilX, X)$ has $\tilX \ed X$, then $\P(\tilX = X) = 1$. It is easily seen that for $\tilX:= M_P(X)$
this can only be so in one of the two exceptional cases indicated.
\end{proof}

Part (i) of this Corollary, and the instance of part (ii) for $r= n$ a positive integer,
 can also be deduced from the formula for $\E \tilX^n$ presented later in Corollary \ref{mainthm}.
Part (iii) appears
in \citet[Proposition 3]{yamato1984characteristic} %title={Characteristic functions of means of distributions chosen from a Dirichlet process}, author={Yamato, Hajime}, year={1984},
for the case of Dirichlet $(0,\theta)$ means.

As an operator mapping a distribution of $X$ to a distribution of $\tilX$, one property of $P$-means extends those of a typical conditional expectation  operator: 
the $P$-mean of $X$ may be well defined and finite by almost sure convergence, even if $\E |X| = \infty$.  For instance, there is the following easy generalization of a result of 
\citet{yamato1984characteristic} %title={Characteristic functions of means of distributions chosen from a Dirichlet process}, author={Yamato, Hajime}, year={1984},
for Dirichlet $(0,\theta)$  means,
and \citet{van1987random} %%title={A random variable uniformly distributed between two independent random variables}, author={Van Assche, Walter},
for the uniformly weighted mean $X_1 P_1 + X_2 (1-P_1)$ for $P_1$ with uniform distribution on $[0,1]$.

\begin{proposition}
\label{prp:yamagen}
Suppose that $X \ed a + b Y$ for some fixed $a$ and $b$ and $Y$ with the standard Cauchy distribution $\P(Y \in dy ) = \pi^{-1}(1 + y^2)^{-1} dy$.
Then, no matter what the random discrete distribution $P$,  the $P$-mean $\tilX$ is well defined as an almost surely convergent series, 
with $\tilX \ed X$.  
\end{proposition}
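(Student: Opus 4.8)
The plan is to condition on $P$, reduce to a fixed deterministic weight sequence, and exploit the stability of the Cauchy family. Since a $P$-mean depends only on the distribution of its argument, and $M_P$ commutes with affine maps in the sense that $M_P(a+bY')=a\sum_j P_j + b\sum_j Y'_j P_j = a + b\,M_P(Y')$ for an i.i.d. sequence $(Y'_j)$ independent of $P$ (using $\sum_j P_j = 1$ a.s.), it suffices to treat $X = Y$ standard Cauchy: fix an i.i.d. standard Cauchy sequence $(Y_j)$ independent of $P$, set $\tilY := \sum_j Y_j P_j$, and show this series converges almost surely with $\tilY \ed Y$; then $\tilX = a + b\tilY$ has the asserted properties.

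The heart of the matter is almost sure convergence, which cannot be read off from the $L^1$ theory of Proposition \ref{prp:canon} because $\E|Y| = \infty$, so the heavy Cauchy tails must enter essentially. I would fix a realization $p = (p_j)$ of $P$ with $p_j \ge 0$ and $\sum_j p_j = 1$ (which holds for $\P$-a.e. value of $P$) and apply Kolmogorov's three-series theorem to the independent summands $p_j Y_j$ with symmetric truncation level $1$. The three series are controlled by the elementary estimates $\P(|p_j Y_j| > 1) = \frac{2}{\pi}\arctan(p_j) \le \frac{2}{\pi} p_j$, then $\E[(p_j Y_j\, 1(|p_j Y_j| \le 1))^2] = \frac{2}{\pi}(p_j - p_j^2 \arctan(1/p_j)) \le \frac{2}{\pi} p_j$, and $\E[p_j Y_j\, 1(|p_j Y_j| \le 1)] = 0$ by symmetry; summing over $j$ and using $\sum_j p_j = 1$ shows all three series converge, so $\sum_j p_j Y_j$ converges almost surely. (Alternatively one may invoke the classical fact that a series of independent symmetric random variables converges a.s. as soon as its partial sums converge in distribution, the latter being immediate from the characteristic-function computation below.)

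It remains to identify the conditional limit law. With $p$ still fixed, the partial sum $\sum_{j \le n} p_j Y_j$ is a scaled Cauchy variable with characteristic function $\exp(-(\sum_{j \le n} p_j)|t|)$, by the stability of the Cauchy distribution, and $\sum_{j \le n} p_j \uparrow 1$; letting $n \to \infty$ and using the almost sure (hence distributional) convergence just established, the limit $\sum_j p_j Y_j$ has characteristic function $e^{-|t|}$, so it is standard Cauchy. Since this conditional law is the same for every admissible $p$, integrating over the distribution of $P$ shows that $\tilY$ is almost surely a convergent series with $\tilY \ed Y$, whence $\tilX = a + b\tilY \ed a + bY \ed X$. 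The only genuine obstacle is the almost sure convergence step; everything else is the arithmetic of $\arctan$ and the one-line stability property of the Cauchy characteristic function.
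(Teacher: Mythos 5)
Your proof is correct, and it carries out in detail the first of the two arguments the paper indicates: condition on $P$, compute with characteristic functions, and reduce to the Cauchy stability identity. Where you go beyond the paper's telegraphic statement is in making the almost sure convergence explicit. Because $\E|Y|=\infty$, neither Proposition \ref{prp:canon} nor the moment machinery applies, and you correctly recognize that this is the genuine content; your three-series verification (with $\P(|p_j Y_j|>1)=\tfrac{2}{\pi}\arctan(p_j)\le\tfrac{2}{\pi}p_j$, the truncated second moment bounded by $\tfrac{2}{\pi}p_j$, and the truncated mean zero by symmetry) is a clean way to dispose of it, as is the alternative appeal to L\'evy's equivalence between a.s.\ and distributional convergence for series of independent symmetric terms. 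The paper's preferred route is different in flavor: it realizes $\tilX$ as $a+bY(\Sigma_{i\le j}P_i)$ for a Cauchy L\'evy process $(Y(t),t\ge0)$ independent of $P$, so that the scaling $Y(p)\ed pY(1)$ and stationary independent increments give the distributional identification, while a.s.\ convergence of $Y(\Sigma_{i\le j}P_i)$ to $Y(1)$ follows from $\Sigma_{i\le j}P_i\uparrow 1$ together with the fact that the process a.s.\ has no jump at the deterministic time $1$. That subordination argument buys both the convergence and the limit law in one stroke and generalizes immediately to any strictly stable index; your argument is more elementary and self-contained, at the cost of a separate convergence step. One small caveat worth noting in your write-up: the bounds as written presume $p_j>0$, and the indices with $p_j=0$ should simply be discarded from the sum before invoking the $\arctan$ estimates.
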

\begin{proof}
This can be shown by a computation with characteristic functions after conditioning on $P$, as in \citet{yamato1984characteristic}.
Alternatively, 
using the well known scaling property $Y(p) \ed p Y(1)$
of a standard Cauchy process with stationary independent increments $(Y(t), t \ge 0 )$, assumed independent of $P$,
the $P$-mean $\tilX$ may be constructed as the limit of $a + b Y(\Sigma_{i=1}^j P_i)$ as $j \to \infty$. It is easily seen
by conditioning on $(P_1, P_2, \ldots)$ that the limit exists and equals $a + b Y(1)$ almost surely.
\end{proof}

For the case of $\tilX = X_1 P_1 + X_2 (1-P_1) $ with $P_1$ uniform on $[0,1]$,
\citet[Theorem 2]{van1987random} obtained the conclusion of this proposition by a more complicated argument involving Stieltjes transforms.
But he also obtained a converse:  the equality in distribution $\tilX \ed X$ implies that $X \ed a + b Y$ for
some real $a$ and $b$ and $Y$ standard Cauchy. It appears that this converse is true under very much weaker conditions on $P$.
But some condition is required to avoid the case $P_2 = 1- P_1$ with the distribution of $P_1$ concentrated on terms of a geometric progression $(q^n, n = 1,2, \ldots)$ for some $0 < q < 1$.
For \citet[\S 58]{levy54}  established the existence of infinitely divisible {\em semi-stable laws} of $X$ such $X \ed p X + (1-p) X$ if $p = q^n$ for some $n$,
besides the family of {\em strictly stable} Cauchy laws $a Y + b$, which is characterized by this property for all $p \in (0,1)$.

\subsection{Refinements}
\label{sec:refine}
For $P$ and $R$ two random discrete distributions, say that $R$ {\em is a refinement of $P$} if there is a coupling of $P$ and $R$ on
a common probability space such that that both $P = (P_i)$ and $R = (R_i)$ may be indexed by $i \in \nats:= \{1,2, \ldots\}$ in the usual way, while some rearrangement of atoms of $R$ may be indexed by 
$(i,j) \in \nats^2$ as $R_{i,j}$ with
$$
P_i = \sum_{j \in \nats} R_{i,j} \qquad ( i \in \nats ) .
$$
The following proposition provides a simple explanation of many monotonicity results for $P$-means:
\begin{proposition}
\label{prp:refine}
If $R$ is a refinement of $P$, then $M_R(X) \lecx M_P(X)$ for every $X$ with $\E |X| < \infty$. 
\end{proposition}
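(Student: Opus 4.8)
\emph{Proof proposal.} The plan is to produce an explicit \emph{martingale coupling} of $M_R(X)$ and $M_P(X)$ and then invoke the remark preceding Corollary~\ref{crl:convex}, that a martingale coupling implies the convex order. Fix a refinement coupling of $P$ and $R$ on one probability space, with the atoms of (a rearrangement of) $R$ indexed as $R_{i,j}$ for $(i,j)\in\nats^2$ and $P_i=\sum_j R_{i,j}$. Enlarging the space if necessary, adjoin an i.i.d. family $(X_{i,j})_{(i,j)\in\nats^2}$ of copies of $X$, independent of $(P,R)$, and independent uniform $[0,1]$ variables $(U_i)$. On $\{P_i>0\}$ use $U_i$ to draw an index $J_i$ from the probability vector $(R_{i,j}/P_i)_j$ (on $\{P_i=0\}$ set $J_i:=1$, irrelevantly), and put $Z_i:=X_{i,J_i}$. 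Set $\tilX_R:=\sum_{i,j}X_{i,j}R_{i,j}$ and $\tilX_P:=\sum_i Z_iP_i$.

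First I would record that $\tilX_R$ has the law of $M_R(X)$ (the $R$-mean being invariant under rearrangement of atoms), and that, since $\E|X|<\infty$, Fubini gives $\E\sum_{i,j}|X_{i,j}|R_{i,j}=\E|X|<\infty$, so $\tilX_R$ is an absolutely convergent series; regrouping, $\tilX_R=\sum_i P_i\tilY_i$ with $\tilY_i:=\sum_j X_{i,j}R_{i,j}/P_i$. Next I would check that $(Z_i)$ is an i.i.d. sequence of copies of $X$, independent of $(P,R)$: conditionally on $(P,R)$ and on the $X$-family, $Z_i$ falls in a Borel set $A$ with probability $\sum_j(R_{i,j}/P_i)\,\mathbf 1(X_{i,j}\in A)$, whose average over the independent copies $X_{i,j}$ is $\P(X\in A)$, and the $Z_i$ are conditionally independent because they involve disjoint blocks of the $X$-family. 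Consequently $\tilX_P=\sum_i Z_iP_i$ is (by definition, and well defined by the standing convergence hypothesis, cf. Proposition~\ref{prp:canon}) distributed as $M_P(X)$, and $\sum_i\E(|Z_i|P_i)=\E|X|<\infty$.

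The key step is to take $\GG:=\sigma\big((X_{i,j})_{i,j},(P_i)_i,(R_{i,j})_{i,j}\big)$ and compute $\E(\tilX_P\mid\GG)$. Since the $U_i$ are independent of $\GG$, the conditional law of $J_i$ given $\GG$ is still $(R_{i,j}/P_i)_j$, hence $\E(Z_i\mid\GG)=\sum_j X_{i,j}R_{i,j}/P_i=\tilY_i$ on $\{P_i>0\}$ (and the $\{P_i=0\}$ terms carry weight $0$). Interchanging the sum and the conditional expectation, justified by $L^1$-convergence of the partial sums together with $(Z_i)\perp(P,R)$, gives $\E(\tilX_P\mid\GG)=\sum_i P_i\tilY_i=\tilX_R$. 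As $\tilX_R$ is $\GG$-measurable, the tower property yields $\E(\tilX_P\mid\tilX_R)=\tilX_R$, so $(\tilX_R,\tilX_P)$ is a martingale coupling of $M_R(X)$ and $M_P(X)$, and therefore $M_R(X)\lecx M_P(X)$.

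I expect the only delicate point to be the bookkeeping around the randomizers $J_i$: one must build them so that they are independent of the $X$-family (so that $(Z_i)$ comes out i.i.d. and independent of $(P,R)$) while still finely enough adapted to split each $P_i$-block, and then the $\sigma$-field $\GG$ must be chosen to contain the $X$-family and $(P,R)$ but \emph{not} the $J_i$; once that is set up correctly the identity $\E(\tilX_P\mid\GG)=\tilX_R$ is essentially forced. An alternative packaging would run the argument through the canonical construction of Proposition~\ref{prp:canon} applied to $R$ relabelled by $\nats^2$, using that the $I$-marginal of a draw $(I,J)$ from $R$ is a draw from $P$; but the direct coupling above seems the most transparent.
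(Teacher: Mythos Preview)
Your proof is correct and is essentially the paper's \emph{martingale proof} of the proposition (the second of two proofs the paper gives). The ingredients are identical: the doubly-indexed family $(X_{i,j})$, the randomizers $J_i$ drawn from $(R_{i,j}/P_i)_j$, and the observation that $Z_i:=X_{i,J_i}$ form an i.i.d.\ copy-of-$X$ sequence independent of $(P,R)$, so that $\sum_i Z_iP_i$ realizes $M_P(X)$ while its conditional expectation given $\sigma(X_{\bullet\bullet},P,R)$ collapses to $M_R(X)$.

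Two small differences of packaging are worth noting. First, the paper conditions on $(P,R)$ at the outset to reduce to deterministic weights, whereas you keep $(P,R)$ random and simply include them in your conditioning $\sigma$-field $\GG$; your route is equally valid and arguably cleaner. Second, the paper goes one step further and builds an ambient $X:=X_{I,J}$ (with $I$ drawn from $P$) so that both $M_R(X)=\E(X\mid\mathcal R)$ and $M_P(X)=\E(X\mid\mathcal P)$ for nested $\mathcal R\subseteq\mathcal P$, producing the full three-term martingale $(M_R(X),M_P(X),X)$. You only need the two-term martingale, and you get it directly; the ``alternative packaging'' you sketch at the end is exactly what the paper does. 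The paper's \emph{first} proof is quite different: it reduces by conditioning and approximation to the case of splitting a single atom of a deterministic finite $P$, which is the base case of Corollary~\ref{crl:convex}.
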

\begin{proof}
It must be shown that for arbitrary convex $\phi$, and $X$ with $\E|X| < \infty$
\begin{equation}
\label{phiinc}
\E \phi \left( \Sigma_{i,j} X_{i,j} R_{i,j} \right) \le \E \phi \left( \Sigma_{i} X_i P_i \right)
\end{equation}
where $(X_{i,j}, i,j \in \nats)$ is a doubly indexed array of copies of $X$, independent of $R$,
and $(X_{i}, i \in \nats)$ is a singly indexed list of copies of $X$, independent of $P$.
%%with both $X_\dbul$ and $X_\bullet$ independent of $(P,R)$.
By conditioning on the coupling $(P,R)$, it is enough to establish 
%%this inequality 
\eqref{phiinc} for a fixed, non-random discrete distribution $R$, which is a refinement of some other
fixed, non-random discrete distribution $P$.
%It must be shown that for arbitrary convex $\phi$
%\begin{equation}
%\label{phiinc}
%\E \phi \left( \Sigma_{i,j} X_{i,j} R_{i,j} \right) \le \E \phi \left( \Sigma_{i} X_i P_i \right)
%\end{equation}
%where $X_{\dbul}:= (X_{i,j}, i,j \in \nats)$ 
%is a doubly indexed array of copies of $X$, 
%and $X_{\bullet}:= (X_{i}, i \in \nats)$ is a singly indexed list of copies of $X$, 
%with both $X_\dbul$ and $X_\bullet$ independent of $(P,R)$.
%By conditioning on $(P,R)$, it is enough to establish this inequality in the case of a fixed, non-random discrete distribution $R$, which is a refinement of some other
%fixed, non-random discrete distribution $P$, for any distribution of $X$ with $\E |X| < \infty$.
A further reduction, by easy limit arguments, shows it is enough to establish \eqref{phiinc}
when $R$ has only a finite number of non-zero atoms. Moreover, by induction on the number these atoms, it is enough to consider the case 
when only one atom of $P$ is split to obtain $R$ from $P$.  That case reduces easily by conditioning and scaling to the base case 
$\E \phi( M_R(X))  \le \E \phi(X)$ of Corollary \eqref{crl:convex}.
\end{proof}

By general theory of the convex order of distributions on the line, recently reviewed by
\citet*{MR3706748}, %%AUTHOR = {Letac, G\'erard and Piccioni, Mauro}, TITLE = {Dirichlet curves, convex order and {C}auchy distribution}, JOURNAL = {Bernoulli},
the above proposition implies it is possible to realize the sequence 
$$( \E X, M_R(X), M_P(X), X)$$ 
on a suitable probability space as a four term martingale.
It is well known however that the general construction of such a martingale, from a sequence of distributions increasing  in the convex order, is not
at all explicit or elementary, and the proof sketched above does not help much either.
So it is natural to ask if the canonical martingale construction of $(M_P(X),X)$ in Proposition \ref{prp:canon} can be extended 
to provide an explicit martingale $(M_R(X), M_P(X), X)$ on a suitable probability space, whenever $R$ is a refinement of $P$.
The following argument shows how this is possible. But the argument is quite tricky, and it does not seem obvious how to extend it 
to a sequence of successive refinements in any nicer way than by forcing the martingale to be Markovian with prescribed two-dimensional laws.

\begin{proof}[Martingale proof of Proposition \ref{prp:refine}]
The aim is to construct $R$ and $P$ jointly with $X$ on some common probability space $(\Omega, \FF, \P)$ 
so that $M_R(X) = \E(X \giv \RR)$ and $M_P(X) = \E(X \giv \PP)$ for some sub $\sigma$-fields $\RR \subseteq \PP \subseteq \FF$.
Note well that while $R$ is a refinement of $P$, the associated $\sigma$-field $\RR$ must be coarser than $\PP$.
It is possible to make such a construction quite generally. But the definition of the $\sigma$-fields involved is tricky.
So as in the previous proof, let us rather argue that by conditioning on $(R,P)$ it is enough to consider the case of deterministic $R$ and $P$.
So consider a fixed pair of discrete distributions $(R,P)$, and let $(I,J)$ be a random element of $\nats^2$ which conditionally given $X_{\dbul}:= (X_{i,j}, i,j \in \nats)$ 
is a pick from $R$:
\begin{equation}
\label{ijdef}
\P(  (I,J)  = (i,j) \giv X_{\dbul} ) = R_{i,j} \qquad ( i,j \in \nats )
\end{equation}
and set
\begin{equation}
\label{ijdef1}
X:= X_{I,J} = \sum_{i,j} X_{i,j} 1 ((I,J)  = (i,j) )
\end{equation}
to make
\begin{equation}
\label{ijdef2}
\E (X \giv X_{\dbul} ) = M_R(X):= \sum_{i,j} X_{i,j} R_{i,j} .
\end{equation}
To involve $P$ as well, for $i$ with $P_i >0$ let $J_i$ be a random index with the conditional distribution of $J$ given $I= i$, that is
$\P(J_i = j) = R_{i,j}/P_i$. Suppose that the $J_i$ are independent, forming a sequence $J_\bul:= (J_i)$ with $i$ ranging over $\{i:P_i >0\}$.
Assume further that the sequence $J_{\bul}$ is independent of the double array $X_{\dbul}$ of copies of $X$.
Now a random pair $(I,J)$ as in \eqref{ijdef}, and $X:= X_{I,J}$ subject to \eqref{ijdef2}, is conveniently constructed from
the double array $X_{\dbul}$ of copies of $X$ and the sequence of conditional indices $J_\bul$ as $J:= J_{I}$ for a single random index $I$ with
\begin{equation}
\label{indptP}
\P(  I  = i \giv X_{\dbul} , J_\bul) =  P_i \qquad (i \in \nats)
\end{equation}
so that
\begin{equation}
\label{ijdef3}
X:= X_{I,J} = \sum_{i} X_{i,J_i} 1 (I = i)
\end{equation}
and hence
\begin{equation}
\label{ijdef4}
\E (X \giv X_{\dbul} , J_\bul ) = M_P(X):= \sum_{i} X_{i,J_i} P_i 
\end{equation}
where it is easily argued that $(X_{i,J_i})$ is a sequence of independent copies of $X$, with
this sequence independent of $P$ by \eqref{indptP}.
Thus we obtain a coupled pair of representations 
$M_R(X) = \E(X \giv \RR)$ and $M_P(X) = \E(X \giv \PP)$ with $\RR \subseteq \PP$ for
$\RR$ the $\sigma$-field generated by $X_{\dbul}$,
and $\PP$ generated by $X_{\dbul}$ and $J_\bul$. Hence the desired conclusion  \eqref{phiinc}, by Jensen's inequality
for conditional expectations.
\end{proof}

As an application of this proposition, 
there are known constructions of the $(0,\theta)$ model which are refining as $\theta$ increases
\citep*{MR2351686}. %%, AUTHOR = {Gnedin, Alexander and Pitman, Jim}, TITLE = {Poisson representation of a {E}wens fragmentation process}, 
For instance, let $(V_i, Y_i)$ be the points of a Poisson process with intensity $dv dy /(1-v)$ in the strip $(0 < v < 1) \times (0 < y < \infty)$. Then let
$P_{0,\theta,j}$ be the length of the $j$th component interval of the relative complement in $[0,1]$ of the random set of points $\{ V_i : 0 < Y_i \le \theta \}$,
reading the intervals from left to right. As shown by \citet{MR645134}, %%AUTHOR = {Ignatov, Ts.}, TITLE = {A constant arising in the asymptotic theory of symmetric
this construction makes $P_{0,\theta,j} = H_{j,\theta} \prod_{i=1}^{j-1}(1- H_{i,\theta})$ where the
$H_{j,\theta}$ are i.i.d. copies of $\beta_{1,\theta}$, which is the characteristic property of the size-biased ordering of the $(0,\theta)$ model.
This construction refines the random discrete distributions $P_{0,\theta}$ as $\theta$ increases, hence the following corollary of Proposition \ref{prp:refine}:

\begin{corollary}
\label{crl:refine}
{\em \citep*[Theorem 1.2]{MR3706748} } %%AUTHOR = {Letac, G\'erard and Piccioni, Mauro}, TITLE = {Dirichlet curves, convex order and {C}auchy distribution}, JOURNAL = {Bernoulli},
For every $X$ with $\E |X| < \infty$, as $\theta$ increases on $[0,\infty)$ the family of distributions of $(0,\theta)$ means of $X$ is decreasing 
in the convex order of distributions on the line, starting from the distribution of $X$ at $\theta = 0$, and converging to the constant $\E(X)$ in the limit as $\theta \uparrow \infty$.
\end{corollary}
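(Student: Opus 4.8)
The statement has three ingredients: that $\theta \mapsto \mathrm{dist}(M_{P_{0,\theta}}(X))$ is decreasing in the convex order, that it equals $\mathrm{dist}(X)$ at $\theta = 0$, and that it tends to the unit mass at $\E X$ as $\theta \uparrow \infty$. The plan is to get the first two essentially for free from the Poisson refinement construction recalled just above together with Proposition \ref{prp:refine}, and to handle the third by a one-line second-moment computation when $\E X^2 < \infty$, extended to general $X$ with $\E|X| < \infty$ by truncation.

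For the monotonicity, I would fix $0 \le \sigma \le \theta$ and use the construction of $P_{0,\bullet}$ from the Poisson points $(V_i, Y_i)$: since $\{V_i : Y_i \le \sigma\} \subseteq \{V_i : Y_i \le \theta\}$, every component interval of the complement of the larger cut set lies inside a unique component interval of the complement of the smaller one. Reading intervals from left to right, let $I_i$ be the $i$th interval of the $\sigma$-complement, and let $(R_{i,j})_j$ be the lengths of the pieces into which the extra cuts $\{V_k : \sigma < Y_k \le \theta\}$ split $I_i$, again read left to right; then $P_{0,\sigma,i} = |I_i| = \sum_j R_{i,j}$, and concatenating the sequences $(R_{i,j})_j$ over $i$ recovers the atoms of $P_{0,\theta}$ in their original left-to-right order. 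This exhibits $P_{0,\theta}$ as a refinement of $P_{0,\sigma}$ in the sense of Section \ref{sec:refine}, so Proposition \ref{prp:refine} gives $M_{P_{0,\theta}}(X) \lecx M_{P_{0,\sigma}}(X)$; i.e. the distributions decrease in the convex order as $\theta$ grows. At $\sigma = 0$ the cut set is empty, $P_{0,0} = (1,0,0,\dots)$, and $M_{P_{0,0}}(X) = X_1 \ed X$.

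For the limit, I would first record that $\E M_{P_{0,\theta}}(X) = \E X$ for all $\theta$ by Corollary \ref{crl:convex}(i), so only the dispersion must be controlled. When $\E X^2 < \infty$, conditioning on $P$ and using independence of the i.i.d. copies from $P$ gives, with $s := \E \sum_i P_i^2$,
\begin{equation*}
\E\,[M_P(X)]^2 = s\,\E X^2 + (1-s)(\E X)^2, \qquad\text{hence}\qquad \Var M_P(X) = s\,\Var X .
\end{equation*}
By \eqref{eppf}, $s = \E\sum_i P_i^2 = p(2)$, and for the $(0,\theta)$ model the Ewens value is $p(2) = 1/(1+\theta)$, so $\Var M_{P_{0,\theta}}(X) = \Var(X)/(1+\theta) \to 0$ and $M_{P_{0,\theta}}(X) \to \E X$ in $L^2$. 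For general $X$ with $\E|X| < \infty$, I would truncate: with $X^{(n)} := (X \wedge n) \vee (-n)$, linearity of $M_P$ and the contraction bound $\E|M_P(Y)| \le \E|Y|$ of Corollary \ref{crl:convex}(i) give
\begin{equation*}
\E\bigl|M_{P_{0,\theta}}(X) - \E X\bigr| \le \E\bigl|M_{P_{0,\theta}}(X^{(n)}) - \E X^{(n)}\bigr| + 2\,\E|X - X^{(n)}| ,
\end{equation*}
and letting $\theta \to \infty$ and then $n \to \infty$ gives $L^1$, hence distributional, convergence of $M_{P_{0,\theta}}(X)$ to $\E X$; since the means do not depend on $\theta$, this pins down the unit mass at $\E X$ as the convex-order limit. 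The only step with real content is this last limit, and even it is light — the substantive input, that refinement forces convex domination, is already packaged in Proposition \ref{prp:refine} — so the proof is essentially the assembly of these pieces plus the single identity $p(2) = 1/(1+\theta)$.
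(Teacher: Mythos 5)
Your proof is correct and, for the monotonicity claim, uses exactly the paper's argument: the paper states this corollary immediately after recalling Ignatov's Poisson construction and notes that the construction refines $P_{0,\theta}$ as $\theta$ increases, so Proposition \ref{prp:refine} applies — precisely the reduction you carry out in detail. The paper does not spell out the $\theta\uparrow\infty$ limit, which you supply via the variance identity $\Var M_{P_{0,\theta}}(X)=\Var(X)/(1+\theta)$ and an $L^1$ truncation argument; both steps check out (the variance identity follows from \eqref{fergprod} with $p(2)=1/(1+\theta)$, and the contraction $\E|M_P(Y)|\le\E|Y|$ from Corollary \ref{crl:convex}(i) handles the tail). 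So your write-up matches the paper's route for the substantive part and completes the limit computation that the paper leaves implicit.
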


See \citep*{MR3706748} %%AUTHOR = {Letac, G\'erard and Piccioni, Mauro}, TITLE = {Dirichlet curves, convex order and {C}auchy distribution}, JOURNAL = {Bernoulli},
for many more refined results regarding the family of
{\em Dirichlet curves} in the space of probability distributions on the line,  
meaning the laws of $(0,\theta)$ means of a fixed distribution of 
$X$ as a function of $\theta$. It is an implication of Corollary \ref{crl:refine} and a well known result of Kellerer, discussed further in 
\citep*[\S 2]{MR3706748}, %%AUTHOR = {Letac, G\'erard and Piccioni, Mauro}, TITLE = {Dirichlet curves, convex order and {C}auchy distribution}, JOURNAL = {Bernoulli},
that for each distribution of $X$ with finite mean, it is possible to construct a Markovian reversed martingale $(\tilX_\theta, \theta \ge 0)$ with 
$\tilX_0 = X$ and $\lim_{\theta \to \infty} \tilX_{\theta} = \E X$ almost surely, such that $\tilX_\theta \ed M_{0,\theta}(X)$ for each $\theta \ge 0$.
However, there is no known way to explicitly construct the transition kernel of such a Markov process. 
The construction indicated above gives an explicit enough process 
\begin{equation}
\label{ignatovtheta}
\tilX_\theta := \sum_{j=1}^\infty X_j P_{\theta, j}
\end{equation}
 for $X_j$ i.i.d. copies of $X$ and $(P_{\theta, j}, j = 1,2, \ldots)$ 
the family of coupled copies of GEM$(0,\theta)$ generated by Ignatov's  Poisson construction. Even for the simplest choice of Bernoulli $(p)$ distributed $X_j$, when we know 
$\tilX_\theta \ed \beta_{p \theta, q \theta}$, it seems difficult to provide any explicit description of the joint law of $(\tilX_{\theta} , \tilX_\phi)$ for $0 < \theta < \phi$,
or even to determine whether or not this process is Markovian, or a reversed martingale. 
It is known however \citep*{MR2351686} %%, AUTHOR = {Gnedin, Alexander and Pitman, Jim}, TITLE = {Poisson representation of a {E}wens fragmentation process}, 
that a corresponding process of compositions of $n$, obtained by sampling from this model, is Markovian with a simple transition mechanism, and it might be possible to proceed from this
to some analysis of $(\tilX_\theta , \theta \ge 0)$ defined by \eqref{ignatovtheta}.

One final remark about Proposition \ref{prp:refine}. 
The converse is completely false. Consider the classical example with $P_n$ the deterministic uniform distribution on $[n]$, discussed  further
in Section \ref{sec:arithmetic}.
It is well known that $M_{P_n} (X) := (X_1 + \cdots + X_n)/n$ is a reversed martingale, for any distribution of $X$ with $\E|X| < \infty$.
So the distribution of $M_{P_n} (X) $ is decreasing in the convex order, but $P_n$ is a refinement of $P_m$  iff $m$  divides $n$.

\begin{problem}
What more explicit condition on a pair of random discrete distributions $P$ and $R$ is equivalent to 
$M_R(X) \lecx M_P(X)$ for all $X$ with a finite mean?
\end{problem}

Even for deterministic $P$ and $R$ this seems to be a non-trivial problem. 
A discussion of various {\em measures of diversity} for random discrete distributions, and concepts  of comparison  of 
$P$ and $R$ with respect to such measures, with many references to earlier work, was provided by
\citet*{MR617593}. %%%, AUTHOR = {Patil, G. P. and Taillie, C.}, TITLE = {Diversity as a concept and its implications for random communities},
%@article {MR675883, AUTHOR = {Patil, G. P. and Taillie, C.}, TITLE = {Diversity as a concept and its measurement}, 1982
That article discusses relations between four different partial orderings on distributions of random discrete distributions, each of which provides some sense in which 
$R$ may be {\em stochastically more diverse than} $P$, denoted SD2, SD3, SD4, SD5.
It appears that all of these orderings are implied by the ordering by refinement, call it SD1, as that notation was not used by Patil and Taillie,
and the refinement ordering SD1 seems to be both the simplest and strongest of all these orderings.  
Already in \citet{fisher1943solo} % title={A theoretical distribution for the apparent abundance of different species}, author={Fisher, Ronald A.},
there is the idea that in his limit model for species sampling, called here the  $(0,\theta)$ model,
the parameter $\theta > 0$ (which Fisher called $\alpha$, not to be confused with the second parameter $\alpha \in (0,1)$ of the $(\alpha,\theta)$ model)
should be regarded as some kind of index of diversity in the random distribution of species frequencies in the population.
This idea was confirmed by \citet[Theorem 2.9]{MR675883}, %%, AUTHOR = {Patil, G. P. and Taillie, C.}, TITLE = {Diversity as a concept and its measurement}, 1982
according to which  the $(0,\theta)$ family is increasing in stochastic diversity according to the partial order SD3.  As discussed above, the $(0,\theta)$ family is increasing in the refinement order SD1, hence also
in all of the other orders considered by Patil and Taillie.
A sixth partial order, say SD6, defined by $M_R(X) \lecx M_P(X)$ for all $X$ with a finite mean, is implied by SD1, 
and is perhaps the same as one of partial orders proposed by Patil and Taillie.
One of these partial orders, denoted  SD4 by Patil and Taillie, is the condition  that
$\Rdec[n]:= \sum_{i = 1}^n \Rdec_i$ is stochastically smaller than $\Pdec[n]$ for each $n$:
\begin{equation}
\label{stochineq}
\mbox{ $\Rdec[n] \led \Pdec[n]$ for every $n = 1,2, \ldots$.  \qquad (SD4)}
\end{equation}
That is to say, for each fixed $n$ it is possible to construct
a coupling of $\Rdec$ and $\Pdec$ with $\P( \Rdec[n] \le \Pdec[n]) = 1$. A stronger stochastic ordering condition, say SD7, with SD7 $\implies$ SD4, 
is that there exists a single coupling of $\Rdec$ and $\Pdec$ such that 
\begin{equation}
\label{alln}
\P(\Rdec[n] \le  \Pdec[n] \mbox{ for all }  n) = 1.  \qquad \mbox{(SD7)}
\end{equation}
It is easily shown that the refinement ordering SD1 $\implies$ SD7, but not conversely, due to the counterexample with $P_n$ and
$P_m$ mentioned above. It is also the case that the two variants of the stochastic ordering condition, SD4 with different couplings for different $n$, and 
SD7 with a single coupling for all $n$, are not equivalent.  This can be seen from the following simple example:
\begin{itemize}
\item Let $P = \Pdec$ be equally likely to be $(3,3,0)/6$ or  $(4,1,1)/6$.
\item Let $R = \Rdec$ be equally likely to be $(3,2,1)/6$ or  $(4,2,0)/6$.
\end{itemize}
Then $R[n] \ed P[n]$, hence $R[n] \led P[n]$, for each $n = 1,2,3$.
But it is impossible to couple $P$ and $R$ so that 
$\P(R[n] \le P[n] \mbox{ for } n = 1,2) = 1$.
For $\P( R[n] \le P[n] ) = 1$ and $R[n] \ed P[n]$ would imply $\P(R[n] = P[n]) = 1$ for $n = 1,2$, hence $\P(R[n] =  P[n] \mbox{ for } n = 1,2) = 1$,
hence $R \ed P$, which is obviously not the case.

It is easily shown that 
\begin{equation}
\label{rdecsq}
\mbox{ if $\Rdec[n] \le \Pdec[n]$ for all $n$, then $\sum_i (\Rdec_i)^2 \le \sum_i(\Pdec_i)^2$.}
\end{equation}
This is really a fact about
arbitrary fixed ranked distributions,  which applies also to random ranked distributions. To see \eqref{rdecsq}, for $0 \le \lambda \le 1$ consider
the convex combination 
$\Pdec(\lambda):= (1- \lambda) \Rdec + \lambda \Pdec$, which is evidently another ranked discrete distribution, and differentiate
$\sum_i \Pdec_i(\lambda)^2$ with respect to $\lambda$. This derivative is a linear function of $\lambda$, which is of the requisite positive sign for all $0 \le \lambda \le 1$
iff it is positive for $\lambda = 0$ and $\lambda = 1$. But that is easily checked using the condition that both $\Pdec$ and $\Rdec$ are ranked.
A connection with the convex order of means is that if $X$ has mean $0$ and finite mean square, then, as discussed further in Section \ref{sec:moms},
it is easily seen that
\begin{equation}
\label{secondmom}
\E ( M_P(X) ^2 ) = \E(X^2) \E \sum_i P_i^2 
\end{equation}
So a necessary condition for $M_R(X) \lecx M_P(X)$ for all $X$ with a finite mean is that
\begin{equation}
\E \sum_i R_i^2 \le  \E \sum_i P_i ^2 .
\end{equation}
This is obviously implied by the existence of a coupling  of $\Rdec$ and $\Pdec$ with $\sum_i(\Rdec_i)^2 \le  \sum_i(\Pdec_i)^2$,
as implied by \eqref{rdecsq}, but is clearly a lot weaker than that condition. 
Other necessary conditions for $M_R(X) \lecx M_P(X)$ are implied by the generalization of \eqref{secondmom} to higher powers
presented later in Corollary \ref{mainthm}.
So much remains to be clarified regarding these various orderings with respect to stochastic diversity.

\subsection{Reversed martingales in the Chinese Restaurant}
This section, which can be skipped at a first reading, explains how in the canonical construction  of $(\E X, M_P(X) , X )$ as a three term martingale,  
as in Proposition \ref{prp:canon}, the $X$ and $M_P(X)$ are the first term and the almost sure limit of the reversed martingale constructed in the following proposition.

\begin{proposition}
%martingale $(M_{P_n}(X), n = 1,2, \ldots)$ where $P_n:= (P_{n,j}, j = 1,2, \ldots )$ is the sequence of random discrete distributions defined by
%with 
Let $(J_1, J_2, \ldots)$ be a random sample from a random discrete distribution $P$, with 
$(J_1, J_2, \ldots)$ and $P$, independent of the i.i.d. sequence $(X_1, X_2, \ldots)$.
Let $J_k^*$ the $k$th distinct value observed in the sequence $(J_1, J_2, \ldots)$, with $J_k = \infty$ if there is no such value.
Let
$$
P_{n,k}:= \frac{1}{n}\sum_{i = 1}^n (J_i = J_k^*) , 
$$
so $P_n  = (P_{n,k}, k = 1,2, \ldots )$ is the random empirical distribution of sample values $J_1, \ldots, J_n$ reindexed by
their order of appearance. For a measurable function $g$, let
$$
M_{P_n}(g(X)):= \sum_{k = 1}^\infty g(X_{J_k^*}) P_{n,k}  = \frac{ 1}{n} \sum_{i=1}^n g( X_{J_i} ) 
$$
so in particular $M_{P_1}(g(X)):= g(X)$ for $X:= X_{J_1} = X_{J_1^*}$. Then for each $g$ with $\E | g(X) | < \infty$ the sequence of 
$P_n$-means $M_{P_n}(g(X))$ is a  reversed martingale, which converges both almost surely and in $L^1$ to
$$
M_P ( g(X) ) :=  \sum_{k=1}^\infty g(   X_{J_k^*} ) P_k \ed \sum_{j=1}^\infty g(   X_j ) P_j .
$$
\end{proposition}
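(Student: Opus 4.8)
The plan is to recognize $M_{P_n}(g(X))$ as the normalized partial sum of an exchangeable sequence and then to run the classical reversed-martingale argument underlying the strong law of large numbers. Write $\xi_i := g(X_{J_i})$ and $S_n := \xi_1 + \cdots + \xi_n$, so $M_{P_n}(g(X)) = S_n/n$ and $\xi_1 = g(X)$ in the notation $X := X_{J_1} = X_{J_1^*}$. Let $\GG$ be the $\sigma$-field generated by $P$ together with the whole i.i.d.\ sequence $(X_1, X_2, \ldots)$. Conditionally on $\GG$ the indices $(J_i)$ are i.i.d.\ picks from $P$, hence $(\xi_i)_{i\ge1}$ is conditionally i.i.d.\ given $\GG$ with random law $\mu := \sum_j P_j\,\delta_{g(X_j)}$ on $\reals$; in particular $(\xi_i)_{i\ge1}$ is an exchangeable sequence of integrable variables, since $\E|\xi_1| = \E|g(X)| < \infty$. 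A Fubini computation, $\E\sum_j |g(X_j)| P_j = \E|g(X)|\sum_j \E P_j = \E|g(X)|$, also shows the defining series for $M_P(g(X))$ converges absolutely a.s.\ and in $L^1$.

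First I would establish the reversed-martingale property. Let $\mathcal{E}_n := \sigma(S_n, S_{n+1}, S_{n+2}, \ldots) = \sigma(S_n, \xi_{n+1}, \xi_{n+2}, \ldots)$, a decreasing sequence of $\sigma$-fields. Since $S_n$ is a symmetric function of $\xi_1,\ldots,\xi_n$ and the full sequence $(\xi_i)_{i\ge1}$ is exchangeable, permuting the first $n$ coordinates leaves the conditional law given $\sigma(\xi_{n+1},\xi_{n+2},\ldots)$ unchanged, so $\E[\xi_j \giv \mathcal{E}_n] = \E[\xi_1 \giv \mathcal{E}_n]$ for all $1 \le j \le n$; summing over $j$ and using $\sum_{j=1}^n \xi_j = S_n$ forces $\E[\xi_j \giv \mathcal{E}_n] = S_n/n$. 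Hence, for $m \le n$, $\E[S_m/m \giv \mathcal{E}_n] = \frac{1}{m}\sum_{j=1}^m \E[\xi_j \giv \mathcal{E}_n] = S_n/n$, which is exactly the reversed-martingale identity for $(M_{P_n}(g(X)))_{n\ge1} = (S_n/n)_{n\ge1}$ relative to $(\mathcal{E}_n)$.

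Next, convergence and identification of the limit. A reversed martingale is automatically uniformly integrable, each term being a conditional expectation of the fixed integrable variable $\xi_1$, so Doob's reversed-martingale convergence theorem gives $M_{P_n}(g(X)) = S_n/n \to Y_\infty$ both a.s.\ and in $L^1$, with $Y_\infty$ measurable with respect to $\bigcap_n \mathcal{E}_n$. To identify $Y_\infty$, note that conditionally on $\GG$ the $\xi_i$ are i.i.d.\ with mean $\int x\,\mu(dx) = \sum_j g(X_j) P_j$, so the ordinary strong law applied under $\P(\cdot \giv \GG)$ gives $S_n/n \to \sum_j g(X_j) P_j$ a.s.; therefore $Y_\infty = \sum_j g(X_j) P_j$. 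Finally, almost surely every index $j$ with $P_j > 0$ appears as some $J_k^*$ (while indices with $P_j = 0$, and the convention $J_k^* = \infty$ when no $k$th distinct value exists, contribute nothing), so $\sum_j g(X_j) P_j = \sum_k g(X_{J_k^*}) P_{J_k^*}$; since $(X_{J_k^*})_k$ is a sequence of i.i.d.\ copies of $X$ independent of the size-biased permutation $\Pst = (P_{J_k^*})_k$, this last sum is distributed as a $\Pst$-mean of $g(X)$, whence the asserted identity $\ed \sum_j g(X_j) P_j$ by the reordering invariance \eqref{ranksb}.

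The main obstacle is not depth but careful bookkeeping at two points: verifying that $(g(X_{J_i}))_{i\ge1}$ is genuinely exchangeable (equivalently, conditionally i.i.d.\ given $\GG$) despite the two independent randomness sources $(J_i)$ and $(X_j)$, and then matching the abstract tail-measurable limit produced by the reversed-martingale theorem with the concrete series $\sum_k g(X_{J_k^*}) P_{J_k^*}$, including the null contributions from atoms of size zero and from the convention $J_k^*=\infty$. Once the conditional-i.i.d.\ structure is in hand, the reversed-martingale identity and the a.s.\ and $L^1$ convergence are the standard Doob argument for the SLLN, and the limit identification is routine.
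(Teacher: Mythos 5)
Your proof is correct and takes essentially the same route as the paper's: establish that $(g(X_{J_i}))_{i\ge1}$ is exchangeable (by recognizing it as conditionally i.i.d.\ given $P$ and the $X$-sequence), invoke the standard reversed-martingale argument for exchangeable partial sums to get the a.s.\ and $L^1$ convergence, and then identify the limit via the conditional SLLN and the observation that the limit is a.s.\ equal to the size-biased presentation $\sum_k g(X_{J_k^*}) P_{J_k^*}$. The paper's own proof is terser, simply asserting exchangeability is "easily checked" and citing "standard theory of exchangeable sequences" and Aldous's variation of Kingman's argument; you have filled in exactly those details.
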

\begin{proof}
The equality of the two expressions for $M_{P_n} ( g(X) )$ follows easily from the definitions. 
The rest of the argument is a variation of the proof of Kingman's representation of partition structures by 
\citet{MR883646}. %%, AUTHOR = {Aldous, David J.}, TITLE = {Exchangeability and related topics},
%% xxx with quibble by Gnedin
It is easily checked that the sequence $(X_{J_i}, i = 1,2, \ldots)$ is exchangeable, so $M_{P_n} ( g(X) )$ 
is a reversed martingale by standard theory of exchangeable sequences. The remaining conclusions follow easily.
\end{proof}

The Chinese Restaurant Process provides a visualization
%%metaphor in a conversation with Lester Dubins in the 1980's, %%to visualize 
of successive random partitions generated by the cycles of random permutations $\pi_n$ of $[n]$,
where $\pi_{n+1}$ is obtained from $\pi_n$ by inserting element $n+1$ into one of $n+1$ possible places relative to the
cycles of $\pi_n$.  Various aspects of this metaphor are developed in 
\citet[\S 3.1]{MR2245368} % AUTHOR = {Pitman, J.}, TITLE = {Combinatorial stochastic processes},
In terms of Chinese Restaurant, the random distribution $P_{n}$ with support $\{1, \ldots, K_n \}$ is the empirical distribution
of how the first $n$ customers are assigned to tables $j$ for $1 \le j \le K_n$, where $K_n$ the number of distinct values in the sample $J_1, \ldots, J_n$
from $P$.
In this picture, table $k$ is brought into service when the $k$th distinct value $J_k^*$ appears, and that $k$th table is labeled by the positive integer $J_k^*$.
The $(n+1)$th customer is given the random value $J_{n+1}$ picked from $(P_1, P_2, \ldots)$, and assigned to whichever table 
has label equal to $J_{n+1}$, if that label has appeared before, and otherwise, if there are $K_n = k$ tables in use, with $k$ different labels,
customer $n+1$ is assigned to a new table $k+1$ with value $J_{k+1}^* = J_{n+1}$.
Suppose that in addition to its index $k$ in order of appearance and its label $J_k^*$, the $k$th table is assigned value $X_{J_k^*}$ for $(X_1, X_2, \ldots)$
an i.i.d. sequence with values in an arbitrary measurable space, independent of $P$ and the sample $(J_1, J_2, \ldots)$ from $P$ which drives the Chinese Restaurant Process.
Say $X_{J_k^*}$ is the {\em table color} of the $k$th table brought into service in the restaurant.
Then the sequence of table colors  encountered by customers as they enter the restaurant, that is $(X_{J_1}, X_{J_2}, \ldots)$, is an exchangeable
sequence of random variables which generates a partition structure which may be coarser than the partition of customers by tables, if there are ties among the
$X$-values, but which will be identical to the partition of customers by tables if the distribution of $X$ is continuous so the $X$-values are almost surely distinct.
Note that the sequence $(P_j^*, j = 1,2, \ldots)$  is a size-biased random permutation of the
original random discrete distribution $(P_j)$ driving the Chinese Restaurant Process, by a mechanism that is independent of the $X$-sequence. 

\subsection{Fragmentation operators and composition of $P$-means}
\label{sec:frag}

\citet*[\S 6]{MR1386296} %%%, AUTHOR = {Pitman, Jim and Yor, Marc}, TITLE = {Random discrete distributions derived from self-similar random
introduced the {\em composition operation} on two random discrete distributions $P$ and $Q$ which creates a new random discrete distribution $R:= P \ootimes Q$ as follows.
Let $P:= (P_i)$ be independent of $(Q_{i,j}, j = 1,2, \ldots)$, a sequence of i.i.d. copies of $Q$, and let $P \ootimes Q$ denote the ranked ordering
of the  collection of products $(P_i Q_{i,j}, i = 1,2, \ldots, j = 1,2, \ldots)$. Intuitively, each atom of $P$ is fragmented by its own copy of $Q$, and these fragments are reassembled in non-increasing order to form $R:= P \ootimes Q$. Clearly, $R$ is a very special kind of refinement  of $P$, as discussed in Section \ref{sec:refine}.
The composition operation $\ootimes$ may be regarded either as an operation on ranked discrete distributions, as in \citet*[\S 6]{MR1386296}, %%%, AUTHOR = {Pitman, Jim and Yor, Marc}, TITLE = {Random discrete distributions derived from self-similar random
or on their corresponding partition structures, as detailed in 
\citet[Lemma 35]{MR1742892}. %%AUTHOR = {Pitman, Jim}, TITLE = {Coalescents with multiple collisions}.

Independent of $(P_i)$ and $(Q_{i,j})$ as above, let $(X_{i,j})$ be an array of i.i.d. copies of $X$, assumed to be either bounded or non-negative.
Then
$$
M_Q^{(i)}(X):= \sum_{j=1}^\infty X_{i,j} Q_{i,j}
$$
is a sequence of i.i.d. copies of $M_Q(X)$. So a $P$-mean of $M_Q(X)$ is naturally constructed as
\begin{align}
M_P( M_Q(X) ) &= \sum_{i = 1}^ \infty M_Q^{(i)} P_i \\
&= \sum_{i = 1}^ \infty \sum_{j=1}^ \infty X_{i,j}P_i Q_{i,j} \, \ed \, M_{P \ootimes Q}(X) .
\end{align}

Hence the following proposition:

\begin{proposition}
The operation $P \ootimes Q$ of composition of random discrete distributions $P$ and $Q$ corresponds to composition of their mean operators $M_P$ and $M_Q$:
\begin{equation}
M_{P \ootimes Q} (X) \ed M_P( M_Q (X) )
\end{equation}
for all bounded or non-negative $X$. Consequently, for three random discrete distributions  $P$, $Q$ and $R$, the following two conditions are equivalent:
\begin{itemize}
\item  $M_R (X) \ed M_P (M_Q(X) )$ for every $X$ with a finite number of values;
\item  $R^\downarrow \ed P \ootimes Q$.
\end{itemize}
\end{proposition}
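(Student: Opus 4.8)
The plan is to obtain the first displayed identity essentially from the computation recorded immediately above the proposition statement, and then to read off the equivalence from Corollary~\ref{crl:means:kingman}. First I would fix a convenient joint realization: let $(P_i)$, the doubly-indexed array $(Q_{i,j})_{i,j\ge 1}$ of i.i.d.\ copies of $Q$, and the doubly-indexed array $(X_{i,j})_{i,j\ge 1}$ of i.i.d.\ copies of $X$ be mutually independent. Then, for each fixed $i$, $M_Q^{(i)}(X):=\sum_{j}X_{i,j}Q_{i,j}$ is a copy of $M_Q(X)$, and the sequence $(M_Q^{(i)}(X))_{i\ge 1}$ is i.i.d.\ and independent of $(P_i)$; hence $M_P(M_Q(X))=\sum_{i}M_Q^{(i)}(X)\,P_i=\sum_{i,j}X_{i,j}P_iQ_{i,j}$, the interchange of summations being legitimate term by term (absolute convergence when $X$ is bounded, Tonelli when $X\ge 0$, with the monotone convergence property \eqref{monconv} available if one prefers to pass from bounded to non-negative $X$). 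The products $(P_iQ_{i,j})$ are non-negative with $\sum_{i,j}P_iQ_{i,j}=\sum_iP_i\sum_jQ_{i,j}=1$ almost surely, and are independent of the i.i.d.\ family $(X_{i,j})$; so, after a measurable reindexing of $\nats^2$ by $\nats$, the last sum is exactly a $P$-mean of $X$ for the random discrete distribution $(P_iQ_{i,j})$, while $P\ootimes Q$ is by definition its ranked rearrangement. The reordering invariance \eqref{ranksb} then gives $\sum_{i,j}X_{i,j}P_iQ_{i,j}\ed M_{P\ootimes Q}(X)$, which is the asserted identity $M_{P\ootimes Q}(X)\ed M_P(M_Q(X))$, valid for every bounded or non-negative $X$.

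For the equivalence, one direction is immediate: if $R^\downarrow\ed P\ootimes Q$ then, since $M_\bullet(X)$ is a fixed measurable functional of the weight sequence run against an independent i.i.d.\ sequence of copies of $X$, we get $M_{R^\downarrow}(X)\ed M_{P\ootimes Q}(X)$, whence $M_R(X)\ed M_{R^\downarrow}(X)\ed M_{P\ootimes Q}(X)\ed M_P(M_Q(X))$ using \eqref{ranksb} and the first part. Conversely, if $M_R(X)\ed M_P(M_Q(X))$ for every $X$ taking finitely many values, then by the first part $M_R(X)\ed M_{P\ootimes Q}(X)$ for every such (simple) $X$; Corollary~\ref{crl:means:kingman} then forces $R$ and $P\ootimes Q$ to share the same partition structure, so in particular $R^\downarrow\ed(P\ootimes Q)^\downarrow=P\ootimes Q$ by Proposition~\ref{propkk}, the last equality holding because $P\ootimes Q$ is already in ranked order.

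The only step that needs real care is the appeal to \eqref{ranksb} for the derived random discrete distribution $(P_iQ_{i,j})$: one must check that the bijection $\nats^2\to\nats$ turning the array of products into a single sequence, and the subsequent ranking into $P\ootimes Q$, are performed measurably and independently of the copies $(X_{i,j})$, so that the reordering is indeed ``made independently of the copies of $X$'' in the sense required by \eqref{ranksb}; one should also note that the invariance applies to non-negative $X$ of possibly infinite mean, where all the means are $[0,\infty]$-valued, and that for the equivalence only bounded (simple) $X$ are needed, where nothing beyond absolute convergence is at issue. Everything else is routine bookkeeping with the canonical construction of $P$-means.
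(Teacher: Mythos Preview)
Your proof is correct and follows essentially the same route as the paper: the identity $M_{P\ootimes Q}(X)\ed M_P(M_Q(X))$ is obtained exactly by the double-sum computation recorded just before the proposition (using the i.i.d.\ array $(X_{i,j})$ and reordering invariance \eqref{ranksb}), and the equivalence is then read off from Corollary~\ref{crl:means:kingman}. You have simply spelled out in more detail the measurability and convergence bookkeeping that the paper leaves implicit.
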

\begin{proof}
The first sentence summarizes the preceding discussion. 
The second sentence follows from the characterization of partition structures by their $P$-means (Corollary \ref{crl:means:kingman}).
\end{proof}

Typically, the operation of composition of random discrete distributions is quite difficult to describe explicitly.
A remarkable exception is the result of 
\citet*[Proposition 22]{MR1434129} %%%AUTHOR = {Pitman, Jim and Yor, Marc}, TITLE = {The two-parameter {P}oisson-{D}irichlet distribution derived
that for the $P_{\alpha,\theta}$ governing the $(\alpha,\theta)$ model, 
there is the simple composition rule
\begin{equation}
\label{althcomp}
P_{\alpha, \theta} = P_{0,\theta} \ootimes P_{\alpha,0}
\qquad( 0 < \alpha < 1, \theta >0 )
\end{equation}
corresponding to the identity in distribution of corresponding $P$-means
\begin{equation}
\label{althcompmeans}
M_{\alpha, \theta} (X) \ed  M_{0,\theta} ( M_{\alpha,0} (X) )
\qquad( 0 < \alpha < 1, \theta >0 )
\end{equation}
for all bounded or non-negative random variables $X$.
See \citet[\S 3.4]{MR2245368} % AUTHOR = {Pitman, J.}, TITLE = {Combinatorial stochastic processes},
for an account of how the identity \eqref{althcomp} was first discovered by a representation of the $(\alpha,\theta)$ model
for $0 < \alpha < 1$ and $\theta >0$ as the limiting proportions of various classes of individuals in a continuous time branching process.
See also
\citet[Theorem 12]{MR1742892} %%AUTHOR = {Pitman, Jim}, TITLE = {Coalescents with multiple collisions}.
for a proof of the more general result that 
\begin{equation}
\label{althcompmeans1}
P_{\alpha, \theta} = P_{\alpha \beta ,\theta} \ootimes P_{\alpha, - \alpha \beta} \qquad (0 < \alpha < 1, 0 \le \beta < 1, \alpha \beta < \theta ),
\end{equation}
which has a similar interpretation in terms of $P$-means. See also
\citet[\S 5.5]{MR2245368} % AUTHOR = {Pitman, J.}, TITLE = {Combinatorial stochastic processes},
for further discussion and combinatorial interpretations of \eqref{althcomp} and \eqref{althcompmeans1}.
As indicated in 
Section \ref{sec:twomeans}
these composition rules for $(\alpha,\theta)$ means are closely related to 
Tsilevich's formula \eqref{composmomalth0} for the generalized Stieltjes transform of an $(\alpha,\theta)$ mean.
See also 
\citet[Theorem 2.1]{MR2398765} %%%, {James, Lancelot F. and Lijoi, Antonio and Pr\"unster, Igor}, TITLE = {Distributions of linear functionals of two parameter {P}oisson-{D}irichlet random measures},
where a presentation of \eqref{althcompmeans} 
was derived from Tsilevich's formula \eqref{composmomalth0}.
But the equivalence of \eqref{althcomp} and \eqref{althcompmeans} is only hinted at there,
by a reference to \cite{MR2183216}, %AUTHOR = {Gnedin, A. and Pitman, J.}, TITLE = {Self-similar and {M}arkov composition structures},
which contains related results for interval partitions and random discrete distributions derived from self-similar random sets.

A result of \citet[Theorem 12]{MR1742892}. %%AUTHOR = {Pitman, Jim}, TITLE = {Coalescents with multiple collisions}.
establishes a close connection between the operation of fragmentation of one random discrete distribution by another,
and a kind of dual coagulation operation. Curiously, while this coagulation operation has a simple description in terms of composition 
of associated  processes with exchangeable increments, it does not seem to have any simple description in terms of $P$-means.
See \citet[\S 5]{MR2245368} % AUTHOR = {Pitman, J.}, TITLE = {Combinatorial stochastic processes},
and \cite{MR2253162} %%AUTHOR = {Bertoin, Jean}, TITLE = {Random fragmentation and coagulation processes},
for further discussion of fragmentation and coagulation operations and associated Markov processes whose state space is
the set of ranked discrete distributions.

\subsection{Moment formulas}
\label{sec:moms}

Let $(\tilX , \tilY): = M_P ( X,Y)$ be the pair of $P$-means of two random variables $X$ and $Y$ with some joint distribution.
It is a basic problem to calculate the expectation $\E \tilX \tilY$, in particular $\E \tilX^2 $ in the case $\tilX = \tilY$.
This problem was first considered by \citet{MR0350949} % AUTHOR = {Ferguson, Thomas S.}, TITLE = {A {B}ayesian analysis of some nonparametric problems},
for the $(0,\theta)$ model of $P$. Following Ferguson's approach in that particular case,
expand the product as
$$
\tilX \tilY = \left( \sum_j X_j P_j  \right) \left( \sum_k X_k P_k  \right)  = \sum_{j} X_j Y_j  P_j^2  + \sum_{j \ne k } X_j Y_j P_j P_k
$$
and take expectations to conclude 
that
\begin{equation}
\label{fergprod}
\E \tilX \tilY = p(2) \E (X Y)  + p(1,1) \E(X ) \E(Y) 
\end{equation}
where
$$
p(2):=  \E \sum_j P_j^2 \mbox{ and } p(1,1) := \E \sum_{j \ne k } P_j P_k 
$$
are the two most basic partition probability formulas encoded in the EPPF $p$ derived from the random discrete distribution by \eqref{eppf}, that is
$$
\mbox{ $p(2) = \P(J_1 = J_2)$ and $p(1,1) = \P(J_1 \ne J_2)$  }
$$
for $(J_1, J_2)$ a sample of size 2 from $P$. 
In the Dirichlet case considered by \citet[Theorem 4]{MR0350949} % AUTHOR = {Ferguson, Thomas S.}, TITLE = {A {B}ayesian analysis of some nonparametric problems},
$P$ is governed by the $(0,\theta)$ model, which makes $p(2) = 1/(1 + \theta)$ and $p(1,2) = \theta/(1 + \theta)$.

This method extends easily to a product of three $P$-means, say $\tilX \tilY \tilZ$, with a different sum appearing for each of the $5$
partitions of the index set $[3]$, according to ties between indices of summation:
$$
\E \tilX \tilY \tilZ =  \sum_{i = j = k} + \sum_{i,j,k {\rm distinct} } + \sum_{i=j \ne k } + \sum_{i=k \ne j } + \sum_{j=k \ne i }
$$
where for instance
$$
\sum_{i=j \ne k }  ~= ~\E ( X Y ) \E ( Z ) \E \sum_{i \ne k } P_i ^2 P_k ~= ~\E ( X Y ) \E ( Z ) p(2,1) 
$$
by \eqref{eppf}.
Continuing to a product of $n$ factors, the corresponding moment formula is given by the following proposition. This is 
a variant of product moment formulas due to \citet*[Proposition (10.1)]{MR1879065}, %%%AUTHOR = {Kerov, S. V. and Tsilevich, N. V.}, TITLE = {The {M}arkov-{K}rein correspondence in several dimensions}, 
for the two-parameter model, and \citet*{MR2026070} %%, AUTHOR = {Ishwaran, Hemant and James, Lancelot F.}, TITLE = {Generalized weighted {C}hinese restaurant processes 
for a general random discrete distribution $P$, possibly even defective, as in Section \ref{sec:defect}.

\begin{proposition}
{\em [Product moment formula for $P$-means]}
Let $(\tilY_i, 1 \le i \le n) = M_P(Y_1, \ldots,  Y_n)$ be the random vector of $P$-means derived from some joint distribution of $(Y_1, \ldots, Y_n)$.
For instance if $Y_i = g_i(X)$ for some sequence of measurable functions $g_i$ and some basic random variable $X$, then $\tilY_i:= \sum_{j} g_i(X_j)P_j$ for $(X_1, X_2, \ldots)$ a sequence of i.i.d. copies of $X$, 
independent of $P$ with EPPF $p$.
Then, assuming either the $Y_i$ are either all bounded, or all non-negative,
\begin{equation}
\label{prodmom}
\E \prod_{i=1}^n \tilY_i = \sum_{k=1}^n \sum_{\{B_1, \ldots , B_k \}} p(\#B_1, \ldots, \# B_k) \prod_{j = 1}^ k \mu(B_j )
\end{equation}
where $\# B$ is the size of block $B$ and $\mu(B):= \E \prod_{i \in B} Y_i$, and
where for each $k$ the inner sum is over the set of 
all partitions of $[n]$ into $k$ blocks $\{B_1, \ldots, B_k \}$.
\end{proposition}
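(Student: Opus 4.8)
The plan is to expand the product $\prod_{i=1}^n \tilY_i$ into a multiple sum over $n$-tuples of positive integers, regroup the tuples according to the partition of $[n]$ that records which indices coincide, and recognize the resulting block sums as the EPPF values of formula~\eqref{eppf}. Concretely, I would fix the coupling in which, for each $j=1,2,\ldots$, the vector $(Y_1^{(j)},\ldots,Y_n^{(j)})$ is an independent copy of $(Y_1,\ldots,Y_n)$, the whole array being independent of $P$, so that $\tilY_i=\sum_j Y_i^{(j)}P_j$ and hence
\[
\prod_{i=1}^n \tilY_i \;=\; \sum_{j_1,\ldots,j_n} \Big(\prod_{i=1}^n Y_i^{(j_i)}\Big)\,\prod_{i=1}^n P_{j_i}.
\]
Taking expectations term by term is legitimate: in the non-negative case every summand is non-negative and Tonelli applies, while in the bounded case the partial sums are dominated by $\prod_i \sup|Y_i|$ because $\sum_j P_j=1$, so one may condition on $P$ and use dominated convergence; integrability of $\prod_i \tilY_i$ itself is supplied by Corollary~\ref{crl:convex}.

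Next I would partition the $n$-tuples $(j_1,\ldots,j_n)$ by the equivalence relation $i\sim i'\iff j_i=j_{i'}$, whose classes form a set partition $\{B_1,\ldots,B_k\}$ of $[n]$. On the tuples inducing a given partition, the common value on block $B_\ell$ is an integer $m_\ell$, with $m_1,\ldots,m_k$ distinct, and $\prod_{i=1}^n Y_i^{(j_i)}=\prod_{\ell=1}^k\prod_{i\in B_\ell}Y_i^{(m_\ell)}$. Since for distinct $\ell$ these involve disjoint, hence independent, copies, the conditional expectation given $P$ factorizes as $\prod_{\ell}\mu(B_\ell)$ with $\mu(B_\ell)=\E\prod_{i\in B_\ell}Y_i$, a nonrandom quantity which then pulls out of the remaining $P$-average. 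What survives is
\[
\sum_{k=1}^n\sum_{\{B_1,\ldots,B_k\}}\Big(\prod_{\ell=1}^k\mu(B_\ell)\Big)\sum_{(m_1,\ldots,m_k)}\E\prod_{\ell=1}^k P_{m_\ell}^{\#B_\ell},
\]
the innermost sum being over $k$-tuples of distinct positive integers, which by definition~\eqref{eppf} equals $p(\#B_1,\ldots,\#B_k)$. This is exactly \eqref{prodmom}.

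The routine analytic interchanges aside, the step I expect to need the most care is the combinatorial bookkeeping: verifying that the passage from ``$n$-tuple $(j_1,\ldots,j_n)$'' to ``set partition $\{B_1,\ldots,B_k\}$ of $[n]$ together with an injective labelling of its blocks by positive integers'' is a genuine bijection, and that the symmetry of $p$ makes the order in which one lists the blocks immaterial, so that the inner sum matches the (unordered) sum defining $p$ in~\eqref{eppf}. One should also double-check that the block-wise factorization of the conditional expectation is valid, which rests only on the copies being i.i.d.\ in $j$ and jointly independent of $P$.
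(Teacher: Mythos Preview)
Your proposal is correct and follows essentially the same approach as the paper: expand the product as a sum over $n$-tuples $(j_1,\ldots,j_n)$, group the tuples according to the partition of $[n]$ generated by ties among the indices, and identify the resulting sum over distinct block labels as the EPPF via \eqref{eppf}. The paper's proof is a two-sentence sketch of exactly this argument, while you have spelled out more carefully the analytic justifications and the combinatorial bijection, but there is no substantive difference in strategy.
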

\begin{proof}
Expand the product according to the partition generated by ties between indices. For each particular partition $\{B_1, \ldots, B_k \}$, the corresponding expectation
is evaluated using the basic formula \eqref{eppf} for the EPPF.
\end{proof}

Observe that  no matter what the joint distribution of the $Y_i$, 
if $\Pi_n$ is the random partition generated by a sample of size $n$ from $P$, and the definition of the product moment function $\mu(B)$
on subsets $B$ of $[n]$ is extended to a partition $\Pi = \{B_1, \ldots, B_k \}$ of $[n]$ by $\mu(\Pi):= \prod_{j=1}^k \mu(B_j)$, then the
product moment formula \eqref{prodmom} becomes simply:
\begin{equation}
\label{prodmom1}
\E \prod_{i=1}^n \tilY_i = \E \mu ( \Pi_n) .
\end{equation}
It is tempting to think this formula somehow evaluates $\E \prod_{i=1}^n \tilY_i $  by conditioning on $\Pi_n$ in a suitable construction of 
the product jointly with $\Pi_n$ to make $\E (\prod_{i=1}^n \tilY_i \giv \Pi_n) = \mu(\Pi_n)$, which would obviously imply \eqref{prodmom1}.
However this thought is completely wrong. Just consider the simplest case \eqref{fergprod} for $n=2$ for $X = Y$
with $\E(X) = \E(Y) = 0$. We know from examples that the distribution of $\tilX^2$ can be continuous, with $p(1,1) >0$.
But then there is no event $E$ with probability $p(1,1)$ such that $\E ( \tilX^2 \giv E) = \E(X) \E(Y) = 0$.

Be that as it may, the probabilistic form \eqref{prodmom1} 
of the product moment formula for $P$-means explains why this formula reduces easily in special cases, by manipulation of $\E \mu(\Pi_n)$.
For instance, if the joint distribution of $(Y_1, \ldots, Y_n)$ is exchangeable, then $\mu(B)$ depends only on $\#B$, say $\mu(B) = \mu(\# B)$ where
the definition of the moment function $\mu$ is extended to  positive integers $m$ by
$\mu(m):= \E \prod_{i= 1}^m Y_i$. That is, the mean product of any collection of $m$ of the variables. 
In this case, $\mu$ as a function of partitions of $[n]$ simplifies to $\mu ( \{B_1, \ldots, B_k \} ) = \prod_{j = 1}^k \mu(\# B_j)$.
This is a symmetric function of the sizes of the blocks of $\Pi_n$, which can be evaluated by listing the sizes of these blocks in any 
order, say $(N_{1:n}, N_{2:n} , \ldots, N_{K_n:n})$.
So for exchangeable $(Y_1, \ldots, Y_n)$ formula \eqref{prodmom1} becomes
\begin{equation}
\label{prodmom2}
\E \prod_{i=1}^n \tilY_i = \E \prod_{j=1}^{K_n} \mu( N_{j:K_n}) = \E \prod_{i=1}^n  \mu(i)^{c_i (\Pi_n) }
\end{equation}
where $\mu(m)$ is the expected product of any $m$ of the $Y_i$, and 
$$c_i(\Pi_n):= \sum_{j= 1}^{K_n} 1 ( N_{j:K_n} = i)$$ 
is the number of blocks of $\Pi_n$ of size $i$.
In the important special case when $Y_i \equiv  X$ for every $1 \le i \le n$, $\mu(m) = \E X^m$, and \eqref{prodmom2}
may be recognized in \citet[Theorem (4.2.2)]{MR1618739} %%AUTHOR = {Kerov, Sergei}, TITLE = {Interlacing measures},
in the equivalent form
\begin{equation}
\label{permsmoms}
\E  \tilX^n = \sum_{\pi } \P( \pi_n = \pi ) \prod_{i = 1}^n (\E X^i ) ^{c(i,\pi)}  
\end{equation}
where $\pi_n$ is a random permutation of $n$ which conditionally given $\Pi_n$ is uniformly distributed over all permutations of $[n]$
whose cycle partition is $\Pi_n$, as generated by the Chinese Restaurant Construction of $\Pi_n$, and $c(i,\pi)$  is the number of cycles of size $i$ in $\pi$.
See also \citet[\S 2]{MR1425401} %%%, AUTHOR = {Diaconis, P. and Kemperman, J.}, TITLE = {Some new tools for {D}irichlet priors},
where the formula \eqref{permsmoms} was first derived for the $(0,\theta)$ model of $P$ which generates the Ewens $(\theta)$ distribution on random permutations with
\begin{equation}
\label{ewensperm}
\P( \pi_n = \pi )  = \frac{ n! \theta^{K_n(\pi)}  } {(\theta)_n }
\end{equation}
for $K_n(\pi)$ the number of cycles of $\pi$.
Here is a version of Kerov's moment formula  \eqref{permsmoms}
in terms of the ECPF of $P$, as introduced in \eqref{nexnform}:
%given whose cycles
%whgave the following variant of \eqref{moments}, involving a sum over the set of $n!$ permutations $\pi$ of $[n]$ instead of the $2^{n-1}$ compositions of $n$ involved in \eqref{moments}:
%where $\pi_n$ is a random permutation of $[n]$ obtained from the random partition $\Pi_n$ generated by sample of size $n$ from $P$ by
%assuming that $\pi_n$ given $\Pi_n$ is uniformly distributed over all permutations of $[n]$ whose cycle partition is $\Pi_n$, with
%$c_j(\pi)$ the number of cycles of size $j$ in $\pi$. By grouping terms in this sum with the same cycle structure, this sum over permutations
%is equivalent to a sum over all partitions $\Pi$ of $[n]$:
%\begin{equation}
%\label{fgmoms1}
%\E  \tilX_P^n = \sum_{\Pi } \P( \Pi_n = \Pi ) \prod_{j = 1}^n (\E X^j ) ^{c(j,\Pi)} 
%\end{equation}
%where $c_j(\Pi)$ is the number of blocks of size $j$ in $\Pi$. This expression in turn reduces easily to that in \eqref{moments},
%using the definition of the EPPF $p(n_1, \ldots, n_k) := \P( \Pi_n = \Pi)$ for every partition $\Pi$ of $[n]$ with blocks of sizes $(n_1, \ldots, n_k)$.
%%
%%with credit to 
%\citet[Theorem 4.1.4]{MR1425401} %%% Diaconis, P. and Kemperman, J.}, TITLE = {Some new tools for {D}irichlet priors},
%for the proof in the particular case of Dirichlet means with weight $\theta >0$, discussed further below.

\begin{corollary}
{\em[Moment formula for $P$-means]}
\label{mainthm}
Let  $P$ be a random discrete distribution with ECPF $\pex$.
For every distribution of $X$ with $\E |X|^n < \infty$, the $n$th moment of $\tilX_P$, the $\Pbul$-mean of a sequence of i.i.d. copies of
$X$, is finite and given by the formula
\begin{equation}
\label{moments}
%\frac{ \E \tilX ^n }{n!} = \sum_{k=1}^n \frac{1}{k!} \sum_{(n_1, \ldots, n_k) } p(n_1, \ldots, n_k) \prod_{i=1}^k \frac{ \E X^{n_i} }{n_i! } . 
\E \tilX _P^n  = \sum_{k=1}^n \sum_{(n_1, \ldots, n_k) } \pex(n_1, \ldots, n_k) \prod_{i=1}^k \E X^{n_i} 
\end{equation}
where the inner sum is over all $\binom{n-1}{k-1}$ compositions of $n$ into $k$ parts.
In particular, if $\E \exp(t X) < \infty$ for $t$ in some open interval $I$ containing $0$, as for a bounded random variable $X$,  then
for every random discrete distribution $P$, 
\begin{itemize}
\item $\E \exp(t \tilX_P) \le  \E \exp(t X) < \infty$ for $t \in I$; 
\item the distribution of $\tilX_P$ is uniquely determined by its moment sequence \eqref{moments}.
\end{itemize}
\end{corollary}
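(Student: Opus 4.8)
The plan is to deduce \eqref{moments} from the product moment formula \eqref{prodmom} specialized to $Y_1 = \cdots = Y_n = X$ --- first for bounded $X$, then for general $X$ with $\E|X|^n<\infty$ by a truncation argument --- and to obtain the two displayed consequences from the convex order $\tilX_P \lecx X$ supplied by Corollary~\ref{crl:convex}.

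\textbf{Bounded case.} When $X$ is bounded, the preceding proposition applies with every $Y_i \equiv X$, so $\mu(B) = \E X^{\#B}$ and \eqref{prodmom} collapses, as in \eqref{prodmom2}, to $\E\tilX_P^n = \E\prod_{j=1}^{K_n}\mu(\Nex_{j:n})$, where $\Nexn = (\Nex_{1:n},\ldots,\Nex_{K_n:n})$ lists the block sizes of $\Pi_n$ in exchangeable order and $\mu(m) = \E X^m$. Since $\prod_j \mu(\Nex_{j:n})$ depends on $\Pi_n$ only through the composition $\Nexn$, and $\Nexn$ has probability function $\pex$ by \eqref{nexnform}, averaging over $\Nexn$ gives precisely the right-hand side of \eqref{moments}. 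Equivalently, one converts the sum over unordered partitions in \eqref{prodmom} into the sum over compositions of $n$ directly, using the multinomial-times-$1/k!$ factor of \eqref{nexnform} together with the symmetry of $p$.

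\textbf{General case.} For $X$ with $\E|X|^n<\infty$ the mean $\tilX_P$ is well defined by Proposition~\ref{prp:canon}, and $\E|\tilX_P|^n \le \E|X|^n < \infty$ by Corollary~\ref{crl:convex}; this is the finiteness assertion. For the formula, set $X^{(m)} := X\,1(|X|\le m)$ and realize $M_P(X)$ and $M_P(X^{(m)})$ on one probability space from coupled i.i.d.\ sequences, so that linearity of $M_P$ and Corollary~\ref{crl:convex} give $\E|M_P(X) - M_P(X^{(m)})|^n = \E|M_P(X - X^{(m)})|^n \le \E|X - X^{(m)}|^n = \E[\,|X|^n\,1(|X|>m)\,] \to 0$. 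Hence $\E(M_P(X^{(m)}))^n \to \E\tilX_P^n$; meanwhile $|X|^{n_i} \le 1 + |X|^n$ is integrable, so dominated convergence gives $\E(X^{(m)})^{n_i} \to \E X^{n_i}$ for each part $n_i \le n$, and therefore the right-hand side of \eqref{moments} evaluated at $X^{(m)}$ (a finite sum with $\pex$ fixed) converges to the right-hand side at $X$. Passing to the limit in the bounded-case identity proves \eqref{moments} for $X$.

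\textbf{The two consequences, and the main obstacle.} If $\E\exp(tX) < \infty$ for $t$ in an open interval $I \ni 0$, then $\E|X|^n < \infty$ for every $n$, and for each $t\in I$ the map $x\mapsto e^{tx}$ is convex, so Corollary~\ref{crl:convex} (applicable since $\E|X|<\infty$) yields $\E\exp(t\tilX_P)\le\E\exp(tX)<\infty$ throughout $I$; the moment generating function of $\tilX_P$ being finite in a neighbourhood of the origin, the standard criterion makes the law of $\tilX_P$ determinate from its moment sequence, which by the first part is exactly \eqref{moments} and depends only on $\pex$ and the moments of $X$. The only step needing care is the convergence $\E(M_P(X^{(m)}))^n \to \E\tilX_P^n$, that is, continuity of the $n$th moment along an $L^n$-convergent sequence; this is routine but does rely essentially on the uniform contraction $\E|M_P(Z)|^n \le \E|Z|^n$ of Corollary~\ref{crl:convex}. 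A reader who prefers to bypass truncation can instead expand $\tilX_P^n = \sum_{j_1,\ldots,j_n} X_{j_1}\cdots X_{j_n} P_{j_1}\cdots P_{j_n}$, justify interchanging $\E$ with the $n$-fold sum via $\sum_{j_1,\ldots,j_n} \E|X_{j_1}\cdots X_{j_n}|\,\E[P_{j_1}\cdots P_{j_n}] \le (1+\E|X|^n)^n\,\E(\sum_j P_j)^n = (1+\E|X|^n)^n$, and then group the sum by the partition of $[n]$ induced by coincidences among the $j_i$, invoking \eqref{eppf}; the ensuing combinatorics is identical.
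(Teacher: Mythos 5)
Your proof is correct and follows essentially the same path as the paper: specialize the product moment formula \eqref{prodmom}/\eqref{prodmom2} to $Y_i\equiv X$, note that the symmetric product $\prod_j\mu(N_{j:K_n})$ depends on $\Pi_n$ only through the exchangeable composition $\Nexn$, and reindex by $\pex$ via \eqref{nexnform}; the two listed consequences then come from Corollary~\ref{crl:convex} and moment determinacy exactly as the paper reads them off. The only difference is the extension step --- you go from bounded $X$ to $\E|X|^n<\infty$ by truncation and the $L^n$ contraction $\E|M_P(Z)|^n\le\E|Z|^n$, while the paper passes from nonnegative $X$ (via monotone convergence in \eqref{prodmom}) to signed $X$ by the decomposition $X=X_+-X_-$; both limit arguments are valid, and your truncation route is arguably the cleaner of the two.
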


\begin{proof}
For non-negative $X$, this is read from 
\eqref{prodmom2}  for $Y_i \equiv X$ and
the particular choice of the exchangeable random presentation
$\Nexn$ of sizes of blocks of $\Pi_n$.
% the preceding formal computation by power series expansion is easily justified by appealing to dominated convergence on both sides,
%and standard properties of the moment generating function. 
%For non-negative $X$, approximate $X$ from below and appeal to monotone convergence.
Then take the usual difference $X = X_+ - X_-$ for signed $X$.
The rest is read from Corollary \ref{crl:convex} and standard theory of moment generating functions.
\end{proof}

A good check on this general moment formula for $P$-means is provided by taking $X$ to be the constant random variable $X = 1$ in \eqref{moments}. 
Then $\tilX = 1$ too, and the moment formula confirms that $\pex(n_1, \ldots, n_k)$ is a probability function on compositions of $n$ for each $n$,
as in \eqref{pexsum1}.
Another check is provided by the {\em classical case}, when $P = P_m$ say is constant, and equal to the uniform distribution on $[m]$.
The exchangeable composition probability function of $\Nexn$ is  then
\begin{equation}
\label{pexmform}
\pex_m(n_1, \ldots, n_k) = \left( \frac{1}{m} \right)^n  \binom{m}{ k } \binom{ n }{n_1, \ldots, n_k }  .
\end{equation}
The above moment formulas for $P$-means then reduce to classical formulas for moments of the arithmetic
mean of a sequence of i.i.d. random variables, discussed further in Section \ref{sec:arithmetic}.
%The probabilistic interpretation \eqref{pexform} of this multiplier may be understood in terms of Kingman's paintbox model. 
%Each of $n$ balls indexed by $1 \le i \le n$ is equally likely to be painted 
%any one of $m$ colors $j \in [m]$, then assigned a location $V_i$ in $[0,1]$ according to its color, with  each cluster of balls of the same color $j$ assigned a uniform random location $U_j \in [0,1]$.
%A sample composition of $(n_1, \ldots, n_k)$ means that the smallest value of the sample $V_1, \ldots, V_n$ is repeated $n_1$ times, the  next smallest value
%repeated $n_2$ times, and so on. There are $k! \binom{m}{k}$ different ways to choose exactly which sequence of $k$ different colors, say $(j_1,\ldots, j_k)$, is the
%sequence of colors of the clusters listed from smallest to largest. For each of these choices of $k$ different colors, the probability of the achieving counts 
%$(n_1, \ldots, n_k)$, with this order of colors, is the probability $1/k!$ that the $U_1, \ldots, U_k$ arrange these $k$ colors in the desired order, times the multinomial probability 
%of achieving counts $(n_1, \ldots, n_k)$ for these colors, and count $0$ for all other colors, in a simple random sample with replacement of $n$ colors from $[m]$.
The ECPF \eqref{pexmform}
%%in this probabilistic multiplier of $\prod_{i=1}^k \E X^{n_i}$ in the classical moment formula
can be derived quickly as follows
.
Each of $n$ balls indexed by $1 \le i \le n$ is equally likely to be painted 
any one of $m$ colors $j \in [m]$, and given there are $k$ different colors used, the clusters of balls by color are put in any one of $k!$ different orders by 
a uniform random permutation of $[k]$. Then $\pex_m(n_1, \ldots, n_k)$ is the probability that the sequence of cluster sizes $(n_1, \ldots, n_k)$ is achieved by this
random ordering.
But there are $k! \binom{m}{k}$ different ways to choose the sequence of $k$ different colors $(j_1,\ldots, j_k)$ generated by this ordering,
and for each of these choices of $k$ colors, the probability of the achieving the counts $(n_1, \ldots, n_k)$ by this sequence of colors, is the probability $1/k!$ that the particular
$k$ colors  are put in the desired order, times the multinomial probability of achieving counts $(n_1, \ldots, n_k)$ for these colors $(j_1, \ldots, j_k)$, and count $0$ for all other colors, 
in a simple random sample with replacement of $n$ colors from $[m]$.

%See also \citet{MR2177313} %AUTHOR = {Hjort, Nils Lid and Ongaro, Andrea}, % TITLE = {Exact inference for random {D}irichlet means},
%for a careful discussion of versions of \eqref{composmomalthm} for signed $Y$.

\begin{problem}
Suppose that $\pex$ is a symmetric function of compositions $(n_1, \ldots, n_k)$ such that for some random discrete distribution $P$ 
the moment formula \eqref{moments} holds for all simple random variables $X$. If $\pex$ is known to be an ECPF, then $\pex = \pex_P$ the ECPF of $P$, by 
Corollary \ref{crl:means:kingman}. 
But this is not very obvious algebraically. 
What if
$\pex$ is not known to be an ECPF? Can it still be concluded that $\pex = \pex_P$? If not, what further side conditions (e.g. non-negativity) might be imposed to
obtain this conclusion?
\end{problem}

As a simple case in point, for each $m = 1,2, \ldots$, the classical moment formula for arithmetic means shows that the moment formula \eqref{moments} holds for all simple 
random variables $X$ and the function $\pex$ displayed in \eqref{pexmform}. 
Does formula \eqref{moments} alone imply that $\pex = \pex_m$ is in fact the ECPF for sampling from the uniform distribution
on $[m]$?
For small $n_1 + \cdots + n_k = 1,2,3,4$ it seems easy enough to conclude that by varying the distribution of $X$ over two values that there are enough independent linear equations to force 
$\pex(n_1, \ldots, n_k) = \pex_P(n_1, \ldots, n_k)$. But as $n$ increases, it seems necessary to involve three or more values of $X$, in which case the 
necessary linear independence of these equations does not seem to be obvious.

\subsection{Arithmetic means}
\label{sec:arithmetic}
The study of averages of i.i.d. random variables has a long history. Borel and Kolmogorov established almost sure convergence of 
$\tilX_m:= \sum_{j=1}^m X_j/m$ to $\E(X)$ as $m \to \infty$. In this instance,
$\tilX_m$ is the $P$ mean of $X$ for the non-random weights $P_j:= 1(j \le m)/m$ that are uniform on the set $[m]:= \{1,\ldots, m \}$, and it is assumed that $\E|X| < \infty$.
Characterizations of the exact distribution of $\tilX_m$ in terms of the distribution of $X$ are provided by the theory of moments, 
moment generating functions and characteristic functions, developed specifically for this purpose, as described in every textbook of probability theory.
%%%One characterization of the distribution of the arithmetic mean $\tilX_m$ is of particular interest, because of its straightforward generalization to the case with random weights.
For $X$ with a moment generating function (m.g.f.) $\E \exp(t X)$ that is finite for $t$ in some neighborhood of $0$, the m.g.f. of $m \tilX_m$  is 
\begin{equation}
\label{mgfs}
\E \exp( t m \tilX_m ) =  \E \exp \left( t \sum_{i=1}^m X_i \right) = \left(  \E \exp (t X ) \right)^m
\end{equation}
from which the $n$th moment of $m \tilX_m$ can be extracted by equating coefficients of $t^n$:
\begin{equation}
\label{mgfs1}
m^n \E \tilX_m ^n = n! \, [t^n]  \left( \sum_{j=0}^\infty \frac{ \E X^j }{j!} t^j \right)^m
\end{equation}
where $[t^n] g(t)$ is the coefficient of $t^n$ in the expansion of $g(t)$ in powers of $t$.
In expanding the product of $m$ factors on the right side of \eqref{mgfs1}, each product of terms contributing to the coefficient of $t^n$ involves 
some subset $I \subseteq [m]$ with say $\#I = k$ factors involving some $t^{n_i}$ with $n_i >0$ for $i \in I$ and $n_i = 0$ otherwise.
Hence, for all positive integers $m$ and $n$, the {\em classical moment formula} for the arithmetic mean of $m$ i.i.d. copies of
some basic variable $X$:
\begin{equation}
\label{classicalmoms}
\E  \tilX_m ^n = 
\left( \frac{1}{m} \right)^n 
\sum_{k=1}^n 
\sum_{(n_1, \ldots, n_k)  }
\binom{m}{k} 
\binom{n}{n_1, \ldots, n_k} 
\prod_{i=1}^k \E( X^{n_i} )
\end{equation}
where $(n_1, \cdots, n_k)$ ranges over the  set of $\binom{n-1}{k-1}$ {\em compositions of $n$ into $k$ parts}, that is sequences of $k$ positive integers with sum $n$.
The term indexed by $(n_1, \cdots, n_k)$ is a symmetric function of $(n_1, \cdots, n_k)$, which remains unchanged if $(n_1, \cdots, n_k)$ is replaced by its {\em non-increasing rearrangement}
$(\ndec_1, \cdots, \ndec_k)$, called a {\em partition of $n$}. This partition of $n$ is often encoded by the sequence of counts 
$$c_j:= \sum_{i=1}^n 1(n_i = j) = \sum_{i=1}^n 1(\ndec_i = j)$$
for
$1 \le j \le n$, in terms of which $k = \sum_j c_j$ and $\sum_j j c_j$, and the right side  of \eqref{classicalmoms} involves
$$
\binom{n}{n_1, \ldots, n_k} \prod_{i=1}^k \E( X^{n_i} ) = n!  \prod_{j=1}^n \left( \frac{ \E X^j }{j! }\right) ^{c_j} .
$$
So the classical moment formula may be rewritten as a sum over partitions of $n$ with
a multiplicity factor counting the number of compositions for each partition, or as a similar sum over permutations of $[n]$, with a different multiplicity factor, using the cycle structure of the permutations 
to index partitions of $n$.
%A general theory of random $P$-means of i.i.d. sequences was developed in a series of papers by
%\citet{MR1425401}, %%% Diaconis, P. and Kemperman, J.}, TITLE = {Some new tools for {D}irichlet priors},
%\citet{MR1618739}, %% AUTHOR = {Kerov, Sergei}, TITLE = {Interlacing measures},
%and \citet{MR1691650} %%AUTHOR = {Tsilevich, N. V.}, TITLE = {Distribution of mean values for some random measures},
%with a formalism of summation over partitions of $n$ or permutations of $[n]$.
%But the approach taken here is to favor simpler moment formulas involving sums over compositions of $n$ as in
%\eqref{classicalmoms}, as these formulas seem to admit the simplest interpretations and proofs.

The classical moment formula shows explicitly how the moments of $\tilX_m$ are determined by those of $X$, in the first instance for  $X$ with a m.g.f. that converges in a neighborhood of $0$.
But then, by standard arguments involving formal power series, the formula holds also for every $X$ with $\E |X|^n < \infty$.
Instances and applications of this formula are well known. For instance, the case $n = 2$ of \eqref{classicalmoms} gives
\begin{equation}
\label{classicalmoms2}
\E  \tilX_m ^2 = \frac{ \E (X^2) }  {m}  \mbox{ if } \E(X^2 ) < \infty \mbox{ and } \E(X) = 0,
\end{equation}
hence the weak law of large numbers for such $X$, by Chebychev's inequality. And the case $n = 4$ of \eqref{classicalmoms} gives
\begin{equation}
\label{classicalmoms4}
\E  \tilX_m ^4 = \frac{1}{m^4} \left( m \E(X^4) + 3! \binom{m}{2} ( \E(X^2) ^2 \right) \mbox{ if } \E X^4 < \infty \mbox{ and } \E(X) = 0,
\end{equation}
hence the strong law of large numbers for such $X$, by Chebychev's inequality and the Borel-Cantelli Lemma  
\citep[Theorem 2.3.5]{MR2722836}. %%AUTHOR = {Durrett, Rick}, TITLE = {Probability: theory and examples}, SERIES = {Cambridge Series in Statistical and Probabilistic Mathematics}, EDITION = {Fourth},
The classical moment formula \eqref{classicalmoms} and its variant with summation over partitions have been known for a long time. 
It was used already by Markov in one of the first proofs of the central limit theorem. See e.g. 
\citet[Appendix II]{uspensky1937introduction}. %% title={Introduction to mathematical probability},
It was also used by 
\citet{MR0214150} %%% AUTHOR = {Nelson, Edward}, TITLE = {Dynamical theories of {B}rownian motion}, PUBLISHER = {Princeton University Press, Princeton, N.J.},
%% xxx page ref?
to establish the Gaussian nature of increments in 
his proof of L\'evy's martingale characterization of Brownian motion.  See also \citet{ferger2014moment} for a recent discussion without acknowledgement
of the classical literature.

 % title={Moment equalities for sums of random variables via integer partitions and Fa{\`a} di Bruno's formula},
 % author={Ferger, Dietmar},

The above derivation of moments of the arithmetic mean $\tilX_m$ of a sequence of i.i.d. copies of $X$ can be adapted to $P$-means by first conditioning on $P$.
This gives
$$
\E ( \tilX_P ^n ) =  \E  \left[ \E ( \tilX_P ^n |  P ) \right] =  n! \, [t]^n \, \E  \prod_{j=1}^\infty \left( 1 + \frac{P_j  \E(X) t }{1!} + \frac{P_j^2 \E(X^2) t^2 }{2! } + \cdots \right)
$$
Now the coefficient of $t^n$ involves expanding the infinite product, picking out some finite number $k$ of the factors, say those indexed by $j_i$ 
factors of $t^{n_i}$ with $n_i >0$, for $1 \le i \le k$,
and then summing over all choices of $(j_1, \ldots, j_k)$ and all compositions $(n_1, \ldots, n_k)$ of $n$. This provides another proof of
the moment formula for $P$-means \eqref{moments}.

\subsection{Improper discrete distributions}
\label{sec:defect}
\citet{MR509954} %%%, AUTHOR = {Kingman, J. F. C.}, TITLE = {The representation of partition structures},
showed that to provide a general representation of sampling consistent families of random partitions of positive integers $n$, 
it is necessary to treat not just sampling from random discrete distributions $(P_i)$ with $P_i \ge 0 $ and $\sum_{i} P_i = 1$,
but also to consider sampling from $(P_i)$ with $P_i \ge 0 $ and $\sum_{i} P_i \le 1$. This more general model may be interpreted to mean that the 
$P_i$ with $P_i >0$ are the jumps of some random distribution function $F$, but that $F$ may also have a continuous component whose total mass is the {\em defect}
\begin{equation}
\label{eq:defect}
P_\infty:= 1 - \sum_i P_i \ge 0.
\end{equation}
Call $P$ {\em proper} iff $P_\infty = 0$, and {\em defective} or {\em improper} if $P_\infty >0$.
It was shown in
\citet[Proposition 26]{MR1742892} %%AUTHOR = {Pitman, Jim}, TITLE = {Coalescents with multiple collisions}.
how improper random discrete distributions arise naturally in the study of random coalescent processes. See
\cite[\S 3]{mohle2010asymptotic} %%title={Asymptotic results for coalescent processes without proper frequencies and applications to the two-parameter Poisson--Dirichlet coalescent}, author={M{\"o}hle, Martin},
and work cited there for more recent developments in this vein.

\citet{MR1618739} %% AUTHOR = {Kerov, Sergei}, TITLE = {Interlacing measures},
indicated the right generalization of the definition of the $P$-mean $M_P(X)$ to defective random discrete distributions $P$. Restrict
discussion to $X$ with $\E|X| < \infty$, and set
\begin{equation}
\label{mpxdefect}
M_P(X):= \sum_{j} X_j P_j \, \,  + P_\infty \E X 
\end{equation}
for $(X_j)$ as usual a sequence of i.i.d. copies of $X$.
This definition is justified by the way that defective distributions of $P$ arise as weak limits of proper discrete distributions.
For instance, if $P_m$ is the uniform distribution on $[m]$ as in the previous section, then $P_m \convd P:= (0,0, \ldots)$ as $m \to \infty$, 
in the sense of convergence of finite dimensional distributions. In this case the limit $P$ has $P_\infty = 1$,
and Kolmogorov's law of large numbers gives $M_{P_m}(X) := m^{-1}\sum_{i=1}^m X_i \to \E(X)$ almost surely. This justifies the definition \eqref{mpxdefect} in the extreme
case $P_j \equiv 0$ and $P_\infty = 1$.
More generally, it is known \citep{MR0195135} %%, AUTHOR = {Pruitt, William E.}, TITLE = {Summability of independent random variables},
that if $(a_{n,k})$ is a Toeplitz summation matrix (i.e., $\lim _n  a_{n,k} = 0$ for each $k$, $\lim_n \sum_{k} a_{n,k} =1$, and $\sum_{k}|a_{n,k}|$ is bounded in $n$),
and $\tilX_n := \sum_k a_{n,k} X_k$, then  for any non-degenerate distribution of $X$ with $\E|X| < \infty$, there is convergence $\tilX_n \to \E(X)$ in probability iff
$\max_k |a_{n,k}| \to 0$ as $n \to \infty$.
As an easy consequence of this fact, there is the following proposition, whose proof is left to the reader:

\begin{proposition}
\label{prp:lim}
Assume $E |X| < \infty$. Let $P_n$ be a sequence of proper discrete distributions, with $\Pdec_n \convd \Pdec$, meaning that
the finite-dimensional distributions of $\Pdec_n$ converge in distribution to those of $\Pdec$, for $\Pdec$ some possibly improper random discrete distribution.
Then $M_{P_n}(X) \convd \tilX:= M_{\Pdec}(X)$ defined by \eqref{mpxdefect}.
Moroever, this conclusion continues to hold for a sequence of possibly defective discrete distribution $P_n$, provided \eqref{mpxdefect} is taken as the 
definition of $M_{P_n}(X)$.
\end{proposition}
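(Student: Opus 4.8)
The plan is to reduce everything to a statement about deterministic ranked sub-probability weight sequences, couple the random weights so that they converge coordinatewise, and then prove convergence \emph{in probability} of the centred weighted averages by a truncation estimate that isolates the escaping mass $P_\infty$ as a sum of i.i.d.\ mean-zero terms with uniformly tiny coefficients, which concentrates by a weak law of large numbers.

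First I would use the invariance of $P$-means under reordering, \eqref{ranksb} (which extends at once to the defective case, since neither the defect nor the defining formula \eqref{mpxdefect} is changed by permuting atoms), to replace $P_n$ by its ranked version $\Pdec_n$; so it suffices to prove $M_{\Pdec_n}(X)\convd M_{\Pdec}(X)$. The laws of $\Pdec_n$ are supported on the set of ranked sub-probability sequences, a compact Polish subset of $[0,1]^{\nats}$, and there convergence of finite-dimensional distributions is the same as weak convergence (functions of finitely many coordinates are dense in the continuous ones by Stone--Weierstrass); hence by the Skorokhod representation theorem I may pass to a common probability space carrying copies of $\Pdec_n$ and $\Pdec$, still written the same, with $\Pdec_{n,j}\to\Pdec_j$ almost surely for every $j$, and an independent i.i.d.\ sequence $(X_j)$ of copies of $X$. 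Write $Y_j:=X_j-\E X$, so $\E Y_j=0$ and $\beta:=\E|Y_j|\le 2\E|X|<\infty$. The conditional absolute first moment of $\sum_j|X_j|\Pdec_{n,j}$ given the weights is $\le\E|X|$, so all the series below converge absolutely a.s.; and since $\sum_j \Pdec_{n,j}+\Pdec_{n,\infty}=1$, formula \eqref{mpxdefect} collapses to $M_{\Pdec_n}(X)=\E X+\sum_j Y_j\Pdec_{n,j}$ and likewise $M_{\Pdec}(X)=\E X+\sum_j Y_j\Pdec_j$. Thus it is enough to show $\sum_j Y_j\Pdec_{n,j}\to\sum_j Y_j\Pdec_j$ in probability, and by bounded convergence it is enough to show this conditionally on the $\sigma$-field $\GG$ generated by all the weights, i.e.\ for deterministic ranked sequences $0\le p_{n,j},p_j$ with $\sum_j p_{n,j}\le1$, $\sum_j p_j\le1$ and $p_{n,j}\to p_j$ for each $j$.

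The key estimate is a concentration bound for small weights: for deterministic $(b_j)$ with $0\le b_j\le\delta$ and $\sum_j b_j\le1$, and any $c>0$, splitting each $Y_j$ into its truncation at level $c$ (whose $b$-weighted sum has variance $\le c\beta\sum_j b_j^2\le c\beta\delta$, handled by Chebyshev) and the complementary part (whose $b$-weighted sum has $L^1$-norm $\le 2\,\E[|Y_1|1(|Y_1|>c)]$, handled by Markov) gives
\[
\P\Bigl(\bigl|\sum_j b_j Y_j\bigr|>\eta\Bigr)\ \le\ \frac{4c\beta\delta}{\eta^2}+\frac{4\,\E[\,|Y_1|1(|Y_1|>c)\,]}{\eta}\qquad(\eta>0),
\]
which is made $<\eps$ by choosing $c$ large and then $\delta$ small; this is, in effect, the consequence of \citet{MR0195135} quoted earlier. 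Now fix $\eta,\eps>0$; choose $\delta>0$ avoiding the countable set $\{p_j:j\ge1\}$, small enough that $\sum_{j:\,p_j\le\delta}p_j<\eps$ and that the displayed bound with $\eta/3$ in place of $\eta$ is $<\eps$; set $L:=\#\{j:p_j>\delta\}<\infty$. Since $\delta\notin\{p_j\}$ and $p_{n,j}\to p_j$, for all large $n$ one also has $\{j:p_{n,j}>\delta\}=\{1,\dots,L\}$, and then
\[
\sum_j Y_j p_{n,j}-\sum_j Y_j p_j=\sum_{j\le L}Y_j(p_{n,j}-p_j)+\sum_{j>L}Y_j p_{n,j}-\sum_{j>L}Y_j p_j,
\]
where the first term $\to0$ a.s.\ (a finite sum), the last has expected modulus $\le\beta\eps$, and the middle term has all weights $\le\delta$ and total $\le1$, so $\P(|\cdot|>\eta/3)<\eps$ by the key estimate. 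Hence $\limsup_n\P\bigl(|\sum_j Y_j p_{n,j}-\sum_j Y_j p_j|>\eta\bigr)\le 3\beta\eps/\eta+\eps$; letting $\eps\downarrow0$ gives conditional convergence in probability, and integrating out $\GG$ yields $M_{P_n}(X)\to\tilX$ in probability, hence in distribution. Nothing used properness of $P_n$ beyond $\sum_j P_{n,j}+P_{n,\infty}=1$, so the ``moreover'' for defective $P_n$ is covered identically.

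The main obstacle, and indeed the whole point, is the defect: when the $P_n$ are proper but $\Pdec$ is improper, the mass $P_\infty$ does not disappear but reappears in $\Pdec_n$ as infinitely many vanishingly small atoms, so a naive $L^1$ bound on the tail $\sum_{j>L}Y_j p_{n,j}$ is \emph{not} small. One must instead use that a weighted sum of i.i.d.\ mean-zero variables with uniformly tiny weights concentrates at its vanishing mean, which after un-centring is precisely the assertion that this tail contributes $P_\infty\E X$ in the limit, exactly as prescribed by \eqref{mpxdefect}; the truncation estimate above is the device that makes this work assuming only $\E|X|<\infty$.
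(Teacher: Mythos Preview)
Your proof is correct and follows exactly the route the paper intends: the paper leaves the proof to the reader, pointing only to the \citet{MR0195135} result that Toeplitz averages of i.i.d.\ integrable variables concentrate at the mean once the maximal coefficient vanishes, and your argument supplies precisely those details---Skorokhod coupling on the compact space of ranked sub-probability sequences, centring by $\E X$ to reduce \eqref{mpxdefect} to $\E X+\sum_j Y_j\Pdec_{n,j}$, and the truncation/Chebyshev bound (which is the heart of Pruitt's argument) to control the tail of small weights. One cosmetic point: in your concentration estimate the truncated piece $Y_j1(|Y_j|\le c)$ is not centred, so Chebyshev bounds its deviation from its mean, not from zero; but since $|\E[Y_11(|Y_1|\le c)]|=|\E[Y_11(|Y_1|>c)]|$ this residual mean is absorbed into the same tail term you already control, and your factor of $2$ in the $L^1$ bound covers it.
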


In other words, for $X$ with $\E|X|<\infty$, the definition \eqref{mpxdefect} is the only definition of $M_P(X)$ which agrees 
with the definition in the proper case, and which makes $\Pdec \mapsto M_{\Pdec}(X)$  weakly continuous as a mapping from laws of possibly
defective random ranked discrete distributions $\Pdec$ to laws of $M_{\Pdec}(X)$.
Beware that the above proposition is false if the assumption $\Pdec_n \convd \Pdec$ is replaced by
$P_n \convd P$: just take $P_n$ to be certain to be a unit mass at $n$. Then  $P_n \convd (0,0,\ldots)$, but $M_{P_n}(X) \ed X$ for every $n$, which does 
not converge to $\E X$ unless $X$ is constant.

For more about improper discrete distributions, and the tricky issue of extending the notion of a size-biased permutation to
this case, see \citet{MR1659532}. %% AUTHOR = {Gnedin, Alexander V.}, TITLE = {On convergence and extensions of size-biased permutations}, JOURNAL = {J. Appl. Probab.}, FJOURNAL = {Journal of Applied Probability},

\section{Models for random discrete distributions}
\label{sec:models}

This section recalls some of the basic models for random discrete distributions. 
These models all arose from applications of random discrete distributions, and
spurred the development of a general theory of distributions of $P$-means and its relation to partition structures.

\subsection{Residual allocation models.}
\label{sec:residual}
Consideration of $P$-means by splitting off the first term, suggests that their study should be simplest for those $P$ which can be presented
in some order by a {\em residual allocation model}, or {\em stick-breaking scheme}, involving a recursive splitting like \eqref{splitser}.
That is, assuming the terms of $P$ have already been put in the right order for such a recursion,
there is the {\em stick-breaking representation}
\begin{equation}
\label{stickbreak}
P_j = H_j \prod_{i=1}^{i-1} ( 1 - H_i) \qquad (j = 1,2, \ldots )
\end{equation}
for a sequence of independent {\em stick-breaking factors} $H_i$ with $H_i \in [0,1]$.
\citet{MR0158483} %%% AUTHOR = {Freedman, David A.}, TITLE = {On the asymptotic behavior of {B}ayes' estimates in the
studied Bayesian estimation for such $P$ given a sample $J_1, \ldots, J_n$ from $P$,
assuming the stick-breaking representation \eqref{stickbreak}
for $H_i$ such that 
$$(H_1, \ldots, H_N), H_{N+1}, H_{N+2}, \ldots$$ 
are independent for some fixed $N \ge 0$. Freedman called such distributions of $P$ {\em tail-free}.
%%The model in which the random factors $H_i$ are mutually independent for $i \ge 1$ is known as a {\em residual allocation model}, often with the additional assumption that the $H_i$ are identically distributed. 
\citet{MR2735350} %%%, AUTHOR = {Gnedin, Alexander and Iksanov, Alexander and Marynych, Alexander}, TITLE = {The {B}ernoulli sieve: an overview},
provide an extensive account of the distribution theory
of a sample $(J_1, \ldots, J_n)$ from a residual allocation model with i.i.d. factors, calling this model for $(J_1, \ldots, J_n)$ the {\em Bernoulli sieve}.

Assuming the stick-breaking form \eqref{stickbreak} for $P:= (P_1, P_2, \ldots)$ derived from $(H_1, H_2, \ldots)$,
let $R:= (R_1, R_2, \ldots)$ be the {\em residual random discrete distribution} defined derived correspondingly from $(H_2, H_3, \ldots)$.
Then, assuming only that $H_1$ is independent of $(H_2, H_3 \ldots)$,
for $M_P(X)$ the $P$-mean of a sequence of i.i.d. copies of $X$, there is the decomposition
\begin{equation}
\label{mpxsplit}
M_P(X) \ed P_1 X_1 + (1- P_1) M_R (X) 
\end{equation}
where on the right side, $P_1$, $X_1$ and $M_R(X)$ are independent, with $X_1 \ed X$.
The case of independent stick-breaking when $P_1 \ed \beta_{r,s}$ for some $r,s >0$ is of particular
interest, due to the ease of computation of moments of $M_P(X)$ in this case.
Multiply  \eqref{mpxsplit} by  an independent $\gamma_{r+s}$ variable, and appeal to the beta-gamma algebra 
\eqref{betagamindpt}
to see that \eqref{mpxsplit} for $P_1 \ed \beta_{r,s}$
implies
$$
\gamma_{r+s} M_P(X) \ed \gamma_r X_1 + \gamma'_s  M_R(X)
$$
where on the right side, $X_1$ and $M_R(X)$ are independent, independent also of $\gamma_r$ and $\gamma'_s$, which are independent gamma variables with the indicated parameters.
In terms of moment generating functions, this becomes
$$
\E \exp [ \lambda \gamma_{r+s} M_P(X) ] = \E \exp [ \lambda \gamma_r X_1  ] \E \exp[ \lambda \gamma_s M_R(X) ] .
$$
That is, by conditioning on all except the gamma variables,,
\begin{equation}
\label{betars}
\E ( 1 - \lambda M_P(X) )^{-(r+s)}  = \E (  1 - \lambda X_1  ) ^{-r} \E ( 1 - \lambda M_R(X)  )^{-s}  .
\end{equation}
For instance, if $X_p:= 1(U \le p)$ is an indicator variable of an event with probabilty $p$, 
and $P_1 \ed \beta_{r,s}$ is independent of the residual fractions $(R_2. R_3, \ldots)$, then
\begin{equation}
\label{betars1}
\E ( 1 - \lambda M_P(X_p) )^{-(r+s)}  = (1 - p + p(1- \lambda)^{-r} ) \E ( 1 - \lambda M_R(X_p)  )^{-s}  .
\end{equation}
Formula \eqref{betars} is a generalization of Proposition 3 of \citet{MR2177313}, %%%AUTHOR = {Hjort, Nils Lid and Ongaro, Andrea}, % TITLE = {Exact inference for random {D}irichlet means},
which is the particular case with $r = 1$ and $s = \theta >0$
of greatest interest in Bayesian non-parametric inference.
See also Proposition 4 of \citet{MR2177313} which gives the corresponding expression in terms of moments.

For an i.i.d. stick-breaking scheme, with factors $H_i \ed P_1$ for all $i$, formula \eqref{mpxsplit} holds with $R \ed P$,  implying that the distribution of 
$\tilX:= M_P(X)$ solves the stochastic equation
\begin{equation}
\label{mpxsplit1}
\tilX \ed P_1 X + (1- P_1) \tilX .
\end{equation}
where on the right side  $P_1$, $X$ and $\tilX$ are independent.
As shown by 
\citet{feigin1989linear} %%title={Linear functionals and Markov chains associated with Dirichlet processes}, author={Feigin, Paul D and Tweedie, Richard L},
and \citet{MR1669737}, %%AUTHOR = {Diaconis, Persi and Freedman, David}, TITLE = {Iterated random functions},
this stochastic equation uniquely determines the  distribution of $\tilX$ under mild regularity conditions.
See \citet[Proposition 9]{MR2177313} regarding the important case of the $(0,\theta)$ model with $P_1 \ed \beta_{1,\theta}$ for some $\theta >0$.  

\subsection{Normalized increments of a subordinator} 
\label{sec:subord}

A well known method of construction of random discrete distributions $P = (P_1,P_2, \ldots)$ is to  start from a sequence of 
non-negative random variables $(A_1, A_2, \ldots )$, and then normalize these variables by their sum $A_\Sigma$:
\begin{equation}
\label{relabunds}
(P_1, P_2, \ldots ) := \frac{1 }{A_\Sigma} (A_1, A_2, \ldots ) \mbox{ where } A_\Sigma= \sum_{i=1}^\infty A_i  .
\end{equation}
Here it is assumed that $\P(A_\Sigma >0 ) = 1$, which provided $\P(A_i >0) >0$ for some $i$ can 
always be arranged by conditioning on the event $(A_\Sigma >0)$.
Say $(P_1, P_2, \ldots )$ is {\em derived from increments of a subordinator} $(A(r), 0 \le r \le \theta)$, where $\theta >0$,
if $A(\bullet)$ is an increasing process with stationary independent increments,
and the $A_i$ are the independent increments of $A(\bullet)$ over consecutive intervals of lengths $\theta_i$ with $\sum_i \theta_i = \theta$.
The normalizing factor $A_\Sigma$ in \eqref{relabunds} is then $A_\Sigma = A(\theta)$.  

A closely related, but more important construction, with the same normalizing factor $A(\theta)$, is obtained by
supposing that $A_i = A_i(\theta)$ in \eqref{relabunds} are some exhaustive list of the jumps $\Delta A(r):= A(r) - A(r-)$ with 
$\Delta A(r) >0$ and $0 \le r \le \theta$, for a subordinator with no drift component, meaning that almost surely
\begin{equation}
\label{subjumps}
A(\theta) = \sum_{0 < r \le \theta}  \Delta A (r) = \sum_{i = 1}^\infty A_i (\theta) .
\end{equation}
Precise definition of the $A_i(\theta)$ and the corresponding $P_i(\theta)$ in \eqref{relabunds} requires an ordering for these jumps. However, 
according to Corollary \ref{crl:means:kingman},
the distribution  of $P$-means $M_P(X)$, and all other aspects of the partition structure derived from $P$, do not depend on what ordering of jumps is chosen.
As shown by L\'evy's analysis of occupation times of Brownian motion, it may be possible to identify the distributions of various $P$-means by suitable decompositions
like \eqref{splitser1}, even without fully specifying the ordering in a construction of $P$ from a countable collection of interval lengths. Historically, this was done by
L\'evy and Lamperti, decades before analysis of the size-biased orderings of jumps of a subordinator by McCloskey, and the ranked jumps by
\citet*{MR0373022} % AUTHOR = {Ferguson, Thomas S. and Klass, Michael J.}, TITLE = {A representation of independent increment processes without
and \citet{MR0368264}. %%AUTHOR = {Kingman, J. F. C. }, TITLE = {Random discrete distributions},

According to the L\'evy-It\^o theory of subordinators, the jumps $A_i(\theta)$ in \eqref{subjumps} are the points of a  Poisson point process on $(0,\infty)$
\begin{equation}
\label{levypois}
N_\theta(\bullet):= \sum_{0 < r \le \theta} 1 ( \Delta A_r \in \bullet) = \sum_{i = 1} ^\infty 1( A_i (\theta) \in \bullet)
\end{equation}
with intensity measure $\Lambda(\bullet)$, for some L\'evy measure $\Lambda$ on $(0,\infty)$, which is uniquely determined by the L\'evy-Khintchine 
representation of the {\em Laplace exponent} of the subordinator
\begin{equation}
\label{levexp}
\Phi(\lambda ) := \int_0^\infty ( 1 - e^{- \lambda x } ) \Lambda (dx) \qquad (\lambda \ge 0 )
\end{equation}
with
\begin{equation}
\label{lkrep}
\E \exp[ - \lambda A( t ) ] = \exp [ - t \Phi(\lambda ) ]  \qquad (t \ge 0, \lambda \ge 0 ).
\end{equation}
The joint law of ranked jumps $\Adec(\theta)$ is then easily read from the Poisson description of the associated counting process  \eqref{levypois},
as detailed in 
\citet*{MR0373022}. % AUTHOR = {Ferguson, Thomas S. and Klass, Michael J.}, TITLE = {A representation of independent increment processes without
More or less explicit descriptions of the finite dimensional distributions of $(\Pdec_j(\theta), j = 1,2, \ldots)$ are known.
See 
\citet*[Proposition 22]{MR1434129} %%%AUTHOR = {Pitman, Jim and Yor, Marc}, TITLE = {The two-parameter {P}oisson-{D}irichlet distribution derived
which reviews earlier work on ranked discrete distributions.
But to derive partition probabilities or distributions of $P$-means,  ranked discrete distributions are impossible to work with. 
For such purposes, a much better ordering is the size-biased ordering $\Pst$ introduced in this setting  by \citet{mccloskey}.
McCloskey imagined each $A_i(\theta)$ to be a Poisson intensity rate of trapping, called the {\em abundance} of some species labeled by $i$, in a species sampling 
model driven by a collection of independent Poisson point processes of random rates $A_i(\theta)$, for some fixed parameter value $\theta >0$.
McCloskey showed that for $A_i(\theta)$ the jumps of a standard gamma process $(\gamma(r), 0 \le r \le \theta)$, in the size-biased order of their
discovery in the Poisson species sampling model, the resulting random discrete distribution $P^*$ has i.i.d. beta$(1,\theta)$ distributed residual fractions,
and that beta$(1,\theta)$ is the only possible distribution of i.i.d. residual fractions which generates a random discrete distribution with its components
in size-biased random order. Later work showed that this GEM$(0,\theta)$ model for $P^*$ introduced by McCloskey is the size-biased presentation of limit frequencies
associated with the limit model proposed earlier by \citet{fisher1943solo}, % title={A theoretical distribution for the apparent abundance of different species}, author={Fisher, Ronald A.},
with partition probabilities governed by the Ewens sampling formula.
%%%This is the GEM$(0,\theta)$ model for a random discrete distribution $P$, as detailed further below.
Before discussing the GEM$(0,\theta)$ this model in more detail, the following proposition presents a fundamental connection between the more elementary model
\eqref{relabunds} with $(P_1, P_2, \ldots)$ the normalized increments of some subordinator $A(\bullet)$ over some fixed sequence of intervals
of lengths $\theta_i$ with $\sum_i \theta_i = \theta$, and the model obtained from the same subordinator by some ordering of its relative jump sizes.

\begin{proposition}
\label{propthetabull}
Let $P_\theta(\bullet):= \sum_{j} 1(Y_j \in \bullet) P_j(\theta)$ be the random probability measure on an abstract space $(S,\SS)$ defined as in \eqref{sprinkle} by assigning i.i.d. 
random locations $Y_i$ to each normalized jump $P_i(\theta)$ of a subordinator up to time $\theta$.
Then for every ordered partition $(S_1, S_2, \ldots )$ of $S$ into disjoint measurable subsets with $\theta \P(Y_j \in  S_i) = \theta_i$, there is the
equality in distribution of discrete random distributions on the positive integers
\begin{equation}
\label{identnorm}
(P_\theta( S_i), i = 1,2, \ldots ) \ed ( A_i(\theta_i) /A(\theta), i = 1,2, \ldots )
\end{equation}
where on the right side the $A_i(\theta_ii)$ are the independent increments of the subordinator $A$ over a partition of $[0,\theta]$ into a succession  of disjoint intervals
of lengths $\theta_i$ with $\sum_{i}  \theta_i = \theta $, that is $A_i(\theta_i):= A( \Sigma_{h=1}^{i}  \theta_i  ) - A( \Sigma_{h=1}^{i-1}  \theta_i  )$. 
\end{proposition}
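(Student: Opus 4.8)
\emph{Proof proposal.}
The plan is to read \eqref{identnorm} off the L\'evy--It\^o description \eqref{levypois} of the jumps of $A$ together with the elementary marking (``colouring'') theorem for Poisson point processes; the normalization is then carried along for free, since dividing a sequence by its own sum is a measurable operation on sequences.

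First I would record the two sides explicitly. By the construction \eqref{sprinkle} of $P_\theta(\bullet)$ and the no-drift identity \eqref{subjumps},
\begin{equation}
\label{plan:lhs}
P_\theta(S_i) = \frac{ \sum_j 1(Y_j \in S_i)\, A_j(\theta) }{ \sum_k A_k(\theta) },
\end{equation}
where $(A_j(\theta))$ is the list of jumps of $A$ on $[0,\theta]$ in whatever order was used to build the $P_i(\theta)$ — the sum in the numerator of \eqref{plan:lhs} does not depend on that order — and $(Y_j)$ are i.i.d.\ copies of $Y$ independent of $A$, so that $\sum_k A_k(\theta) = A(\theta)$ almost surely. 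On the other side, stationarity and independence of increments make $A_1(\theta_1), A_2(\theta_2), \ldots$ independent with $A_i(\theta_i) \ed A(\theta_i)$ and $\sum_i A_i(\theta_i) = A(\theta)$.

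The heart of the proof is the marking theorem. Regard the jumps of $A$ on $[0,\theta]$ as a (in general only $\sigma$-finite) Poisson point process on $(0,\infty)$ with intensity $\theta\Lambda$, as in \eqref{levypois}, and attach to each jump $A_j(\theta)$ the independent mark $Y_j$. Thinning by the events $\{Y_j \in S_i\}$ produces independent Poisson point processes, the $i$th with intensity $\theta\,\P(Y_j \in S_i)\,\Lambda = \theta_i\Lambda$; summing the points of the $i$th one gives exactly $\sum_j 1(Y_j \in S_i)A_j(\theta)$, and the sum of a Poisson point process of jumps with intensity $\theta_i\Lambda$ has, by \eqref{levypois}--\eqref{lkrep}, the law of $A(\theta_i)$. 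Hence the two families — each a sequence of independent random variables with matching marginals — agree in joint law:
\begin{equation}
\label{plan:match}
\Big( \sum_j 1(Y_j \in S_i)\, A_j(\theta),\ i = 1,2,\ldots \Big) \ed \big( A_i(\theta_i),\ i = 1,2,\ldots \big).
\end{equation}

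Finally I would normalize: applying the measurable map that divides a sequence of nonnegative reals with finite positive sum by that sum to both sides of \eqref{plan:match} preserves the equality in law; on the left it returns $(P_\theta(S_i))_i$ by \eqref{plan:lhs}, using that each jump receives exactly one colour so the total of the left family is again $A(\theta)$, and on the right it returns $(A_i(\theta_i)/A(\theta))_i$ since $\sum_h A_h(\theta_h) = A(\theta)$. This is \eqref{identnorm}. I expect the only genuinely delicate points to be cosmetic: checking that the marking theorem applies to the merely $\sigma$-finite mean measure $\theta\Lambda$, and the remark that the ordering of the $A_j(\theta)$ is irrelevant because each $P_\theta(S_i)$ is an unordered sum — consistent with the order-independence built into Corollary \ref{crl:means:kingman}; the a.s.\ convergence $\sum_j A_j(\theta) = A(\theta)<\infty$ is automatic for a driftless subordinator, and everything else is routine Poisson calculus.
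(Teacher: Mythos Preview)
Your proposal is correct and follows essentially the same route as the paper: both invoke the marking/thinning theorem for the Poisson point process of jumps, so that colouring each jump by the cell $S_i$ containing its i.i.d.\ mark $Y_j$ splits the jump process into independent Poisson processes with intensities $\theta_i\Lambda$, whose totals therefore have the joint law of the independent increments $A_i(\theta_i)$. The paper's proof is just a one-sentence pointer to this argument (phrased on the product space $[0,\theta]\times(0,\infty)\times S$ with the arrival times retained), while you have spelled out the normalization step more carefully; there is no substantive difference.
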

\begin{proof}
This is a straightforward consequence  of standard marking and thinning properties of Poisson 
point processes, which make the $(T_i(\theta), A_i(\theta), Y_i)$ the points of a Poisson process on $[0,\theta] \times (0,\infty) \times S$ 
with intensity $dt\, \Lambda(da) \,\P(Y \in ds)$, where $T_i(\theta)$ is the arrival time in $[0,\theta]$ of the jump of the subordinator with
magnitude $A(T_i(\theta)) - A(T_i(\theta)-) = A_i(\theta)$.
\end{proof}

This proposition yields a fairly explicit description of the finite dimensional distributions of 
the random measure $P_\theta(\bullet)$ on $S$, as well as the distribution of various $P$-means:

\begin{corollary}
\label{corthetabull}
Let $P(\theta):= (P_j(\theta), j = 1,2, \ldots)$ be the sequence of normalized jumps of a subordinator $(A(r), 0 \le r \le \theta)$ governed by a L\'evy measure $\Lambda$ with
infinite total mass. Then every discrete random variable $X:= \sum_{i} a_i X_{p_i}$, with distinct possible values $x_i$,
and $X_{p_i}$ the Bernoulli$(p_i)$ indicators of disjoint events $(X = x_i)$ with $p_i:= \P(X = x_i)$ subject to $\sum_{i} p_i= 1$,
the distribution of $M_{P(\theta)}(X)$, the $P(\theta)$-mean of a sequence of i.i.d. copies of $X$ independent of $P(\theta)$,
is determined by the equality in distribution
\begin{equation}
\label{mptheta}
M_{P(\theta)} \left( \Sigma_{i} x_i X_{p_i} \right) \ed \frac{1}{A(\theta)} \sum_{i} x_i  A_i(\theta\, p_i)
\end{equation}
where the right side is a corresponding normalized linear combination of independent increments $A_i(\theta p_i)$ of the subordinator $A$ over a partition of $[0,\theta]$ 
into disjoint intervals, as in \eqref{identnorm}. If $X$ has an infinite number of possible values, \eqref{mptheta} means that
if either side is well defined by almost sure absolute convergence, then so is the other, and the distributions of both sides are equal.
\end{corollary}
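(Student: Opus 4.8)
The plan is to read the corollary off Proposition~\ref{propthetabull} after recognizing $M_{P(\theta)}(X)$ as a measure-valued $P$-mean evaluated on the partition of the range of $X$ into singletons. Concretely, let $S$ be the (finite or countable) set of possible values of $X$, equipped with its power set, let $S_i:=\{x_i\}$, and take $Y_j:=X_j$, so that $(Y_j)$ is an i.i.d.\ sequence of copies of $Y:=X$ independent of $P(\theta)$, with $\P(Y\in S_i)=p_i$. For the sprinkled random measure $P_\theta(\bullet):=\sum_j 1(Y_j\in\bullet)P_j(\theta)$ of Proposition~\ref{propthetabull} one has, simply by grouping terms according to which value $X_j$ takes,
\[
\sum_j X_j P_j(\theta)=\sum_i x_i\sum_j 1(X_j=x_i)P_j(\theta)=\sum_i x_i P_\theta(S_i),
\qquad
\sum_j |X_j| P_j(\theta)=\sum_i |x_i| P_\theta(S_i),
\]
the second identity being a rearrangement of non-negative terms and hence valid unconditionally. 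Thus $M_{P(\theta)}(X)=\sum_j X_j P_j(\theta)$ is well defined by a.s.\ absolute convergence precisely when $\sum_i|x_i|P_\theta(S_i)<\infty$ a.s., and on that event it equals $\sum_i x_i P_\theta(S_i)=\int_S s\,P_\theta(ds)$, the special case $g=\mathrm{id}$ of \eqref{intSg}.

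Next I would apply Proposition~\ref{propthetabull} with $\theta_i:=\theta p_i$ (so $\sum_i\theta_i=\theta$), obtaining the equality in law of $\reals_{\ge 0}^{\infty}$-valued random elements
\[
(P_\theta(S_i),\,i\ge 1)\ \ed\ \bigl(A_i(\theta p_i)/A(\theta),\,i\ge 1\bigr),
\]
where the $A_i(\theta p_i)$ are the successive increments of the subordinator over a partition of $[0,\theta]$ into consecutive intervals of lengths $\theta p_i$ and $A(\theta)=\sum_i A_i(\theta p_i)$. When $X$ has finitely many values, only finitely many of the $S_i$ are nonempty and $\mathbf q\mapsto\sum_i x_i q_i$ reduces to a continuous function of finitely many coordinates, so applying it to both sides gives $\sum_i x_i P_\theta(S_i)\ed A(\theta)^{-1}\sum_i x_i A_i(\theta p_i)$; combined with the previous paragraph this is exactly \eqref{mptheta}, and here $M_{P(\theta)}(X)$ is automatically well defined because $X$ is bounded.

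For $X$ with infinitely many values the remaining work is a measure-theoretic transfer. For a fixed real sequence $(x_i)$, the set $D:=\{\mathbf q\in\reals_{\ge 0}^{\infty}:\sum_i |x_i|q_i<\infty\}$ is Borel and the map $\mathbf q\mapsto\sum_i x_i q_i$ is Borel on $D$, being the pointwise limit of the continuous partial sums. Hence two $\reals_{\ge 0}^{\infty}$-valued random elements with the same law fall in $D$ with the same probability, and, conditionally on that event, the values of $\sum_i x_i q_i$ have the same law. Applying this to the two sides of the displayed identity, and using that $0<A(\theta)<\infty$ a.s.\ (so that dividing by $A(\theta)$ affects neither absolute convergence nor the law of the limit), gives: $\sum_i |x_i|P_\theta(S_i)<\infty$ a.s.\ iff $\sum_i|x_i|A_i(\theta p_i)<\infty$ a.s., and when these hold $\sum_i x_i P_\theta(S_i)\ed A(\theta)^{-1}\sum_i x_i A_i(\theta p_i)$. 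Chaining this with the equivalences of the first paragraph yields the asserted conclusion: the left side of \eqref{mptheta} is well defined by a.s.\ absolute convergence iff the right side is, and then the two sides have the same distribution.

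The substantive content is entirely carried by Proposition~\ref{propthetabull}; the only step needing a little care is the infinite-valued case, where one must check that equality in law of the full coordinate sequences passes to equality in law of the (almost surely defined) series and to coincidence of the absolute-convergence events --- a standard fact about Borel functions of $\reals_{\ge 0}^{\infty}$-valued random elements --- and that the normalizing factor $A(\theta)$, being finite and strictly positive almost surely, plays no role in any of these convergence questions.
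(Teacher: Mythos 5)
Your proof is correct and takes essentially the same route as the paper's: the finite-valued case is read directly from Proposition~\ref{propthetabull}, and the infinite-valued case is reduced to it. You simply make explicit what the paper calls an ``obvious approximation argument,'' phrasing it as a Borel-measurability transfer applied to the equality in law of the full coordinate sequences $(P_\theta(S_i))_{i\ge1} \ed (A_i(\theta p_i)/A(\theta))_{i\ge1}$ supplied by Proposition~\ref{propthetabull}, which is a valid and slightly cleaner way to handle the convergence issues.
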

\begin{proof}
The case of a finite sum is read immediately from the previous proposition. The case of infinite sums then follows by an obvious
approximation argument.
\end{proof}

These distributions of $P$-means can be described much more explicitly in the particular cases of gamma and stable subordinators,
as discussed further below. 
See also 
\citet*{MR1983542}, %AUTHOR = {Regazzini, Eugenio and Lijoi, Antonio and Pr{\"u}nster, Igor}, TITLE = {Distributional results for means of normalized random measures with independent increments},
regarding more general subordinators.

\subsection{Dirichlet distributions and processes.}
\label{sec:dirichlet}

The model for a random discrete distribution derived from normalized increments of a subordinator is 
of special interest for the {\em standard gamma subordinator} $A(r) = \gamma(r)$ for $r >0$,
defined by the standard gamma density \eqref{gamdens}. 
The convolution property of gamma distributions, that
$$
\gamma (r) + \gamma'(s) \ed \gamma(r+s)
$$
for independent gamma variables of the indicated parameters $r,s >0$, is part of the basic {\em beta-gamma algebra}
\eqref{betafromgam}-\eqref{betagamindpt} which underlies all the following calculations with the gamma process.
First of all, this property allows the construction of the standard gamma subordinator with stationary independent increments.
For any subordinator $A$, it is known \cite[Corollary 8.9]{MR1739520} %%, AUTHOR = {Sato, Ken-iti}, TITLE = {L\'evy processes and infinitely divisible distributions},
that for each continuity point $\epsilon >0$ of its L\'evy measure $\Lambda(\bullet)$, the
restriction of $\Lambda(\bullet)$ to $(\epsilon, \infty)$ is the weak limit as $r \downarrow 0$ of the 
same restriction of the measure $r^{-1} \P( A(r)  \in \bullet )$. 
For the gamma density \eqref{gamdens}, in this limit there
is the pointwise convergence of densities 
at each $x >0$
$$
\frac{ \P( \gamma(r)  \in dx )}{r} = 
\frac{ x^{r-1} e^{-x} }
{ r \Gamma(r) }
\to  x^{-1} e^{-x}  \mbox{ as } r \downarrow 0
$$
because $r \Gamma(r) = \Gamma(r+1) \to \Gamma(1) = 1$. This identifies the L\'evy measure of the gamma process
\begin{equation}
\Lambda_{\gamma} (dx) = x^{-1} \,e^{-x } \, 1(x >0 ) \,dx 
\end{equation}
hence the L\'evy-Khintchine exponent
\begin{equation}
\label{levexpgam}
\Phi\gamma(\lambda ) = \int_0^\infty ( 1 - e^{- \lambda x } ) x^{-1} e^{-x} dx  = \log ( 1 + \lambda )
\qquad (\lambda \ge 0 )
\end{equation}
which is a Frullani integral. The corresponding Laplace transform is obtained more easily by integration with respect to
the gamma$(r)$ density \eqref{gamdens}:
\begin{equation}
\label{gamlt}
\E \exp[ - \lambda \gamma( \theta ) ] = \exp [ - \theta \Phi_\gamma(\lambda ) ]   = (1 +  \lambda)^{-\theta} \qquad (\theta  \ge 0, \lambda \ge 0 ).
\end{equation}
The negative binomial expansion of this Laplace transform in powers of $-\lambda$ encodes the moments of $\gamma(\theta)$:
\begin{equation}
\label{gamnegbin}
\sum _{n=0}^\infty \E  \gamma(\theta )^n  \frac{\lambda^n}{n!}  =  \sum_{n=0}^\infty \frac{(\theta)_n}{n!} \lambda^n = (1 - \lambda )^{-\theta} \qquad ( |\lambda| < 1, \theta >0 ).
\end{equation}
Hence, by equating coefficients of $\lambda^n$, the list of integer moments of a gamma$(\theta)$ variable:
%%%%%%, which can also be obtained by direct integration and definition of the gamma function:
\begin{equation}
\label{gammoms1}
\E  \gamma(\theta)^n  = (\theta)_n  := \frac{ \Gamma(\theta + n )} {\Gamma(\theta) } = \prod_{i=1}^{n} (\theta + i -1 ) \qquad (n = 0,1,2, \ldots).
\end{equation}
Apart from the last equality, this moment evaluation holds also for all real $n > - \theta$, by direct integration and the definition of the gamma function.
%%%%These basic gamma moment formulas, and 
Easily from \eqref{gammoms1} by beta-gamma algebra, or by direct integration, there is the corresponding beta moment formula:
\begin{equation}
\label{betamoms}
\E \beta_{r,s}^n ( 1  - \beta_{r,s} ) ^m = \frac{ (r)_n (s)_m } {(r + s )_{n+m} }
\end{equation}
where for non-negative integers $r$ and $s$, 
the right side involves just factorial powers of $r$, $s$ and $r+s$,
and  the formula extends to all real $n > - r$ and $m > - s$ with the general definition  \eqref{gammoms1} of the {\em Pochhammer symbol} $(\theta)_n$.
This Pochhammer symbol, appearing in most formulas involving Dirichlet distributions with total weight $\theta$, is often best understood through beta-gamma algebra as 
the $n$th monent of a gamma$(\theta)$ variable, that is the magic multiplier which makes the Dirichlet components independent.

The {\em Dirichlet distribution of $P$ with weights $(\theta_1, \theta_2, \ldots)$} is the
distribution obtained as $P_i:= A_i/A(\theta)$
from the normalized subordinator increments construction \eqref{relabunds}, with independent $A_i \ed \gamma(\theta_i)$ 
for some $\theta_i \ge 0$ with $\theta:= \sum_i \theta_i >0$, so $A(\theta) \ed \gamma(\theta)$.
The {\em finite Dirichlet $(\theta_1, \ldots, \theta_m)$} distribution of $P$,
is the distribution of $(P_1, \ldots, P_m)$ on the $m$-simplex $\sum_{i=1}^m P_i = 1$ so obtained by taking $\theta_i = 0 $ for $i > m$.
This distribution can be characterized in a number of different ways. For instance, 
by the joint density  of $(P_1, \ldots , P_{m-1})$ at $(u_1, \ldots, u_{m-1})$ relative to  Lebesgue  measure in $\reals^{m-1}$, which is 
$$
\P(P_1 \in du_i, 1 \le i \le m-1) = \frac{1}{\Gamma(\theta) } \prod_{i=1}^m \frac{u^{\theta_i  - 1} }{\Gamma(\theta_i)} 1 \left( 0 \le u _i \le 1, \sum_{i=1}^n u_i = 1  \right)
$$
or by its product moments
$$
\E \prod_{i=1}^m P_i^{n_i} = \frac{ \prod_{i = 1}^m (\theta_i)_{n_i} }{ ( \theta )_n } \mbox{ for } n_i \ge -\theta_i \mbox{ with } \sum_{i=1}^m n_i = n 
$$
which are easily obtained by beta-gamma algebra, like the case \eqref{betamoms} for $m = 2$.

The {\em symmetric Dirichlet distribution with total weight $\theta$}, denoted here by \\
Dirichlet$(m||\theta)$, is the particular case with $\theta_i \equiv \theta/m$ for $1 \le i \le m$.
As examples:
\begin{itemize}
\item the distribution of the $m$ consecutive spacings between order statistics of $m-1$ independent uniform 
$[0,1]$ variables is the Dirichlet$(m||m)$ distribution with $m$ weights equal to $1$. 
\item
For any integer composition $(m_1, \ldots, m_k)$ of $m$, 
a finite Dirichlet $(m_1, \ldots, m_k)$ random vector can then be constructed from suitable disjoint
sums of terms in a  Dirichlet$(m||m)$ random vector, by property (ii) in the following proposition.
\end{itemize}

This proposition summarizes some well known properties of the Dirichlet model for $P$.

\begin{proposition}
Let $P:= (P_j, j \ge 1)$ have the Dirichlet distribution with weights $(\theta_1, \theta_2, \ldots)$ defined by
the normalization $P_j:= A_j/\gamma(\theta)$ as in  \eqref{relabunds} for a sequence of independent gamma$(\theta_j)$ variables $A_j$ with total $\sum_j A_j = \gamma(\theta)$.
For a set of positive integers $B$, let $P(B):= \sum_{j\in B} P_j$.
Then
%%%\eqref{betafromgam}-\eqref{betagamindpt}
\begin{itemize}
\item [(i)] the sequence of ratios $(P_1, P_2, \ldots )$ is independent of the total $\gamma(\theta)$.
\item [(ii)] For each partition of positive integers into a finite number of disjoint subsets $B_1, \ldots, B_m$, 
the distribution of $(P(B_i), 1 \le i \le m)$ is the finite Dirichlet $(\theta P(B_i), 1 \le i \le m)$ distribution on the $m$-simplex.
\item [(iii)] In particular, the distribution of $P(B)$ is beta$(\theta P(B), \theta - \theta P(B) )$.
\item [(iv)] This model is identical to the residual allocation model \eqref{stickbreak} with independent beta distributed factors
\begin{equation}
\label{betafordir}
H_j \ed \beta_{\theta_j, \sigma_j} \mbox{ with } \sigma_{j}:= 
\theta - \sum_{i=1}^j \theta_i 
= 
\theta_{j+1} + \theta_{i+2} + \cdots 
\end{equation}
\end{itemize}
\end{proposition}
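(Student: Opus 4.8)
The plan is to reduce the entire proposition to the beta-gamma algebra \eqref{betafromgam}--\eqref{betagamindpt} and the convolution property $\gamma(r)+\gamma'(s)\ed\gamma(r+s)$, and to handle the passage from finitely many coordinates (where these identities apply verbatim) to the whole sequence by a monotone-class argument. Throughout I would use, without further comment, that any sum $\sum_{j\in B}A_j$ over indices with $\sum_{j\in B}\theta_j\le\theta<\infty$ converges almost surely and in $L^1$ --- its partial sums have means bounded by $\theta$ --- with a $\gamma\big(\sum_{j\in B}\theta_j\big)$ limit law, by the convolution property and weak convergence. Write $\theta(B):=\sum_{j\in B}\theta_j$ for the weight of a block $B$; with this notation (ii) asserts that $(P(B_i))_{1\le i\le m}$ has the finite Dirichlet distribution with weights $(\theta(B_1),\dots,\theta(B_m))$, and (iii) that $P(B)\ed\beta_{\theta(B),\,\theta-\theta(B)}$.

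For (i), first I would show that $(P_1,\dots,P_m)$ is independent of $\gamma(\theta)$ for each fixed $m$, and then conclude --- since the events $\{(P_1,\dots,P_m)\in B\}$, with $B$ Borel and $m$ arbitrary, form a $\pi$-system generating $\sigma(P_1,P_2,\dots)$ --- that the whole sequence is independent of $\gamma(\theta)$ by Dynkin's lemma. For the finite case, split $\gamma(\theta)=S_m+T$ with $S_m:=A_1+\dots+A_m$ and $T:=\sum_{j>m}A_j$; being sums over disjoint blocks of independent gammas, these are independent $\gamma(\theta_1+\dots+\theta_m)$ and $\gamma(\sigma)$ variables with $\sigma:=\sum_{j>m}\theta_j$. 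The normalized finite vector $V:=(A_1/S_m,\dots,A_m/S_m)$ is independent of $S_m$ --- a fact I would prove by induction on $m$ from \eqref{betafromgam}--\eqref{betagamindpt} --- and, being a function of $A_1,\dots,A_m$ alone, is independent of $T$ as well, so $V$, $S_m$, $T$ are mutually independent. Since $P_i=V_i\beta$ for $i\le m$ with $\beta:=S_m/\gamma(\theta)$, and \eqref{betagamindpt} makes $\beta$ independent of $S_m+T=\gamma(\theta)$ while $V$ is independent of $(S_m,T)$ and hence of $(\beta,\gamma(\theta))$, the elements $V,\beta,\gamma(\theta)$ are mutually independent, so $(P_1,\dots,P_m)$ --- a function of $(V,\beta)$ --- is independent of $\gamma(\theta)$. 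Parts (ii) and (iii) then follow with no extra effort: aggregating the numerators over the disjoint blocks $B_1,\dots,B_m$ yields, by the convolution property, independent $\gamma(\theta(B_i))$ variables with sum $\gamma(\theta)$, so $(P(B_i))_i=\big(\sum_{j\in B_i}A_j/\gamma(\theta)\big)_{i}$ is a finite Dirichlet vector with weights $(\theta(B_1),\dots,\theta(B_m))$ by the definition of that distribution, and (iii) is the case $m=2$, $B_1=B$, $B_2=\nats\setminus B$.

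For (iv) the candidate factors are $H_j:=A_j/T_j$ where $T_j:=\sum_{\ell\ge j}A_\ell$, so $T_1=\gamma(\theta)$. Because $1-H_i=T_{i+1}/T_i$, the product $\prod_{i=1}^{j-1}(1-H_i)$ telescopes to $T_j/T_1$, whence $H_j\prod_{i=1}^{j-1}(1-H_i)=A_j/\gamma(\theta)=P_j$ --- the stick-breaking form \eqref{stickbreak}. For the marginals, write $T_j=A_j+T_{j+1}$ with $A_j\ed\gamma(\theta_j)$ and $T_{j+1}\ed\gamma(\sigma_j)$ independent, $\sigma_j=\sum_{\ell>j}\theta_\ell$: then \eqref{betafromgam} gives $H_j\ed\beta_{\theta_j,\sigma_j}$ and \eqref{betagamindpt} gives that $H_j$ is independent of $T_j$. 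Mutual independence of $(H_1,H_2,\dots)$ I would establish by induction on $n$: the prefix $(H_1,\dots,H_{n-1})$ is a function of $(A_1,\dots,A_{n-1},T_n)$ since $T_i=A_i+\dots+A_{n-1}+T_n$ for $i<n$; the block $(A_1,\dots,A_{n-1})$ is independent of the tail $(A_n,A_{n+1},\dots)$, hence of $(H_n,T_n)$, while $H_n$ is independent of $T_n$; so $(A_1,\dots,A_{n-1})$, $H_n$, $T_n$ are mutually independent, which forces $H_n$ independent of $(H_1,\dots,H_{n-1})$, and together with the inductive hypothesis this gives mutual independence of $(H_1,\dots,H_n)$.

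The main obstacle is this last independence bookkeeping. The naive argument --- ``$H_n$ is a function of $A_n,A_{n+1},\dots$, so the factors are obviously independent'' --- does not work, because $H_n$ depends on the entire tail sum $T_{n+1}$ and is \emph{not} independent of the tail sequence; what rescues it is the sharper observation that $H_n$ is independent of $T_n$ (by \eqref{betagamindpt}), combined with the fact that every earlier factor is measurable with respect to $\sigma(A_1,\dots,A_{n-1},T_n)$. A secondary but genuine technicality, needed to justify both the telescoping in (iv) and the blockwise aggregation in (ii), is the almost-sure convergence and gamma law of the infinite sums $T_j$ and $\sum_{j\in B}A_j$, which I would settle at the outset so that the algebraic manipulations that follow are on firm ground.
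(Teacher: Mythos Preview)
Your proposal is correct and follows exactly the approach the paper intends: the paper's proof is the single sentence ``Straightforward applications of the basic beta-gamma algebra \eqref{betafromgam}--\eqref{betagamindpt},'' and you have carefully unpacked what those straightforward applications are, including the independence bookkeeping for (iv) and the $\pi$-system extension to infinitely many coordinates for (i) that the paper leaves implicit.
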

\begin{proof}
Straightforward applications of the basic beta-gamma algebra \eqref{betafromgam}-\eqref{betagamindpt}.
\end{proof}

These definitions and properties of Dirichlet distributions allow 
Proposition \ref{propthetabull} and its corollary to be combined and restated as follows, 
for the Dirichlet random discrete distributions on abstract spaces introduced by 
\citet{MR0350949}. % AUTHOR = {Ferguson, Thomas S.}, TITLE = {A {B}ayesian analysis of some nonparametric problems},

\begin{proposition}
\label{fergdir}
Let $P_\theta(\bullet):= \sum_{j} 1(Y_j \in \bullet) P_j(\theta)$ be the random probability measure on an abstract space $(S,\SS)$ defined as in \eqref{sprinkle} by assigning i.i.d. 
random locations $Y_j$ to each normalized jump $P_j(\theta)$ of a standard gamma subordinator up to time $\theta$.
Then for every ordered partition $(S_1, S_2, \ldots )$ of $S$ into disjoint measurable subsets with $\theta \P(Y_j \in  S_i) = \theta_i$, the 
sequence $(P_\theta( S_i), i \ge 1)$ has the Dirichlet distribution with parameters $(\theta_i, i \ge 1)$. That is 
\begin{equation}
\label{identnorm2}
(P_\theta( S_1), P_\theta(S_2), \ldots ) \ed \frac{1}{\gamma(\theta)} (\gamma_1(\theta_1) , \gamma_2(\theta_2) , \ldots )
\end{equation}
where the 
$\gamma_i(\theta_i)$ are the independent gamma$(\theta_i)$ distributed increments  of the gamma subordinator over a partition of $[0,\theta]$
into disjoint intervals of lengths $\theta_i$.
Moreover, for each discrete distribution of $X := \sum_{i} a_i X_{p_i}$ as in  \eqref{mptheta}, 
%%with $m$ distinct values $a_i$,
%%and $X_{p_i}:= 1 (X = a_i)$ the Bernoulli$(p_i)$ indicators with $p_i:= \P(X = a_i)$ subject to $\sum_{i=1}^m p_i= 1$,
there is the particular case of \eqref{mptheta}
\begin{equation}
\label{mptheta2}
M_{P(\theta)} \left( \sum_{i} a_i X_{p_i} \right) \ed \frac{1}{\gamma(\theta)} \sum_{i} a_i  \gamma_i(\theta\, p_i)
\end{equation}
where $P(\theta)$ is a random discrete distribution defined by any exhaustive listing of the normalized jumps $P_j(\theta)$ of a 
standard gamma subordinator up to time $\theta$.
\end{proposition}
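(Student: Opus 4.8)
The plan is to derive Proposition \ref{fergdir} by specializing Proposition \ref{propthetabull} and Corollary \ref{corthetabull} to the standard gamma subordinator $A(r) = \gamma(r)$. First I would check that the gamma subordinator meets the hypotheses of those results: its L\'evy measure $\Lambda_\gamma(dx) = x^{-1} e^{-x} 1(x>0)\, dx$ has infinite total mass, since $x^{-1} e^{-x} \sim x^{-1}$ is not integrable near $0$, so \eqref{subjumps} holds and the normalized jumps $P(\theta) := (P_j(\theta))$ form a proper random discrete distribution. Proposition \ref{propthetabull} then gives, for every ordered measurable partition $(S_1, S_2, \ldots)$ of $S$ with $\theta\,\P(Y_j \in S_i) = \theta_i$, the identity in distribution $(P_\theta(S_i), i \ge 1) \ed (\gamma_i(\theta_i)/\gamma(\theta), i \ge 1)$, where the $\gamma_i(\theta_i)$ are the independent increments of the gamma subordinator over a succession of disjoint intervals of lengths $\theta_i$ partitioning $[0,\theta]$.

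The key step is simply to recognize that the right-hand vector here is, by definition, the Dirichlet distribution with parameters $(\theta_i, i \ge 1)$. By the convolution property $\gamma(r) + \gamma'(s) \ed \gamma(r+s)$ recalled in Section \ref{sec:dirichlet}, the increment $\gamma_i(\theta_i)$ over an interval of length $\theta_i$ is a gamma$(\theta_i)$ variable; the stationary independent increments property of $\gamma$ makes these increments independent; and they sum to $\gamma(\theta)$. Normalizing by this sum is exactly the construction \eqref{relabunds} of the Dirichlet law with weights $(\theta_i)$, which yields \eqref{identnorm2}.

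For \eqref{mptheta2} I would apply Corollary \ref{corthetabull} with $A = \gamma$. For a discrete $X = \sum_i a_i X_{p_i}$ with $\sum_i p_i = 1$, the general identity \eqref{mptheta} reads $M_{P(\theta)}(\sum_i a_i X_{p_i}) \ed \frac{1}{A(\theta)} \sum_i a_i A_i(\theta p_i)$, and substituting the gamma subordinator turns $A(\theta)$ into $\gamma(\theta)$ and the $A_i(\theta p_i)$ into the independent gamma$(\theta p_i)$ increments over a partition of $[0,\theta]$. The case where $X$ has infinitely many possible values is handled, as in Corollary \ref{corthetabull}, by truncating the sum and passing to the limit.

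There is no genuine obstacle in this argument; the proposition is essentially a repackaging of results already in hand. The only point calling for a moment's attention is the middle paragraph --- verifying that the vector of normalized gamma increments on the right of \eqref{identnorm} is literally the Dirichlet vector as defined earlier, the independence of the increments (from the stationary independent increments of $\gamma$) being precisely what makes the two descriptions coincide. Everything else is routine bookkeeping that specializes a general subordinator statement to the gamma case.
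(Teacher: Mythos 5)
Your argument is correct and matches the paper's intent exactly: the paper presents this proposition without a separate proof block, introducing it with the remark that it is obtained by combining Proposition \ref{propthetabull} and Corollary \ref{corthetabull} with the definitions just given of the Dirichlet distribution. Your specialization to the gamma subordinator, the check of infinite L\'evy mass, and the recognition of the normalized increments as the Dirichlet vector by the construction \eqref{relabunds} are precisely the steps the paper is implicitly invoking.
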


%Given a nice measurable space $(S, \SS)$, a probability distribution $\P(X \in \bullet)$ of an $S$-valued random variable $X$, 
%and $\theta >0$,  the consistency property (ii) of Dirichlet distributions
%allows the construction of a {\em Dirichlet process with intensity measure $\theta \P(X \in \bullet)$.}
% That is a random probability measure $P$ on $(S,\SS)$  with the Dirichlet $(\theta P(B_i), 1 \le i \le m)$ 
%finite dimensional distributions as in (ii) for every partition of $S$ into a finite number of $\SS$-measurable 
%subsets $B_1, \ldots, B_m$.
%As shown by 
%\citet{MR0350949}, % AUTHOR = {Ferguson, Thomas S.}, TITLE = {A {B}ayesian analysis of some nonparametric problems},
%such a random probability measure is almost surely discrete, of the general form \eqref{sprinkle}
%obtained by scattering  the nornalized jumps of the standard gamma subordinator over  $[0, \theta]$
%at i.i.d. locations $X_j$ in $S$, where $(X_j)$ is a sequence of i.i.d. copies of $X$, and $(P_j)$ is an exhaustive listing of the relative sizes 
%$\Delta \gamma(r)/\gamma(\theta)$ of the Poisson point process of jumps the normalized gamma subordinator 
%$(\gamma(r)/\gamma(\theta), 0 \le r \le \theta )$. That is, the collection of points
%$\Delta \gamma(r): = \gamma(r) - \gamma(r-)$ with $\Delta \gamma(r) >0$ for some $0 <r < \theta$, which is the
%collection of points of a Poisson point process on $(0,\infty)$ with intensity $\theta x^{-1} e^{-x} dx$.

\subsection{Finite Dirichlet means}
\label{sec:findir}
As a general remark, if the $X_i$ in a random average $\tilX:= \sum_i X_i P_i$ are either constants, or made so 
by conditioning, say $X_i = x_i$ for some bounded sequence of numbers $x_i$, then 
as $(x_i)$ ranges over bounded sequences, the collection of distributions of $\tilX$, or a suitable collection of moments or  transforms of those distributions,
provides an encoding of the joint distribution of random weights $P_i$. 
This approach works very nicely for the Dirichlet model:

\begin{proposition}
\label{vonwatson}
{\em [\citet{von1941distribution}, %%% title={Distribution of the ratio of the mean square successive difference to the variance}, author={Von Neumann, John},
\citet{watson1956joint} %%%, title={On the joint distribution of the circular serial correlation coefficients},
]}
For each fixed sequence of non-negative coefficients $(x_1, \ldots, x_m)$ 
and $(P_1, \ldots, P_m)$ with Dirichlet $(\theta_1, \ldots, \theta_m)$ distribution with $\sum_{i=1}^m \theta_i = \theta$,
the distribution of the finite Dirichlet mean $\sum_{i=1}^m x_i P_i$ is uniquely determined by the following 
Laplace transform of $\gamma(\theta) \, \sum_{i=1}^m x_i P_i$, for $\gamma(\theta)$ with gamma$(\theta)$ distribution independent of $(P_1,\ldots, P_m)$:
\begin{equation}
\label{gammaeqlt}
\E  \exp \left( - \lambda \gamma(\theta) \sum_{i} x_i P_i   \right) = \E \left( 1 + \lambda \sum_{i} x_i P_i   \right)^{-\theta} = \prod_{i}  ( 1 +  \lambda x_i )^{-\theta_i}  .
\end{equation}
For $\lambda = 1$, with the left side regarded as the multivariate Laplace transform of the random vector 
$\gamma(\theta) (P_1, \ldots, P_m)$ with arguments $x_1, \ldots, x_m$, this formula uniquely characterizes the
Dirichlet $(\theta_1, \ldots, \theta_m)$ distribution of $(P_1, \ldots, P_m)$.
\end{proposition}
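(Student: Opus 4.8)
The plan is to reduce both equalities to the beta--gamma algebra \eqref{betafromgam}--\eqref{betagamindpt} together with the Laplace transform \eqref{gamlt} of a gamma variable. The middle equality is an instance of the general identity \eqref{cauchy-stieltjesthlap2}: writing $Y:=\sum_{i=1}^m x_iP_i$, which is non-negative since the $x_i$ are, I would condition on $(P_1,\dots,P_m)$ and integrate out the independent $\gamma(\theta)$ to obtain $\E\exp(-\lambda\gamma(\theta)Y)=\E(1+\lambda Y)^{-\theta}$. So the whole statement rests on evaluating the left-hand side.

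To do this, realize the Dirichlet vector concretely as in \eqref{relabunds}: let $A_1,\dots,A_m$ be independent with $A_i\ed\gamma(\theta_i)$, put $A(\theta):=\sum_j A_j$, and set $P_i:=A_i/A(\theta)$, so that $(P_1,\dots,P_m)$ has the Dirichlet$(\theta_1,\dots,\theta_m)$ distribution. The crucial point is that the independence clause \eqref{betagamindpt} of the beta--gamma algebra makes the ratio vector $(P_1,\dots,P_m)$ independent of the total $A(\theta)$, while $A(\theta)\ed\gamma(\theta)$. Hence in \eqref{gammaeqlt} we may take the independent gamma$(\theta)$ multiplier to be $A(\theta)$ itself, and then the normalizing constant cancels identically:
\[
\gamma(\theta)\sum_{i=1}^m x_iP_i \;=\; A(\theta)\sum_{i=1}^m x_i\frac{A_i}{A(\theta)} \;=\;\sum_{i=1}^m x_iA_i .
\]
Taking Laplace transforms, using independence of the $A_i$ and \eqref{gamlt},
\[
\E\exp\Bigl(-\lambda\gamma(\theta)\sum_{i}x_iP_i\Bigr)=\prod_{i=1}^m\E\exp(-\lambda x_iA_i)=\prod_{i=1}^m(1+\lambda x_i)^{-\theta_i},
\]
which is the asserted formula.

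It remains to record the uniqueness assertions. For fixed $(x_1,\dots,x_m)$, the displayed quantity is the Laplace transform of the non-negative variable $\gamma(\theta)\sum_i x_iP_i$, so it determines that variable's law; by Lemma \ref{lem:gammacancel}, cancelling the independent $\gamma(\theta)$, it determines the law of the Dirichlet mean $\sum_i x_iP_i$. For the multivariate claim with $\lambda=1$, I would read the left-hand side as a function of $(x_1,\dots,x_m)\in[0,\infty)^m$: by the cancellation above it is the multivariate Laplace transform of $\gamma(\theta)(P_1,\dots,P_m)=(A_1,\dots,A_m)$, a vector of independent gamma$(\theta_i)$ variables. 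Uniqueness of multivariate Laplace transforms therefore forces any $(P_1,\dots,P_m)$ satisfying \eqref{gammaeqlt} to have $\gamma(\theta)(P_1,\dots,P_m)$ distributed in exactly this way; since the normalized ratios of such a vector recover $(P_1,\dots,P_m)$ (which sums to $1$), the Dirichlet distribution of $(P_1,\dots,P_m)$ is pinned down.

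I do not expect a genuine obstacle here. The only point requiring care --- and the sole place where the full strength of the beta--gamma algebra enters --- is the observation that the independent gamma$(\theta)$ multiplier in \eqref{gammaeqlt} may be chosen to be the normalizing sum $\sum_j A_j$ itself, which is precisely what legitimizes the cancellation identity; without that independence the left-hand side would not factor over $i$.
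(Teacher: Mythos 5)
Your proof is correct and follows essentially the same route as the paper: both exploit the beta--gamma algebra to convert $\gamma(\theta)\sum_i x_i P_i$ into a sum $\sum_i x_i A_i$ of independent gamma$(\theta_i)$ variables, factor the Laplace transform using \eqref{gamlt}, and invoke Lemma~\ref{lem:gammacancel} to cancel the $\gamma(\theta)$. You make the cancellation mechanism slightly more explicit (by choosing the independent multiplier to be the normalizing sum $A(\theta)$ itself) and you spell out the multivariate uniqueness claim that the paper leaves implicit, but the underlying argument is the same.
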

\begin{proof}
After multiplying both sides
of \eqref{gammaeqlt} by an independent $\gamma(\theta)$ variable,  the beta-gamma algebra
makes  the $P_i \gamma(\theta)$ a collection of independent gamma$(\theta_i)$ variables, hence
\begin{equation}
\label{gammath}
\gamma(\theta) \, \sum_{i} x_i P_i   =  \sum_{i} x_i \gamma_{i}(\theta_i) 
\end{equation}
for independent $\gamma_{i}(\theta_i)$ with sum $\gamma(\theta)$, as above.
Hence by taking Laplace transforms:
\begin{equation}
\label{gammalt1}
\E  \exp \left( - \lambda \sum_{i} x_i P_i \gamma(\theta)   \right) = \prod_{i} \E  \exp \left( - \lambda x_i \gamma _i(\theta_i)  \right).
\end{equation}
Condition on all the $P_i$, and integrate out the gamma variables using the Laplace transform \eqref{gamlt}, to obtain the two further expressions in \eqref{gammaeqlt}.
For each fixed choice of coefficients $x_i$, this formula determines the Laplace transform of $\gamma(\theta) \sum_{i} x_i P_i$,
hence the distribution of $\gamma(\theta) \sum_{i} x_i P_i$,  hence also the distribution of the finite Dirichlet mean $\sum_{i} x_i P_i$,
by Lemma \ref{lem:gammacancel}.
\end{proof}

The basic {\em Dirichlet mean transform} \eqref{gammaeqlt} has a long history, dating back to 
\citet{von1941distribution}, %%% title={Distribution of the ratio of the mean square successive difference to the variance}, author={Von Neumann, John},
who gave a more complicated derivation in the case  of particular interest in mathematical statistics, with parameters $\theta_i = k_i/2$ for some positive integers
$k_i$ with $\sum_{i=1}^m k_i = k$ when 
$$
(P_i , 1 \le i \le m) \ed  (A_i, 1 \le i \le m)/A
$$
for a sequence of independent random variables $A_i \ed \chi^2_{k_i} \ed 2 \gamma(k_i/2)$ and $A:= \sum_{i=1}^m A_i \ed \chi^2_k \ed 2 \gamma(k/2)$,
where $\chi_k^2 \ed \sum_{i=1}^k Z_i^2$ for a sequence of i.i.d. standard Gaussian variables $Z_i$. So in this instance, 
which provided the original motivation for study of the finite Dirichlet  distribution in mathematical statistics
$\sum_{i} x_i P_i$ is the ratio of two dependent quadratic forms in a sequence of $k$  i.i.d. standard Gaussian variables.
As observed by Von Neumann, for half integer $\theta_i$, the basic beta-gamma algebra
behind the above formulas, especially the key independence \eqref{betagamindpt} of the Dirichlet distributed ratios and their gamma distributed denominator, 
follows from the symmetry of the joint distribution of the underlying Gaussian variables in $\reals^k$ with respect to orthonormal transformations.

\citet{watson1956joint} %%%, title={On the joint distribution of the circular serial correlation coefficients},
gave the simple general argument indicated above using beta-gamma algebra.
%%, which he attributed to  xxx.
Watson also supposed each $\theta_j$ to be a multiple of $1/2$, but his argument generalizes immediately to general $\theta_i$
as above.
Watson indicated how the same method yields a transform of the joint law of any finite number of linear combinations of Dirichlet variables. 
Simply take $\lambda  = 1$ and $x_j = \sum_{i} t_i \sum_{j} x_{i,j} D_j$ 
in 
\eqref{gammaeqlt} 
to obtain a joint Laplace transform of $\sum_{i} \sum_{j} x_{i,j} D_j, 1 \le i \le m$ for any matrix of real coefficients  $x_{i,j}$ , $1 \le i \le m, 1 \le j \le k$.
This trick, of turning what looks at first like a univariate transform into a multivariate transform, has been rediscovered many times, 
often without recognizing that it can done so simply by a change of variables.
See also \citet{mauldon1959generalization}, %%title={A generalization of the beta-distribution}, author={Mauldon, J. G.},
\cite{MR0286230} %%% AUTHOR = {Weisberg, Herbert}, TITLE = {The distribution of linear combinations of order statistics from the uniform distributions},
\cite{diniz2002calculating}
for detailed studies of the distributions and joint distributions  of linear combinations of Dirichlet variables, motivated by applications 
to linear combinations of order statistics and their spacings.

The above proposition was formulated for a fixed sequence of coefficients $x_1, \ldots, x_m$.
But a corresponding result for random coefficients $(X_1, \ldots, X_m)$ follows immediately by conditioning:

\begin{corollary}
\label{crl:findir}
Let $(X_1, \ldots, X_m)$ be a sequence of random variables independent of $(P_1, \ldots, P_m)$ with Dirichlet $(\theta_1, \ldots, \theta_m)$ distribution with $\sum_{i=1}^m \theta_i = \theta$.
Then:
\begin{itemize}
\item  the distribution of the random Dirichlet mean $\sum_i X_i P_i$ 
%%%%%t i%:= \sum_{i=1}^m X_i P_i$ 
is uniquely determined by the following 
Laplace transform: 
%%% of $\gamma(\theta) \, \sum_{i} X_i P_i$, 
for $\gamma(\theta)$ independent of $(P_1,\ldots, P_m)$, and $\lambda \ge 0$
\begin{equation}
\label{gammaeqltgen}
\E  \exp \left( - \lambda \gamma(\theta) \Sigma_{i} X_i P_i   \right) = \E \left( 1 + \lambda \Sigma_{i} X_i P_i   \right)^{-\theta} = \E \prod_{i}  ( 1 +  \lambda X_i )^{-\theta_i}  .
\end{equation}
\item 
If the $X_i$ are independent,  this holds with $\E \prod_{i}$ replaced by $\prod_{i}\E$ in the rightmost expression. In particular, if the $X_i$ are i.i.d. copies of
$X$,  so $M_P(X):=  \sum_{i} X_i P_i$ is the $P$-mean of $X$ for this Dirichlet distribution of $P$, then
\begin{equation}
\label{gammaeqltind}
\E  \exp \left( - \lambda \gamma(\theta) M_P(X) \right) = \E \left( 1 + \lambda M_P(X)  \right)^{-\theta} = \prod_{i}  \E ( 1 +  \lambda X)^{-\theta_i}  .
\end{equation}
\item
As a special case, for $\tilX_{m||\theta}$ the $P$-mean of $X$ for $P = (P_1, \ldots, P_m)$ with the symmetric Dirichlet$(m||\theta)$ distribution with total weight $\theta$,
\begin{equation}
\label{gammaeqltsym}
\E  \exp \left( - \lambda \gamma(\theta) \tilX_{m||\theta} \right) = \E \left( 1 + \lambda \tilX_{m||\theta} \right)^{-\theta} = \left( \E ( 1 +  \lambda X)^{-\theta/m} \right)^m.
\end{equation}
\end{itemize}
\end{corollary}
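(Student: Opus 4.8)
The plan is to derive Corollary \ref{crl:findir} from Proposition \ref{vonwatson} by conditioning on the (random) coefficient vector, with Lemma \ref{lem:gammacancel} supplying the uniqueness assertions. Throughout I take the $X_i$ to be non-negative (the setting in which the middle, Cauchy--Stieltjes, expression is unambiguously finite for all $\lambda \ge 0$; a bounded signed version would only require restricting $\lambda$ to a neighbourhood of $0$).

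First I would prove the opening bullet. Fix $\lambda \ge 0$ and let $\gamma(\theta)$ have the gamma$(\theta)$ distribution, independent of $(X_1,\ldots,X_m)$ and of $(P_1,\ldots,P_m)$. Conditioning on $(X_1,\ldots,X_m) = (x_1,\ldots,x_m)$ and using that $(X_i)$ is independent of the Dirichlet vector $(P_i)$, Proposition \ref{vonwatson} applies to the fixed coefficients $x_i$ and gives $\E[\exp(-\lambda\gamma(\theta)\Sigma_i x_i P_i) \giv X_\bul = x_\bul] = \prod_i (1+\lambda x_i)^{-\theta_i}$; integrating over the law of $(X_i)$ yields $\E\exp(-\lambda\gamma(\theta)\Sigma_i X_i P_i) = \E\prod_i(1+\lambda X_i)^{-\theta_i}$. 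The middle term $\E(1+\lambda\Sigma_i X_i P_i)^{-\theta}$ is inserted by applying the basic identity \eqref{cauchy-stieltjesthlap2} conditionally on the non-negative variable $Z := \Sigma_i X_i P_i$, namely $\E\exp(-\lambda\gamma(\theta)Z) = \E(1+\lambda Z)^{-\theta}$. For the uniqueness claim, observe that as $\lambda$ ranges over $[0,\infty)$ the common value above is precisely the Laplace transform of the non-negative variable $\gamma(\theta)\,\Sigma_i X_i P_i$, hence determines its law; Lemma \ref{lem:gammacancel} (case $a=1$), cancelling the independent gamma factor, then shows the law of $\Sigma_i X_i P_i$ itself is determined.

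The remaining two bullets are immediate specializations. If the $X_i$ are independent, then $\E\prod_i(1+\lambda X_i)^{-\theta_i} = \prod_i \E(1+\lambda X_i)^{-\theta_i}$ by independence; if moreover the $X_i$ are i.i.d. copies of $X$, every factor equals $\E(1+\lambda X)^{-\theta_i}$, giving \eqref{gammaeqltind}. Taking $\theta_i \equiv \theta/m$ collapses the product to $(\E(1+\lambda X)^{-\theta/m})^m$, which is \eqref{gammaeqltsym}. The only step calling for even mild care is the interchange of the conditional-expectation identity with the outer integration over $(X_i)$ and the attendant integrability: this is harmless here because, for non-negative $X_i$, the integrand $\prod_i(1+\lambda X_i)^{-\theta_i}$ lies in $[0,1]$ and $\Sigma_i X_i P_i \ge 0$, so Fubini--Tonelli and dominated convergence apply directly. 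Everything else is bookkeeping.
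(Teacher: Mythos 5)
Your proposal is correct and matches the paper's own argument, which is simply the one-line observation that the result ``follows immediately by conditioning'' on $(X_1,\ldots,X_m)$ and applying Proposition~\ref{vonwatson} to the resulting fixed coefficients. You have merely filled in the routine details (Fubini--Tonelli, the use of \eqref{cauchy-stieltjesthlap2} for the middle expression, and Lemma~\ref{lem:gammacancel} for uniqueness) that the paper leaves implicit.
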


To illustrate the basic transform \eqref{gammaeqltsym} of the distribution of a symmetric Dirichlet mean, observe that for $a,b>0$ the beta$(a,b)$ distribution is characterized by
\begin{equation}
X \ed \beta_{a,b} \qquad \iff \qquad \E ( 1 - \lambda X)^{-(a+b)} = (1-\lambda)^{-a} .
\end{equation}
Hence easily from \eqref{gammaeqltsym},
\begin{equation}
\label{symmdirab}
X \ed \beta_{a,b} \qquad \iff \qquad \tilX_{m||m(a+b)} \ed \beta_{ma, mb}.
\end{equation}
In the particular case $a = b = \hf$, for the symmetric Dirichlet$(m||m)$  mean of i.i.d. copies of $X$ with the arcsine distribution of $\beta_{1/2,1/2}$,
the implication $\Rightarrow$ in \eqref{symmdirab} was established 
in \cite{MR3250962} %% AUTHOR = {Roozegar, Rasool and Soltani, Ahmad Reza}, TITLE = {Classes of power semicircle laws that are randomly weighted average distributions},
by a more difficult argument involving Stieltjes transforms. See
also  \citet{MR3557549}  %%AUTHOR = {Homei, Hazhir}, TITLE = {Characterizations of arcsin and related distributions based on a new generalized unimodality},
where the same case is derived by moment calculations, involving the instance for Dirichlet$(m||m)$ of the general moment formula \eqref{moments} for $P$-means.

To illustrate \eqref{symmdirab} for $0 < p < 1$ and $q:= 1-p$, if a unit interval is cut into $m$ segments by $m-1$ independent uniform cut points,
and a beta$(p,q)$-distributed fraction of each segment is painted red, independently from one segment to the next, then the total
length of red segments has beta$(mp,mq)$ distribution. 

\subsection{Infinite Dirichlet means}

The extension of the basic transforms of Corollary \ref{crl:findir} from finite to infinite Dirichlet means is surprisingly easy:

\begin{corollary}
\label{crl:inftheta}
{\em [Infinite Dirichlet mean transform: \citet*{MR1041402}]} %%AUTHOR = {Cifarelli, Donato Michele and Regazzini, Eugenio}, TITLE = {Distribution functions of means of a {D}irichlet process},
For every non-negative random variable $X$, and $P_{0,\theta}$ the random discrete distribution derived from the normalized jumps of standard gamma process on $[0,\theta]$, the distribution of 
the distribution of the $P_{0,\theta}$-mean $\tilX_{0,\theta}$ of $X$ is uniquely determined by the Laplace transform 
of $\gamma(\theta) \tilX_{0,\theta}$, for $\gamma(\theta)$ independent of $\tilX_{0,\theta}$, according to the formula for $\lambda > 0$
\begin{equation}
\label{dirlog}
\E  \exp \left( - \lambda \gamma(\theta) \tilX_{0,\theta} \right) = \E ( 1 + \lambda \tilX_{0,\theta} )^{-\theta } = \exp [ - \theta \E \log ( 1 + \lambda X ) ]   .
\end{equation}
For unbounded $X \ge 0$, this formula should be read with the convention  $(1 + \lambda \infty)^{-\theta} = e^{- \infty}  = 0$, implying 
\begin{equation}
\label{feigintweedie}
\qquad \P(\tilX_{0,\theta} < \infty ) = 1 \mbox{ or } 0   \mbox{ according as } \E \log ( 1 + X ) < \infty \mbox{ or } = \infty.
\end{equation}
\end{corollary}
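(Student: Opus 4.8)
The plan is to realize $\tilX_{0,\theta}$ inside the gamma subordinator and to compute the Laplace transform of $\gamma(\theta)\,\tilX_{0,\theta}$ directly by the exponential formula for Poisson point processes. Write $P_{0,\theta,j}=\Delta_j/\gamma(\theta)$, where $\Delta_1,\Delta_2,\ldots$ is an exhaustive listing of the jumps of a standard gamma subordinator $(\gamma(r),0\le r\le\theta)$ with $\gamma(\theta)=\sum_j\Delta_j$, and let $(X_j)$ be i.i.d.\ copies of $X\ge 0$, independent of the subordinator. Set $Z:=\gamma(\theta)\,\tilX_{0,\theta}=\sum_j\Delta_j X_j$. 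By the beta--gamma independence \eqref{betagamindpt} (in the form of part (i) of the Dirichlet-properties proposition, applied to the weight sequence given by the gamma jumps), the normalized sequence $(P_{0,\theta,j})$ is independent of the total $\gamma(\theta)$; hence $\tilX_{0,\theta}$ is independent of $\gamma(\theta)$, and $Z$ is the product of these two independent factors.

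The key step is the evaluation of $\E e^{-\lambda Z}$. By the L\'evy--It\^o description \eqref{levypois}, marking each jump $\Delta_j$ with its companion $X_j$ makes $\{(\Delta_j,X_j)\}$ a Poisson point process on $(0,\infty)\times[0,\infty)$ with intensity $\theta\,\Lambda_\gamma(dx)\,\P(X\in dy)=\theta\,x^{-1}e^{-x}\,dx\,\P(X\in dy)$. Applying the Poisson exponential formula (Campbell's theorem) to the non-negative functional $(x,y)\mapsto\lambda xy$ gives, as an identity in $[0,1]$,
\[
\E e^{-\lambda Z}=\exp\!\left(-\theta\int_0^\infty\int_0^\infty\bigl(1-e^{-\lambda xy}\bigr)\,x^{-1}e^{-x}\,dx\,\P(X\in dy)\right).
\]
For each fixed $y$ the inner integral is the Frullani integral $\Phi_\gamma(\lambda y)=\log(1+\lambda y)$ of \eqref{levexpgam}, so the right side equals $\exp\bigl(-\theta\,\E\log(1+\lambda X)\bigr)$. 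On the other hand, since $\gamma(\theta)$ is independent of $\tilX_{0,\theta}$, conditioning on $\tilX_{0,\theta}$ and using the gamma Laplace transform \eqref{gamlt}, $\E e^{-\mu\gamma(\theta)}=(1+\mu)^{-\theta}$, gives $\E e^{-\lambda Z}=\E(1+\lambda\tilX_{0,\theta})^{-\theta}$. Chaining the two evaluations yields \eqref{dirlog}.

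The dichotomy \eqref{feigintweedie} then comes out of the same computation under the stated convention $e^{-\infty}=0$: the Poisson sum $Z$ converges almost surely exactly when the displayed double integral is finite, i.e.\ when $\E\log(1+\lambda X)=\E\log(1+X)<\infty$ (the particular $\lambda>0$ being irrelevant to finiteness, since $\log(1+\lambda X)$ and $\log(1+X)$ differ by a bounded amount for large $X$), in which case $\tilX_{0,\theta}=Z/\gamma(\theta)<\infty$ a.s.; and when $\E\log(1+X)=\infty$ the right side of the display is $0$, forcing $e^{-\lambda Z}=0$, hence $Z=\infty$ and $\tilX_{0,\theta}=\infty$ almost surely. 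The step I expect to require the most care is the $[0,\infty]$-valued bookkeeping — that a divergent L\'evy integral makes the Laplace transform vanish identically and the mean infinite — together with the explicit appeal to the gamma-specific independence of $(P_{0,\theta,j})$ from $\gamma(\theta)$; everything else is the standard Poisson exponential formula and the Frullani evaluation already recorded in \eqref{levexpgam}. As an alternative for bounded or integrable $X$, one may instead let $m\to\infty$ in the symmetric Dirichlet transform \eqref{gammaeqltsym}, using $\bigl(\E(1+\lambda X)^{-\theta/m}\bigr)^m\to\exp\bigl(-\theta\,\E\log(1+\lambda X)\bigr)$ by dominated convergence and the weak convergence $\tilX_{m||\theta}\convd\tilX_{0,\theta}$ supplied by Proposition \ref{prp:lim}.
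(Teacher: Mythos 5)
Your proof is correct, and it takes a genuinely different route from the paper's. The paper proves \eqref{dirlog} by first treating simple $X = \sum_i x_i X_{p_i}$, rewriting $\tilX_{0,\theta}$ as a finite Dirichlet mean via Proposition \ref{fergdir}, applying the finite Dirichlet transform of Proposition \ref{vonwatson}, recognizing the product as $\exp(-\theta\,\E\log(1+\lambda X))$, and then passing to general $X \ge 0$ by monotone convergence \eqref{monconv}. You instead work directly in the gamma subordinator: you mark each jump $\Delta_j$ with an independent $X_j$ to get a Poisson point process on $(0,\infty)^2$ with intensity $\theta\,x^{-1}e^{-x}\,dx\,\P(X\in dy)$, compute $\E e^{-\lambda Z}$ for $Z := \sum_j \Delta_j X_j$ by Campbell's exponential formula, and evaluate the inner integral by the Frullani identity \eqref{levexpgam}. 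The identification $Z = \gamma(\theta)\tilX_{0,\theta}$ with the two factors independent, and hence $\E e^{-\lambda Z} = \E(1+\lambda\tilX_{0,\theta})^{-\theta}$, then closes the loop. Both approaches ultimately rest on the same gamma-specific independence (normalized jumps independent of the total), but yours packages it differently. What the Campbell route buys is that it handles bounded and unbounded $X$ in one stroke and makes the dichotomy \eqref{feigintweedie} a corollary of the standard a.s.\ convergence criterion for Poisson integrals ($\int\int(1\wedge \lambda xy)\,\theta x^{-1}e^{-x}dx\,\P(X\in dy)<\infty$ iff $\E\log(1+X)<\infty$), rather than an afterthought; what the paper's route buys is that it stays entirely within the finite-Dirichlet/beta-gamma toolkit already built up, avoiding any explicit appeal to L\'evy--It\^o. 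Your parenthetical alternative (letting $m\to\infty$ in \eqref{gammaeqltsym}) is in spirit closer to the paper's argument, though the paper uses monotone approximation in the distribution of $X$ rather than in the number of Dirichlet components.
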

\begin{proof}
Suppose first that $X$ is a simple random variable $X = \sum_{i=1}^m x_i X_{p_i}$ for Bernoulli$(p_i)$ indicators $X_{p_i}$ of $m$ disjoint events with probabilities $p_i = \theta_i/\theta$.
Proposition \ref{fergdir} gives $\tilX_{0,\theta} \ed \sum_{i} x_i P_i$ for $(P_1, \ldots, P_m)$ with the finite Dirichlet distribution with parameters 
$(\theta p_i, 1 \le i \le m)$. So Proposition \ref{vonwatson} gives
\begin{align}
\nonumber \E  \exp \left( - \lambda \gamma(\theta) \tilX_{0,\theta} \right) &= \E \left( 1 + \lambda \sum_{i} x_i P_i \right)^{- \theta} \\
\nonumber &= \prod_{i} ( 1 + \lambda x_i )^{- p_i \theta} \\
\nonumber &= \exp \left( - \theta \sum_{i} p_i \log ( 1 + \lambda x_i ) \right) \\
\nonumber &= \exp \left( - \theta \E \log ( 1 + \lambda X ) \right)  .
\end{align}
This is \eqref{dirlog} for simple non-negative $X$. The case of general $X \ge 0$ follows by taking simple $X_n$ with $0 \le X_n \uparrow X$
and appealing to  the monotone convergence theorem for $P$-means \eqref{monconv}.
\end{proof}

\begin{corollary}
\label{crl:ft}
{\em 
\citep{feigin1989linear} %%title={Linear functionals and Markov chains associated with Dirichlet processes}, author={Feigin, Paul D and Tweedie, Richard L},
}
For  a general distribution of $X$, for each fixed $\theta >0$ the $(0,\theta)$ mean 
$\tilX_{0,\theta}$ of $X$
%%%with weights the normalized jumps of a gamma subordinator $(\gamma(r), 0 \le r \le \theta)$, 
is well defined by almost sure absolute convergence iff $\E \log (1 + |X| ) < \infty$.
\end{corollary}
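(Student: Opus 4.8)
The plan is to deduce this directly from the infinite Dirichlet mean transform of Corollary~\ref{crl:inftheta}, applied to $|X|$ in place of $X$. By definition the $(0,\theta)$ mean of $X$ is the series $\tilX_{0,\theta} = \sum_{j} X_j P_{0,\theta,j}$, with $(X_j)$ i.i.d.\ copies of $X$ independent of $P_{0,\theta}$, and it is ``well defined by almost sure absolute convergence'' precisely when $\sum_j |X_j| P_{0,\theta,j} < \infty$ almost surely. The key observation is that $(|X_j|)$ is a sequence of i.i.d.\ copies of the \emph{non-negative} random variable $|X|$, still independent of $P_{0,\theta}$, so that $\sum_j |X_j| P_{0,\theta,j}$ is exactly the $(0,\theta)$ mean $M_{P_{0,\theta}}(|X|)$ of $|X|$ in the sense of Corollary~\ref{crl:inftheta} --- a sum of non-negative terms, hence automatically well defined as an element of $[0,\infty]$.

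With this reduction in hand I would simply invoke Corollary~\ref{crl:inftheta} with $X$ replaced by $|X| \ge 0$. Formula~\eqref{feigintweedie}, read with the convention $(1+\lambda\infty)^{-\theta}=0$, yields the dichotomy $\P\bigl(\sum_j |X_j| P_{0,\theta,j} < \infty\bigr) = 1$ if $\E \log(1+|X|) < \infty$, and $= 0$ (that is, $\sum_j |X_j| P_{0,\theta,j} = \infty$ a.s.) if $\E \log(1+|X|) = \infty$. In the first case the defining series for $\tilX_{0,\theta}$ converges absolutely almost surely, so the $(0,\theta)$ mean is well defined; in the second case absolute convergence fails almost surely. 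This is precisely the asserted equivalence.

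I do not expect a genuine obstacle. The whole content is carried by Corollary~\ref{crl:inftheta}, and the only point worth a sentence of justification is that that corollary does apply to $|X|$: it is stated for an arbitrary non-negative random variable with no integrability hypothesis, and the identification of $\sum_j |X_j| P_{0,\theta,j}$ with the $(0,\theta)$ mean of $|X|$ uses nothing beyond the fact that $|\cdot|$ carries i.i.d.\ copies of $X$ to i.i.d.\ copies of $|X|$. Note in particular that the zero-one character of the statement is already built into Corollary~\ref{crl:inftheta} --- it comes from the log-transform identity~\eqref{dirlog} together with the stated convention at $\infty$ --- so no separate zero-one law argument is needed, and Corollary~\ref{crl:ft}, due to \citet{feigin1989linear}, follows at once.
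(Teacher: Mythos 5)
Your proposal is correct and matches the approach the paper intends: Corollary~\ref{crl:ft} is stated as an immediate consequence of Corollary~\ref{crl:inftheta}, and the intended derivation is precisely what you carry out --- apply the dichotomy~\eqref{feigintweedie} to the non-negative variable $|X|$ and observe that $M_{P_{0,\theta}}(|X|)=\sum_j |X_j| P_{0,\theta,j}$ is the almost sure absolute-convergence criterion for the series defining $\tilX_{0,\theta}$. No gap.
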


See also \citet{MR2932413} %%AUTHOR = {Sethuraman, Jayaram}, = {A short proof of the {F}eigin-{T}weedie theorem on the existence of the mean functional of a {D}irichlet process}, 
for a nice proof of this result without use of transforms.
The problem of inverting the transform \eqref{dirlog} to obtain more explicit formulas for the distribution of a $(0,\theta)$ mean $\tilX_{0,\theta}$
has attracted a great deal of attention. 
One of the first appearances of the right side of formula \eqref{dirlog} in connection with the distribution of a $(0,\theta)$ mean $\tilX_{0,\theta}$ is
in \citet[Theorem 2.5]{MR630318}, %%%, AUTHOR = {Hannum, Robert C. and Hollander, Myles and Langberg, Naftali A.},
where for $X$ with $\E|X| < \infty$ it is shown that for each real $x$ the formula 
\begin{equation}
\label{hannumetal}
\phi_{T^x} (t):= \exp ( - \theta \E  \log [ 1 - i t (X -x) ] ) \qquad (t \in \reals)
\end{equation}
with
\begin{equation}
\mbox{ $\log[ 1 + i v] := \log \sqrt{1 + v^2} + i \xi $ for $\xi = \arctan v \in (- \pi, \pi )$, }
\end{equation}
defines the characteristic function of a random variable $T_x$, which is a limit in distribution of a linear combination of independent gamma
variables with suitable Dirichlet distributed weights. Provided $\P(X = x) < 1$ the distribution of $T^x$ is continuous, and such that
\begin{equation}
\label{hannumid}
\P( \tilX_{0,\theta}  \le x ) = \P( T_x  \le 0 ) .
\end{equation}
The c.d.f. of $\tilX_{0,\theta}$ is therefore determined by inversion of the characteristic function \eqref{hannumetal}. Something missing in 
this discussion of \citet{MR630318} %%%, AUTHOR = {Hannum, Robert C. and Hollander, Myles and Langberg, Naftali A.},
identification 
\begin{equation}
\label{txdef}
T^x =  \gamma(\theta ) ( \tilX_{0,\theta} - x ) \mbox{  for $\gamma(\theta)$ independent of $\tilX_{0,\theta}$} 
\end{equation}
which is evident by inspection of formula \eqref{dirlog} for $\lambda = - i t $.
This observation makes both the identity \eqref{hannumid} and the continuity of the distribution of $T^x$ completely obvious. It is also clear from 
Corollary \ref{crl:ft} that this description  of the distribution of $\tilX_{0,\theta}$ is valid for any $X$ with $\E \log ( 1  + |X| ) < \infty$.
Closely related generalized Stieltjes transforms of the distribution of $\tilX_{0,\theta}$ appear also in
\citet{MR1041402}, %%, AUTHOR = {Cifarelli, Donato Michele and Regazzini, Eugenio}, TITLE = {Distribution functions of means of a {D}irichlet process},
with references to earlier work by those authors. For a later treatment with further references, and explicit inversion formulas for the
density of $\tilX_{0,\theta}$, see
\cite[Proposition 2]{MR1936323} %% , AUTHOR = {Regazzini, Eugenio and Guglielmi, Alessandra and Di Nunno, Giulia}, TITLE = {Theory and numerical analysis for exact distributions of functionals of a {D}irichlet process},
which is a Fourier variant of Corollary \ref{crl:inftheta}, with subsequent analysis involving \eqref{hannumid} and inversion of the Fourier transform
\eqref{hannumetal}. Surprisingly, none of the above references mention the simple interpretation \eqref{txdef} of $T^x$.

\subsection{The two-parameter model}
\label{sec:twoparam}
As recalled in Section \ref{sec:twoparamintro}, following the initial development of the basic infinite Dirichlet model 
with a single parameter $\theta$ by Fisher (who used $\alpha$ instead of $\theta$ for the parameter),
subsequent work of McCloskey,  Ewens, Ferguson and Engen,
and the work of L\'evy, Lamperti, Dynkin and others on last exit times and occupation times of various stochastic
processes related to the stable subordinator of index $\alpha \in (0,1)$,
\citet*{MR1156448} %% AUTHOR = {Perman, Mihael and Pitman, Jim and Yor, Marc}, TITLE = {Size-biased sampling of {P}oisson point processes and
developed the two-parameter extension of these basic models for random discrete distributions. The partition structure of this
$(\alpha,\theta)$ model was described by \citet{MR1337249}, %%AUTHOR = {Pitman, Jim}, TITLE = {Exchangeable and partially exchangeable random partitions}, JOURNAL = {Probab. Theory Related Fields},
following which 
\citet*{MR1434129} %%%AUTHOR = {Pitman, Jim and Yor, Marc}, TITLE = {The two-parameter {P}oisson-{D}irichlet distribution derived
gave an account of the corresponding ranked discrete distributions, and 
 \citet{MR1691650} %%AUTHOR = {Tsilevich, N. V.}, TITLE = {Distribution of mean values for some random measures},
characterized the distributions of $P_{\alpha,\theta}$-means for the complete range of parameters $(\alpha,\theta)$.
The $(\alpha,\theta)$ model is most easily described by a residual allocation model  
\eqref{stickbreak} 
for generating
its  size-biased permutation $P^*$, commonly known as the GEM$(\alpha,\theta)$ distribution.
This is obtained by the particular choice of distributions for independent factors $H_i$ with
\begin{equation}
\label{gembreaks}
H_i \ed \beta_{1-\alpha, \theta + \alpha i }  \qquad (i = 1, 2, \ldots) .
\end{equation}
The corresponding EPPF is known to be
\begin{equation}
\label{altheppf}
p_{\alpha,\theta}(n_1, \ldots, n_k) := \frac { \left( \prod_{i = 1}^{k-1} (\theta + i \alpha ) \right) \prod_{i=1}^k (1 - \alpha)_{n_i - 1} }{ (\theta + 1)_{n-1} } .
\end{equation}
It is easily shown that this EPPF corresponds to the above choice of beta distributed factors in the residual allocation model, and
that this choice leads to a well defined random discrete distribution $P$ iff one of following three cases obtains.
See \citet[\S 3.1]{MR2245368} % AUTHOR = {Pitman, J.}, TITLE = {Combinatorial stochastic processes},
for details and references to original sources.

\noindent 
\bitem GEM$(-\theta/m,\theta)= $ {\em size-biased Dirichlet$(m||\theta)$.}
This is the case $\alpha = -\theta/m < 0$ for some positive integer $m$ and $\theta > 0$, with the convention $P_j = H_j = 0$ for $j >m$.
This distribution of $(P_1, \ldots, P_m)$ is the size-biased random permutation of the symmetric Dirichlet$(m||\theta)$ model.

\noindent 
\bitem GEM$(0,\theta) = $ {\em size-biased Dirichlet$(\infty||\theta)$.}
This is the case $\alpha = 0$ and $\theta \ge 0$, which is the weak limit of the Dirichlet$(m||\theta)$ model as $m \to \infty$.
In this model, $P_j >0$ a.s. for all $j$ if $\theta >0$.
%%Aspects of this limit process were
%%irst indicated by \citet{fisher1943solo}. % title={A theoretical distribution for the apparent abundance of different species}, author={Fisher, Ronald A.},
Statistical aspects of this limit process %%%, discussed later in Section \ref{xxx}, 
%%%including inference for the parameter $\theta$
were first  considered by
\citet{fisher1943solo}. % title={A theoretical distribution for the apparent abundance of different species}, author={Fisher, Ronald A.},
As first shown by McCloskey, the GEM$(0,\theta)$ model is the size-biased ordering of relative sizes of jumps of the standard gamma process on $[0,\theta]$,
relative to their gamma$(\theta)$ distributed total.
This is also the size-biased distribution of atom sizes of any Dirichlet random measure governed by a continuous measure with total weight $\theta$.
The corresponding partition structure is governed by the Ewens sampling formula.

\noindent 
\bitem GEM$(\alpha, \theta) = $ {\em size-biased stable $(\alpha, \theta)$ model derived from a stable$(\alpha)$ subordinator}. This is the case $0 < \alpha < 1$ and $\theta > - \alpha$, with $P_j >0$ a.s. for all $j$.
This case has special subcases as follows.

\begin{itemize} 
\item $(\alpha,0)$. This model with $\theta = 0$ is the size-biased ordering of relative sizes of jumps of a stable process of index $\alpha$ on $[0,s]$,
for any fixed time $s$. Equivalently in distribution, an interval partition of $[0,1]$ may be created by the collection of maximal open intervals in the complement of
the range of the stable subordinator, relative to $[0,1]$. Then the GEM$(\alpha,0)$ distributed $(P_j)$ may be obtained either as a size-biased ordering of the lengths of these intervals,
or by letting $P_1$ be the last (meander) interval with right end $1$, and size-biasing the order of the rest of the intervals.

\item $(\alpha,\alpha$). This case with $\theta = \alpha \in (0,1)$, is derived from the previous construction by conditioning the stable subordinator to hit the point $1$.
So there is no last interval, rather an exchangeable interval partition, whose lengths in size-biased order are GEM$(\alpha,\alpha)$.
Equivalently, this is the sequence of lengths of  excursions, in size-biased random order, for the excursions of a Bessel bridge of dimension $(2-2 \alpha)$ from
$(0,0)$ to $(1,0)$.

\item $(\alpha, m \alpha)$ for $m = 1,2, \ldots$. This model is obtained from the $(\alpha, 0)$ model by deleting the first $m$ values $P_j, 1 \le j \le m$, and renormalizing the
residual values $(P_{m+1}, P_{m+2}, \ldots)$ by their sum $1 - \sum_{i=1}^m P_i$. Or, by the same scheme, starting from the $(\alpha, \alpha)$ model associated with
the excursions of a Bessel bridge of dimension $(2-2 \alpha)$ after deleting the first $m-1$ values $P_j, 1 \le j \le m-1$, and renormalizing the residual values.

\item $(\alpha, \theta)$ for $\theta >0$.  This model model can be obtained by first splitting $[0,1]$ into subintervals by GEM$(0,\theta)$, that is 
by i.i.d. beta$(1,\theta)$ stick-breaking, then splitting each of these subintervals independently according to GEM$(\alpha, 0)$.  The result is an $(\alpha,\theta)$ interval partition of $[0,1]$, meaning that the interval lengths in size-biased order form a GEM$(\alpha,\theta)$.

\item $(\alpha, \theta)$ with $-\alpha < \theta < 0$ there is no known construction of GEM$(\alpha,\theta)$ of a comparable kind.

\item $(\alpha, \theta)$ for general $0 < \alpha < 1$ and $\theta > - \alpha$. The GEM$(\alpha,\theta)$ model for generating $P$, and a random sample from $P$ from which the partition structure is created, is absolutely continuous relative to the GEM$(\alpha,0)$ model, with density factor $c_{\alpha, \theta} S_\alpha ^{\theta/\alpha}$, where $S_\alpha$, the {\em $\alpha$-diversity of $P$}, is the almost sure limit of $K_n/n^\alpha$ as $n \to \infty$ for $K_n$ the number of distinct elements in a sample of size $n$ from $P$, and $c_{\alpha,\theta} := \Gamma(1 + \theta )/\Gamma ( 1 + \theta/\alpha)$ is a normalization constant.  So if $\E_{\alpha,\theta}$ is the expectation operator governing $P$ as a GEM$(\alpha,\theta)$, and a sample $(J_1, J_2, \ldots)$ from $P$, then
for every non-negative random variable $Y$ which is  a measurable function of $P$ and the sample $(J_1, J_2, \ldots )$ from $P$:
\begin{equation}
\label{rnd}
\E_{\alpha, \theta} Y = c_{\alpha,\theta} \E_{\alpha, 0} Y   S_\alpha ^{\theta/\alpha}
\end{equation}

\end{itemize}

In the 1990's, this $(\alpha,\theta)$ model for a random discrete distribution $P$,  and its associated partition structures and  $P$-means, were extensively studied in a series of articles cited in Section 
\ref{sec:twoparam}.
Since around 2000, the merits of this $(\alpha,\theta)$ model for a random discrete distribution $\Pbul$ have been widely acknowledged, and there is by now a substantial literature of developments and applications of this 
model in various contexts, as mentioned in the introduction.

\subsection{Two-parameter means}
\label{sec:twomeans}

Looking at the general moment formula for $P$-means \eqref{moments}, it is evident that this formula will simplify greatly if the
EPPF factors as
\begin{equation}
\label{prodform}
p(n_1, \ldots, n_k) = \frac{v(k)}{c(n)} \prod_{i=1}^k w( n_i)
\end{equation}
for some pair of weight sequences $v(k), k = 1,2, \ldots$ and $w(m), m = 1,2, \ldots $.
For then by \eqref{nexnform} the corresponding ECPF factors as
\begin{equation}
\label{prodformex}
\pex(n_1, \ldots, n_k) = \frac{v(k)/k!}{c(n)/n!} \prod_{i=1}^k w( n_i)/n_i!
\end{equation}
It was shown by \citet{MR2160323} %%Kerov
that apart from some degenerate limit cases, the only EPPFs of the form 
\eqref{prodform}, defined for all positive integer compositions and subject to the consistency constraint
\eqref{consistnnm}
for all $n$, are those in displayed in \eqref{altheppf},
corresponding to a random discrete distribution $P$ whose size-biased presentation follows the GEM$(\alpha,\theta)$ residual allocation model \eqref{gembreaks}. 
%%satisfies the consistency condition \eqref{consistnnn}, with $p_{\alth} (1)= 1$, no matter what $\alpha \ne 0$ and $\theta \ne 0$.
%It is also easily checked that this function of compositions is non-negative for all compositions of positive integers $(n_1, \ldots, n_k)$, hence the EPPF of some consistent
%sequence of random partitions $\Pi_n$ of $[n]$ as $n$ varies, iff  
%Depending on whether or not one or other of $\alpha$ and $\theta$ equals $0$, the general formula 
%\eqref{altheppf}, can be put in various more or less convenient forms \eqref{prodformex}, with corresponding probabilistic interpretations discussed in xxx.
%But however the EPPF \eqref{altheppf} is reduced to \eqref{prodformex}, just 
Assuming that \eqref{prodformex} is an EPPF, which we know is possible
for suitable choices of weights $v(k), w(n) $ and $c(n)$, 
the general moment formula \eqref{moments} reduces easily to the identity 
\begin{equation}
\label{coeffsn}
\frac{c(n)}{n!} \E(\tilX ^n) =  [\lambda^n]  \sum_{k=1}^\infty \frac{ v(k) }{k!} \left( \sum_{m = 1}^\infty \frac{ w(m) }{m!} \E (\lambda Y)^m  \right)^k .
\end{equation}
%%where $[\lambda]^n g(\lambda)$ is the coefficient of $\lambda^n$ in the expansion of $g(\lambda)$ in powers of $\lambda$.
Introducing the generating functions 
$$
C(t):= 1 + \sum_{n=1}^\infty \frac{c(n)}{n!} t^n ; \qquad
V(s):= 1 + \sum_{k=1}^\infty \frac{v(k)}{k!} s^k;  \qquad
W(t):= \sum_{m=1}^\infty \frac{w(m)}{m!} t^m ,
$$
formula \eqref{coeffsn} is the identity of coefficients of $\lambda^n$ in
\begin{equation}
\label{composmom}
\E C( \lambda \tilX )  = V ( \E  W ( \lambda X ) ) 
\end{equation}
which for $\tilX = X = 1$ gives
\begin{equation}
\label{composc}
C( \lambda )  = V\circ W(\lambda):= V ( W ( \lambda ) ).
\end{equation}
Thus the general formula \eqref{moments} for moments of $P$-means has the following corollary.
\begin{corollary}
\label{crl:compos}
{\em [Composite moment formula for $(\alpha,\theta)$-means;  \citet{MR1691650}]}. %%AUTHOR = {Tsilevich, N. V.}, TITLE = {Distribution of mean values for some random measures},
For any presentation of an $(\alpha,\theta)$ EPPF in the product form \eqref{prodform} for some sequences of weights $v(k)$ and $w(n)$ with exponential generating
functions $V$ and $W$ as above, these generating functions are convergent in some neighborhood of the origin, and for each bounded random variable $X$ the distribution of the
$(\alpha,\theta)$-mean $\tilX$ is the 
unique distribution whose positive integer moments are determined by the identity of formal power series in $\lambda$
\begin{equation}
\label{composv}
\E [ V \circ W (\lambda \tilX )]   = V ( \E W ( \lambda X ) ).
\end{equation}
\end{corollary}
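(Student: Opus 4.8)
The plan is to read off the formal identity \eqref{composv} from the general moment formula for $P$-means and then supply the two further assertions in the statement. First I would substitute the factorized ECPF \eqref{prodformex} into the formula \eqref{moments} of Corollary \ref{mainthm}: grouping the compositions $(n_1,\dots,n_k)$ of $n$ by $k$ turns the right-hand side into $\tfrac{c(n)}{n!}\E(\tilX^{n})=\sum_{k\ge1}\tfrac{v(k)}{k!}\sum_{n_1+\cdots+n_k=n}\prod_{i=1}^{k}\tfrac{w(n_i)}{n_i!}\E X^{n_i}$, which is exactly the coefficient of $\lambda^{n}$ in $V(\E W(\lambda X))$; this is \eqref{coeffsn}, hence the formal power series identity \eqref{composmom}, and taking $X=\tilX=1$ gives $C=V\circ W$ as in \eqref{composc}, so \eqref{composmom} becomes \eqref{composv}. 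Because $W$ has zero constant term, the composition $V\circ W$ makes sense as a formal power series irrespective of convergence, so this step already settles the ``formal power series'' part of the claim, and it works for \emph{any} product-form presentation, no classification needed.

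For the convergence claim I would proceed via the structure of the admissible weights. By the Gnedin--Pitman classification of product-form EPPFs (\citet{MR2160323}), a consistent EPPF of the shape \eqref{prodform} is, up to the evident rescalings $w(m)\mapsto\mu^{m}w(m)$ and $v(k)\mapsto\nu^{k}v(k)$ with $c(n)$ adjusted accordingly, one of the $(\alpha,\theta)$ weight systems underlying \eqref{altheppf}; for the standard choice $w(m)=\alpha(1-\alpha)_{m-1}$, $v(k)=\prod_{i=1}^{k-1}(\theta+i\alpha)$, $c(n)=(1+\theta)_{n-1}$ one has $W(t)=1-(1-t)^{\alpha}$ for $0<\alpha<1$, and in all cases (including the degenerate $\alpha=0$ and $\alpha=-\theta/m$ limits) the coefficients $w(m)/m!$, $v(k)/k!$, $c(n)/n!$ grow at most geometrically, so $W$, $V$, $C$ have positive radii of convergence. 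The rescalings act by $t\mapsto\mu t$ and by a constant, which preserve positivity of the radius of convergence and, as a one-line check shows, leave the $\lambda$-family of identities \eqref{composv} unchanged, so both \eqref{composv} and the convergence statement hold for every presentation. Since $W(0)=0$ and $W$ is continuous near $0$, for $|X|\le M$ we have $|\E W(\lambda X)|\le\E|W(\lambda X)|\to0$ as $\lambda\to0$, so for small $|\lambda|$ the argument of $V$ lies inside its disk of convergence, $\lambda\mapsto V(\E W(\lambda X))$ is analytic there, and the interchanges of expectation and summation used above are justified. (One can alternatively deduce the growth rates directly from $\pex\le1$ together with the consistency relations \eqref{consistnnm}, but going through the explicit weights is more transparent.)

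Finally, for uniqueness I would note that boundedness of $X$, say $|X|\le M$, forces $|\tilX|=\bigl|\sum_{j}X_{j}P_{j}\bigr|\le M$ almost surely, so $\tilX$ is a bounded random variable and its law is determined by its moment sequence (the second bullet of Corollary \ref{mainthm}); and those moments are recovered from \eqref{composv} because the coefficient of $\lambda^{n}$ on the left equals $\tfrac{c(n)}{n!}\E\tilX^{n}$ with $c(n)>0$ (for the normalization $c(1)=1$ one has $c(n)=(1+\theta)_{n-1}$, all of whose factors $i+\theta\ge1-\alpha>0$), whence $\E\tilX^{n}=\tfrac{n!}{c(n)}[\lambda^{n}]\,V(\E W(\lambda X))$ is the unique value consistent with the right-hand side. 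I expect the only genuinely delicate step to be the convergence claim, and within it the reduction of an arbitrary product-form presentation to the standard one (i.e. the essential uniqueness of the presentation); the generating-function manipulation is just the exponential formula once \eqref{prodformex} is available, and the uniqueness-of-law part is immediate from boundedness.
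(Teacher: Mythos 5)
Your proposal follows the same route as the paper: substitute the factorized ECPF \eqref{prodformex} into the moment formula \eqref{moments}, recognize the resulting coefficient identity \eqref{coeffsn} as the statement that $[\lambda^n]$ of both sides of \eqref{composmom} agree, specialize to $X=\tilX=1$ to identify $C=V\circ W$, and conclude \eqref{composv}. The paper does exactly this in the paragraph preceding the corollary and then simply remarks that ``to check the claim of convergence of the generating functions, it seems necessary to check case by case as below,'' leaving that to the explicit computations of $V$, $W$, $C$ which follow. Your proposal supplies a more systematic version of that convergence check, invoking the classification of product-form EPPFs and tracking how the rescaling freedom $w(m)\mapsto\mu^m\nu\, w(m)$, $v(k)\mapsto\nu^{-k}v(k)$, $c(n)\mapsto\mu^n c(n)$ acts on $V,W,C$; this is a genuine (and correct) addition, since the paper does not argue that an \emph{arbitrary} product-form presentation gives convergent series. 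Your uniqueness paragraph (boundedness of $X$ forces $|\tilX|\le M$, and $c(n)>0$ lets you solve \eqref{composv} uniquely for the moments) is also correct and fills in a step the paper leaves to Corollary \ref{mainthm}. One small slip: \citet{MR2160323} is Kerov's theorem, not Gnedin--Pitman, which the paper itself makes clear when it first cites that result.
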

To check the claim of convergence of the generating functions, it seems necessary to check case by case as below. But this composite moment
formula for $(\alpha,\theta)$-means provides a remarkable unification of a number of different formulas that were first discovered in the
special cases listed below.
This composite moment formula for $P$-means
is a variation of the {\em compositional} or {\em Fa\`a di Bruno} formula,  
which shows how the coefficients $c(n)$ of the composite function $C(\lambda) = V\circ W(\lambda)$ are determined the two weight sequences $v(k)$ and $w(m)$.
See \citet[\S 1.2]{MR2245368}. % AUTHOR = {Pitman, J.}, TITLE = {Combinatorial stochastic processes},
Consider the product $\pi(n_1, \ldots, n_k):= v(k) \prod_{i=1}^k w(n_i)$ appearing in \eqref{prodform}, without the factor of $c(n)$ in the denominator.
Starting from any two sequences of weights $v(k)$ and $w(m)$ such that this product is non-negative for all $(n_1, \ldots, n_k)$, the
compositional formula \eqref{composc} determines the sequence of non-negative coefficients $c(n)$ that is necessary to make $p(n_1, \ldots, n_k):= \pi(n_1, \ldots, n_k)/c(n)$ the EPPF of some 
exchangeable random partition $\Pi_n$ of $[n]$ for each $n$. However, for these $\Pi_n$ to be derived by sampling from some random discrete distribution $P$,
 it is necessary that they be {\em consistent} as $n$ varies in the sense of \eqref{consistnnm}, and it is this consistency requirement that limits the scope of application
of the composite moment formula to the $(\alpha,\theta)$ model.

The simplest algebraic form of the $(\alpha,\theta)$ EPPF 
\eqref{altheppf} is obtained for $\alpha \ne 0$ and $\theta \ne 0$ by writing it as
\begin{equation}
\label{altheppfgeneric}
p_{\alpha,\theta}(n_1, \ldots, n_k) := \frac{(-1)^k (\theta/\alpha)_k  }{ ( \theta )_n } \prod_{i=1}^k (-\alpha)_{n_i} \qquad (\alpha \ne 0, \theta \ne 0 ) .
\end{equation}
which allows the product form 
\eqref{prodform} to be achieved by what appears to be the simplest possible choice of weights, that is 
\begin{align}
w(m) &= (- \alpha)_m := \prod_{i=0}^{n-1} (i - \alpha) = (-1)^m m! \binom{\alpha}{m} \\
v(k) &= (-1)^k (\theta/\alpha)_k  = (-1)^k k! \binom{- \theta/\alpha}{k} \\
c(n) &= (\theta)_n
\end{align}
The corresponding exponential generating functions then all simplify by negative binomial expansions:
\begin{align}
W(t) &= \sum_{m=1}^\infty \frac{ (- \alpha)_m }{m!} t^m = (1 - t)^\alpha  -1 \\
V(s) &= \sum_{k=1}^\infty \frac{ (\theta/\alpha)_k  }{k!}  s^k = ( 1 + s)^{- \theta/\alpha} \\
C(t) &= \sum_{n=0}^\infty \frac{( \theta)_n }{n! } t^n  = (1-t)^{-\theta}
\end{align}
which magically combine as they must according to the composite formmula
\eqref{composc}:
$$
V(W(t)) = ( 1 + ( 1 - t)^{\alpha} - 1 )^{- \theta/\alpha} = (1-t)^{-\theta} = C(t).
$$
This argument simplifies a similar argument due to \citet{MR1691650}, %%AUTHOR = {Tsilevich, N. V.}, TITLE = {Distribution of mean values for some random measures},
by working consistently with compositions rather than partitions of $n$.
A puzzling feature of the argument is that for $0 < \alpha < 1$, there is  no obvious interpretation of the weight sequence $w(m) = (- \alpha)_m $ in probabilisitic or combinatorial
terms, due to negativity of the weight for $m = 1$. This is compensated by the alternating sign in the definition of $v(k)$, which ensures that the product 
\eqref{prodform} is positive, as it must be for all compositions of positive integers $(n_1, \ldots, n_k)$. Still, the result of this algebraically simple calculation is
a remarkable unified formula for what appear at first to be extremely different cases of the $(\alth)$ model, that is the elementary 
symmetric Dirichlet $(m || \theta)$ case with  only a finite number $m$ of positive $P_i$, and the fat tailed $(\alpha, \theta)$ models for $0 < \alpha < 1$.

\begin{corollary}
\label{crl:compos2}
{\em [generic $(\alpha,\theta)$ Cauchy-Stieltjes transform; \citet{MR1691650}]}. %%AUTHOR = {Tsilevich, N. V.}, TITLE = {Distribution of mean values for some random measures},
Suppose that either $\alpha = - \theta/m$ for some $m = 1,2, \ldots$, or $0 < \alpha < 1$ and $\theta > - \alpha$ with $\theta \ne 0$. Then
for any distribution of $X \ge 0$, the distribution of $\tilX_\alth$, the $(\alpha,\theta)$-mean of $X$, is uniquely determined by the formula
\begin{equation}
\label{composmomalth}
\E ( 1 + \lambda \tilX_\alth  )^{-\theta } = \left(  \E ( 1 + \lambda X )^\alpha \right)^{- \thoval}  \qquad (\alpha \ne 0, \theta \ne 0, \lambda \ge 0).
\end{equation}
Also, for $\alpha \ne 0, \theta \ne 0$ and all $X$ with $\E|X|^n < \infty$ for some $n = 1,2, \ldots$ the $n$th moment of $\tilX_\alth$ is well defined, and given by
the equality of coefficients of $\lambda^n$ in the formal power series
\begin{equation}
\label{composmomalth1}
\frac{ ( \theta )_n }{n!} \E \tilX_\alth ^n  = [\lambda^n]  \sum_{j=1}^\infty \frac{ (\theta/\alpha)_j \alpha ^j }{j!} \left( \sum_{\ell=1}^ \infty \frac{ (1-\alpha)_{\ell-1} \lambda^\ell \E(X^\ell) } {\ell! } \right)^j .
\end{equation}
And for $0 < \alpha < 1$ and arbitrary $\theta > - \alpha$
\begin{itemize}
\item $\tilX_\alth$ is finite with probability one for all $\theta > - \alpha$ if $\E X^\alpha < \infty$; 
\item $\tilX_\alth$ is infinite with probability one for all $\theta > - \alpha$ if $\E X^\alpha = \infty$.
\end{itemize} 
\end{corollary}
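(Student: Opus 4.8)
The plan is to read off Corollary~\ref{crl:compos2} from the composite moment formula of Corollary~\ref{crl:compos} by substituting the weight sequences $w(m)=(-\alpha)_m$, $v(k)=(-1)^k(\theta/\alpha)_k$, $c(n)=(\theta)_n$ found above, whose exponential generating functions $W(t)=(1-t)^{\alpha}-1$, $V(s)=(1+s)^{-\theta/\alpha}$, $C(t)=(1-t)^{-\theta}$ each have radius of convergence at least $1$ (so the convergence hypothesis of Corollary~\ref{crl:compos} is trivially met) and satisfy $V\circ W=C$. For a bounded $X\ge 0$, formula~\eqref{composv} becomes $\E(1-\lambda\tilX_\alth)^{-\theta}=\bigl(\E(1-\lambda X)^{\alpha}\bigr)^{-\theta/\alpha}$ as an identity of formal power series in $\lambda$, and extracting the coefficient of $\lambda^n$ --- using $(-\alpha)_m=-\alpha\,(1-\alpha)_{m-1}$ to absorb signs --- is exactly~\eqref{composmomalth1}. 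Since $\tilX_\alth$ is bounded (Corollary~\ref{crl:convex}), both $\lambda\mapsto\E(1+\lambda\tilX_\alth)^{-\theta}$ and $\lambda\mapsto\bigl(\E(1+\lambda X)^{\alpha}\bigr)^{-\theta/\alpha}$ are real-analytic on an interval $(-\epsilon,\infty)$ with the same Taylor expansion at $0$ (replace $\lambda$ by $-\lambda$ in the formal identity), hence agree for all $\lambda\ge 0$; this gives~\eqref{composmomalth} for bounded $X\ge 0$. The moment formula~\eqref{composmomalth1} already holds for every $X$ with $\E|X|^n<\infty$, since the moment formula for $P$-means (Corollary~\ref{mainthm}) does.

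Next I would remove the boundedness restriction by truncation. With $X_N:=X\wedge N\uparrow X$, the monotone convergence theorem for $P$-means~\eqref{monconv} gives $M_\alth(X_N)\uparrow\tilX_\alth$ almost surely, where $\tilX_\alth$ is defined as this limit, possibly $+\infty$. On the right of~\eqref{composmomalth}, $\E(1+\lambda X_N)^{\alpha}\uparrow\E(1+\lambda X)^{\alpha}$ and $u\mapsto u^{-\theta/\alpha}$ is continuous and monotone on $[1,\infty]$; on the left, $(1+\lambda M_\alth(X_N))^{-\theta}$ converges monotonically to $(1+\lambda\tilX_\alth)^{-\theta}$ (increasing if $\theta<0$; decreasing and bounded by $1$ if $\theta>0$, with the convention $(1+\lambda\cdot\infty)^{-\theta}=0$ for $\theta>0$). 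So~\eqref{composmomalth} passes to the limit and holds for every $X\ge 0$, both sides being allowed to take the value $+\infty$. For the uniqueness assertion: when $\theta>0$ the left side is $\E\exp(-\lambda\gamma(\theta)\tilX_\alth)$, the Laplace transform of $\gamma(\theta)\tilX_\alth$, so the law of $\tilX_\alth$ is determined by Lemma~\ref{lem:gammacancel}; when $\theta<0$ it is the generalized Cauchy--Stieltjes transform of order $-\theta$ of the non-negative variable $\tilX_\alth$, which is injective. In the symmetric Dirichlet case $\alpha=-\theta/m$ one may instead simply quote~\eqref{gammaeqltsym}, which is~\eqref{composmomalth} with $-\theta/\alpha=m$.

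It remains to establish the finiteness dichotomy for $0<\alpha<1$ and $\theta>-\alpha$. If $\theta>0$ it follows from~\eqref{composmomalth} directly: when $\E X^{\alpha}=\infty$ the right side vanishes for every $\lambda>0$, forcing $\P(\tilX_\alth=\infty)=1$; when $\E X^{\alpha}<\infty$ we have $\E(1+X)^{\alpha}<\infty$ (as $(1+x)^{\alpha}\le 1+x^{\alpha}$), so by dominated convergence $\E(1+\lambda\tilX_\alth)^{-\theta}\to\P(\tilX_\alth<\infty)$ as $\lambda\downarrow 0$ while the right side tends to $1$, giving $\P(\tilX_\alth<\infty)=1$. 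For general $\theta\in(-\alpha,\infty)\setminus\{0\}$ I would reduce to $\theta=0$ by the change of measure~\eqref{rnd}: the same absolutely continuous relationship between the GEM$(\alpha,\theta)$ and GEM$(\alpha,0)$ laws persists after adjoining an independent i.i.d.\ sequence of copies of $X$, with density $c_{\alpha,\theta}S_\alpha^{\theta/\alpha}$ depending on $P$ alone and satisfying $0<S_\alpha^{\theta/\alpha}<\infty$ a.s.; hence $\{\tilX_\alth<\infty\}$ has the same probability ($0$ or $1$) under both models, and it suffices to treat $\theta=0$. For $\theta=0$, $\tilX_{\alpha,0}=T_\alpha(1)^{-1}\sum_j X_j\Delta_j$ with $0<T_\alpha(1)<\infty$ a.s., where the pairs $(\Delta_j,X_j)$ --- the jumps of a stable$(\alpha)$ subordinator on $[0,1]$ tagged by i.i.d.\ copies of $X$ --- form a Poisson process on $(0,\infty)\times[0,\infty)$ with mean measure proportional to $\delta^{-1-\alpha}\,d\delta\otimes\P(X\in dx)$. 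By the zero--one law for Poisson functionals, $\sum_j X_j\Delta_j$ is finite a.s.\ iff $\int_0^\infty\!\int_0^\infty\min(\delta x,1)\,\delta^{-1-\alpha}\,d\delta\,\P(X\in dx)<\infty$; the inner integral equals $x^{\alpha}/(\alpha(1-\alpha))$, so the condition is $\E X^{\alpha}<\infty$, and otherwise the sum is $+\infty$ a.s.

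I expect the finiteness dichotomy to be the main obstacle, specifically the implication $\E X^{\alpha}=\infty\Rightarrow\tilX_\alth=\infty$ a.s.\ when $\theta\le 0$: here the identity~\eqref{composmomalth} has both sides infinite and so carries no information, and one is forced out of the algebraic framework into the Poisson structure of the stable subordinator (for $\theta=0$) together with the change of measure~\eqref{rnd} (to transfer to $\theta\ne 0$). Everything else is a routine assembly of Corollary~\ref{crl:compos}, analytic continuation, and monotone convergence.
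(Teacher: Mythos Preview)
Your argument for the transform \eqref{composmomalth} and the moment formula \eqref{composmomalth1} is essentially the paper's: read off Corollary~\ref{crl:compos} for bounded $X$, then pass to general $X\ge 0$ by monotone approximation via \eqref{monconv}. Your extra care with analytic continuation from the formal identity to $\lambda\ge 0$, and with the direction of monotonicity when $\theta<0$, fills in details the paper leaves implicit.

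The one genuine difference is the finiteness dichotomy. The paper argues in the reverse direction from you: it treats $\theta\ne 0$ first, claiming the conclusion follows ``similarly by monotone approximation'' from \eqref{composmomalth}, and then transfers to $\theta=0$ by the mutual absolute continuity of the GEM$(\alpha,\theta)$ laws as $\theta$ varies (the same fact underlying \eqref{rnd}). You instead anchor at $\theta=0$ with a direct Poisson--L\'evy computation on the stable subordinator, and then use absolute continuity to reach all other $\theta$. Both routes ultimately rest on the same absolute continuity. Your route has the advantage of being fully explicit and of cleanly handling the case $-\alpha<\theta<0$ with $\E X^\alpha=\infty$, where \eqref{composmomalth} reads $\infty=\infty$ and is uninformative; the paper's terse proof does not single this case out, and must implicitly be using the mutual absolute continuity to carry it over from some $\theta>0$ where the transform argument does work. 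The paper's route, on the other hand, avoids any appeal to the Poisson structure of the subordinator and stays entirely within the transform framework.
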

\begin{proof}
Formula \eqref{composmomalth} is read from Corollary \ref{crl:compos}, in the first instance for bounded $X$, when the convergence of 
all power series is easily justified. The formula then extends to unbounded $X \ge 0$ by monotone convergence,
using the consequence of Proposition \ref{prp:canon} that $P$-means $\tilX$ and $\tilY$ of $X$ and $Y$ with $0 \le X \le Y$
can always be constructed as $\tilX = X_J \le \tilY = Y_J$ for $(X_i,Y_i)$ a sequence of i.i.d. copies of $(X,Y)$.
It follows easily that if $\E|X|^n < \infty$ for some $n = 1,2, \ldots$ then the  $n$th moment of $\tilX_\alth$ is well defined, and can be evaluated as indicated
by equating coefficients in the formal power series. The conclusions regarding finiteness of $\tilX_\alth$ follow similarly by monotone approximation, in the first instance for
And for $0 < \alpha < 1$ and $\theta > - \alpha$ with $\theta \ne 0$, then also for $\theta = 0$ by the result of 
\citet*{MR1434129} %%%AUTHOR = {Pitman, Jim and Yor, Marc}, TITLE = {The two-parameter {P}oisson-{D}irichlet distribution derived
that for each fixed $0 < \alpha < 1$ the laws of GEM$(\alth)$ distributions are mutually absolutely continuous as $\theta$ varies.
\end{proof}

Two checks on formula \eqref{composmomalth} are provided as follows.
One check is the finite symmetric Dirichlet $(\theta)$ case with $\theta >0$ and $\alpha = -\theta/m$ for some $m = 1, 2, \ldots$,
when \eqref{composmomalth} reduces to the symmetric Dirichlet mean transform \eqref{gammaeqltsym}.
Another check 
is provided by the case $\alpha = \theta$, when for simple $X$ it reduces to a formula of 
\citet{MR1022918}. %%AUTHOR = {Barlow, Martin and Pitman, Jim and Yor, Marc}, TITLE = {Une extension multidimensionnelle de la loi de l'arc sinus},
The infinite  Dirichlet mean transform \eqref{dirlog} is the limit case  for fixed $\theta$ and $\alpha = -\theta/m \uparrow 0$ as $m \to \infty$,
as already indicated around \eqref{dirlog}.
Next, the limit case for $0 < \alpha < 1 ,  \theta = 0$:

\begin{corollary}
For $0 < \alpha <1$ and $X \ge 0$, if $\E X^\alpha < \infty$ then the distribution of $\tilX_{\alpha,0}$ is determined by the transform
\begin{equation}
\label{composlogal}
\E \log ( 1 + \lambda \tilX_{\alpha,0}  )  = \frac{1}{\alpha} \log \left(   \E( 1 + \lambda X)^ \alpha  \right) \qquad ( 0 < \alpha < 1, \lambda \ge 0)
\end{equation}
which admits the alternative form
\begin{equation}
\label{composmomalz}
\E ( 1 + \lambda \tilX_{\alpha,0}  )^{-1} =  \frac{ \E ( 1 + \lambda X)^{\alpha - 1} } { \E ( 1 + \lambda X)^{\alpha} } \qquad (0 < \alpha < 1, \lambda \ge 0)
\end{equation}
\end{corollary}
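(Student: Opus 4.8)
The plan is to obtain both displays as the shadow, at any single positive value of $\theta$, of the composition identity \eqref{althcompmeans} combined with the two transforms already established: the infinite Dirichlet mean transform \eqref{dirlog} and the generic $(\alpha,\theta)$ Cauchy--Stieltjes transform \eqref{composmomalth}. Since $\E X^\alpha < \infty$, Corollary \ref{crl:compos2} guarantees that $\tilX_{\alpha,0}$ is finite almost surely, so that $Y := \tilX_{\alpha,0} = M_{\alpha,0}(X)$ is a legitimate non-negative random variable to which a further averaging operator may be applied.

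Fix $0 < \alpha < 1$, $\lambda \ge 0$ and any $\theta > 0$. By the composition rule \eqref{althcompmeans}, $\tilX_{\alpha,\theta} \ed M_{0,\theta}(Y)$, so
\[
\E(1 + \lambda \tilX_{\alpha,\theta})^{-\theta} = \E\bigl(1 + \lambda M_{0,\theta}(Y)\bigr)^{-\theta}.
\]
The right-hand side is evaluated by the infinite Dirichlet mean transform \eqref{dirlog} applied to $Y$, giving $\exp\bigl(-\theta\,\E\log(1+\lambda Y)\bigr) = \exp\bigl(-\theta\,\E\log(1+\lambda \tilX_{\alpha,0})\bigr)$ (the convention attached to \eqref{dirlog} being in force for the moment), while the left-hand side is evaluated by \eqref{composmomalth}, giving $\bigl(\E(1+\lambda X)^\alpha\bigr)^{-\theta/\alpha} = \exp\bigl(-\tfrac{\theta}{\alpha}\log \E(1+\lambda X)^\alpha\bigr)$. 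Equating the two exponents and cancelling the common factor $-\theta$ yields exactly \eqref{composlogal}. Subadditivity of $t\mapsto t^\alpha$ gives $1 \le \E(1+\lambda X)^\alpha \le 1 + \lambda^\alpha\E X^\alpha < \infty$, so both sides of \eqref{composlogal} are finite and nonnegative; in particular $\E\log(1+\lambda\tilX_{\alpha,0}) < \infty$, which is what makes the use of \eqref{dirlog} legitimate and retroactively justifies dropping its $\infty$-convention.

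To pass from \eqref{composlogal} to the alternative form \eqref{composmomalz}, differentiate \eqref{composlogal} in $\lambda$; differentiation under the expectation is justified by dominated convergence on a neighbourhood of any fixed $\lambda_0 > 0$, using the bound $\tilX_{\alpha,0}/(1+\lambda\tilX_{\alpha,0}) \le 1/\lambda$ on the left, and $X(1+\lambda X)^{\alpha-1} \le \lambda^{-1}(1+\lambda X)^\alpha \le \lambda^{-1}(1+\lambda^\alpha X^\alpha) \in L^1$ on the right. This gives
\[
\E\frac{\tilX_{\alpha,0}}{1+\lambda\tilX_{\alpha,0}} = \frac{\E\bigl[X(1+\lambda X)^{\alpha-1}\bigr]}{\E(1+\lambda X)^\alpha}.
\]
Multiplying both sides by $\lambda$ and using $\lambda\tilX_{\alpha,0}/(1+\lambda\tilX_{\alpha,0}) = 1 - (1+\lambda\tilX_{\alpha,0})^{-1}$ and $\lambda X(1+\lambda X)^{\alpha-1} = (1+\lambda X)^\alpha - (1+\lambda X)^{\alpha-1}$, the two ``$1$''s cancel and one is left with \eqref{composmomalz}; conversely \eqref{composmomalz} integrates back to \eqref{composlogal}, both sides of which vanish at $\lambda = 0$. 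Either form determines the law of $\tilX_{\alpha,0}$: \eqref{composmomalz} displays its ordinary Cauchy--Stieltjes transform for all $\lambda \ge 0$, equivalently, via \eqref{cauchy-stieltjesthlap2}, the Laplace transform of $\gamma(1)\tilX_{\alpha,0}$, which is unique and pins down the distribution by Lemma \ref{lem:gammacancel}.

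Along this route there is essentially no obstacle: all the analytic work has already been discharged in Corollaries \ref{crl:inftheta} and \ref{crl:compos2} and in the composition identity \eqref{althcompmeans}, and what remains is bookkeeping (the subadditivity bound and the routine dominated-convergence justification of differentiating under the expectation). The natural alternative --- letting $\theta \downarrow 0$ directly in \eqref{composmomalth} and recognising that $-\theta^{-1}\log\E(1+\lambda\tilX_{\alpha,\theta})^{-\theta} \to \E\log(1+\lambda\tilX_{\alpha,0})$ --- is more self-contained but genuinely harder, since the random variable $\tilX_{\alpha,\theta}$ itself varies with $\theta$; making that limit rigorous would force one to control the whole family via the Radon--Nikodym relation \eqref{rnd} between GEM$(\alpha,\theta)$ and GEM$(\alpha,0)$, together with an evaluation of $\E_{\alpha,0}\log S_\alpha$ from the Mittag--Leffler moments of the $\alpha$-diversity. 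The composition-rule argument sidesteps all of that, which is why it is the route I would take.
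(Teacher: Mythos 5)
Your proof is correct, and it takes a genuinely different route from the paper's. The paper presents the corollary as ``the limit case for $0 < \alpha < 1$, $\theta = 0$'' of \eqref{composmomalth}, and its proof amounts to citing \citet{MR1022918} for \eqref{composmomalz} and \citet{MR1691650} for \eqref{composlogal} in the case of simple $X$, noting that each form follows from the other by differentiating or integrating the power series, and then extending to general $X \ge 0$ by increasing approximation as in the proof of Corollary~\ref{crl:compos2}. That route implicitly leans on passing $\theta \downarrow 0$ in \eqref{composmomalth}, which — as you observe — is delicate because $\tilX_{\alpha,\theta}$ itself moves with $\theta$. Your route sidesteps the limit entirely: fix any single $\theta > 0$, apply the composition identity \eqref{althcompmeans} to write $\tilX_{\alpha,\theta} \ed M_{0,\theta}(\tilX_{\alpha,0})$, and evaluate $\E(1+\lambda\,\cdot)^{-\theta}$ on both sides, once via the Dirichlet transform \eqref{dirlog} applied to $Y = \tilX_{\alpha,0}$ and once via \eqref{composmomalth} applied to $X$; equating exponents and cancelling $-\theta$ gives \eqref{composlogal}. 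The subadditivity bound $\E(1+\lambda X)^\alpha \le 1 + \lambda^\alpha \E X^\alpha < \infty$ then forces $\E\log(1+\lambda\tilX_{\alpha,0}) < \infty$, which retroactively legitimises the use of \eqref{dirlog} without its $\infty$-convention, and the differentiation step passing to \eqref{composmomalz} is routine and correctly dominated. What the composition route buys is a proof that holds directly for all $X \ge 0$ with $\E X^\alpha < \infty$ in one stroke, with no separate treatment of simple $X$ and no $\theta \downarrow 0$ limit; what the paper's route buys is independence from \eqref{althcompmeans} (a nontrivial Pitman--Yor input) and closer contact with the historical sources. There is no circularity in your argument, since \eqref{althcompmeans} is established in Section~\ref{sec:frag} from the combinatorial composition rule \eqref{althcomp}, not from the transforms.
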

%%This formula holds for an arbitary distribution of $X \ge 0$, with appropriate conventions if one of the expectations on the right side is infinite.
Observe that \eqref{composmomalz} for $X = X_p$ the indicator of an event of probability $p$ reduces to Lamperti's Stieltjes transform \eqref{lampertistieltjes}
for the generalized arcsine law with probability density \eqref{darling-lamperti}.
The case of \eqref{composmomalz} for simple $X$ is due to \citet{MR1022918}, while %%AUTHOR = {Barlow, Martin and Pitman, Jim and Yor, Marc}, TITLE = {Une extension multidimensionnelle de la loi de l'arc sinus},
while \eqref{composlogal} was first indicated by \citet{MR1691650}.  %%AUTHOR = {Tsilevich, N. V.}, TITLE = {Distribution of mean values for some random measures},
For simple $X$, each of \eqref{composlogal} and \eqref{composmomalz} follows easily from the other, by differentiation or integration of the power series.
These formulas for general $X \ge 0$ are obtained by increasing approximation with simple $X$, as in the proof of Corollary \ref{crl:compos2}.

%Following are some general remark about the above formulas. As a first remark, for any $Y \ge 0$ the Laplace-Stieltjes transform
%of $Y$ of order $\theta$ is just the ordinary Laplace transform of $Y \gamma_\theta$,
%that is
%$$
%\E (  1 + \lambda Y )^{-\theta } = \E \exp ( - \lambda Y \gamma_\theta ) \qquad ( \lambda > 0)
%$$
%where $\gamma_\theta$ is independent of $Y$.
%Using the monotonicity of the power, logarithmic and exponential functions, it follows by monotone convergence that for each of the identities in the above theorem,  
%the identity holds for all $X \ge 0$ iff it holds for all simple random variables $X \ge 0$,
%meaning that $X$ assumes only a finite number of non-negative values. This makes it easy to check the formulas for a general distribution of $X$ from
%those for simple $X$, when the convergence of various power series is most easily justified.

%\section*{Conclusion}
%It is hoped that this organization of the circle of ideas related to random weighted averages, partition structures and generalized arcsine laws may help
%provide a better foundation for the diverse applications

\section*{Acknowledgement}
Thanks to Lancelot James,  Wenpin Tang, Zhiyi You and Teddy Zhu for careful readings of earlier versions of this article, and pointers to the literature.

%\bibliographystyle{plainnat}
%\bibliography{arcsine,meanders,hypergeometric,rambib}

\def\cprime{$'$} \def\cprime{$'$} \def\cprime{$'$} \def\cprime{$'$}

\end{document}